\numberwithin{equation}{section}
\newcommand{\Fg}{\mathfrak{g}}
\newcommand{\Fh}{\mathfrak{h}}
\newcommand{\Fsl}{\mathfrak{sl}}
\newcommand{\BZ}{\mathbb{Z}}
\newcommand{\BR}{\mathbb{R}}
\newcommand{\BC}{\mathbb{C}}
\newcommand{\BK}{\mathbb{K}}
\newcommand{\CO}{\mathcal{O}}
\newcommand{\RAP}{\mathsf{AP}_{\mathrm{red}}}
\newcommand{\RAPc}{\mathsf{AP}_{\mathrm{red}}^{\circ}}
\newcommand{\AP}{\mathsf{AP}}
\newcommand{\APc}{\mathsf{AP}^{\ast}}
\newcommand{\vpi}{\varpi}
\newcommand{\be}{\mathbf{e}}
\newcommand{\bv}{\mathbf{v}}
\newcommand{\bw}{\mathbf{w}}
\newcommand{\bb}{\mathbf{b}}
\newcommand{\bc}{\mathbf{c}}
\newcommand{\bF}{\mathbf{F}}
\newcommand{\bG}{\mathbf{G}}
\newcommand{\bI}{\mathbf{I}}
\newcommand{\minu}{\mathbf{m}}
\newcommand{\QW}{\mathbf{QW}}
\newcommand{\Hom}{\mathrm{Hom}}
\newcommand{\QBG}{\mathrm{QBG}}
\newcommand{\KNOS}{\mathrm{I}}
\DeclareMathOperator{\wt}{wt}
\DeclareMathOperator{\ed}{end}
\DeclareMathOperator{\qwt}{qwt}
\DeclareMathOperator{\gch}{gch}
\DeclareMathOperator{\sgn}{sgn}
\DeclareMathOperator{\YB}{\mathsf{YB}}
\DeclareMathOperator{\D}{\mathsf{D}}
\newcommand{\af}{\mathrm{af}}
\newcommand{\edge}[1]{ \xrightarrow{\hspace{2pt}#1\hspace{2pt}} }
\newcommand{\Qe}[1]{ \xrightarrow[\mathsf{Q}]{\hspace{2pt}#1\hspace{2pt}} }
\newcommand{\Be}[1]{ \xrightarrow[\mathsf{B}]{\hspace{2pt}#1\hspace{2pt}} }
\newcommand{\mcr}[1]{\lfloor #1 \rfloor}
\newcommand{\lng}{w_{\circ}}
\newcommand{\OQG}[1]{ [\mathcal{O}_{\mathbf{Q}_{G}(#1)}] }
\newcommand{\OQGL}[2]{ [\mathcal{O}_{\mathbf{Q}_{G}(#1)}(#2)] }
\newcommand{\QG}{\mathbf{Q}_{G}}
\newcommand{\QGr}{\mathbf{Q}_{G}^{\mathrm{rat}}}
\newcommand{\Kgrp}{K_{ \ti{\bI} }(\QGr)}
\newcommand{\KQG}{K_{H \times \BC^{\ast}}(\QG)}
\newcommand{\KQGr}{K_{H \times \BC^{\ast}}(\QGr)}
\newcommand{\pair}[2]{\langle #1,\,#2 \rangle}
\newcommand{\bra}[1]{[\![#1]\!]}
\newcommand{\pra}[1]{(\!(#1)\!)}
\newcommand{\ti}[1]{\widetilde{#1}}
\newcommand{\ha}[1]{\widehat{#1}}
\theoremstyle{plain}
\newtheorem{lem}{Lemma}[section]
\newtheorem{prop}[lem]{Proposition}
\newtheorem{thm}[lem]{Theorem}
\newtheorem{cor}[lem]{Corollary}
\theoremstyle{definition}
\newtheorem{dfn}[lem]{Definition}
\newtheorem{case}{Case}
\newtheorem{subcase}{Subcase}[case]
\theoremstyle{remark}
\newtheorem{rem}[lem]{Remark}
\newenvironment{enu}{%
 \begin{enumerate}%
}{\end{enumerate}}
\begin{document}

%
\begin{abstract}
We continue the study, begun in \cite{KNOS}, of inverse Chevalley formulas 
for the equivariant $K$-group of semi-infinite flag manifolds. 
Using the language of alcove paths, we reformulate and extend 
our combinatorial inverse Chevalley formula to arbitrary weights 
in all simply-laced types (conjecturally also for $E_8$).
\end{abstract}

\title[Inverse $K$-Chevalley formulas II]{Inverse $K$-Chevalley formulas for semi-infinite flag \\
manifolds, II: arbitrary weights in ADE type}
\author[C. Lenart]{Cristian Lenart}
\address[Cristian Lenart]{Department of Mathematics and Statistics, State University of New York at Albany, 
Albany, NY 12222, U.S.A.}
\email{clenart@albany.edu}
\author[S. Naito]{Satoshi Naito}
\address[Satoshi Naito]{Department of Mathematics, Tokyo Institute of Technology, 
2-12-1 Oh-okayama, Meguro-ku, Tokyo 152-8551, Japan.}
\email{naito@math.titech.ac.jp}
\author[D. Orr]{Daniel Orr}
\address[Daniel Orr]{Department of Mathematics (MC 0123), 460 McBryde Hall, 
Virginia Tech, 225 Stanger St., Blacksburg, VA 24061, U.S.A.}
\email{dorr@vt.edu}
\author[D. Sagaki]{Daisuke Sagaki}
\address[Daisuke Sagaki]{Department of Mathematics, 
Faculty of Pure and Applied Sciences, University of Tsukuba, 
1-1-1 Tennodai, Tsukuba, Ibaraki 305-8571, Japan.}
\email{sagaki@math.tsukuba.ac.jp}
%

\maketitle

\section{Introduction.}
\label{sec:intro}

Let $G$ be a simply-connected simple algebraic group over $\BC$ 
with Borel subgroup $B = HN$, maximal torus $H$, unipotent radical $N$, 
Weyl group $W$, weight lattice $P = \sum_{i \in I} \BZ \vpi_{i}$, 
root lattice $Q = \sum_{i \in I} \BZ \alpha_{i}$, and 
coroot lattice $Q^\vee = \sum_{i \in I} \BZ \alpha_{i}^{\vee}$. 
The work \cite{KNOS} initiated the study of inverse Chevalley formulas 
in the equivariant $K$-group $\KQGr$ of the semi-infinite flag manifold $\QGr$ associated with $G$, 
where the semi-infinite flag manifold $\QGr$ is a reduced ind-scheme whose set of $\BC$-valued points is $G(\BC\pra{z})/(H(\BC) \cdot N(\BC\pra{z}))$ (see \cite{Kat2} for details), 
with the group $\BC^*$ acting on $\QGr$ by loop rotation; 
note that our $K$-group $\KQGr$ is a variant of the Iwahori-equivariant $K$-group of $\QGr$ introduced in \cite{KNS}.
The $K$-group $\KQGr$ has a topological $K_{H \times \BC^*}(\mathrm{pt})$-basis consisting of 
Schubert classes $\{\OQG{x}\}_{x\in W_\af}$ indexed by the affine Weyl group $W_\af=W\ltimes Q^\vee$, 
where $K_{H \times \BC^*}(\mathrm{pt}) \cong \BZ[q^{\pm 1}][P]$, 
with $K_{H}(\mathrm{pt}) = R(H) \cong \BZ[P] = \BZ[\be^{\lambda} \mid \lambda \in P]$ 
the character ring of $H$ and $K_{\BC^{*}}(\mathrm{pt}) = R(\BC^{*}) \cong \BZ[q^{\pm 1}]$; 
here $q \in R(\BC^{*})$ denotes the character of loop rotation. 
By an \textit{inverse} Chevalley formula, we mean a combinatorial formula 
for the product of an equivariant scalar $\be^\lambda\in K_H(\mathrm{pt}) = R(H)$ 
with a Schubert class $\OQG{x}$, expressed as a $\BZ[q^{\pm 1}]$-linear combination of 
the twisted Schubert classes $\{\OQGL{x}{\mu}\}_{x \in W_\af,\mu \in P}$; 
here the twisted Schubert class $\OQGL{x}{\mu}$ corresponds to 
the tensor product sheaf $\OQG{x} \otimes \CO(\mu)$, 
with $\CO(\mu)$ the equivariant line bundle over $\QGr$ associated to $\mu \in P$. 
The results of \cite{KNOS} treat the case when $\lambda$ is
a (not necessarily dominant) minuscule weight and $G$ is of simply-laced type.

This work is a sequel to \cite{KNOS}. Our main result is a combinatorial formula 
which generalizes the inverse Chevalley formula of \cite{KNOS} to 
arbitrary weights $\lambda \in P$. An important feature of our formula is 
that we formulate it using \textit{alcove paths}, 
matching more closely existing results of \cite{LP1} 
on equivariant Chevalley formulas for finite-dimensional flag manifolds 
and those of \cite{LNS} regarding (non-inverse) Chevalley formulas in $\KQGr$. 
(We recall that, while the ordinary and inverse Chevalley formulas for 
equivariant $K$-theory of finite-dimensional flag manifolds are essentially identical, 
the two types of Chevalley formulas are genuinely different 
for semi-infinite flag manifolds; see \cite[Introduction]{KNOS}.)

The ingredients in our combinatorial formula are roughly as follows 
(see \S\ref{subsec:main} for details). Given any weight $\lambda\in P$, 
any alcove path $\Gamma$ from the fundamental alcove $A_\circ$ to 
$A_\lambda=A_\circ+\lambda$, and any $w\in W$, 
we define a finite set $\ti{\QW}_{\lambda,w}(\Gamma)$ of 
combinatorial objects $(\bw,\bb)$ (called decorated quantum walks) and 
various associated quantities: $(-1)^{(\bw,\bb)} \in \{\pm 1\}$, 
$\deg(\bw,\bb)\in\BZ$, $\ed(\bw)\in W$, $\wt(\bw)\in P$, 
$\qwt(\bw,\bb)\in Q$ and $\qwt^\vee(\bw,\bb)\in Q^\vee$. 
Informally, an element $(\bw,\bb)\in\ti{\QW}_{\lambda,w}(\Gamma)$ 
consists of a \textit{walk} $\bw$ in the quantum Bruhat graph $\QBG(W)$ 
(i.e., a directed path in $\QBG(W)$ with stationary steps allowed), 
which must begin at $w\in W$ and follow edges prescribed by $\Gamma$, 
together with some additional information recorded by $\bb$ at special stationary steps in $\bw$.

Our main result then reads as follows:

\begin{thm}[{= Theorem~\ref{thm:main}}] \label{ithm:main}
Assume that $G$ is simply-laced, but not of type $E_{8}$. 
Let $\lambda \in P$ be an arbitrary weight, and 
let $\Gamma$ be an arbitrary alcove path from $A_\circ$ to $A_\lambda$.
For any $w \in W$, the following equality holds in $\KQGr${\rm :}
\begin{equation} \label{eq:imain}
\begin{split}
& \be^{\lambda} \cdot \OQG{w} = \\[3mm]
& 
\sum_{(\bw,\bb) \in \ti{\QW}_{\lambda,w}(\Gamma)} (-1)^{(\bw,\bb)} q^{\deg(\bw,\bb)} 
\OQGL{ \ed(\bw) t_{ \qwt^{\vee}(\bw,\bb) } }{-\lng \wt(\bw) -\lng \qwt(\bw,\bb)},
\end{split}
\end{equation}
where $\lng$ denotes the longest element of $W$.
\end{thm}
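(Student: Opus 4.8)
The plan is to establish \eqref{eq:imain} by induction on the length $\ell(\Gamma)$ of the alcove path, taking as base case the minuscule inverse Chevalley formula of \cite{KNOS}, suitably recast in the quantum-walk language of \S\ref{subsec:main}, and peeling off a single hyperplane crossing at a time. Two structural facts make this possible. First, multiplication by a scalar is multiplicative, $\be^{\lambda+\lambda'} = \be^{\lambda}\be^{\lambda'}$, and compatible with line-bundle twists, $\be^{\lambda}\cdot\OQGL{x}{\mu} = (\be^{\lambda}\cdot\OQG{x})\otimes\CO(\mu)$. Second, any alcove path factors as a concatenation $\Gamma = \Gamma_1 * \Gamma_2$, with $\Gamma_1$ running from $A_\circ$ to $A_\nu$ and $\Gamma_2$ a $\nu$-translate of a path from $A_\circ$ to $A_{\lambda-\nu}$. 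I would show that these two decompositions are matched, on the combinatorial side, by a \emph{concatenation bijection} for decorated quantum walks: every $(\bw,\bb)\in\ti{\QW}_{\lambda,w}(\Gamma)$ splits uniquely into a walk along $\Gamma_1$ starting at $w$, followed by a walk along $\Gamma_2$ starting at the endpoint of the first, with the signs $(-1)^{(\bw,\bb)}$ multiplying and the quantities $\deg$, $\wt$, $\qwt$, $\qwt^{\vee}$ all adding.

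First I would isolate the elementary step. A single crossing in $\Gamma$ of the affine hyperplane attached to a positive root $\beta$ presents the walk $\bw$ with a local choice: remain stationary, or traverse a Bruhat or a quantum edge of $\QBG(W)$ labeled by $\beta$, the latter possibly recorded by a decoration in $\bb$. I would package these choices into elementary operators on $\KQGr$, one per crossing, and show that their ordered composition along $\Gamma$ realizes multiplication by $\be^{\lambda}$. The coefficients of these operators --- the signs, the powers of $q$, and the reflections in $\QBG(W)$ --- are pinned down by matching, along suitable minuscule sub-paths, against the base case of \cite{KNOS}. Composing the $\ell(\Gamma)$ elementary operators and expanding the resulting product term by term then produces exactly the sum over $\ti{\QW}_{\lambda,w}(\Gamma)$, with the signs, the powers $q^{\deg(\bw,\bb)}$, and the twisting weights $-\lng\wt(\bw)-\lng\qwt(\bw,\bb)$ read off from the individual steps.

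The affine indices are where I expect the main difficulty to lie. Already after a single application the output Schubert class is indexed by the affine element $\ed(\bw)\,t_{\qwt^{\vee}(\bw,\bb)}\in W_\af$ rather than by an element of $W$, so to iterate the induction one must run the elementary operators against Schubert classes carrying affine labels. To this end I would establish a translation equivariance of the whole construction: using the semi-infinite periodicity relations of \cite{Kat2,KNS}, the formula for $\be^{\lambda}\cdot\OQG{w\,t_\xi}$ is obtained from that for $\be^{\lambda}\cdot\OQG{w}$ by shifting every output element by $t_\xi$ and adjusting the $q$-grading accordingly. The delicate point is to keep this adjustment coherent across concatenations --- in particular, to reconcile the $Q^\vee$-valued quantum coweight $\qwt^{\vee}$ governing the translation part with the $Q$-valued quantum weight $\qwt$ entering the line-bundle twist and with the $q$-degree coming from loop rotation. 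Verifying that the additive bookkeeping of $(\deg,\wt,\qwt,\qwt^{\vee})$ and the multiplicative bookkeeping of the signs survive both the concatenation and the translation is the technical heart of the argument.

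Finally, since the left-hand side of \eqref{eq:imain} plainly does not depend on $\Gamma$, I would confirm the same for the right-hand side by checking invariance of the quantum-walk sum under the elementary Yang--Baxter (Lenart--Postnikov) moves \cite{LP1} that connect any two alcove paths sharing endpoints, verifying that each move preserves the total sign, $q$-degree, weight, and quantum weight; consistency with the (non-inverse) Chevalley formula of \cite{LNS}, phrased in the same alcove-path language, provides a further check. This also explains the hypothesis on the type: the elementary operators are calibrated against genuine minuscule computations, which are unavailable in type $E_8$ since it possesses no nonzero minuscule weight, so although the formula is expected to persist there, our inductive mechanism does not reach it.
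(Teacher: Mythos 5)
Your overall architecture --- a minuscule base case recast in alcove-path language, concatenation with additive bookkeeping of $(\deg,\wt,\qwt,\qwt^{\vee})$ and multiplicative signs, translation/twist equivariance, and Yang--Baxter invariance --- is indeed the paper's architecture. But your central inductive mechanism has a genuine gap. An arbitrary alcove path from $A_\circ$ to $A_\lambda$ almost never passes through a translate $A_\nu=A_\circ+\nu$ of the fundamental alcove, so the claimed factorization $\Gamma=\Gamma_1\ast\Gamma_2$ with $\Gamma_1\in\AP(\nu)$ simply does not exist for general $\Gamma$; likewise, a single hyperplane crossing is not multiplication by any $\be^{\mu}$, so there is nothing against which to calibrate a ``per-crossing operator,'' and an induction on $\ell(\Gamma)$ that peels off one crossing at a time does not close. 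The paper inducts instead on the number of minuscule summands of the \emph{weight}: by Stembridge's theorem (precisely what fails in type $E_8$), one writes $\lambda=\nu_1+\cdots+\nu_s$ with each $\nu_i$ minuscule, chooses for each $\nu_i$ the particular reduced path on which the formula is matched against the result of \cite{KNOS} (Proposition~\ref{prop:minu}), concatenates these, and proves additivity of the formula under concatenation (Proposition~\ref{prop:sum}) --- your ``technical heart'' concerning the interplay of $\qwt^{\vee}$, the line-bundle twist, and the $q$-degree is exactly what is verified there. This establishes \eqref{eq:imain} only for these special concatenated paths.

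Consequently, the passage from special paths to arbitrary paths is not a ``confirmation'' to be checked at the end: it is the essential bridge, and it is where most of the paper's work lies. One must prove that the right-hand side of \eqref{eq:imain} is invariant under both the Yang--Baxter move (YB) \emph{and} the deletion move (D); you omit deletions entirely, yet they are unavoidable because $\Gamma$ is allowed to be non-reduced, and the Lenart--Postnikov connectivity statement (Proposition~\ref{prop:YBD}) that links an arbitrary path to a reduced one uses both moves. The paper proves these invariances (Theorems~\ref{thm:YB} and~\ref{thm:D}) by constructing explicit sign-preserving bijections and sign-reversing involutions (sijections) on decorated quantum walks, in a long case analysis supported by new lemmas on length-three paths in $\QBG(W)$ (Appendix~\ref{sec:lem}); the compatibility of the statistic $\deg$ with these local moves is the delicate point, and your proposal offers no mechanism for it beyond asserting that the verification can be done. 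As written, then, your argument assumes precisely the hardest part of the theorem. (Your explanation of the $E_8$ exclusion --- no nonzero minuscule weights --- is correct and matches the paper's.)
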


Here we mention that the sum on the right-hand side of \eqref{eq:imain} is 
an explicit finite sum, described in terms of decorated quantum walks, 
while the finiteness itself is proved in \cite{Orr}.
Also, we note that it suffices to consider only Schubert classes indexed by $x=w\in W$, 
rather than arbitrary $x\in W_\af$, by virtue of the right action of 
translations $\{t_\xi\}_{\xi\in Q^\vee}\subset W_\af$ on $\QGr$.

We expect that \eqref{eq:imain} also holds, as stated, in type $E_8$. 
To prove this, however, further arguments will be necessary to handle 
the case of quasi-minuscule $\lambda$; we plan to provide 
the details of these arguments in a future work.

The first step in our proof of Theorem~\ref{ithm:main} is 
to reformulate the formula from \cite{KNOS}, for minuscule $\lambda$, 
in terms of a particular (reduced) alcove path $\Gamma$ (see Proposition~\ref{prop:minu}). 
Then, our main efforts are devoted to establishing that 
the right-hand side of \eqref{eq:imain}:
\begin{enumerate}
\item is invariant under Yang-Baxter transformations (Theorem~\ref{thm:YB}) and 
deletion procedures (Theorem~\ref{thm:D}) on alcove paths, and
\item respects additivity in $\lambda \in P$ (Proposition~\ref{prop:sum}).
\end{enumerate}
In order to prove part (1), we establish the existence of 
a certain ``sijection'' (bijection between signed sets in the sense of \cite{FK}) 
between decorated quantum walks associated to 
two alcove paths $\Gamma_{1}, \Gamma_{2}$ from $A_{\circ}$ to 
$A_{\lambda}$ for $\lambda \in P$ such that $\Gamma_{2}$ is 
obtained from $\Gamma_{1}$ by a Yang-Baxter transformation or a deletion procedure.
In \cite{KLN}, it is shown that similar results hold for 
a certain generating function of some statistics including weights over 
admissible subsets in the quantum alcove model, which is closely related to 
the (non-inverse) Chevalley formula in $\KQGr$.
However, since decorated quantum walks are completely different combinatorial objects 
from admissible subsets, we need to construct the desired sijection for decorated quantum walks from scratch.
In addition, our results above are much more difficult to prove than 
the corresponding results for admissible subsets in \cite{KLN} 
mainly because of the appearance of the rather delicate term $q^{\deg(\,\cdot\,)}$ 
on the right-hand side of \eqref{eq:imain}.
By a result of \cite{S} (see also \cite{LP2}), 
which asserts that an arbitrary $\lambda \in P$ can be written as a sum of 
(not necessarily dominant) minuscule weights in simply-laced types (except in type $E_{8}$), 
we are then able to deduce that \eqref{eq:imain} holds for all $\lambda \in P$ and 
all alcove paths $\Gamma$ from $A_\circ$ to $A_\lambda$. 

Here we should mention that in \cite{Kat1}, Kato established a $\BZ[P]$-module isomorphism 
from the (small) $H$-equivariant quantum $K$-theory $QK_{H}(G/B)$ 
(see \cite{ACT} for the finiteness result on $QK_{H}(G/B)$) of 
the finite-dimensional flag manifold $G/B$ onto the $\BZ[P]$-submodule 
(denoted by $K_{H}(\mathbf{Q}_{G}(e))$ in this paper) of 
the specialization of $\KQGr$ at $q = 1$ consisting of 
all finite linear combinations of 
the Schubert classes $\OQG{x}$ for $x \in W_{\af}^{\geq 0} := W \times Q^{\vee,+}$ 
with coefficients in $\BZ[P]$, where 
$Q^{\vee,+} := \sum_{i \in I} \BZ_{\geq 0} \alpha_{i}^{\vee} \subset 
Q^{\vee} = \sum_{i \in I} \BZ \alpha_{i}^{\vee}$. 
This isomorphism sends each (opposite) Schubert class in $QK_{H}(G/B)$ 
to the corresponding Schubert class in $K_{H}(\mathbf{Q}_{G}(e))$; 
also, it respects the quantum multiplication with the line bundle class 
$[\CO_{G/B}(- \vpi_{i})]$ and the tensor product with 
the line bundle class $\OQGL{e}{\lng \vpi_{i}}$ for all $i \in I$.
Since the quantum multiplication in $QK_{H}(G/B)$ is uniquely determined 
by its $\BZ[P]$-module structure and the quantum multiplication 
with the line bundle classes $[\CO_{G/B}(- \vpi_{i})]$ 
for $i \in I$ (see \cite{BCMP}), it follows that under 
Kato's $\BZ[P]$-module isomorphism above, 
the quantum multiplicative structure of $QK_{H}(G/B)$ 
can be described explicitly by means of (the specialization at $q = 1$ of) 
our inverse Chevalley formula (Theorem~\ref{ithm:main}), 
together with (the specialization at $q = 1$ of) the (non-inverse) Chevalley formula in $\KQGr$ 
for anti-dominant weights $- \varpi_{i}$, obtained in \cite{NOS}; 
recall that the (non-inverse) Chevalley formula in $\KQGr$ for anti-dominant weights 
expresses an arbitrary twisted Schubert class $\OQGL{x}{-\varpi_{i}}$ 
as an explicit finite linear combination of Schubert classes 
with coefficients in $\BZ[q^{\pm 1}][P]$ in terms of the quantum alcove model.

In Appendix~\ref{sec:example}, we give a detailed example in type $A_{2}$ of 
our inverse Chevalley formula \eqref{eq:imain}.

\section*{Acknowledgements.}
The authors would like to thank Takafumi Kouno for helpful discussions.
C.L. was partially supported by the NSF grant DMS-1855592 and the Simons Foundation grant No. 584738.
S.N. was supported in part by JSPS Grant-in-Aid for Scientific Research (B) 16H03920
D.O. was supported by a Collaboration Grant for Mathematicians from the Simons Foundation (Award ID: 638577).
D.S. was supported in part by JSPS Grant-in-Aid for Scientific Research (C) 19K03415.

\section{Basic notation.}
\label{sec:pre}

\subsection{Root systems.}
\label{sec:rs}

As above, let $G$ be a simply-connected simple algebraic group over $\BC$. 
We fix a maximal torus and Borel subgroup: $H \subset B \subset G$. 
We set $\Fg := \mathrm{Lie}(G)$ and $\Fh := \mathrm{Lie}(H)$, 
and denote by $\pair{\cdot\,}{\cdot} : \Fh^{\ast} \times \Fh \rightarrow \BC$ 
the canonical pairing, where $\Fh^{\ast} := \Hom_{\BC}(\Fh, \BC)$. 

Let $\Delta \subset \Fh^{\ast}$ be 
the root system of $\Fg$, $\Delta^{+} \subset \Delta$ the positive roots (corresponding to $B$), 
and $\{ \alpha_{i} \}_{i \in I} \subset \Delta^{+}$ the set of simple roots; 
we denote by $\alpha^{\vee} \in \Fh$ the coroot corresponding to $\alpha \in \Delta$. 
For $\alpha \in \Delta$, we set $\sgn(\alpha):=1$ (resp., $\sgn(\alpha):=-1$) 
if $\alpha$ is positive (resp., negative), and $|\alpha|:=\sgn(\alpha)\alpha \in \Delta^{+}$. 
We denote by $\theta \in \Delta^{+}$ the highest root of $\Delta$, and 
set $\rho := (1/2) \sum_{\alpha \in \Delta^{+}} \alpha$, 
$Q := \sum_{i \in I} \BZ \alpha_{i}$, and $Q^{\vee} := \sum_{i \in I} \BZ \alpha_{i}^{\vee}$; 
also, we set $Q^{\vee,+}:=\sum_{i \in I} \BZ_{\ge 0}\alpha_{i}^{\vee} \subset Q^{\vee}$. 

For $i \in I$, let $\vpi_{i} \in \Fh^{\ast}$ be the $i$-th fundamental weight determined by
$\pair{\vpi_{i}}{\alpha_{j}^{\vee}} = \delta_{i,j}$ for all $j \in I$, 
where $\delta_{i,j}$ denotes the Kronecker delta. 
The weight lattice $P$ of $\Fg$ is defined by $P := \sum_{i \in I} \BZ \vpi_{i}$; 
note that $\Fh^{\ast}_{\BR}=\BR \otimes_{\BZ} P = \BR \otimes_{\BZ} Q$ is a real form of $\Fh^{\ast}$. 
We denote by $\BZ[P]$ the group algebra of $P$, that is, 
the associative algebra generated by the formal exponentials $\be^{\lambda}$, 
$\lambda \in P$, 
where $\be^{\lambda} \be^{\mu} := \be^{\lambda + \mu}$ 
for $\lambda,\,\mu \in P$. 

For $\alpha \in \Delta$, the corresponding reflection $s_{\alpha} \in GL(\Fh^{\ast})$ is defined by $s_{\alpha}(\lambda) := \lambda - \pair{\lambda}{\alpha^{\vee}} \alpha$ 
for $\lambda \in \Fh^{\ast}$; we write $s_{i} := s_{\alpha_{i}}$ for $i \in I$. 
Then the Weyl group $W$ of $\Fg$ is the subgroup $\langle s_{i} \mid i \in I \rangle$ of $GL(\Fh^{\ast})$ generated by $\{ s_{i} \}_{i \in I}$.
We denote by $\ell(w)$ the length of $w \in W$ with respect to $\{ s_{i} \}_{i \in I}$, and by $<$ the Bruhat order on $W$. 
The following fact is well-known.
%
%
\begin{lem} \label{lem:ell}
Let $w \in W$ and $\alpha \in \Delta$. Then, 
\begin{equation*}
\begin{split}
& ws_{\alpha} > w \iff \ell(ws_{\alpha}) > \ell(w) \iff \sgn(w\alpha)=\sgn(\alpha), \\
& ws_{\alpha} < w \iff \ell(ws_{\alpha}) < \ell(w) \iff \sgn(w\alpha)=-\sgn(\alpha).
\end{split}
\end{equation*}
\end{lem}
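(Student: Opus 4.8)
The plan is to reduce everything to the case of a positive root and then isolate the single nontrivial length estimate. Since $s_{\alpha} = s_{|\alpha|}$ and $|\alpha| = \sgn(\alpha)\,\alpha$, we have $w\alpha = \sgn(\alpha)\,w|\alpha|$, hence $\sgn(w\alpha) = \sgn(\alpha)\,\sgn(w|\alpha|)$; so, writing $\beta := |\alpha| \in \Delta^{+}$, the condition $\sgn(w\alpha) = \sgn(\alpha)$ becomes $w\beta \in \Delta^{+}$, and $ws_{\alpha} = ws_{\beta}$. Because $s_{\alpha} \ne e$ we have $ws_{\alpha} \ne w$, and (as will follow from the description of the Bruhat order recalled in the last step) $w$ and $ws_{\beta}$ are always Bruhat-comparable; thus exactly one of $ws_{\alpha} > w$, $ws_{\alpha} < w$ holds, and similarly exactly one of $\sgn(w\alpha) = \pm\sgn(\alpha)$ holds. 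The three alternatives in the second displayed line are therefore the negations of those in the first, so the second line follows formally from the first, and it suffices to prove, for $\beta \in \Delta^{+}$,
\[
ws_{\beta} > w \iff \ell(ws_{\beta}) > \ell(w) \iff w\beta \in \Delta^{+}.
\]

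The step I expect to be the main obstacle is the middle equivalence $\ell(ws_{\beta}) > \ell(w) \iff w\beta \in \Delta^{+}$ for a possibly \emph{non-simple} reflection $s_{\beta}$; for simple $\beta = \alpha_{i}$ this is just the elementary length-change rule $\ell(ws_{i}) = \ell(w) \pm 1$ according to the sign of $w\alpha_{i}$. I would prove the general case by first establishing the single implication ``$w\beta \notin \Delta^{+} \Rightarrow \ell(ws_{\beta}) < \ell(w)$'' and then deducing its converse by applying this to $ws_{\beta}$ in place of $w$, using $(ws_{\beta})\beta = -w\beta$. For the implication I would induct on $\ell(w)$: assuming $w \ne e$, choose a left descent $s_{i}$, so that $\ell(s_{i}w) = \ell(w) - 1$, and split into two cases. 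If $w\beta = -\alpha_{i}$, then $s_{w\beta} = ws_{\beta}w^{-1}$ gives $s_{\beta} = w^{-1}s_{i}w$, whence $ws_{\beta} = s_{i}w$ and $\ell(ws_{\beta}) = \ell(w) - 1$ directly. Otherwise $s_{i}$ preserves the sign of the negative root $w\beta$, so $(s_{i}w)\beta$ is again negative; the induction hypothesis applied to $s_{i}w$ yields $\ell((s_{i}w)s_{\beta}) \le \ell(s_{i}w) - 1 = \ell(w) - 2$, and since $ws_{\beta} = s_{i}\cdot(s_{i}w)s_{\beta}$ differs from $(s_{i}w)s_{\beta}$ in length by at most one, we get $\ell(ws_{\beta}) \le \ell(w) - 1 < \ell(w)$.

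It then remains to match the Bruhat order with the length, i.e.\ for any reflection $t$ and any $w$, the elements $w$ and $wt$ are comparable, with $wt > w \iff \ell(wt) > \ell(w)$. I would take this from the standard description of the Bruhat order as the transitive closure of the relations $v < vt$ ranging over all reflections $t$ with $\ell(vt) > \ell(v)$: the forward direction is then immediate, and the reverse follows by applying the same statement to $ws_{\beta}$ in place of $v$ (note $(ws_{\beta})s_{\beta} = w$). Chaining this with the length criterion of the previous paragraph and the reduction of the first paragraph gives the first displayed line of the lemma, and passing to negations gives the second. Apart from the non-simple length estimate, which is the genuinely delicate point, every remaining step is routine bookkeeping with the signs of $w\beta$ and $\sgn(\alpha)$.
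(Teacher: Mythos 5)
Your proof is correct, but there is essentially nothing in the paper to compare it against: the paper states Lemma~\ref{lem:ell} with the remark that it is well-known and supplies no argument, so your write-up functions as a self-contained replacement for a standard citation (e.g., to Humphreys' book \cite{Hum}, already in the paper's bibliography). Your route is the standard textbook derivation, and all steps check out. The reduction via $s_{\alpha}=s_{|\alpha|}$ correctly collapses both displayed lines to the single positive-root statement; the induction on $\ell(w)$ for the key implication $w\beta\in\Delta^{-}\Rightarrow\ell(ws_{\beta})<\ell(w)$ splits correctly according to whether $w\beta=-\alpha_{i}$ (where $s_{w\beta}=ws_{\beta}w^{-1}$ gives $ws_{\beta}=s_{i}w$, dropping the length by exactly one) or not (where $s_{i}$ preserves the negativity of $w\beta$, so the inductive bound $\ell((s_{i}w)s_{\beta})\le\ell(w)-2$ applies and the final length can rise by at most one); the converse is obtained by the substitution $w\mapsto ws_{\beta}$ using $(ws_{\beta})\beta=-w\beta$; and the matching of length with the Bruhat order follows from its definition as the transitive closure of the relations $v<vt$ with $\ell(vt)>\ell(v)$, together with antisymmetry of the resulting partial order. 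The one claim you defer in the first paragraph, comparability of $w$ and $ws_{\beta}$, is genuinely discharged later with no circularity: the length criterion shows that for any reflection either $\ell(ws_{\beta})>\ell(w)$ or $\ell(ws_{\beta})<\ell(w)$ (equality being impossible, as the lengths differ by an odd number), and in each case the generating relation of the Bruhat order makes $w$ and $ws_{\beta}$ comparable.
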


\begin{dfn}[{cf. \cite[Definition~6.1]{BFP}}]
The quantum Bruhat graph $\QBG(W)$ 
is the $\Delta^{+}$-labeled directed graph whose vertices are the elements of $W$ 
and whose (directed) edges are of the following form: 
$x \xrightarrow{\alpha} y$, with $x, y \in W$ and $\alpha \in \Delta^+$, 
such that $y = x s_{\alpha}$ and either of the following holds: 
(B) $\ell(y) = \ell(x) + 1$, (Q) $\ell(y) = \ell(x) - 2\pair{\rho}{\alpha^\vee} + 1$. 
An edge satisfying (B) (resp. (Q)) is called a Bruhat edge (resp. a quantum edge).
\end{dfn}

Now, let $\Fg_{\af} := (\Fg \otimes \BC[z,\,z^{-1}]) \oplus \BC c \oplus \BC d$ 
be the (untwisted) affine Lie algebra over $\BC$ associated to $\Fg$, 
where $c$ is the canonical central element and $d$ is the scaling element,
with Cartan subalgebra $\Fh_{\af} := \Fh \oplus \BC c \oplus \BC d$. 
We denote by $\pair{\cdot\,}{\cdot} : \Fh_{\af}^{\ast} \times \Fh_{\af} \rightarrow \BC$ 
the canonical pairing. Regarding $\lambda \in \Fh^{\ast}$ as 
$\lambda \in \Fh_{\af}^{\ast} = \Hom_{\BC}(\Fh_{\af},\,\BC)$ 
by setting $\pair{\lambda}{c} = \pair{\lambda}{d} = 0$, 
we have $\Fh^{\ast} \subset \Fh_{\af}^{\ast}$; 
under this identification, we see that the canonical pairing $\pair{\cdot\,}{\cdot}$ 
on $\Fh_{\af}^{\ast} \times \Fh_{\af}$ extends that on $\Fh^{\ast} \times \Fh$. 

We define $\delta$ to be the unique element of $\Fh_{\af}^{\ast}$ which satisfies
$\pair{\delta}{h} = 0$ for all $h \in \Fh$, $\pair{\delta}{c} = 0$, and $\pair{\delta}{d} = 1$, 
and set $\alpha_{0} := -\theta + \delta \in \Fh_{\af}^{\ast}$. 
Then the root system $\Delta_{\af}$ of $\Fg_{\af}$ has simple roots 
$\{ \alpha_{i} \}_{i \in I_{\af}}$, where $I_{\af} := I \sqcup \{ 0 \}$.

For $\alpha \in \Delta_{\af}$, we have the corresponding reflection $s_{\alpha} \in GL(\Fh_{\af})$, 
defined as for $\Fg$. Note that for $\alpha \in \Delta \subset \Delta_{\af}$, 
the restriction of the reflection $s_{\alpha}$ defined on $\Fh_{\af}$ to $\Fh$ 
coincides with the reflection $s_{\alpha}$ defined on $\Fh$. 
We set $s_{i} := s_{\alpha_{i}}$ for $i \in I_{\af}$. 
Then, the Weyl group of $\Fg_{\af}$ (called the affine Weyl group) $W_{\af}$ is defined 
to be the subgroup of $GL(\Fh_{\af})$ generated by $\{ s_{i} \}_{i \in I_{\af}}$, 
namely, $W_{\af} = \langle s_{i} \mid i \in I_{\af} \rangle$. 
In \cite[\S 6.5]{Kac}, it is shown that $W_{\af} \simeq 
W \ltimes \{ t_{\alpha^{\vee}} \mid \alpha^{\vee} \in Q^{\vee} \} \simeq W \ltimes Q^{\vee}$, 
where $t_{\alpha^{\vee}}$ is the translation element corresponding to $\alpha^{\vee} \in Q^{\vee}$;
we set $W_{\af}^{\ge 0}:=\{ wt_{\alpha^{\vee}} \mid w \in W,\,\alpha^{\vee} \in Q^{\vee,+} \} \simeq
W \times Q^{\vee,+} \subset W_{\af}$. 

\subsection{Alcove paths.}
\label{subsec:alcove}

For $\alpha \in \Delta$ and $k \in \BZ$, we set 
\begin{equation}
H_{\alpha,k}:=\bigl\{\nu \in \Fh^{\ast}_{\BR} \mid \pair{\nu}{\alpha^{\vee}}=k\bigr\}. 
\end{equation}
Let $\ha{r}_{\alpha,k}$ denote the affine reflection 
with respect to the affine hyperplane $H_{\alpha,k}$; we have 
\begin{equation}
\ha{r}_{\alpha,k}(\nu) = \nu - (\pair{\nu}{\alpha^{\vee}}-k)\alpha = s_{\alpha}\nu + k\alpha \quad 
 \text{for $\nu \in \Fh^{\ast}_{\BR}$}.
\end{equation}
The hyperplanes $H_{\alpha,k}$, $\alpha \in \Delta,\,k \in \BZ$, 
divide the real vector space $\Fh^{\ast}_{\BR}$ into open regions, called alcoves; 
the fundamental alcove is defined as
\begin{equation*}
A_{\circ} := \bigl\{ \nu \in \Fh^{\ast}_{\BR} \mid 
 0 < \pair{\nu}{\alpha^{\vee}} < 1 \ \text{for all $\alpha \in \Delta^{+}$} \bigr\}. 
\end{equation*}
We say that two alcoves are adjacent if they are distinct and have a common wall. Given a pair
of adjacent alcoves $A$ and $B$, we write $A \xrightarrow{\alpha} B$ for $\alpha \in \Delta$ 
if the common wall is orthogonal to $\alpha$, and $\alpha$ points in the direction from $A$ to $B$.

\begin{dfn}[\cite{LP1}]
	An alcove path is a sequence of alcoves $(A_{0},\,A_{1},\,\ldots,\,A_{m})$ such that
	$A_{j-1}$ and $A_j$ are adjacent for $j=1,\,2,\ldots,\,m$. 
	We say that an alcove path $(A_{0},\,A_{1},\,\ldots,\,A_{m})$ is reduced 
	if it has minimal length among all alcove paths from $A_{0}$ to $A_{m}$.
\end{dfn}

Let $\lambda \in P$, and let $\Gamma=(A_{0},\,A_{1},\,\ldots,\,A_{m})$ 
be an alcove path from the fundamental alcove $A_{\circ}$ to $A_{\lambda}:=A_{\circ}+\lambda$. 
If $\gamma_{1},\gamma_{2},\dots,\gamma_{m} \in \Delta$ are such that
%
%
\begin{equation} \label{eq:Gamma}
  A_{\circ}=A_{0} 
  \xrightarrow{\gamma_{1}} A_{1} 
  \xrightarrow{\gamma_{2}} \cdots 
  \xrightarrow{\gamma_{m}} A_{m}=A_{\lambda} \ (=A_{\circ}+\lambda), 
\end{equation}
then we write $\Gamma=(\gamma_{1},\dots,\gamma_{m})$. 
For each $1 \le t \le m$, let $H_{\gamma_{t},l_{t}}$ be the affine hyperplane 
between $A_{t-1}$ and $A_{t}$, and set
\begin{equation} \label{eq:l'}
l_{t}':=\pair{\lambda}{\gamma_{t}^{\vee}} - l_{t}.
\end{equation}
Let $\AP(\lambda)$ (resp., $\RAP(\lambda)$) denote 
the set of all alcove paths (resp., all reduced alcove paths) 
from $A_{\circ}$ to $A_{\lambda}$. 

Let $\lambda, \mu \in P$, and 
let $\Gamma=(\gamma_{1},\dots,\gamma_{m}) \in \AP(\lambda)$, 
$\Xi=(\xi_{1},\dots,\xi_{p}) \in \AP(\mu)$. It is obvious that 
the concatenation 
\begin{equation} \label{eq:cat}
\Gamma \ast \Xi:=(\gamma_{1},\dots,\gamma_{m},\xi_{1},\dots,\xi_{p})
\end{equation}
of $\Gamma$ with $\Xi$ is an alcove path 
from $A_{\circ}$ to $A_{\lambda+\mu}$; namely, $\Gamma \ast \Xi \in \AP(\lambda+\mu)$. 

\begin{rem} \label{rem:cat}
Keep the notation and setting above. 
For each $1 \le q \le p$, let $H_{\xi_{q},k_{q}}$ be the affine hyperplane between 
the $(q-1)$-th alcove and the $q$-th alcove in $\Xi$. Then the affine hyperplane between 
the $(t-1)$-th alcove and the $t$-th alcove in $\Gamma \ast \Xi$ is 
$H_{\gamma_{t},l_{t}}$ (resp., $H_{\xi_{t-m},\pair{\lambda}{\xi_{t-m}^{\vee}} + k_{t-m}}$) 
for $1 \le t \le m$ (resp., $m+1 \le t \le m+p$). 
\end{rem}

\subsection{Simply-laced assumption.}
\label{subsec:simplylaced}

In this paper, we assume throughout that $G$ is {\em simply-laced}. 
As a result, by means of the non-degenerate $W$-invariant symmetric bilinear form 
$(\cdot\,,\cdot) : \Fh^{\ast} \times \Fh^{\ast} \rightarrow \BC$, 
normalized so that $(\alpha,\alpha)=2$ for all $\alpha \in \Delta$, 
we can identify roots with coroots; note that $\pair{\nu}{\alpha^{\vee}}=(\nu,\alpha)$ 
for $\nu \in \Fh^{\ast}$ and $\alpha \in \Delta$. 
Under this identification, 
if $\alpha \in \Delta$ is of the form $\alpha=\sum_{i \in I} c_{i}\alpha_{i}$, 
with $c_{i} \in \BZ$ for $i \in I$, then we can write 
$\alpha^{\vee}=\sum_{i \in I} c_{i}\alpha_{i}^{\vee}$. 


\section{Main results.}
\label{sec:main}

\subsection{$K$-groups.}
\label{subsec:Kgrp}

Let $\Kgrp$ denote the equivariant $K$-group of the semi-infinite flag manifold $\QGr$ introduced in \cite[Section~6]{KNS}, 
where $\ti{\bI} = \bI \rtimes \BC^{\ast}$ is the semi-direct product of 
the Iwahori subgroup $\bI \subset G(\BC\bra{z})$ and loop rotation $\BC^{\ast}$. 
Correspondingly, $\Kgrp$ is a module over $\BZ[P]\pra{q^{-1}}$, 
which acts by equivariant scalar multiplication. 

One has the following classes in $\Kgrp$, 
for each $x\in W_{\af}$ and $\lambda \in P$:
\begin{itemize}
\item Schubert classes $\OQG{x}$, 
\item equivariant line bundle classes $[\CO(\lambda)]$,
\item classes $\OQGL{x}{\lambda}$ corresponding to 
the tensor product sheaves $\CO_{\mathbf{Q}_{G}(x)} \otimes \CO(\lambda)$.
\end{itemize}
We follow the conventions of \cite{KNS} for indexing equivariant line bundles and 
Schubert varieties in $\QGr$. 

\begin{dfn}[$(H \times \BC^{\ast})$-equivariant $K$-groups of $\QGr$ and $\QG$]
Let $\KQGr$ denote the $\BZ[q^{\pm 1}][P]$-submodule of $\Kgrp$ 
consisting of all convergent infinite $\BZ[q^{\pm 1}][P]$-linear combinations of 
Schubert classes $\OQG{x}$ for $x \in W_{\af}$, 
where convergence holds in the sense of \cite[Proposition 5.8]{KNS}. 

Similarly, we define $\KQG$ to be the $\BZ[q^{\pm 1}][P]$-submodule of 
$\Kgrp$ consisting of all convergent infinite $\BZ[q^{\pm 1}][P]$-linear combinations of 
Schubert classes $\OQG{x}$ for $x \in W_{\af}^{\ge 0}$. 
\end{dfn}

The classes $\{ \OQG{x} \}_{x \in W_{\af}}$ satisfy a notion of 
topological linear independence in $\Kgrp$ 
given by \cite[Proposition~5.8]{KNS}; thus they form a topological $\BZ[q^{\pm 1}][P]$-basis of 
$\KQGr$. Also, one has $\OQGL{x}{\lambda} \in \KQGr$ 
for any $x \in W_{\af}$ and $\lambda \in P$, thanks to the Chevalley formulas for dominant weights \cite{KNS} and anti-dominant weights \cite{NOS}. 
Similar (in fact, equivalent) assertions hold for $\KQG$.

\begin{dfn}
Define $\BK \subset \KQGr$ to be the $\BZ[q^{\pm 1}]$-submodule consisting of 
all finite $\BZ[q^{\pm 1}]$-linear combinations of 
the classes $\{ \OQGL{x}{\lambda} \}_{x \in W_{\af},\,\lambda\in P}$.
\end{dfn}

By definition, $\BK$ is only a $\BZ[q^{\pm 1}]$-submodule of $\KQGr$. 
But, as shown in \cite[Theorem~5.1]{Orr}, $\BK$ is indeed a $\BZ[q^{\pm 1}][P]$-submodule of $\KQGr$; 
here we recall the identification $K_{H \times \BC^{*}}(\mathrm{pt}) \simeq \BZ[q^{\pm 1}][P]$.
We note that the classes $\{ \OQGL{x}{\lambda} \}_{x \in W_{\af},\,\lambda \in P}$ are 
linearly independent over $\BZ[q^{\pm 1}]$, again by the Chevalley formula of \cite{KNS}.
 
To summarize, we have the following chain of $\BZ[q^{\pm 1}][P]$-modules 
(and $\Kgrp$ is in fact a $\BZ[P]\pra{q^{-1}}$-module):
\begin{equation*}
\BK \subset \KQGr \subset \Kgrp.
\end{equation*}

\subsection{Inverse $K$-Chevalley formula for minuscule weights.}
\label{subsec:KNOS}

In this subsection, we review the inverse Chevalley formula for minuscule weights, 
obtained in \cite[Theorem~3.14]{KNOS}. Assume that $\Fg$ is simply-laced, but not of type $E_{8}$; 
in this subsection, we identify $\sum_{i \in I}c_{i}\alpha_{i} \in Q$ 
with $\sum_{i \in I}c_{i}\alpha_{i}^{\vee} \in Q^{\vee}$, 
as mentioned in Section~\ref{subsec:simplylaced}. 
Let $\lambda \in P$ be a (not necessarily dominant) minuscule weight, with $\vpi_{k}$ 
the unique minuscule fundamental weight contained in $W\lambda$. 
Let $x \in W$ be the unique minimal-length element of $W$ such that 
$\lambda=x\vpi_{k}$, and let $y \in W$ be the (unique) element such that 
$yx$ is the unique minimal-length element in 
$\bigl\{w \in W \mid w\vpi_{k}=\lng\vpi_{k}\bigr\}$; 
it is easy to verify that $\ell(yx) = \ell(y) + \ell(x)$.
We fix reduced expressions $x=s_{j_a}\dotsm s_{j_1}$ and 
$y = s_{i_{1}} \cdots s_{i_{b}}$, and define
\begin{align*}
\beta_{r} & := 
 s_{j_{a}}s_{j_{a-1}} \cdots s_{j_{r+1}} \alpha_{j_{r}} \in \Delta^{+} \qquad 
 \text{for $1 \le r \le a$}, \\
\gamma_s & := 
 s_{i_{b}}s_{i_{b-1}} \cdots s_{i_{s+1}} \alpha_{i_{s}} \in \Delta^{+} \qquad 
 \text{for $1 \le s \le b$}.
\end{align*}
We set 
\begin{equation} \label{eq:veceta}
\vec{\eta}:=(\eta_1,\dotsc,\eta_m)=(\beta_a,\,\ldots,\,\beta_1,\,\gamma_1,\,\ldots,\,\gamma_b),
\end{equation}
where $m = a + b$. For $w \in W$, let $\QW_{\lambda,w}^{\KNOS}$ denote 
the set of sequences $(w_{0},w_{1},\dots,w_{m})$ such that 
\begin{itemize}
\item $w_{0}=w$; 
\item $w_{t} \in \bigl\{ w_{t-1},\,s_{\eta_{t}}w_{t-1} \bigr\}$ for all $1 \le t \le m$; 
\item for $1 \le t \le m$ such that $w_{t} = s_{\gamma_{t}}w_{t-1}$, 
$w_{t-1} \to w_{t} = s_{\gamma_{t}}w_{t-1}$ is an edge (labeled by $|w_{t-1}^{-1} \gamma_{t}| = |w_{t}^{-1} \gamma_{t}|$) in the quantum Bruhat graph $\QBG(W)$. 
\end{itemize}
Given $\bw=(w_{0},w_1,\dots,w_m) \in \QW_{\lambda,w}^{\KNOS}$, let 
$S^{-}(\bw)^{\KNOS}$ denote the set of steps $t$, for $1\le t\le a$, such that $w_{t}=w_{t-1}$ and $(\rho,w_{t-1}^{-1}\eta_t)=1$ (or equivalently, $w_{t-1}^{-1}\eta_t$ is a simple root). 
Similarly, let $S^{+}(\bw)^{\KNOS}$ denote the set of steps $t$, 
for $a < t \le m$, such that $w_{t}=w_{t-1}$ and $(\rho,w_{t-1}^{-1}\eta_t)=-1$ 
(or equivalently, $-w_{t-1}\eta_t$ is a simple root). 
Let $S(\bw)^{\KNOS}=S^{-}(\bw)^{\KNOS} \cup S^{+}(\bw)^{\KNOS}$, and 
define $\ti{\QW}^{\KNOS}_{\lambda,w}$ to consist of all pairs $(\bw,\bb)$ 
where $\bw \in \QW^{\KNOS}_{\lambda,w}$ and $\bb$ is a $\{0,1\}$-valued function on $S(\bw)^{\KNOS}$.
For $(\bw,\bb)\in \ti{\QW}^{\KNOS}_{\lambda,w}$, we define
\begin{align*}
(-1)^{(\bw,\bb)} & := 
 \prod_{ \substack{ 1 \le t\le a \\ w_t < w_{t-1}} } (-1) 
 \prod_{ \substack{ a < t \le m \\ w_t>w_{t-1}} }(-1)
 \prod_{ t \in S(\bw)^{\KNOS} }(-1)^{\bb(t)}, 
\end{align*}
and 
\begin{align*}
\wt_0(\bw,\bb)^{\KNOS} & := 0, \\[2mm]
\wt_t(\bw,\bb)^{\KNOS} & := \wt_{t-1}(\bw,\bb)^{\KNOS} +
\begin{cases}
-\bb(t)\lng w_{t-1}^{-1}\eta_t & \text{if $t\in S^{-}(\bw)^{\KNOS}$}, \\
\lng w_{t-1}^{-1}\eta_t & \text{if $w_t < w_{t-1}$}, \\
0 & \text{otherwise}, 
\end{cases}
\qquad \text{for $1\le t\le a$} \\[2mm]
\wt_t(\bw,\bb)^{\KNOS} & := \wt_{t-1}(\bw,\bb)^{\KNOS} +
\begin{cases}
\bb(t)\lng w_{t-1}^{-1}\eta_t & \text{if $t\in S^{+}(\bw)^{\KNOS}$}, \\
\lng w_{t-1}^{-1}\eta_t & \text{if $w_t < w_{t-1}$}, \\
0 & \text{otherwise}, 
\end{cases}
\qquad \text{for $a<t\le m$}; 
\end{align*}
define $\wt(\bw,\bb)^{\KNOS}:=\wt_{n}(\bw,\bb)^{\KNOS}$, and 
set $d_t(\bw,\bb):=\wt_{t}(\bw,\bb)^{\KNOS}-\wt_{t-1}(\bw,\bb)^{\KNOS}$ for $1\le t \le m$.
Then we define
\begin{align*}
\deg_{0}^{-}(\bw,\bb)^{\KNOS} & = 0, \\
\deg_{t}^{-}(\bw,\bb)^{\KNOS} & = 
\deg_{t-1}^{-}(\bw,\bb)^{\KNOS} + \frac{(d_t(\bw,\bb),d_t(\bw,\bb))}{2}+
(d_t(\bw,\bb),\wt_{t-1}(\bw,\bb)^{\KNOS}) \quad \text{for $1 \le t \le a$} \\
\deg_{a}^{+}(\bw,\bb)^{\KNOS} & = \deg_{a}^{-}(\bw,\bb)^{\KNOS}+
(-\lng w_{a}^{-1}\lambda,\,\wt_{a}(\bw,\bb)^{\KNOS}) \\
\deg_{t}^{+}(\bw,\bb)^{\KNOS} &= 
\deg_{t-1}^{+}(\bw,\bb)^{\KNOS}+\frac{(d_t(\bw,\bb),d_t(\bw,\bb))}{2}+
(d_t(\bw,\bb),\wt_{t-1}(\bw,\bb)^{\KNOS}) \qquad \text{for $a < t \le m$};
\end{align*}
define $\deg(\bw,\bb)^{\KNOS}=\deg_{m}^{+}(\bw,\bb)^{\KNOS} \in \BZ$.
%
%
\begin{thm}[{\cite[Theorem~3.14]{KNOS}}] \label{thm:KNOS}
For any minuscule $\lambda\in P$ and $w\in W$, we have in $\BK \subset \KQGr$, 
\begin{equation} \label{eq:KNOS}
\begin{split}
& \be^{\lambda} \cdot \OQG{w} = \\
& \quad \sum_{(\bw,\bb) \in \ti{\QW}^{\KNOS}_{\lambda,w}}
  (-1)^{(\bw,\bb)} q^{\deg(\bw,\bb)^{\KNOS}} \cdot 
  \OQGL{w_{m} t_{-\lng \wt(\bw,\bb)^{\KNOS}} }{ -\lng w_{a}^{-1}\lambda+\wt(\bw,\bb)^{\KNOS} }. 
\end{split}
\end{equation}
\end{thm}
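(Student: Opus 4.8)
The plan is to prove \eqref{eq:KNOS} by iterating a one-step inverse Chevalley relation along the alcove path from $A_{\circ}$ to $A_{\lambda}$ attached to the reduced word for $yx$, processing the root sequence $\vec{\eta} = (\beta_a, \ldots, \beta_1, \gamma_1, \ldots, \gamma_b)$ one entry at a time. The recursive shape of $\wt_t(\bw,\bb)^{\KNOS}$ and of $\deg_t^{\pm}(\bw,\bb)^{\KNOS}$ is tailored to exactly such an argument: a walk of length $t$ contributing to $\ti{\QW}^{\KNOS}_{\lambda,w}$ is obtained from one of length $t-1$ by appending either a stationary step ($w_t = w_{t-1}$) or an edge $w_{t-1} \to s_{\eta_t} w_{t-1}$ of $\QBG(W)$. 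Here the minuscule hypothesis is essential: since $\pair{\lambda}{\alpha^{\vee}} \in \{0, \pm 1\}$ for all $\alpha \in \Delta^{+}$, the alcove path crosses each relevant hyperplane exactly once and the entries of $\vec{\eta}$ occur without repetition, so the walks carry no multiplicities.

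First I would establish the one-step relation itself, describing the effect of multiplying a single twisted Schubert class by the elementary factor attached to crossing the hyperplane $H_{\eta_t, l_t}$. This local rule should produce two contributions: a stationary term keeping the index $w_{t-1}$ (decorated, at the special simple-root steps collected in $S^{\pm}(\bw)^{\KNOS}$, by a choice $\bb(t) \in \{0,1\}$), and a moving term with index $s_{\eta_t} w_{t-1}$ carrying a sign $-1$ precisely when $t \le a$ and $w_t < w_{t-1}$, or $t > a$ and $w_t > w_{t-1}$, in agreement with the definition of $(-1)^{(\bw,\bb)}$ and with Lemma~\ref{lem:ell}. Quantum edges of $\QBG(W)$ are what generate the translation $t_{-\lng \wt(\bw,\bb)^{\KNOS}}$ in the affine index, a quantum step shifting the Schubert label by an element of $Q^{\vee}$, while the line-bundle twist is assembled additively through the increments $d_t = \wt_t - \wt_{t-1}$. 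Iterating the one-step relation along the whole word then telescopes into the sum over $\ti{\QW}^{\KNOS}_{\lambda,w}$, with the transition term $\deg_a^{+} = \deg_a^{-} + (-\lng w_a^{-1}\lambda,\, \wt_a(\bw,\bb)^{\KNOS})$ recording the passage from the first block (the reduced word of $x$, realizing $\lambda = x\vpi_k$) to the second block (the reduced word of $y$, carrying $\vpi_k$ on to the lowest weight $\lng\vpi_k$).

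The delicate point, and the main obstacle, is the exponent of $q$. The statistic $\deg_t^{\pm}$ is not linear in the steps: it accrues the quadratic contribution $\frac{(d_t, d_t)}{2} + (d_t, \wt_{t-1}(\bw,\bb)^{\KNOS})$. I would control this by identifying $q$ with the character of loop rotation and tracking how the $\BC^{\ast}$-grading propagates when line-bundle twists are composed and when a translation $t_\xi$ is applied: tensoring $\CO(\mu)$ with $\CO(\nu)$ and then passing through $t_\xi$ introduces a loop-rotation factor recording the pairing of $\xi$ against the accumulated weight, and summing these contributions along the path should reproduce exactly the quadratic form above. Verifying that this bookkeeping is consistent with the combinatorial recursion—so that the final power of $q$ is independent of the intermediate choices and equals $\deg(\bw,\bb)^{\KNOS}$—is where the argument requires the most care. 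Alternatively, one may run the same computation by localization at the $(H \times \BC^{\ast})$-fixed points indexed by $W_{\af}$, comparing the two sides as rational functions; the walks in $\QBG(W)$ then organize the fixed-point contributions, and it is again the loop-rotation weight that is the subtle ingredient. It is precisely this $q^{\deg}$ term that the present paper isolates as the principal source of difficulty in extending the formula beyond minuscule weights.
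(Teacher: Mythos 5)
The first thing to say is that the paper does not prove Theorem~\ref{thm:KNOS} at all: the statement is imported verbatim, with citation, from the prequel \cite{KNOS} (where it is Theorem~3.14), and the only work the present paper does in connection with it is downstream --- namely Proposition~\ref{prop:minu}, proved in Section~\ref{sec:minu}, which translates the \cite{KNOS} statistics $\wt(\bw,\bb)^{\KNOS}$, $\deg(\bw,\bb)^{\KNOS}$ into the alcove-path statistics of Theorem~\ref{thm:main} for one particular reduced alcove path. So there is no proof in this paper for your argument to parallel; your proposal has to stand on its own as a proof of the result of \cite{KNOS}, and judged that way it has a genuine gap.

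The gap is that the load-bearing step --- the ``one-step inverse Chevalley relation'' attached to each entry of $\vec{\eta}$ --- is exactly the content of the theorem, and you never formulate it precisely, let alone prove it. Multiplication by $\be^{\lambda}$ is a single operation in $\KQGr$; nothing formal makes it factor through the $m=a+b$ hyperplane crossings of the alcove path, and asserting that the local rule ``should produce'' a stationary term and a moving term with exactly the stated sign, decoration, translation by $t_{-\lng\wt(\bw,\bb)^{\KNOS}}$ and line-bundle twist is a description of the answer, not a derivation of it. In \cite{KNOS} this factorization comes from genuine algebraic input: Kato's identification of the $K$-group with a module over a $q$-Heisenberg algebra, in which the reduced words for $x$ and $y$ become iterated commutation relations whose output is precisely the quantum-walk combinatorics; in particular the quadratic statistic $\deg(\bw,\bb)^{\KNOS}$, which you correctly single out as the hardest ingredient, is what those commutation relations produce (the present paper says as much in the remark following Theorem~\ref{thm:main}). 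Your two proposed ways of controlling the $q$-exponent are both unsubstantiated: the loop-rotation ``bookkeeping'' is never tied to an identity that actually holds in $\KQGr$, and the localization alternative is not available off the shelf, since the $K$-group of \cite{KNS} used here is defined via Schubert classes with a topological basis and convergence conditions, not via a fixed-point (GKM-type) presentation, and no localization theorem of the required form is established in the cited literature --- indeed, even the finiteness of the sum on the right-hand side of \eqref{eq:KNOS} is a nontrivial theorem of \cite{Orr}. Without supplying the $q$-Heisenberg (or an equivalent) engine, your induction has no base operation to iterate.
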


\subsection{Inverse $K$-Chevalley formula for arbitrary weights.}
\label{subsec:main}

Let $\lambda \in P$, and 
\begin{equation*}
\Gamma: 
  A_{\circ}=A_{0} 
  \xrightarrow{\gamma_{1}} A_{1} 
  \xrightarrow{\gamma_{2}} \cdots 
  \xrightarrow{\gamma_{m}} A_{m}=A_{\lambda}
\end{equation*}
be an alcove path from the fundamental alcove $A_{\circ}$ to $A_{\lambda}$; 
for $1 \le t \le m$, let $H_{\gamma_{t},l_{t}}$ be the affine hyperplane 
between $A_{t-1}$ and $A_{t}$. 
For $w \in W$, let $\QW_{\lambda,w} = \QW_{\lambda,w}(\Gamma)$ denote 
the set of sequences $(w_{0},w_{1},\dots,w_{m})$ such that 
\begin{itemize}
\item $w_{0}=w$; 
\item $w_{t} \in \bigl\{ w_{t-1},\,s_{\gamma_{t}}w_{t-1} \bigr\}$ for all $1 \le t \le m$; 
\item for $1 \le t \le m$ such that $w_{t} = s_{\gamma_{t}}w_{t-1}$, 
$w_{t-1} \to w_{t} = s_{\gamma_{t}} w_{t-1}$ is an edge in the quantum Bruhat graph $\QBG(W)$; 
note that the label of this edge is $|w_{t-1}^{-1}\gamma_{t}| = |w_{t}^{-1}\gamma_{t}|$. 
\end{itemize}
For $\bw = (w_{0},w_{1},\dots,w_{m}) \in \QW_{\lambda,w}$, we define
\begin{equation} \label{eq:end}
\ed(\bw):=w_{m}, 
\end{equation}
\begin{equation} \label{eq:Tw}
T(\bw):=\bigl\{ 1 \le t \le m \mid w_{t} = s_{\gamma_{t}} w_{t-1} \bigr\}, 
\end{equation}
\begin{equation} \label{eq:Tw-}
T^{-}(\bw):=\bigl\{ 1 \le t \le m \mid w_{t-1} > w_{t} \bigr\} \subset T(\bw), 
\end{equation}
\begin{equation} \label{eq:Sw}
S(\bw):=\bigl\{ 1 \le t \le m \mid 
 \text{$w_{t}=w_{t-1}$ and $-w_{t}^{-1}\gamma_{t}$ is a simple root} \bigr\}. 
\end{equation}
We set
\begin{equation} \label{eq:tiQW}
\ti{\QW}_{\lambda,w}=\ti{\QW}_{\lambda,w}(\Gamma):=
\bigl\{ (\bw,\bb) \mid 
\text{$\bw \in \QW_{\lambda,w}(\Gamma)$ and $\bb:S(\bw) \rightarrow \{0,1\}$} \bigr\}. 
\end{equation}
For $(\bw,\bb) \in \ti{\QW}_{\lambda,w}$, we define
\begin{equation}
\ed(\bw,\bb):=\ed(\bw),
\end{equation}
\begin{align}
(-1)^{(\bw,\bb)} & :=
  \prod_{ \begin{subarray}{c} 1 \le t \le m \\[1mm] \gamma_{t} \in \Delta^{-} \\[1mm] w_{t-1} > w_{t} \end{subarray}} (-1) 
  \prod_{ \begin{subarray}{c} 1 \le t \le m \\[1mm] \gamma_{t} \in \Delta^{+} \\[1mm] w_{t-1} < w_{t} \end{subarray}} (-1) 
  \prod_{t \in S(\bw)} (-1)^{\bb(t)} \nonumber \\[3mm]
& = 
  \prod_{ \begin{subarray}{c} t \in T(\bw) \\ w_{t-1}^{-1}\gamma_{t} \in \Delta^{+} \end{subarray}} (-1) 
  \prod_{t \in S(\bw)} (-1)^{\bb(t)}
  = 
  \prod_{ \begin{subarray}{c} t \in T(\bw) \\ w_{t}^{-1}\gamma_{t} \in \Delta^{-} \end{subarray}} (-1) 
  \prod_{t \in S(\bw)} (-1)^{\bb(t)}, 
\end{align}
\begin{align}
&
\begin{cases}
\displaystyle{%
\qwt_{t}(\bw,\bb):=\sum_{
 \begin{subarray}{c}
 1 \le u \le t \\[1mm]
 u \in T^{-}(\bw)
 \end{subarray}} |w_{u}^{-1}\gamma_{u}| + 
\sum_{
 \begin{subarray}{c}
 1 \le u \le t \\[1mm]
 u \in S(\bw)
 \end{subarray}} (- \bb(u) w_{u}^{-1}\gamma_{u})} & \text{for $0 \le t \le m$}, \\[10mm]
\qwt(\bw,\bb):=\qwt_{m}(\bw,\bb), 
\end{cases} \\[5mm]
& 
\begin{cases}
\displaystyle{%
\qwt_{t}^{\vee}(\bw,\bb):=\sum_{
 \begin{subarray}{c}
 1 \le u \le t \\[1mm]
 u \in T^{-}(\bw) 
 \end{subarray}} |w_{u}^{-1}\gamma_{u}|^{\vee} + 
\sum_{
 \begin{subarray}{c}
 1 \le u \le t \\[1mm]
 u \in S(\bw)
 \end{subarray}} (- \bb(u) w_{u}^{-1}\gamma_{u}^{\vee})} & \text{for $0 \le t \le m$}, \\[10mm]
\qwt^{\vee}(\bw,\bb):=\qwt_{m}^{\vee}(\bw,\bb), 
\end{cases}
\end{align}
\begin{equation}
\begin{cases}
\ha{r}_{0}(\bw):=w^{-1}, \\[3mm]
\ha{r}_{t}(\bw):=
\begin{cases}
\ha{r}_{t-1}(\bw)
\ha{r}_{\gamma_{t},l_{t}} & \text{\rm if $t \in T(\bw)$}, \\[1mm]
\ha{r}_{t-1}(\bw) & \text{\rm otherwise}, 
\end{cases}
\quad \text{for $1 \le t \le m$},
\end{cases}
\end{equation}
\begin{equation}
\begin{cases}
\wt_{0}(\bw) :=\ha{r}_{0}(\bw)\lambda = w^{-1}\lambda, \\[2mm]
\wt_{t}(\bw) :=\ha{r}_{t}(\bw)\lambda-\ha{r}_{t-1}(\bw)\lambda \quad \text{for $1 \le t \le m$}, \\[2mm]
\wt(\bw) := \ha{r}_{m}(\bw)\lambda = {\displaystyle\sum_{t=0}^{m} \wt_{t}(\bw)}, \qquad \wt(\bw,\bb):=\wt(\bw), 
\end{cases}
\end{equation}
\begin{equation} \label{eq:deg}
\deg(\bw,\bb) := \frac{1}{2} \pair{ \qwt(\bw,\bb) }{ \qwt^{\vee}(\bw,\bb) } + \deg'(\bw,\bb), 
\end{equation}
where
\begin{equation} \label{eq:deg'}
\deg'(\bw,\bb) := \sum_{t=1}^{m} \pair{ \wt_{t}(\bw) }{ \qwt_{t}^{\vee}(\bw,\bb) } -
 \sum_{t \in T^{-}(\bw)} \sgn(\gamma_{t})l_{t}' - 
 \sum_{t \in S(\bw)}\bb(t)l_{t}'. 
\end{equation}
The main result of this paper is the following inverse $K$-Chevalley formula in $\BK \subset \KQGr$, 
which generalizes Theorem~\ref{thm:KNOS} to the case that 
$\lambda \in P$ is an arbitrary weight (see also Section~\ref{sec:minu} below). 
%
%
\begin{thm} \label{thm:main}
Assume that $\Fg$ is simply-laced, but not of type $E_{8}$. 
Let $\lambda \in P$ be an arbitrary weight, and $\Gamma \in \AP(\lambda)$. 
For $w \in W$, the following equality holds in $\BK \subset \KQGr${\rm :}
\begin{equation} \label{eq:main}
\begin{split}
& \be^{\lambda} \cdot \OQG{w} = \\[3mm]
& 
\underbrace{
\sum_{(\bw,\bb) \in \ti{\QW}_{\lambda,w}(\Gamma)} \overbrace{(-1)^{(\bw,\bb)} q^{\deg(\bw,\bb)} 
\OQGL{ \ed(\bw) t_{ \qwt^{\vee}(\bw,\bb) } }{-\lng \wt(\bw) -\lng \qwt(\bw,\bb)}}^{=:\bG(\bw,\bb)}%
 }_{ =:\bF_{\lambda,w}(\Gamma)}. 
\end{split}
\end{equation}
\end{thm}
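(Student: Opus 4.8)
The plan is to reduce the general statement to the minuscule case already established in Theorem~\ref{thm:KNOS}, using two independent reduction principles: invariance of the right-hand side $\bF_{\lambda,w}(\Gamma)$ under changes of the alcove path $\Gamma$, and additivity of $\bF$ under concatenation of alcove paths (equivalently, additivity in $\lambda$). By the result of \cite{S} (see also \cite{LP2}), every $\lambda \in P$ decomposes as a sum $\lambda = \mu_1 + \cdots + \mu_k$ of (not necessarily dominant) minuscule weights in all simply-laced types other than $E_{8}$; concatenating reduced alcove paths realizing the $\mu_i$ produces a distinguished path $\Gamma_0 \in \AP(\lambda)$. If I can show that $\bF_{\lambda,w}(\Gamma)$ is independent of $\Gamma$ and that it behaves multiplicatively under concatenation, the theorem follows by induction on $k$ from the minuscule base case.

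The first step is that base case. For a minuscule $\lambda = x\vpi_{k}$ I would construct a specific reduced alcove path $\Gamma_{\min}$ whose associated root sequence $(\gamma_1,\dots,\gamma_m)$ coincides with the sequence $\vec{\eta} = (\beta_a,\dots,\beta_1,\gamma_1,\dots,\gamma_b)$ of \eqref{eq:veceta} read off from the chosen reduced expressions for $x$ and $y$. The content of Proposition~\ref{prop:minu} is then a term-by-term identification of the data attached to $(\bw,\bb)$ in the two formulations: that $\QW_{\lambda,w}(\Gamma_{\min})$ and $\QW_{\lambda,w}^{\KNOS}$ coincide, that the signs $(-1)^{(\bw,\bb)}$ agree, that the index $\ed(\bw)\,t_{\qwt^{\vee}(\bw,\bb)}$ with twist $-\lng\wt(\bw)-\lng\qwt(\bw,\bb)$ reproduces the index $w_m t_{-\lng\wt(\bw,\bb)^{\KNOS}}$ with twist $-\lng w_a^{-1}\lambda+\wt(\bw,\bb)^{\KNOS}$ of Theorem~\ref{thm:KNOS}, and finally that $\deg(\bw,\bb)=\deg(\bw,\bb)^{\KNOS}$. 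The last equality is the most delicate, as it requires rewriting the intrinsic degree \eqref{eq:deg} in terms of the incremental weight differences $d_t$ of the minuscule formula.

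The second and main step is path-independence, which rests on the standard fact that any two paths in $\AP(\lambda)$ are connected by a finite sequence of Yang-Baxter transformations and deletion procedures. I would therefore prove invariance of $\bF_{\lambda,w}(\Gamma)$ under each local move (Theorem~\ref{thm:YB} and Theorem~\ref{thm:D}), and the cleanest route is to construct, for a move $\Gamma_1 \rightsquigarrow \Gamma_2$, an explicit sijection (signed bijection in the sense of \cite{FK}) between the signed sets $\ti{\QW}_{\lambda,w}(\Gamma_1)$ and $\ti{\QW}_{\lambda,w}(\Gamma_2)$ that preserves $\ed(\bw)$, the quantum weights $\qwt$ and $\qwt^{\vee}$, the total weight $\wt(\bw)$, and — crucially — the degree $\deg(\bw,\bb)$ and the sign $(-1)^{(\bw,\bb)}$, so that each summand $\bG(\bw,\bb)$ is matched exactly. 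I expect this to be the main obstacle: the quantum Bruhat graph edges underlying a Yang-Baxter transformation force a case-by-case analysis of how short walk segments over a rank-two subsystem reroute, and the term $q^{\deg(\bw,\bb)}$ — which mixes the global quadratic pairing $\tfrac12\pair{\qwt(\bw,\bb)}{\qwt^{\vee}(\bw,\bb)}$ with the sum $\deg'$ in \eqref{eq:deg'} — is far more sensitive to these local changes than the purely weight-theoretic statistics handled for admissible subsets in \cite{KLN}. Controlling the degree across a deletion, where a pair of adjacent hyperplanes is removed and the levels $l_t, l_t'$ shift, is the technically hardest point.

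Finally, I would establish additivity (Proposition~\ref{prop:sum}): for $\lambda = \mu + \nu$ and a concatenated path $\Gamma = \Xi \ast \Xi'$ with $\Xi \in \AP(\mu)$ and $\Xi' \in \AP(\nu)$, every decorated quantum walk in $\ti{\QW}_{\lambda,w}(\Gamma)$ splits uniquely at the junction between the two blocks into a walk in $\ti{\QW}_{\mu,w}(\Xi)$ followed by one in $\ti{\QW}_{\nu,w'}(\Xi')$ for the appropriate intermediate endpoint $w'$; using Remark~\ref{rem:cat} to track how the hyperplane levels in the second block are shifted by $\pair{\mu}{\xi_q^{\vee}}$, the statistics should add so that $\bF_{\lambda,w}(\Xi\ast\Xi')$ realizes the iterated product $\be^{\mu}\cdot(\be^{\nu}\cdot\OQG{w})$. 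Combining the three steps: for minuscule $\lambda$ the base case together with path-independence gives the formula for every $\Gamma \in \AP(\lambda)$; additivity then propagates it along a decomposition $\lambda = \mu_1 + \cdots + \mu_k$ into minuscules, and a final appeal to path-independence yields the formula for an arbitrary $\Gamma \in \AP(\lambda)$, completing the proof.
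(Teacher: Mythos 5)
Your proposal is correct and follows essentially the same route as the paper: the minuscule base case via identification with the formula of Theorem~\ref{thm:KNOS} along a specific reduced alcove path (Proposition~\ref{prop:minu}), sijection-based invariance of $\bF_{\lambda,w}(\Gamma)$ under the Yang--Baxter and deletion moves (Theorems~\ref{thm:YB} and~\ref{thm:D}, via Proposition~\ref{prop:YBD}), and additivity under concatenation combined with Stembridge's decomposition into minuscule weights (Propositions~\ref{prop:sum} and~\ref{prop:stem}). The only difference is a harmless reordering — you invoke path-independence before additivity, while the paper first propagates the formula to concatenations of good minuscule paths and applies the YB/D invariance afterwards — and you correctly single out the degree statistic $\deg(\bw,\bb)$ as the delicate point in each step.
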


An example in type $A_{2}$ is given in 
Appendix~\ref{sec:example}.

\begin{rem}
The degree function $\deg$ defined in \eqref{eq:deg} and \eqref{eq:deg'} may seem ad hoc to the reader. In fact, it arises naturally from commutation relations in the $q$-Heisenberg algebra used in \cite{KNOS}.
\end{rem}

\subsection{Outline of the proof of Theorem~\ref{thm:main}.}
\label{subsec:outline}

Keep the setting of Theorem~\ref{thm:main}. 
Using Theorem~\ref{thm:KNOS}, 
we first prove the following. 
%
%
\begin{prop}[to be proved in Section~\ref{sec:minu}] \label{prop:minu}
Assume that $\lambda \in P$ is a minuscule weight. 
Then, there exists $\Gamma \in \RAP(\lambda)$ for which Theorem~\ref{thm:main} holds. 
\end{prop}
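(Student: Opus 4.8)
The plan is to exhibit one convenient reduced alcove path $\Gamma_{\minu} \in \RAP(\lambda)$ and to match $\bF_{\lambda,w}(\Gamma_{\minu})$, the right-hand side of \eqref{eq:main}, with the right-hand side of \eqref{eq:KNOS} term by term; since the latter equals $\be^{\lambda}\cdot\OQG{w}$ by Theorem~\ref{thm:KNOS}, this proves the proposition. I would take $\Gamma_{\minu}$ to be the alcove path whose signed labels are read off from $\vec{\eta}=(\beta_a,\ldots,\beta_1,\gamma_1,\ldots,\gamma_b)$ of \eqref{eq:veceta} by reversing the signs on the first block:
\[
(\gamma_1^{\minu},\ldots,\gamma_m^{\minu})=(-\beta_a,\ldots,-\beta_1,\gamma_1,\ldots,\gamma_b),
\]
so that $|\gamma_t^{\minu}|=\eta_t$ throughout, while the $a$ labels of the first block are negative and the $b$ labels of the second block are positive. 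Using that $\lambda$ is minuscule, one checks that the first block consists of the positive roots $\beta$ with $\pair{\lambda}{\beta^\vee}=-1$ and the second block of the positive roots $\gamma$ with $\pair{\lambda}{\gamma^\vee}=+1$, and that these together exhaust the separating set $\{\alpha\in\Delta^+\mid \pair{\lambda}{\alpha^\vee}\ne 0\}$. In particular $\pair{\lambda}{(\gamma_t^{\minu})^\vee}=1$ for all $t$, the hyperplane levels are $l_t=0$ on the first block and $l_t=1$ on the second, so by \eqref{eq:l'} one has $l_t'=1$ on the first block and $l_t'=0$ on the second. Since $\Gamma_{\minu}$ then crosses each separating hyperplane exactly once, it is a reduced alcove path from $A_\circ$ to $A_\lambda$ of length $m=\ell(x)+\ell(y)$ (equivalently, $\vec{\eta}$ is a reduced $\lambda$-chain in the sense of \cite{LP1}); thus $\Gamma_{\minu}\in\RAP(\lambda)$.

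Next I would identify the two index sets. Because $s_{\gamma_t^{\minu}}=s_{\eta_t}$ and the quantum Bruhat graph condition — including the edge label $|w_{t-1}^{-1}\gamma_t^{\minu}|=|w_{t-1}^{-1}\eta_t|$ — is literally the same in both theorems, the underlying walks coincide: $\QW_{\lambda,w}(\Gamma_{\minu})=\QW^{\KNOS}_{\lambda,w}$. For a stationary step, the condition ``$-w_t^{-1}\gamma_t^{\minu}$ is a simple root'' of \eqref{eq:Sw} unwinds, via $\gamma_t^{\minu}=-\eta_t$ for $t\le a$ and $\gamma_t^{\minu}=\eta_t$ for $t>a$, into the two conditions defining $S^-(\bw)^{\KNOS}$ and $S^+(\bw)^{\KNOS}$; hence $S(\bw)=S(\bw)^{\KNOS}$ and the decorations $\bb$ correspond, giving a bijection $\ti{\QW}_{\lambda,w}(\Gamma_{\minu})\cong\ti{\QW}^{\KNOS}_{\lambda,w}$. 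Under it the signs agree directly (a down-step contributes on the negative first block, an up-step on the positive second block, matching the two products in $(-1)^{(\bw,\bb)}$). Using that the linear part of $\ha{r}_t(\bw)$ is $w_t^{-1}$ and that $\ha{r}_{\gamma_t^{\minu},l_t}\lambda=\lambda-l_t'\gamma_t^{\minu}$, a short induction — relying on $\pair{\lambda}{(\gamma_t^{\minu})^\vee}=1$ on the first block and $l_t'=0$ on the second — yields
\[
\wt(\bw)=\ha{r}_m(\bw)\lambda=\ha{r}_a(\bw)\lambda=w_a^{-1}\lambda.
\]
A parallel term-by-term comparison, in which Lemma~\ref{lem:ell} is used to fix the sign of $w_{t-1}^{-1}\eta_t$ at each down-step, gives $\qwt(\bw,\bb)=-\lng\,\wt(\bw,\bb)^{\KNOS}$ and likewise $\qwt^\vee(\bw,\bb)=-\lng\,\wt(\bw,\bb)^{\KNOS}$ under $Q\cong Q^\vee$. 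Feeding these into \eqref{eq:main} and using $\lng^2=e$, the index $\ed(\bw)t_{\qwt^\vee(\bw,\bb)}$ and the twist $-\lng\wt(\bw)-\lng\qwt(\bw,\bb)$ become exactly $w_m t_{-\lng\wt(\bw,\bb)^{\KNOS}}$ and $-\lng w_a^{-1}\lambda+\wt(\bw,\bb)^{\KNOS}$, i.e.\ the twisted Schubert class of \eqref{eq:KNOS}.

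It remains to match the powers of $q$, which is where the real work lies. I would first put the degree of \eqref{eq:KNOS} in closed form: writing $\wt_t^{\KNOS}$ for $\wt_t(\bw,\bb)^{\KNOS}$ and telescoping the recursions for $\deg_t^\pm(\bw,\bb)^{\KNOS}$ via
\[
\frac{(d_t(\bw,\bb),d_t(\bw,\bb))}{2}+(d_t(\bw,\bb),\wt_{t-1}^{\KNOS})=\tfrac12\bigl[(\wt_t^{\KNOS},\wt_t^{\KNOS})-(\wt_{t-1}^{\KNOS},\wt_{t-1}^{\KNOS})\bigr]
\]
together with $\wt_0^{\KNOS}=0$, one obtains
\[
\deg(\bw,\bb)^{\KNOS}=\tfrac12\bigl(\wt(\bw,\bb)^{\KNOS},\wt(\bw,\bb)^{\KNOS}\bigr)+\bigl(-\lng w_a^{-1}\lambda,\ \wt_a(\bw,\bb)^{\KNOS}\bigr).
\]
By $\qwt(\bw,\bb)=-\lng\,\wt(\bw,\bb)^{\KNOS}$ and the $W$-invariance of $(\cdot\,,\cdot)$, the first summand equals $\tfrac12\pair{\qwt(\bw,\bb)}{\qwt^\vee(\bw,\bb)}$, the leading term of $\deg(\bw,\bb)$ in \eqref{eq:deg}. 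Thus the whole proposition reduces to the single identity
\[
\deg'(\bw,\bb)=\bigl(-\lng w_a^{-1}\lambda,\ \wt_a(\bw,\bb)^{\KNOS}\bigr),
\]
and this is the main obstacle. With the level values above, the two level sums in \eqref{eq:deg'} simplify to $\sum_{t\in T^-(\bw)}\sgn(\gamma_t^{\minu})\,l_t'=-\#\bigl(T^-(\bw)\cap[1,a]\bigr)$ and $\sum_{t\in S(\bw)}\bb(t)\,l_t'=\sum_{t\in S^-(\bw)^{\KNOS}}\bb(t)$, while $\wt_t(\bw)$ is supported on the first-block active steps, so the pairing $\sum_{t=1}^m\pair{\wt_t(\bw)}{\qwt_t^\vee(\bw,\bb)}$ collapses to a sum over $t\le a$. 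The remaining — and delicate — task is to reorganize this expression, carefully tracking the split at $t=a$ and using Lemma~\ref{lem:ell} to control the signs of the roots $w_{t-1}^{-1}\gamma_t^{\minu}$, into the single pairing on the right-hand side above. Once this bookkeeping is completed, $\bF_{\lambda,w}(\Gamma_{\minu})$ coincides with the right-hand side of \eqref{eq:KNOS}, completing the proof.
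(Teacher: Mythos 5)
Your proposal follows exactly the route of the paper's own proof: the same choice of reduced alcove path $\Gamma=(-\beta_{a},\dots,-\beta_{1},\zeta_{1},\dots,\zeta_{b})$ with levels $(l_{t})=(0,\dots,0,1,\dots,1)$ and $(l_{t}')=(1,\dots,1,0,\dots,0)$, the same identification $\ti{\QW}_{\lambda,w}(\Gamma)\cong\ti{\QW}^{\KNOS}_{\lambda,w}$ (including signs, $S(\bw)=S(\bw)^{\KNOS}$, $\wt(\bw)=w_{a}^{-1}\lambda$, and $\qwt(\bw,\bb)=-\lng\wt(\bw,\bb)^{\KNOS}$), and the same reduction of the degree comparison; in fact your telescoping of the recursive definition of $\deg(\bw,\bb)^{\KNOS}$ to the closed form $\tfrac12(\wt(\bw,\bb)^{\KNOS},\wt(\bw,\bb)^{\KNOS})+(-\lng w_{a}^{-1}\lambda,\,\wt_{a}(\bw,\bb)^{\KNOS})$ is exactly what makes the paper's concluding comparison with $\deg(\bw,\bb)^{\KNOS}$ meaningful, and everything you assert along the way is correct.

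The proposal is nevertheless not a complete proof, and the two places where it stops are precisely where the paper does its work. First, you take essentially for granted that the sequence $(-\beta_{a},\dots,-\beta_{1},\zeta_{1},\dots,\zeta_{b})$ is an alcove path at all, with the stated crossing levels, and that it is reduced; this is the content of Lemma~\ref{lem:min}, whose proof is not routine: it uses the length-additive factorization $t_{w\mu}=w(t_{\mu}w^{-1})$ of translation elements, the correspondence between reduced words and reduced alcove paths, and an inversion-set argument identifying $A_{\circ}-\vpi_{k}$ with $\mcr{\lng}^{-1}A_{\circ}$. Second, and more seriously, you reduce the proposition to the identity $\deg'(\bw,\bb)=\pair{w_{a}^{-1}\lambda}{\qwt_{a}^{\vee}(\bw,\bb)}$ (the correct target, matching the paper) and then defer its verification as ``bookkeeping.'' That identity is the computational heart of the entire section: the paper proves it by writing $\sum_{t}\pair{\wt_{t}(\bw)}{\qwt_{t}^{\vee}(\bw,\bb)}$ as $\sum_{u\in T^{-}(\bw)}A_{u}+\sum_{u\in S(\bw)}B_{u}$, showing $A_{u}=B_{u}=0$ for $u>a$ (since $l_{t}'=0$ there), evaluating $A_{u}$ and $B_{u}$ for $u\le a$ using $\ha{r}_{t}(\bw)\lambda=w_{t}^{-1}\lambda$ and $\pair{\lambda}{\gamma_{u}^{\vee}}=1$, and checking that the leftover constants $\sgn(\gamma_{u})$ and $\bb(u)$ are exactly absorbed by the two level sums in \eqref{eq:deg'}. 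Until this is carried out, the powers of $q$ on the two sides have not been matched, so the argument has a genuine gap at what you yourself call the main obstacle.
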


For a minuscule weight $\lambda \in P$, 
let $\RAPc(\lambda)$ denote the subset of $\RAP(\lambda)$ 
consisting of those elements for which Theorem~\ref{thm:main} holds. 

Next we prove the following. 
%
%
\begin{prop}[to be proved in Section~\ref{sec:sum}] \label{prop:sum}
Let $\lambda,\,\mu \in P$, and $\Gamma \in \AP(\lambda)$, $\Xi \in \AP(\mu)$. 
Assume that both of the equalities $\be^{\lambda} \cdot \OQG{w} = \bF_{\lambda,w}(\Gamma)$ and 
$\be^{\mu} \cdot \OQG{w} = \bF_{\mu,w}(\Xi)$ hold in $\BK \subset \KQGr$ for all $w \in W$. 
Then we have $\be^{\lambda+\mu} \cdot \OQG{w} = \bF_{\lambda+\mu,w}(\Gamma \ast \Xi)$ in $\BK \subset \KQGr$
for all $w \in W$. 
\end{prop}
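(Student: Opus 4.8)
The plan is to deduce the identity for $\lambda+\mu$ from the two hypotheses by combining the multiplicativity $\be^{\lambda+\mu}=\be^{\mu}\be^{\lambda}$ of equivariant scalars with the right action of the translations $\{t_{\xi}\}_{\xi\in Q^{\vee}}$ (the reduction from $W_{\af}$-indexed to $W$-indexed classes mentioned in the Introduction). First I would write, using the hypothesis for $(\lambda,\Gamma)$ and $\BZ[q^{\pm1}][P]$-linearity,
\[
\be^{\lambda+\mu}\cdot\OQG{w}
=\be^{\mu}\cdot\bigl(\be^{\lambda}\cdot\OQG{w}\bigr)
=\be^{\mu}\cdot\bF_{\lambda,w}(\Gamma)
=\sum_{(\bw,\bb)\in\ti{\QW}_{\lambda,w}(\Gamma)}\be^{\mu}\cdot\bG(\bw,\bb).
\]
Since $\Gamma\ast\Xi$ lists the roots of $\Gamma$ first, this ordering is exactly what makes the first $m$ steps of a walk on $\Gamma\ast\Xi$ a walk on $\Gamma$ and the last $p$ steps a walk on $\Xi$. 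It therefore suffices to evaluate $\be^{\mu}\cdot\bG(\bw,\bb)$ for a single decorated walk $(\bw,\bb)$ on $\Gamma$ and to prove it equals $\sum_{(\bw',\bb')\in\ti{\QW}_{\mu,\ed(\bw)}(\Xi)}\bG(\bw\ast\bw',\bb\sqcup\bb')$, where $\bw\ast\bw'$ is the concatenation (well defined because $\bw'$ starts at $\ed(\bw)$) and $\bb\sqcup\bb'$ is the combined decoration; summing over $(\bw,\bb)$ and invoking the bijection of the next paragraph will then identify the total with $\bF_{\lambda+\mu,w}(\Gamma\ast\Xi)$.

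For the per-term evaluation, note that $\bG(\bw,\bb)$ is a $\BZ[q^{\pm1}]$-multiple of $\OQGL{\ed(\bw)t_{\qwt^{\vee}(\bw,\bb)}}{\nu}$ with $\nu=-\lng\wt(\bw)-\lng\qwt(\bw,\bb)$. I would apply $\be^{\mu}$ in two moves: first, scalar multiplication commutes with tensoring by a line bundle, so $\be^{\mu}\cdot\OQGL{y}{\nu}=(\be^{\mu}\cdot\OQG{y})\otimes\CO(\nu)$; second, to handle the affine index $y=\ed(\bw)t_{\xi}$ with $\xi=\qwt^{\vee}(\bw,\bb)$, I would use the right-translation operator $\mathsf{T}_{\xi}$, which is $\BZ[q^{\pm1}][P]$-linear (hence commutes with $\be^{\mu}$) and acts by $\OQGL{y'}{\kappa}\mapsto q^{\pair{-\lng\kappa}{\xi}}\OQGL{y't_{\xi}}{\kappa}$, in particular $\OQG{y'}\mapsto\OQG{y't_{\xi}}$. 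Applying $\mathsf{T}_{\xi}$ to the hypothesis $\be^{\mu}\cdot\OQG{\ed(\bw)}=\bF_{\mu,\ed(\bw)}(\Xi)$ then expresses $\be^{\mu}\cdot\OQGL{\ed(\bw)t_{\xi}}{\nu}$ as a sum over $\Xi$-walks $\bw'$ from $\ed(\bw)$, with indices $\ed(\bw')t_{\qwt^{\vee}(\bw',\bb')+\xi}$, twists $\nu+\nu'$ where $\nu'=-\lng\wt(\bw')-\lng\qwt(\bw',\bb')$, and each intrinsic $q$-degree shifted by the amount $\pair{-\lng\nu'}{\xi}=\pair{\wt(\bw')+\qwt(\bw',\bb')}{\qwt^{\vee}(\bw,\bb)}$.

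Next I would match statistics. The bijection $\ti{\QW}_{\lambda+\mu,w}(\Gamma\ast\Xi)\cong\bigsqcup_{(\bw,\bb)}\ti{\QW}_{\mu,\ed(\bw)}(\Xi)$ splits a walk at step $m$; under it the sets $T$, $T^{-}$, $S$ and the decoration split, so that the sign is multiplicative, $(-1)^{(\bw\ast\bw',\bb\sqcup\bb')}=(-1)^{(\bw,\bb)}(-1)^{(\bw',\bb')}$, and $\qwt,\qwt^{\vee}$ are additive, giving the correct index $\ed(\bw')t_{\qwt^{\vee}(\bw',\bb')+\xi}$. For the weights, the linear part of $\ha{r}_{t}(\bw)$ is $w_{t}^{-1}$, so the linear part of $\ha{r}_{m}(\bw)$ is $\ed(\bw)^{-1}$; combined with Remark~\ref{rem:cat} (the hyperplanes of the $\Xi$-part of $\Gamma\ast\Xi$ are those of $\Xi$ translated by $\lambda$, i.e.\ $\ha{r}_{\xi_{q},k_{q}}$ conjugated by $t_{\lambda}$), this yields $\wt_{m+q}(\bw\ast\bw')=\wt_{q}(\bw')$ and hence $\wt(\bw\ast\bw')=\wt(\bw)+\wt(\bw')$, so the twist $-\lng\wt-\lng\qwt$ is additive and matches $\nu+\nu'$.

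The whole argument then reduces to the degree identity
\[
\deg(\bw\ast\bw',\,\bb\sqcup\bb')
=\deg(\bw,\bb)+\deg(\bw',\bb')
+\pair{\wt(\bw')+\qwt(\bw',\bb')}{\qwt^{\vee}(\bw,\bb)},
\]
whose correction term is exactly the $\mathsf{T}_{\xi}$-shift found above, so the two sides agree term by term. Proving this is the main obstacle, because $\deg$ couples the quadratic part $\tfrac12\pair{\qwt}{\qwt^{\vee}}$ with $\deg'$, and in $\Gamma\ast\Xi$ the co-levels $l_{t}'$ of the $\Gamma$-part acquire a shift $\pair{\mu}{\gamma_{t}^{\vee}}$ (Remark~\ref{rem:cat}) while the weights $\wt_{t}$ of the $\Gamma$-part acquire the correction $(w_{t}^{-1}-w_{t-1}^{-1})\mu$ (since $\bw$ now carries weight $\lambda+\mu$). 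I would expand $\deg(\bw\ast\bw',\bb\sqcup\bb')$ step by step and collect the three cross contributions: the quadratic cross term $\pair{\qwt(\bw',\bb')}{\qwt^{\vee}(\bw,\bb)}$, the co-level shift, and the weight correction $E:=\sum_{t=1}^{m}\pair{(w_{t}^{-1}-w_{t-1}^{-1})\mu}{\qwt^{\vee}_{t}(\bw,\bb)}$. The key computational lemma, proved by Abel summation together with Lemma~\ref{lem:ell} and the simply-laced identification of roots with coroots, is
\[
E=\pair{\ed(\bw)^{-1}\mu}{\qwt^{\vee}(\bw,\bb)}
+\sum_{t\in T^{-}(\bw)}\sgn(\gamma_{t})\pair{\mu}{\gamma_{t}^{\vee}}
+\sum_{t\in S(\bw)}\bb(t)\pair{\mu}{\gamma_{t}^{\vee}},
\]
whose last two sums cancel the co-level shift, leaving precisely the stated correction. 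I expect the bookkeeping in this degree computation, and pinning down the exact $q$-normalization of the translation operator $\mathsf{T}_{\xi}$ so that its shift equals this correction, to be the principal difficulty.
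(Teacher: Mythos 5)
Your proposal is correct and takes essentially the same route as the paper's proof: the same reduction $\be^{\lambda+\mu}\cdot\OQG{w}=\be^{\mu}\cdot\bF_{\lambda,w}(\Gamma)$, the same termwise expansion of $\be^{\mu}$ on translated twisted classes (your normalization of $\mathsf{T}_{\xi}$ yields exactly the $q$-shift $\pair{\wt(\bv)+\qwt(\bv,\bc)}{\qwt^{\vee}(\bw,\bb)}$ that the paper asserts), the same concatenation bijection \eqref{eq:lm}, and the same degree identity, which is precisely \eqref{eq:deg*}. Your key computational lemma for $E$ is true, and its proof by summation by parts together with Lemma~\ref{lem:ell} (which gives $\sgn(w_{t-1}^{-1}\gamma_{t})=-\sgn(\gamma_{t})$ for $t\in T^{-}(\bw)$) is just a reorganization of the paper's $A_{u}$, $B_{u}$ bookkeeping, so the two arguments coincide in substance.
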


Here we know the following fact from \cite{S} (see also \cite[Theorem~2.1]{L}); 
recall that $\Fg$ is simply-laced, but not of type $E_{8}$. 
%
%
\begin{prop} \label{prop:stem}
For each $\lambda \in P$, there exist minuscule weights 
$\nu_{1},\nu_{2},\dots,\nu_{s} \in P$ such that $\lambda=\nu_{1}+\nu_{2}+\cdots+\nu_{s}$. 
\end{prop}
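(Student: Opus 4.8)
The plan is to prove Proposition~\ref{prop:stem} by exhibiting, for each fundamental weight $\vpi_i$, a decomposition into minuscule weights, and then extending additively to an arbitrary $\lambda \in P = \sum_{i \in I} \BZ \vpi_i$. Since any $\lambda$ is an integer combination of fundamental weights and minuscule weights are closed under sign change (if $\nu$ is minuscule, so is $-\nu$, as $W(-\nu) = -W\nu$ and $-\vpi_k \in W\vpi_{k^*}$ for the involution $k \mapsto k^*$), it suffices to treat each $\vpi_i$ individually. Thus the crux is: \emph{every} fundamental weight in a simply-laced type other than $E_8$ can be written as a sum of minuscule weights.

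First I would recall the classification of minuscule weights in the simply-laced types: in type $A_n$ every fundamental weight $\vpi_i$ is minuscule; in type $D_n$ the minuscule fundamental weights are $\vpi_1$ (vector) and $\vpi_{n-1}, \vpi_n$ (the two spin weights); in type $E_6$ they are $\vpi_1$ and $\vpi_6$; and in type $E_7$ there is the single minuscule weight $\vpi_7$. The point of excluding $E_8$ is precisely that $E_8$ has \emph{no} minuscule fundamental weights at all, so the span of minuscule weights is proper and the statement fails there. For $A_n$ there is nothing to prove. For the remaining types the task reduces to expressing each non-minuscule fundamental weight as a $\BZ$-combination of the minuscule ones.

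The concrete step is a finite, type-by-type verification using the standard realization of the weight lattices. In type $D_n$, writing weights in the orthonormal basis $\{\varepsilon_1,\dots,\varepsilon_n\}$, one has $\vpi_i = \varepsilon_1 + \cdots + \varepsilon_i$ for $i \le n-2$, while the spin weights are $\vpi_{n-1} = \tfrac12(\varepsilon_1 + \cdots + \varepsilon_{n-1} - \varepsilon_n)$ and $\vpi_n = \tfrac12(\varepsilon_1 + \cdots + \varepsilon_n)$; then for $2 \le i \le n-2$ one checks $\vpi_i = \vpi_{i-1} + \varepsilon_i$ and expresses each $\varepsilon_j$ through $\vpi_1 = \varepsilon_1$, $\vpi_n$, and $\vpi_{n-1}$ (indeed $\varepsilon_n = \vpi_n - \vpi_{n-1}$, and successively each $\varepsilon_j$ is a signed combination of spin weights and earlier $\varepsilon$'s). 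For $E_6$ and $E_7$ one performs the analogous explicit bookkeeping against the known minuscule generators, reading the coefficients off the inverse Cartan matrix; this is a finite check in each rank. Since the argument is ultimately a root-system computation already recorded in \cite{S} (and \cite[Theorem~2.1]{L}), I would cite those references for the case analysis rather than reproduce all tables.

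The main obstacle is not conceptual but bookkeeping: one must verify that the minuscule weights actually \emph{generate} the full weight lattice $P$ over $\BZ$ (not merely a finite-index sublattice), which requires checking the integrality of the explicit expansions in each exceptional type. This is exactly where the $E_8$ exclusion becomes essential, since there the minuscule weights generate only the root lattice (in fact the zero sublattice of minuscule weights), which is proper in $P$. Given that the needed decompositions are already established in the cited literature, the cleanest presentation is to state the classification, reduce to the fundamental weights via additivity and the minuscule-closure under negation, and invoke \cite{S} for the remaining finite verification.
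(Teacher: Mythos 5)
Your proposal is correct and ultimately takes the same route as the paper: the paper gives no independent proof of Proposition~\ref{prop:stem}, stating it as a known fact from \cite{S} (see also \cite[Theorem~2.1]{L}), which is exactly the citation your argument falls back on, and your added reductions (closure of minuscule weights under negation, hence reduction to fundamental weights, plus the explicit $A_n$ and $D_n$ checks) are sound. One caveat: for $E_6$, $E_7$ (and already for $D_n$) one genuinely needs Weyl-orbit elements of the minuscule fundamental weights, not $\BZ$-combinations of the minuscule fundamental weights themselves --- these span only a proper sublattice of $P$ (rank $2$, $1$, $3$ respectively) --- so the remark about reading coefficients off the inverse Cartan matrix should be dropped in favor of the citation you already make.
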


For $\lambda \in P$, we define $\minu(\lambda)$ to be the set of all finite 
sequences $(\nu_{1},\nu_{2},\dots,\nu_{s})$, $s \ge 0$, of 
minuscule weights in $P$ such that $\lambda=\nu_{1}+\nu_{2}+\cdots+\nu_{s}$, and set
\begin{equation}
\APc(\lambda):=\bigcup_{
(\nu_{1},\nu_{2},\dots,\nu_{s}) \in \minu(\lambda)}
\bigl\{ \Gamma_{1} \ast \cdots \ast \Gamma_{s} \mid 
\Gamma_{u} \in \RAPc(\nu_{u}),\,1 \le u \le s \bigr\};
\end{equation}
notice that $\APc(\lambda) \ne \emptyset$ by Propositions~\ref{prop:minu} and 
\ref{prop:stem}. Combining Propositions~\ref{prop:minu} and \ref{prop:sum}, 
we obtain the following. 
%
%
\begin{cor} \label{cor:AP0}
Theorem~\ref{thm:main} holds for arbitrary $\lambda \in P$ and 
$\Gamma \in \AP_{0}(\lambda)$. 
\end{cor}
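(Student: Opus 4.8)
The plan is to deduce the corollary by induction on the number $s$ of minuscule summands in a decomposition of $\lambda$, using Proposition~\ref{prop:minu} (together with the definition of $\RAPc$) for the base case and Proposition~\ref{prop:sum} for the inductive step, while Proposition~\ref{prop:stem} guarantees that such decompositions exist and hence that $\APc(\lambda) \neq \emptyset$.

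First I would fix an arbitrary $\Gamma \in \APc(\lambda)$. By the definition of $\APc(\lambda)$, there is a sequence $(\nu_{1},\dots,\nu_{s}) \in \minu(\lambda)$ and reduced alcove paths $\Gamma_{u} \in \RAPc(\nu_{u})$ for $1 \le u \le s$ such that $\Gamma = \Gamma_{1} \ast \cdots \ast \Gamma_{s}$. The goal is then to establish $\be^{\lambda} \cdot \OQG{w} = \bF_{\lambda,w}(\Gamma)$ for all $w \in W$. For the base case $s = 0$ we have $\lambda = 0$ and $\Gamma$ the empty alcove path, for which $\QW_{0,w}(\Gamma)$ consists of the single trivial walk and all associated statistics vanish, so $\bF_{0,w}(\Gamma) = \OQG{w} = \be^{0} \cdot \OQG{w}$; the case $s = 1$ is precisely the assertion that $\Gamma_{1} \in \RAPc(\nu_{1})$, which is the content of Proposition~\ref{prop:minu} and the definition of $\RAPc$.

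For the inductive step, set $\mu := \nu_{1} + \cdots + \nu_{s-1}$ and $\Xi := \Gamma_{1} \ast \cdots \ast \Gamma_{s-1}$, so that $\Xi \in \APc(\mu)$ and, by the inductive hypothesis applied to every $w \in W$, the equality $\be^{\mu} \cdot \OQG{w} = \bF_{\mu,w}(\Xi)$ holds for all $w$. Since also $\be^{\nu_{s}} \cdot \OQG{w} = \bF_{\nu_{s},w}(\Gamma_{s})$ for all $w$, because $\Gamma_{s} \in \RAPc(\nu_{s})$, Proposition~\ref{prop:sum} yields $\be^{\lambda} \cdot \OQG{w} = \bF_{\lambda,w}(\Xi \ast \Gamma_{s})$ for all $w$; using the associativity of concatenation, $\Xi \ast \Gamma_{s} = \Gamma$, which completes the step and hence the induction.

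This argument is essentially bookkeeping, so I do not anticipate a serious obstacle; the only point requiring care is that Proposition~\ref{prop:sum} demands its two input formulas to hold for \emph{all} $w \in W$ simultaneously, so the induction must be carried out uniformly in $w$ rather than for a single fixed $w$. Since each of Proposition~\ref{prop:minu}, Proposition~\ref{prop:sum}, and the definition of $\RAPc$ is phrased uniformly in $w$, this causes no difficulty, and the conclusion holds for the arbitrarily chosen $\Gamma \in \APc(\lambda)$.
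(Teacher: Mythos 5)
Your proposal is correct and follows essentially the same route as the paper: the paper's proof of this corollary is precisely the combination of Proposition~\ref{prop:minu} (base case, via the definition of $\RAPc$) and Proposition~\ref{prop:sum} (inductive step over the minuscule summands $\nu_{1},\dots,\nu_{s}$ provided by Proposition~\ref{prop:stem}), which is the induction you spell out. Your explicit remarks on the $s=0$ case and on the need for uniformity in $w$ when invoking Proposition~\ref{prop:sum} are careful touches that the paper leaves implicit.
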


Also, we know the following fact from the proof of \cite[Lemma~9.3]{LP2}. 
%
%
\begin{prop} \label{prop:YBD}
Let $\lambda \in P$. Let $\Gamma \in \RAP(\lambda)$ and $\Gamma' \in \AP(\lambda)$. 
Then, $\Gamma$ can be obtained from $\Gamma'$ by repeated application of 
the following procedures {\rm (YB)} and {\rm (D):}
\begin{enumerate}
\item[\rm (YB)] for $\alpha,\beta \in \Delta$ 
such that $\pair{\alpha}{\beta^{\vee}} \le 0$, or equivalently, 
$\pair{\beta}{\alpha^{\vee}} \le 0$, one replaces a segment 
$\alpha,\,s_{\alpha} \beta,\,
s_{\alpha}s_{\beta} \alpha,\,\dots,\,
s_{\beta} \alpha,\,\beta$ by
$\beta,\,s_{\beta} \alpha,\,\dots,\,
s_{\alpha}s_{\beta} \alpha,\,
s_{\alpha} \beta,\,\alpha$\,{\rm;}

\item[\rm (D)] one deletes a segment of the form 
$\alpha$, $-\alpha$ for $\alpha \in \Delta$. 
\end{enumerate}
\end{prop}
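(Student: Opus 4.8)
\emph{Strategy.} The plan is to translate the statement about alcove paths into the language of words in the affine Weyl group $W_{\af}$, where it becomes the classical fact that any word can be brought to a prescribed reduced word by braid relations and cancellations, and then to check that these two Coxeter-theoretic moves are precisely (YB) and (D). This is the approach underlying the gallery/word techniques of \cite{LP2}.

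\emph{Step 1: paths as words.} Recall that $W_{\af}$ acts simply transitively on the set of alcoves. Writing each alcove in a path $\Gamma'=(A_{0},A_{1},\dots,A_{m'})$ with $A_{0}=A_{\circ}$ as $A_{t}=v_{t}A_{\circ}$ (so $v_{0}=e$), adjacency of $A_{t-1}$ and $A_{t}$ means $v_{t}=v_{t-1}s_{i_{t}}$ for a unique $i_{t}\in I_{\af}$; thus $\Gamma'$ determines the word $(i_{1},\dots,i_{m'})$, and the label $\gamma_{t}\in\Delta$ is recovered as the finite root recording the crossed hyperplane, namely $\gamma_{t}=\pm v_{t-1}(\alpha_{i_{t}})$ with sign dictated by the direction of crossing. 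Since the endpoint $A_{m'}=A_{\lambda}$ is a fixed alcove, there is a unique $u_{\lambda}\in W_{\af}$ with $u_{\lambda}A_{\circ}=A_{\lambda}$, and by simple transitivity every alcove path from $A_{\circ}$ to $A_{\lambda}$ has word representing this same element $u_{\lambda}$. Moreover, a path is reduced exactly when its word is a reduced word for $u_{\lambda}$, with length $\ell(u_{\lambda})$. In particular $\Gamma$ corresponds to a reduced word for $u_{\lambda}$ and $\Gamma'$ to an arbitrary word for $u_{\lambda}$.

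\emph{Step 2: the word problem.} By the deletion condition together with the Matsumoto--Tits theorem for the Coxeter group $W_{\af}$, any word for $u_{\lambda}$ can be transformed into any prescribed reduced word for $u_{\lambda}$ by repeatedly applying (i) cancellations of consecutive pairs $s_{i}s_{i}$ and (ii) braid moves $(s_{i}s_{j}s_{i}\cdots)=(s_{j}s_{i}s_{j}\cdots)$ of length $m_{ij}$. Concretely, one first shortens $\Gamma'$ to a reduced word using braid moves and cancellations (length non-increasing), and then connects reduced words to $\Gamma$ by braid moves alone. Since the target $\Gamma$ has minimal length and braid moves preserve length, no lengthening insertions are ever required, which matches the absence of an ``insertion'' move in the statement.

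\emph{Step 3: matching the moves, and the main obstacle.} A cancellation $s_{i}s_{i}$ corresponds to a step immediately followed by its reverse across the same hyperplane, i.e.\ a backtrack $A_{t-1}\edge{\gamma}A_{t}\edge{-\gamma}A_{t-1}$, whose labels form the segment $\gamma,-\gamma$; this is exactly (D). A braid move occurs within a rank-two standard parabolic, so geometrically the path runs through the alcoves of a rank-two affine subsystem, and the two reduced words for the corresponding rank-two element correspond to the two convex orders on the positive roots of the rank-two subsystem spanned by the two boundary roots $\alpha,\beta$; listing these roots in the two orders gives precisely $\alpha,\,s_{\alpha}\beta,\,s_{\alpha}s_{\beta}\alpha,\dots,\,s_{\beta}\alpha,\,\beta$ and its reverse, which is (YB), the hypothesis $\pair{\alpha}{\beta^{\vee}}\le 0$ reflecting that $\alpha,\beta$ are the simple roots of that subsystem. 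I expect the main obstacle to be this last identification: verifying that the abstract braid move reproduces the (YB) segment with exactly the labels $s_{\alpha}\beta,\,s_{\alpha}s_{\beta}\alpha,\dots$ and the correct affine levels $l_{t}$ (tracked via the reflections $\ha{r}_{\gamma_{t},l_{t}}$), a rank-two computation that must be carried out uniformly over all admissible pairs. A secondary technical point is the bookkeeping for the translation $t_{\lambda}$ with $\lambda\in P$: passing from $u_{\lambda}\in W_{\af}$ to $t_{\lambda}$ in the extended affine group $\ti{W}_{\af}=W\ltimes P$ introduces a length-zero factor $\pi\in\Omega$ stabilizing $A_{\circ}$, which carries no crossings and only conjugates the simple-reflection labels by a diagram automorphism, so it does not disturb the identification of braid moves with (YB) moves.
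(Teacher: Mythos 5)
Your proof is correct and coincides in essence with the argument the paper relies on: the paper does not prove Proposition~\ref{prop:YBD} itself but cites the proof of \cite[Lemma~9.3]{LP2}, which is precisely this gallery/word-problem argument (alcove paths from $A_{\circ}$ to $A_{\lambda}$ viewed as words in $W_{\af}$ for the unique $u_{\lambda}$ with $u_{\lambda}A_{\circ}=A_{\lambda}$, Tits' solution of the word problem supplying braid moves and $s_{i}s_{i}$-cancellations, which translate exactly into (YB) and (D) on the label sequences). The only cosmetic point is that your aside on the length-zero factor $\pi\in\Omega$ is unnecessary, since your Step 1 works entirely with $u_{\lambda}\in W_{\af}$ and never requires $t_{\lambda}$ itself.
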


\begin{rem} \label{rem:YB}
Since $\Fg$ is simply-laced, 
if $\alpha,\beta \in \Delta$ satisfy 
$\pair{\alpha}{\beta^{\vee}} \le 0$, or equivalently 
$\pair{\beta}{\alpha^{\vee}} \le 0$, then either of the following holds: 
\begin{equation*}
\text{(a) } \pair{\alpha}{\beta^{\vee}}= \pair{\beta}{\alpha^{\vee}}=0, \qquad 
\text{(b) } \pair{\alpha}{\beta^{\vee}}= \pair{\beta}{\alpha^{\vee}}=-1.
\end{equation*}
If (a) (resp., (b)) holds, then (YB) is just the following replacement: 
\begin{equation*}
\alpha,\,\beta \mapsto \beta,\,\alpha \qquad 
\text{(resp., $\alpha,\,\alpha+\beta,\,\beta \mapsto \beta,\,\alpha+\beta,\,\alpha$)}. 
\end{equation*}
\end{rem}
%
%
\begin{thm}[to be proved in Section~\ref{sec:YB}] \label{thm:YB}
Let $\lambda \in P$ and $w \in W$. 
Let $\Gamma, \Xi \in \AP(\lambda)$, and 
assume that $\Xi$ is obtained from $\Gamma$ by {\rm (YB)} in Proposition~\ref{prop:YBD}. 
Then, there exist a subset $\ti{\QW}^{(0)}_{\lambda,w}(\Gamma)$ of 
$\ti{\QW}_{\lambda,w}(\Gamma)$ and a subset $\ti{\QW}^{(0)}_{\lambda,w}(\Xi)$ of 
$\ti{\QW}_{\lambda,w}(\Xi)$ such that the following hold {\rm :}
\begin{enu}
\item There exists a bijection 
$\YB:\ti{\QW}^{(0)}_{\lambda,w}(\Gamma) \rightarrow \ti{\QW}^{(0)}_{\lambda,w}(\Xi)$
satisfying the conditions that for $(\bw,\bb) \in \ti{\QW}^{(0)}_{\lambda,w}(\Gamma)$, 
\begin{equation} \label{eq:YB1}
\begin{cases}
(-1)^{\YB(\bw,\bb)}=(-1)^{(\bw,\bb)}, \qquad \ed(\YB(\bw,\bb))=\ed(\bw,\bb), \\[1mm]
  \qwt(\YB(\bw,\bb)) = \qwt(\bw,\bb), \qquad 
  \qwt^{\vee}(\YB(\bw,\bb)) = \qwt^{\vee}(\bw,\bb), \\[1mm]
  \wt(\YB(\bw,\bb)) = \wt(\bw,\bb), \qquad \deg(\YB(\bw,\bb)) = \deg(\bw,\bb). 
\end{cases}
\end{equation}
\item There exists an involution $\YB$ on 
$\ti{\QW}_{\lambda,w}(\Gamma) \setminus \ti{\QW}^{(0)}_{\lambda,w}(\Gamma)$ 
satisfying the conditions that for $(\bw,\bb) \in 
\ti{\QW}_{\lambda,w}(\Gamma) \setminus \ti{\QW}^{(0)}_{\lambda,w}(\Gamma)$, 
\begin{equation} \label{eq:YB2}
\begin{cases}
(-1)^{\YB(\bw,\bb)}=-(-1)^{(\bw,\bb)}, \qquad \ed(\YB(\bw,\bb))=\ed(\bw,\bb), \\[1mm]
  \qwt(\YB(\bw,\bb)) = \qwt(\bw,\bb), \qquad 
  \qwt^{\vee}(\YB(\bw,\bb)) = \qwt^{\vee}(\bw,\bb), \\[1mm]
  \wt(\YB(\bw,\bb)) = \wt(\bw,\bb), \qquad \deg(\YB(\bw,\bb)) = \deg(\bw,\bb). 
\end{cases}
\end{equation}

\item There exists an involution $\YB$ on 
$\ti{\QW}_{\lambda,w}(\Xi) \setminus \ti{\QW}^{(0)}_{\lambda,w}(\Xi)$ 
satisfying the same conditions as in \eqref{eq:YB2}. 
\end{enu}
\end{thm}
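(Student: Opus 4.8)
The plan is to prove Theorem~\ref{thm:YB} by constructing the required maps explicitly, working locally on the segment of the alcove path where the Yang--Baxter transformation (YB) acts. By Remark~\ref{rem:YB}, since $\Fg$ is simply-laced, the transformation (YB) takes one of two forms: either an orthogonal swap $\alpha,\beta \mapsto \beta,\alpha$ (case (a)), or a length-three braid-type move $\alpha,\alpha+\beta,\beta \mapsto \beta,\alpha+\beta,\alpha$ (case (b)). Outside the affected positions, the alcove paths $\Gamma$ and $\Xi$ coincide, so any decorated quantum walk agrees with its image at every step before and after the segment; in particular, the partial walk $(w_0,\dots,w_{t_0})$ up to the start of the segment is shared, and the pre- and post-segment contributions to $\ed$, $\wt$, $\qwt$, $\qwt^\vee$, and the sign are identical for a walk and its image. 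Thus I would reduce everything to a careful local analysis of how a walk traverses the two (resp.\ three) edges in the two orderings.

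First I would handle case (a), the orthogonal swap. Here the key observation is that when $\pair{\alpha}{\beta^\vee}=0$, the reflections $s_\alpha$ and $s_\beta$ commute, so the two-step segment has the same set of reachable endpoints regardless of the order, and one can match each local configuration across the two orderings in a way that preserves the endpoint $w_{t_0+2}$. The bulk of the work is bookkeeping: one must verify that the local contributions to each statistic coincide. For the weights and quantum weights this follows because orthogonality makes the roots $|w^{-1}\gamma|$ add independently; the subtle point, as the authors flag, is the degree $\deg$, since $\deg(\bw,\bb)$ mixes $\wt_t$ and $\qwt_t^\vee$ cumulatively via \eqref{eq:deg'} and also involves the level terms $l_t'$. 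I would track $\qwt_t^\vee$ and $\wt_t$ through the segment in both orderings and check that the cross terms $\pair{\wt_t(\bw)}{\qwt_t^\vee(\bw,\bb)}$ plus the $l_t'$ corrections are order-independent; orthogonality should force cancellation of the only potentially order-sensitive cross term. In this case I expect all configurations to match bijectively with matching signs, contributing to $\ti{\QW}^{(0)}$ with the trivial involution on the complement being empty.

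The heart of the matter is case (b), the braid move on three edges. Here I would enumerate the local walk patterns through $\alpha,\alpha+\beta,\beta$ versus $\beta,\alpha+\beta,\alpha$, classifying each by which edges are traversed (vs.\ stationary) and the Bruhat/quantum type of each edge, using Lemma~\ref{lem:ell} to read off edge types from signs of $w^{-1}\gamma$. Crucially, the two root orderings realize the \emph{same} reflection $s_\alpha s_{\alpha+\beta} s_\beta = s_\beta s_{\alpha+\beta} s_\alpha$ in the rank-two subsystem, so the global endpoint $w_{t_0+3}$ can be preserved. For most patterns I expect a configuration-to-configuration bijection $\YB$ preserving all six statistics (defining $\ti{\QW}^{(0)}$); the statistic-preservation is again dominated by the degree computation, where the mixed sums over the three steps must be shown to be reordering-invariant. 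The remaining patterns—those with no matching partner across the transformation—must pair up internally via sign-reversing involutions on each side, which is where the decoration $\bb$ at stationary simple-root steps enters: I would use $\bb$ to flip configurations in $\ti{\QW}_{\lambda,w}(\Gamma)\setminus\ti{\QW}^{(0)}$ (and symmetrically on $\Xi$), arranging that the involution toggles exactly one factor of $(-1)^{\bb(t)}$ while preserving $\qwt$, $\qwt^\vee$, $\wt$, and $\deg$. That this can be done while simultaneously fixing $\deg$ is the delicate technical core: one must verify that toggling $\bb(t)$ together with the compensating change in which edges are traversed leaves \eqref{eq:deg} unchanged, exploiting the $\tfrac12\pair{\qwt}{\qwt^\vee}$ term and the $\bb(t)l_t'$ correction.

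The main obstacle, as the authors themselves emphasize, will be controlling the degree term $q^{\deg(\bw,\bb)}$ throughout. Unlike the purely additive statistics $\wt$, $\qwt$, $\qwt^\vee$, which decompose cleanly over steps, the degree \eqref{eq:deg} is quadratic via $\tfrac12\pair{\qwt(\bw,\bb)}{\qwt^\vee(\bw,\bb)}$ and carries cumulative cross terms $\pair{\wt_t(\bw)}{\qwt_t^\vee(\bw,\bb)}$ that depend on the running partial sums, hence on the order in which edges are traversed; the level corrections $l_t'$, which change under reordering by Remark~\ref{rem:cat}, further complicate the comparison. The bulk of the proof will therefore be a case-by-case verification that $\deg(\YB(\bw,\bb))=\deg(\bw,\bb)$, which I would organize by reducing to the rank-two subsystem generated by $\alpha,\beta$ and computing the degree contributions of the segment in closed form for each local pattern. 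Once the degree invariance is established in every case, assembling the global bijection and the two complementary involutions from the local data is routine, since everything outside the segment is common to $\Gamma$ and $\Xi$.
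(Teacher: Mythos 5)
Your outline reproduces the architecture of the paper's own proof: reduce to the affected segment (everything outside it is common to $\Gamma$ and $\Xi$), dispose of the orthogonal swap separately (where indeed the paper takes $\ti{\QW}^{(0)}_{\lambda,w}(\Gamma)=\ti{\QW}_{\lambda,w}(\Gamma)$ and no involution is needed), and in the braid case combine a statistic-preserving matching of local patterns with sign-reversing involutions that pair $\bb$-decorated stationary configurations against configurations traversing more edges, with the degree equality checked case by case. This is exactly how Section~\ref{sec:YB} proceeds, including the mechanism you describe for the involution (removing a $\bb=1$ decoration while replacing a single crossing of the $\alpha+\beta$ hyperplane by a three-edge traversal).

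There is, however, a genuine gap: you treat the enumeration and matching of local patterns as ``bookkeeping'' readable from Lemma~\ref{lem:ell}, but that lemma only tells you whether an \emph{already given} edge is an ascent or a descent; it cannot decide which local configurations actually exist in $\QBG(W)$, and hence which partner a given configuration has — and this existence question is the mathematical core of the proof. Concretely, two inputs are indispensable and absent from your plan. (i) For walks traversing exactly two of the three edges, knowing $s_\alpha s_{\alpha+\beta}s_\beta = s_\beta s_{\alpha+\beta}s_\alpha$ does not produce the partner walk: the paper constructs it via the existence and uniqueness of label-increasing/label-decreasing directed paths in $\QBG(W)$ with respect to a reflection order (\cite[Theorem~7.3]{LNSSS1}), and the degree comparison there further relies on the fact that shortest paths between fixed endpoints have the same total quantum-edge label (\cite[Proposition~8.1]{LNSSS1}); which of the two candidate partner paths exists also determines whether the configuration contributes to the bijection \eqref{eq:YB1} or to the involution \eqref{eq:YB2}, so this dichotomy cannot be skipped. (ii) For the sign-reversing pairings — e.g., a walk stationary with $\bb=1$ at a simple-root step crossing only the middle hyperplane by a quantum edge, paired against a walk crossing all three hyperplanes — one needs the rank-two lemmas of Appendix~\ref{sec:lem} (Lemmas~\ref{lem:qqq}--\ref{lem:bbb}), which characterize exactly when such three-step paths exist and force the simple-root and sign conditions that make the partner well defined, the map an involution, and the statistics (including $\qwt$, $\qwt^\vee$ and $\deg$) match. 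Without these structural results on the quantum Bruhat graph, the case analysis you propose cannot be carried out.
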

%
%
\begin{cor} \label{cor:YB}
Let $\lambda \in P$ and $w \in W$. 
Let $\Gamma, \Xi \in \AP(\lambda)$. 
If $\Xi$ is obtained from $\Gamma$ by {\rm (YB)}, then 
$\bF_{\lambda,w}(\Gamma) = \bF_{\lambda,w}(\Xi)$ holds in $\BK \subset \KQGr$. 
\end{cor}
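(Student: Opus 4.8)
The plan is to deduce Corollary~\ref{cor:YB} directly from Theorem~\ref{thm:YB} by a sign-cancellation argument. Recall that $\bF_{\lambda,w}(\Gamma) = \sum_{(\bw,\bb)\in\ti{\QW}_{\lambda,w}(\Gamma)}\bG(\bw,\bb)$, where each summand $\bG(\bw,\bb)$ is determined by the six quantities $(-1)^{(\bw,\bb)}$, $\deg(\bw,\bb)$, $\ed(\bw)$, $\wt(\bw)$, $\qwt(\bw,\bb)$, and $\qwt^{\vee}(\bw,\bb)$, since the twisted Schubert class appearing is $\OQGL{\ed(\bw)t_{\qwt^{\vee}(\bw,\bb)}}{-\lng\wt(\bw)-\lng\qwt(\bw,\bb)}$. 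First I would split each sum according to the decomposition $\ti{\QW}_{\lambda,w}(\Gamma) = \ti{\QW}^{(0)}_{\lambda,w}(\Gamma) \sqcup \bigl(\ti{\QW}_{\lambda,w}(\Gamma)\setminus\ti{\QW}^{(0)}_{\lambda,w}(\Gamma)\bigr)$, and similarly for $\Xi$, so that
\begin{equation*}
\bF_{\lambda,w}(\Gamma) = \sum_{(\bw,\bb)\in\ti{\QW}^{(0)}_{\lambda,w}(\Gamma)}\bG(\bw,\bb) + \sum_{(\bw,\bb)\in\ti{\QW}_{\lambda,w}(\Gamma)\setminus\ti{\QW}^{(0)}_{\lambda,w}(\Gamma)}\bG(\bw,\bb),
\end{equation*}
and likewise for $\bF_{\lambda,w}(\Xi)$.

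The key observation is that the six data determining $\bG(\bw,\bb)$ are exactly the quantities preserved (up to the sign in the non-$(0)$ part) by the maps in Theorem~\ref{thm:YB}. For part~(2), the involution $\YB$ on $\ti{\QW}_{\lambda,w}(\Gamma)\setminus\ti{\QW}^{(0)}_{\lambda,w}(\Gamma)$ fixes $\ed$, $\wt$, $\qwt$, $\qwt^{\vee}$, and $\deg$ while flipping the sign $(-1)^{(\bw,\bb)}$; hence $\bG(\YB(\bw,\bb)) = -\bG(\bw,\bb)$. Since $\YB$ is a fixed-point-free involution (an involution with a sign change cannot fix any element, as that would force $(-1)^{(\bw,\bb)} = -(-1)^{(\bw,\bb)}$), pairing each element with its image shows that the second sum in the display above vanishes in $\BK$. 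The same argument applied to part~(3) shows that the corresponding sum over $\ti{\QW}_{\lambda,w}(\Xi)\setminus\ti{\QW}^{(0)}_{\lambda,w}(\Xi)$ also vanishes. Therefore $\bF_{\lambda,w}(\Gamma) = \sum_{(\bw,\bb)\in\ti{\QW}^{(0)}_{\lambda,w}(\Gamma)}\bG(\bw,\bb)$ and $\bF_{\lambda,w}(\Xi) = \sum_{(\bw,\bb)\in\ti{\QW}^{(0)}_{\lambda,w}(\Xi)}\bG(\bw,\bb)$.

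It remains to match the two $(0)$-sums using part~(1). The bijection $\YB:\ti{\QW}^{(0)}_{\lambda,w}(\Gamma)\to\ti{\QW}^{(0)}_{\lambda,w}(\Xi)$ preserves all six determining quantities with no sign change, so $\bG(\YB(\bw,\bb)) = \bG(\bw,\bb)$ term by term. Reindexing the sum over $\ti{\QW}^{(0)}_{\lambda,w}(\Xi)$ via this bijection gives
\begin{equation*}
\sum_{(\bw,\bb)\in\ti{\QW}^{(0)}_{\lambda,w}(\Xi)}\bG(\bw,\bb) = \sum_{(\bw,\bb)\in\ti{\QW}^{(0)}_{\lambda,w}(\Gamma)}\bG(\YB(\bw,\bb)) = \sum_{(\bw,\bb)\in\ti{\QW}^{(0)}_{\lambda,w}(\Gamma)}\bG(\bw,\bb),
\end{equation*}
which completes the identification $\bF_{\lambda,w}(\Gamma) = \bF_{\lambda,w}(\Xi)$ in $\BK \subset \KQGr$. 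I expect this corollary to be essentially formal once Theorem~\ref{thm:YB} is in hand; the only point requiring a moment's care is verifying that the sign-reversing maps in parts~(2) and~(3) are genuinely fixed-point-free, which is automatic from the sign condition in \eqref{eq:YB2}. All the real difficulty is concentrated in Theorem~\ref{thm:YB} itself---the construction of the subsets and the sijection---so here I would keep the argument short and purely bookkeeping.
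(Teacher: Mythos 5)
Your proof is correct and is precisely the argument the paper intends: the paper states Corollary~\ref{cor:YB} as an immediate consequence of Theorem~\ref{thm:YB}, leaving implicit exactly the bookkeeping you carry out (cancellation of the complementary sums via the sign-reversing, hence fixed-point-free, involutions of parts~(2)--(3), and term-by-term matching of the $(0)$-parts via the bijection of part~(1)). Nothing is missing; your observation that fixed-point-freeness is automatic from the sign condition in \eqref{eq:YB2} is the only point needing care, and you handle it correctly.
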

%
%
\begin{thm}[to be proved in Section~\ref{sec:D}] \label{thm:D}
Let $\lambda \in P$ and $w \in W$. 
Let $\Gamma, \Xi \in \AP(\lambda)$, and 
assume that $\Xi$ is obtained from $\Gamma$ by {\rm (D)} in Proposition~\ref{prop:YBD}. 
Then, there exists a subset $\ti{\QW}^{(1)}_{\lambda,w}(\Gamma)$ of 
$\ti{\QW}_{\lambda,w}(\Gamma)$ such that the following hold {\rm :}
\begin{enu}
\item There exists a bijection 
$\D:\ti{\QW}^{(1)}_{\lambda,w}(\Gamma) \rightarrow \ti{\QW}_{\lambda,w}(\Xi)$
satisfying the conditions that for $(\bw,\bb) \in \ti{\QW}^{(1)}_{\lambda,w}(\Gamma)$, 
\begin{equation} \label{eq:D1}
\begin{cases}
(-1)^{\D(\bw,\bb)}=(-1)^{(\bw,\bb)}, \qquad \ed(\D(\bw,\bb))=\ed(\bw,\bb), \\[1mm]
  \qwt(\D(\bw,\bb)) = \qwt(\bw,\bb), \qquad 
  \qwt^{\vee}(\D(\bw,\bb)) = \qwt^{\vee}(\bw,\bb), \\[1mm]
  \wt(\D(\bw,\bb)) = \wt(\bw,\bb), \qquad \deg(\D(\bw,\bb)) = \deg(\bw,\bb). 
\end{cases}
\end{equation}
\item There exists an involution $\D$ on 
$\ti{\QW}_{\lambda,w}(\Gamma) \setminus \ti{\QW}^{(1)}_{\lambda,w}(\Gamma)$ 
satisfying the conditions that for $(\bw,\bb) \in 
\ti{\QW}_{\lambda,w}(\Gamma) \setminus \ti{\QW}^{(1)}_{\lambda,w}(\Gamma)$, 
\begin{equation} \label{eq:D2}
\begin{cases}
(-1)^{\D(\bw,\bb)}=-(-1)^{(\bw,\bb)}, \qquad \ed(\D(\bw,\bb))=\ed(\bw,\bb), \\[1mm]
  \qwt(\D(\bw,\bb)) = \qwt(\bw,\bb), \qquad 
  \qwt^{\vee}(\D(\bw,\bb)) = \qwt^{\vee}(\bw,\bb), \\[1mm]
  \wt(\D(\bw,\bb)) = \wt(\bw,\bb), \qquad \deg(\D(\bw,\bb)) = \deg(\bw,\bb). 
\end{cases}
\end{equation}
\end{enu}
\end{thm}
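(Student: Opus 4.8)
The plan is to work locally at the two crossed steps. Write $\Gamma=(\gamma_{1},\dots,\gamma_{m})$ and suppose the deleted segment occupies steps $k,k+1$, so that $\gamma_{k}=\alpha$ and $\gamma_{k+1}=-\alpha$ for some $\alpha\in\Delta$, while $\Xi$ is obtained by removing these two entries. The key structural observation I would record first is that steps $k$ and $k+1$ cross one and the \emph{same} affine hyperplane in opposite directions: one has $\ha{r}_{\gamma_{k},l_{k}}=\ha{r}_{\gamma_{k+1},l_{k+1}}$, together with $l_{k+1}=-l_{k}$ and hence $l_{k+1}'=-l_{k}'$ by \eqref{eq:l'}. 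Consequently $A_{k+1}=A_{k-1}$, and the tail $(\gamma_{k+2},\dots,\gamma_{m})$ of $\Gamma$ together with all its hyperplane data agrees verbatim with the corresponding tail of $\Xi$. Throughout I fix $v:=w_{k-1}$ and $\beta:=v^{-1}\alpha\in\Delta$, noting that $s_{\alpha}v=vs_{\beta}$ and that at either step the only available move is the $\QBG(W)$-edge $v\to vs_{\beta}$, labelled $|\beta|$.

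For part (1) I would observe that at steps $k,k+1$ a walk $\bw$ realizes one of four local patterns: (stay,stay), (move,move), (move,stay), (stay,move). I define $\ti{\QW}^{(1)}_{\lambda,w}(\Gamma)$ to consist of the (stay,stay) walks whose decoration vanishes on the unique element of $S(\bw)$ that can occur among $\{k,k+1\}$ (such an element is present exactly when $|\beta|$ is simple). The map $\D$ then simply deletes the two stationary steps. Because these steps do not move, they contribute nothing to $\ed(\bw)$, leave $\ha{r}_{t}(\bw)$ unchanged, and hence force $\wt_{k}(\bw)=\wt_{k+1}(\bw)=0$; since their decoration is $0$ they contribute nothing to $\qwt,\qwt^{\vee}$ and, via the two subtracted $l'$-sums in \eqref{eq:deg'}, nothing to $\deg'$. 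Combined with the tail identity above, this shows all six statistics are preserved, and that $\D$ is a bijection onto $\ti{\QW}_{\lambda,w}(\Xi)$ whose inverse reinserts a pair of $0$-decorated stationary steps.

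For part (2) I would first verify that the complement is exhausted by four families: \textbf{(a)} the (stay,stay) walks carrying decoration $1$ on the $S$-element (present iff $|\beta|$ simple); \textbf{(b)} the (move,move) walks (present iff both edges $v\rightleftarrows vs_{\beta}$ exist, equivalently iff $|\beta|$ is simple, one edge being a Bruhat edge and the other a quantum edge); and \textbf{(c)},\textbf{(d)} the (move,stay) and (stay,move) walks (present iff the edge $v\to vs_{\beta}$ exists), each carrying the decoration of its $S$-element when $-\beta$ is simple. The involution is given by two rules: $\mathrm{(c)}\leftrightarrow\mathrm{(d)}$, which transposes which step carries the move and transfers the $S$-decoration intact, and $\mathrm{(a)}\leftrightarrow\mathrm{(b)}$. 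A direct check, using $s_{\alpha}v=vs_{\beta}$ and $s_{\alpha}(\pm\alpha^{\vee})=\mp\alpha^{\vee}$, shows that within each pair the contributions to $\qwt$ and $\qwt^{\vee}$ coincide (each being a $T^{-}$-term $|\beta|^{\vee}$ present exactly when $vs_{\beta}<v$, plus a matching $S$-term), that $\ed$ and $\wt$ agree (the partial reflections $\ha{r}_{t}(\bw)$ recombine identically for $t\ge k+1$, using $\ha{r}_{\gamma_{k},l_{k}}^{2}=\mathrm{id}$), and that the sign is reversed: in a $\mathrm{(c)}\leftrightarrow\mathrm{(d)}$ pair the move-sign toggles between the regimes $\beta\in\Delta^{+}$ and $\beta\in\Delta^{-}$ while the $S$-sign is unchanged, whereas the $\mathrm{(a)}\leftrightarrow\mathrm{(b)}$ pair opposes a double-move of local sign $+1$ against a decorated double-stay of local sign $-1$.

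The remaining, and genuinely delicate, point is invariance of $\deg$ under this involution --- the step where the factor $q^{\deg(\cdot)}$ makes the argument substantially harder than its admissible-subset analogue in \cite{KLN}. The difficulty is that, although $\qwt(\bw,\bb)$ and $\qwt^{\vee}(\bw,\bb)$ are preserved (so $\tfrac12\pair{\qwt(\bw,\bb)}{\qwt^{\vee}(\bw,\bb)}$ is unchanged), the \emph{individual} weights $\wt_{k},\wt_{k+1}$ and the \emph{partial} coweights $\qwt_{t}^{\vee}$ for $t\ge k$ do change between paired walks, so the summands of $\deg'$ in \eqref{eq:deg'} are redistributed. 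My plan is to localize this redistribution: in a move step one has $\wt_{k}(\bw)=-l_{k}'\,\ha{r}_{k-1}(\bw)\alpha$ (and $\wt_{k+1}=-\wt_{k}$ in the (move,move) case), while the extra coweight $|\beta|^{\vee}$ enters $\qwt_{t}^{\vee}$ for all $t\ge k$ if the move sits at step $k$ and for all $t\ge k+1$ if it sits at step $k+1$. Telescoping $\sum_{t}\pair{\wt_{t}(\bw)}{\qwt_{t}^{\vee}(\bw,\bb)}$ past steps $k,k+1$ and invoking $l_{k+1}'=-l_{k}'$ should collapse the global difference of $\deg'$ to a single local identity at the crossed hyperplane, which then cancels the discrepancy coming from the $\sum_{t\in T^{-}(\bw)}\sgn(\gamma_{t})l_{t}'$ and $\sum_{t\in S(\bw)}\bb(t)l_{t}'$ terms. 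Carrying out this cancellation carefully --- separately for the $\mathrm{(c)}\leftrightarrow\mathrm{(d)}$ and $\mathrm{(a)}\leftrightarrow\mathrm{(b)}$ pairings and for the two sign regimes of $\beta$ --- is the main computational obstacle of the proof.
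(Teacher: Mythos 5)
Your construction is structurally identical to the paper's proof in Section~\ref{sec:D}: the same four-way local case analysis at the deleted steps, the same set $\ti{\QW}^{(1)}_{\lambda,w}(\Gamma)$ (your (stay,stay) walks with vanishing decoration are exactly the paper's Subcase~\ref{subcase11D}), the same deletion/reinsertion bijection for part (1), and the same two pairings for the involution in part (2): your (c) $\leftrightarrow$ (d) is the paper's Case~\ref{case2D} $\leftrightarrow$ Case~\ref{case3D}, and your (a) $\leftrightarrow$ (b) is Subcase~\ref{subcase12D} $\leftrightarrow$ Case~\ref{case4D}. Your verifications of the sign flip, of $\ed$, $\wt$, $\qwt$, $\qwt^{\vee}$, and your criterion that the (move,move) and decorated (stay,stay) patterns occur precisely when $|\beta|$ is simple (one edge Bruhat, the other quantum), are all correct and agree with the paper.

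The gap is the one you flag yourself: invariance of $\deg$ is given as a plan, not a proof. Your reduction is the right one --- since $\qwt$ and $\qwt^{\vee}$ are preserved, it suffices to compare $\deg'$ of \eqref{eq:deg'}, and by \eqref{eq:wttw} together with $l_{s+1}'=-l_{s}'$ the difference localizes to the two deleted steps --- but the resulting local identity is genuinely case-dependent, and it is where the paper spends most of its effort. Concretely, for your (a) $\leftrightarrow$ (b) pairing one must first prove that the position of the $S$-element (step $s$ versus $s+1$) is correlated with the position of the quantum edge in the (move,move) partner and with $\sgn(\alpha)$; this is what yields, in the notation of \eqref{eq:X} and \eqref{eq:Y}, the identity $X=-\sum_{t\in S(\bw)\cap\{s,s+1\}}\bb(t)l_{t}'=(2\delta_{s+1\in T^{-}(\bv)}-1)\,l_{s}'\,\sgn(\alpha)=Y$. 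For (c) $\leftrightarrow$ (d) one must check $\pair{\wt_{s+1}(\bv)}{\qwt_{s+1}^{\vee}(\bv,\bc)}=\pair{\wt_{s}(\bw)}{\qwt_{s}^{\vee}(\bw,\bb)}+2\delta_{s\in S(\bv)}\bc(s)l_{s}'$, with the extra term absorbed by the $S$-correction sum in $\deg'$. These computations do close up exactly as you predict, so nothing in your outline would fail; but as written, the quantitatively delicate core of the theorem --- the very feature that distinguishes this result from its admissible-subset analogue --- is asserted rather than carried out.
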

%
%
\begin{cor} \label{cor:D}
Let $\lambda \in P$ and $w \in W$. 
Let $\Gamma, \Xi \in \AP(\lambda)$. 
If $\Xi$ is obtained from $\Gamma$ by {\rm (D)}, then 
$\bF_{\lambda,w}(\Gamma) = \bF_{\lambda,w}(\Xi)$ holds in $\BK \subset \KQGr$. 
\end{cor}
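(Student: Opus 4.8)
The plan is to deduce the corollary directly from Theorem~\ref{thm:D} via a sign-reversing-involution argument. The key observation is that the summand $\bG(\bw,\bb)$ in \eqref{eq:main} depends on the decorated quantum walk $(\bw,\bb)$ only through the six quantities $(-1)^{(\bw,\bb)}$, $\deg(\bw,\bb)$, $\ed(\bw)$, $\qwt^{\vee}(\bw,\bb)$, $\wt(\bw)$, and $\qwt(\bw,\bb)$. Hence any map preserving all six of these quantities preserves $\bG$, while any map preserving the latter five but negating the sign $(-1)^{(\bw,\bb)}$ sends $\bG(\bw,\bb)$ to $-\bG(\bw,\bb)$.

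First I would split the sum $\bF_{\lambda,w}(\Gamma) = \sum_{(\bw,\bb) \in \ti{\QW}_{\lambda,w}(\Gamma)} \bG(\bw,\bb)$ into the part indexed by the subset $\ti{\QW}^{(1)}_{\lambda,w}(\Gamma)$ produced by Theorem~\ref{thm:D} and the part indexed by its complement in $\ti{\QW}_{\lambda,w}(\Gamma)$. For the complement, I would invoke the involution $\D$ of Theorem~\ref{thm:D}(2): by \eqref{eq:D2} it preserves the five ``geometric'' statistics while flipping the sign, so $\bG(\D(\bw,\bb)) = -\bG(\bw,\bb)$. This involution is automatically fixed-point-free, since a fixed point would force $(-1)^{(\bw,\bb)} = -(-1)^{(\bw,\bb)}$, which is impossible in $\{\pm 1\}$; therefore the complement decomposes into two-element $\D$-orbits on which the contributions cancel, and the sum over the complement vanishes.

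For the remaining part, I would use the bijection $\D \colon \ti{\QW}^{(1)}_{\lambda,w}(\Gamma) \to \ti{\QW}_{\lambda,w}(\Xi)$ of Theorem~\ref{thm:D}(1). By \eqref{eq:D1} it preserves all six quantities, so $\bG(\bw,\bb) = \bG(\D(\bw,\bb))$ term by term; reindexing along this bijection gives $\sum_{(\bw,\bb) \in \ti{\QW}^{(1)}_{\lambda,w}(\Gamma)} \bG(\bw,\bb) = \sum_{(\bw',\bb') \in \ti{\QW}_{\lambda,w}(\Xi)} \bG(\bw',\bb') = \bF_{\lambda,w}(\Xi)$. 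Adding the two contributions yields $\bF_{\lambda,w}(\Gamma) = \bF_{\lambda,w}(\Xi) + 0 = \bF_{\lambda,w}(\Xi)$ in $\BK \subset \KQGr$, as desired. I note that no appeal to the topological linear independence of the twisted Schubert classes is needed, since the equalities are obtained at the level of the individual summands $\bG$ and then summed in $\BK$.

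In short, once Theorem~\ref{thm:D} is in hand the corollary is immediate; the genuine obstacle lies entirely in that theorem, namely in constructing the subset $\ti{\QW}^{(1)}_{\lambda,w}(\Gamma)$, the bijection, and the fixed-point-free involution, and above all in verifying that the delicate degree statistic $\deg(\bw,\bb)$ is preserved under the deletion procedure (D).
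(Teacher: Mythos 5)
Your proposal is correct and matches the paper's intended argument: the paper treats Corollary~\ref{cor:D} as an immediate consequence of Theorem~\ref{thm:D}, obtained exactly by cancelling the complement $\ti{\QW}_{\lambda,w}(\Gamma) \setminus \ti{\QW}^{(1)}_{\lambda,w}(\Gamma)$ in pairs under the sign-reversing (hence fixed-point-free) involution of part (2), and reindexing the sum over $\ti{\QW}^{(1)}_{\lambda,w}(\Gamma)$ along the statistic-preserving bijection of part (1) to obtain $\bF_{\lambda,w}(\Xi)$. Your observation that $\bG(\bw,\bb)$ depends only on the six listed statistics, so that no linear independence of the classes is needed, is also exactly the point that makes the deduction term-by-term and immediate.
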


Let $\Gamma \in \APc(\lambda)$ and $\Xi \in \RAP(\lambda)$. 
By Proposition~\ref{prop:YBD}, there exists a sequence 
$\Gamma=\Gamma_{0},\Gamma_{1},\dots,\Gamma_{p}=\Xi$ of elements in $\AP(\lambda)$ 
such that $\Gamma_{q}$ is obtained from $\Gamma_{q-1}$ by (YB) or (D) for each $1 \le q \le p$. 
By making use of Corollaries~\ref{cor:YB} and \ref{cor:D} repeatedly, we deduce that 
$\bF_{\lambda,w}(\Gamma) = \bF_{\lambda,w}(\Xi)$ holds in $\BK$. 
Since $\be^{\lambda} \cdot \OQG{w} = \bF_{\lambda,w}(\Gamma)$ in $\BK$ by Corollary~\ref{cor:AP0}, 
we obtain the following. 
%
%
\begin{cor} \label{cor:RAP}
Theorem~\ref{thm:main} holds for arbitrary $\lambda \in P$ and 
$\Gamma \in \RAP(\lambda)$. 
\end{cor}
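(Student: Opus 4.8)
The plan is to obtain Corollary~\ref{cor:RAP} purely as a formal consequence of the results already assembled in this subsection, with no further combinatorial work required. Fix an arbitrary $\lambda \in P$ and an arbitrary reduced alcove path $\Xi \in \RAP(\lambda)$. Since $\APc(\lambda) \ne \emptyset$ (by the remark following its definition, which rests on Propositions~\ref{prop:minu} and \ref{prop:stem}), I would begin by choosing some $\Gamma \in \APc(\lambda)$. The two paths $\Gamma$ and $\Xi$ both lie in $\AP(\lambda)$ and therefore both run from $A_\circ$ to $A_\lambda$; this is the hypothesis needed to invoke Proposition~\ref{prop:YBD}.

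Next I would apply Proposition~\ref{prop:YBD} to the reduced path $\Xi$ and the (possibly non-reduced) path $\Gamma$: it guarantees that $\Xi$ can be obtained from $\Gamma$ by a finite sequence of transformations, each of type (YB) or (D). Concretely, this produces a chain $\Gamma = \Gamma_0, \Gamma_1, \dots, \Gamma_p = \Xi$ of alcove paths in $\AP(\lambda)$, where each $\Gamma_q$ arises from $\Gamma_{q-1}$ by a single application of (YB) or of (D). Along this chain I would invoke Corollary~\ref{cor:YB} at each (YB)-step and Corollary~\ref{cor:D} at each (D)-step; both corollaries assert precisely that the right-hand side $\bF_{\lambda,w}(\,\cdot\,)$ is unchanged under the corresponding transformation, for every $w \in W$. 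Chaining these equalities yields $\bF_{\lambda,w}(\Gamma) = \bF_{\lambda,w}(\Gamma_1) = \cdots = \bF_{\lambda,w}(\Xi)$ in $\BK$.

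Finally, because $\Gamma$ was chosen in $\APc(\lambda)$, Corollary~\ref{cor:AP0} tells us that Theorem~\ref{thm:main} already holds for $\Gamma$, i.e.\ $\be^{\lambda} \cdot \OQG{w} = \bF_{\lambda,w}(\Gamma)$ in $\BK \subset \KQGr$ for all $w \in W$. Combining this with the invariance $\bF_{\lambda,w}(\Gamma) = \bF_{\lambda,w}(\Xi)$ established above gives $\be^{\lambda} \cdot \OQG{w} = \bF_{\lambda,w}(\Xi)$, which is exactly the assertion of Theorem~\ref{thm:main} for the reduced path $\Xi$. Since $\Xi \in \RAP(\lambda)$ and $\lambda \in P$ were arbitrary, this proves Corollary~\ref{cor:RAP}.

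I do not expect any genuine obstacle here, since all the substantive content has been pushed into the earlier statements; the only point requiring a modicum of care is the bookkeeping that the invariance corollaries apply uniformly in $w$, so that the chain of equalities holds simultaneously for all $w \in W$ rather than for one $w$ at a time. The real difficulties lie upstream, in proving the sijection-based invariance results Theorem~\ref{thm:YB} and Theorem~\ref{thm:D} (and in particular in controlling the delicate degree term $q^{\deg(\,\cdot\,)}$), but those are precisely the inputs I am permitted to assume at this stage.
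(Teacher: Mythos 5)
Your proposal is correct and coincides with the paper's own proof: the paper likewise fixes $\Gamma \in \APc(\lambda)$ and $\Xi \in \RAP(\lambda)$, invokes Proposition~\ref{prop:YBD} to produce a chain $\Gamma=\Gamma_0,\Gamma_1,\dots,\Gamma_p=\Xi$ of (YB)/(D) moves, applies Corollaries~\ref{cor:YB} and \ref{cor:D} repeatedly to get $\bF_{\lambda,w}(\Gamma)=\bF_{\lambda,w}(\Xi)$, and concludes via Corollary~\ref{cor:AP0}. Your extra remark about uniformity in $w$ is a harmless refinement of the same argument.
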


Finally, let $\Gamma \in \AP(\lambda)$ and $\Xi \in \RAP(\lambda)$. 
By the same argument as above, we deduce that $\bF_{\lambda,w}(\Gamma) = \bF_{\lambda,w}(\Xi)$ holds in $\BK$. 
Since $\be^{\lambda} \cdot \OQG{w} = \bF_{\lambda,w}(\Xi)$ in $\BK$ by Corollary~\ref{cor:RAP}, 
we conclude that Theorem~\ref{thm:main} holds for arbitrary $\lambda \in P$ and 
$\Gamma \in \AP(\lambda)$. 

\subsection{A few technical remarks about the proof.}
\label{subsec:har}

Keep the notation and setting of Section~\ref{subsec:main}. 
Let $\bw \in \QW_{\lambda,w}(\Gamma)$. 
Let $0 \le t \le m$, and let $t_{1} < t_{2} < \cdots < t_{c}$ be the elements of 
$T(\bw)$ (see \eqref{eq:Tw}) less than or equal to $t$. 
Then we have 
\begin{align}
\ha{r}_{t}(\bw)\lambda & = 
w^{-1}\ha{r}_{\gamma_{t_1},l_{t_1}} \cdots \ha{r}_{\gamma_{t_c},l_{t_c}}(\lambda) \nonumber \\
& = w^{-1}\ha{r}_{\gamma_{t_1},l_{t_1}} \cdots \ha{r}_{\gamma_{t_{c-1}},l_{t_{c-1}}}
(s_{\gamma_{t_c}} \lambda +l_{t_c}\gamma_{t_c}) \nonumber \\
& = w^{-1}\ha{r}_{\gamma_{t_1},l_{t_1}} \cdots \ha{r}_{\gamma_{t_{c-2}},l_{t_{c-2}}}
(s_{\gamma_{t_{c-1}}}s_{\gamma_{t_c}} \lambda +
   l_{t_c}s_{\gamma_{t_{c-1}}} \gamma_{t_c} +l_{t_{c-1}}\gamma_{t_{c-1}}) \nonumber \\
& = \cdots \cdots \cdots \cdots \nonumber \\
& = \underbrace{w^{-1}s_{\gamma_{t_1}} \cdots s_{\gamma_{t_{c-1}}}s_{\gamma_{t_c}}}_{%
    =w_{t}^{-1}} \lambda + 
    \sum_{a=1}^{c} l_{t_a} w^{-1}s_{\gamma_{t_1}} \cdots s_{\gamma_{t_{a-1}}} \gamma_{t_a}. \label{eq:har}
\end{align}
Using this formula, we can show that for $1 \le t \le m$, 
\begin{equation} \label{eq:wttw}
\wt_{t}(\bw) = \ha{r}_{t}(\bw)\lambda-\ha{r}_{t-1}(\bw)\lambda = 
\begin{cases}
- l_{t}' w_{t-1}^{-1}\gamma_{t} & \text{\rm if $t \in T(\bw)$}, \\[1mm]
0 & \text{\rm otherwise}. 
\end{cases}
\end{equation}
Indeed, if $t \notin T(\bw)$, then it is obvious that 
$\wt_{t}(\bw) = 0$ since $\ha{r}_{t}(\bw) = \ha{r}_{t-1}(\bw)$. 
Assume that $t \in T(\bw)$; note that in this case, $t_{c}=t$, and 
$w^{-1}s_{\gamma_{t_1}} \cdots s_{\gamma_{t_{c-1}}} \gamma_{t_c} = 
w_{t-1}^{-1}\gamma_{t}$. We see that 
\begin{align*}
\wt_{t}(\bw) & = \ha{r}_{t}(\bw)\lambda-\ha{r}_{t-1}(\bw)\lambda = 
w_{t}^{-1}\lambda - w_{t-1}^{-1}\lambda + l_{t}w_{t-1}^{-1}\gamma_{t} \\
& = w_{t-1}^{-1}s_{\gamma_{t}}\lambda - w_{t-1}^{-1}\lambda + l_{t}w_{t-1}^{-1}\gamma_{t}
 = -\pair{\lambda}{\gamma_{t}^{\vee}}w_{t-1}^{-1}\gamma_{t} + l_{t}w_{t-1}^{-1}\gamma_{t} \\
& = - l_{t}' w_{t-1}^{-1}\gamma_{t}.
\end{align*}
This proves \eqref{eq:wttw}, as desired. 

\section{Proof of Proposition~\ref{prop:minu}.}
\label{sec:minu}

In this section, we assume that $\lambda \in P$ is a (not necessarily dominant) minuscule weight; 
as in Section~\ref{subsec:KNOS}, 
let $\vpi_{k}$ be the unique minuscule fundamental weight contained in $W\lambda$, and 
let $x \in W$ be the unique minimal-length element of $W$ 
such that $\lambda=x\vpi_{k}$. Also, let $y \in W$ be the (unique) element such that 
$yx$ is the unique minimal-length element in 
$\bigl\{w \in W \mid w\vpi_{k}=\lng\vpi_{k}\bigr\}$; 
recall that $\ell(yx) = \ell(y) + \ell(x)$.
Let $x=s_{j_{a}} \cdots s_{j_{1}}$ and $y = s_{i_{1}} \cdots s_{i_{b}}$ 
be reduced expressions for $x$ and $y$, respectively, and set
\begin{align*}
& \beta_{c}:=s_{j_{a}} \cdots s_{j_{c+1}} \alpha_{j_{c}} \in \Delta^{+} \qquad \text{for $1 \le c \le a$}, \\
& \zeta_{d}:=s_{i_{b}} \cdots s_{i_{d+1}} \alpha_{i_{d}} \in \Delta^{+} \qquad \text{for $1 \le d \le b$}. 
\end{align*}
%
%
\begin{lem}[{cf. \eqref{eq:veceta}}] \label{lem:min}
In the notation and setting above, 
%
%
\begin{equation} \label{eq:mGam}
\Gamma=(-\beta_{a},\dots,-\beta_{1},\zeta_{1},\dots,\zeta_{b}) \in \RAP(\lambda). 
\end{equation}
Moreover, for $1 \le c \le a$, the affine hyperplane between the $(c-1)$-th alcove and 
the $c$-th alcove in $\Gamma$ is $H_{-\beta_{a-c+1},0}$, and 
for $1 \le d \le b$, the affine hyperplane between the $(a+d-1)$-th alcove and 
the $(a+d)$-th alcove in $\Gamma$ is $H_{\zeta_{d},1}$. 
\end{lem}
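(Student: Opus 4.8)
The plan is to exhibit $\Gamma$ as the label sequence of an explicit minimal gallery and to verify along the way that it starts at $A_{\circ}$, ends at $A_{\lambda}$, and is reduced. The one structural fact I will use repeatedly is that two adjacent alcoves are reflections of one another across their common wall, so that if $H_{\gamma_{t},l_{t}}$ is the wall crossed at step $t$ then $A_{t}=\ha{r}_{\gamma_{t},l_{t}}(A_{t-1})$ and hence $A_{m}=\ha{r}_{\gamma_{m},l_{m}}\cdots\ha{r}_{\gamma_{1},l_{1}}(A_{\circ})$. Writing $P_{+}:=\{\alpha\in\Delta^{+}\mid \pair{\lambda}{\alpha^{\vee}}=1\}$ and $P_{-}:=\{\alpha\in\Delta^{+}\mid \pair{\lambda}{\alpha^{\vee}}=-1\}$ (so $\Delta^{+}=P_{+}\sqcup P_{-}\sqcup\{\alpha\mid \pair{\lambda}{\alpha^{\vee}}=0\}$, since $\lambda$ is minuscule), the whole statement reduces to showing that the first $a$ steps cross exactly the hyperplanes $H_{\beta,0}$ for $\beta\in P_{-}$, each once, in the direction $-\beta$, and that the last $b$ steps cross exactly the hyperplanes $H_{\zeta,1}$ for $\zeta\in P_{+}$, each once, in the direction $+\zeta$.

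The algebraic heart is the identification of the two root collections. By the standard description of inversion sets attached to reduced words, $\{\beta_{1},\dots,\beta_{a}\}=\{\alpha\in\Delta^{+}\mid x^{-1}\alpha\in\Delta^{-}\}$ and $\{\zeta_{1},\dots,\zeta_{b}\}=N(y):=\{\alpha\in\Delta^{+}\mid y\alpha\in\Delta^{-}\}$. First I would record the easy inclusions coming from (anti)dominance: if $x^{-1}\alpha\in\Delta^{-}$ then $\pair{\lambda}{\alpha^{\vee}}=\pair{\vpi_{k}}{(x^{-1}\alpha)^{\vee}}\le 0$, which for minuscule $\vpi_{k}$ forces the value $-1$, and conversely $\pair{\lambda}{\alpha^{\vee}}=-1$ forces $x^{-1}\alpha\in\Delta^{-}$, so $\{\beta_{c}\}=P_{-}$; likewise, using $y\lambda=\lng\vpi_{k}$ and antidominance of $\lng\vpi_{k}$, one gets $P_{+}\subseteq N(y)=\{\zeta_{d}\}$. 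It then remains to match cardinalities. Here I would use that $x$ is the minimal coset representative modulo $\mathrm{Stab}_{W}(\vpi_{k})$ and that $z:=yx$ is the minimal coset representative of $\lng\,\mathrm{Stab}_{W}(\vpi_{k})$, together with $\ell(z)=\ell(y)+\ell(x)=a+b$. Since the number of positive roots not orthogonal to a weight is constant along its $W$-orbit, both $|P_{+}|+|P_{-}|=\#\{\alpha\in\Delta^{+}\mid \pair{\lambda}{\alpha^{\vee}}\neq 0\}$ and $\ell(z)=\#\{\alpha\in\Delta^{+}\mid \pair{\lng\vpi_{k}}{\alpha^{\vee}}\neq 0\}$ equal $\#\{\alpha\in\Delta^{+}\mid \pair{\vpi_{k}}{\alpha^{\vee}}\neq 0\}$; hence $|P_{+}|+|P_{-}|=a+b$, while $|P_{-}|=\ell(x)=a$, so $|P_{+}|=b=|N(y)|$ and the inclusion $P_{+}\subseteq N(y)$ is an equality. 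This gives $\{\beta_{c}\}=P_{-}$ and $\{\zeta_{d}\}=P_{+}$.

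For the geometric assembly I would treat the first phase by direct computation. The telescoping identity $s_{\beta_{1}}s_{\beta_{2}}\cdots s_{\beta_{a}}=x$ shows that the partial products of the phase-one reflections carry $A_{\circ}$ successively to $s_{j_{a}}\cdots s_{j_{a-c+1}}A_{\circ}$ and finally to $xA_{\circ}$; moreover at step $c$ the hyperplane $H_{\beta_{a-c+1},0}=H_{-\beta_{a-c+1},0}$ is genuinely a wall of $A_{c-1}=s_{j_{a}}\cdots s_{j_{a-c+2}}A_{\circ}$, being the image under that element of the wall $H_{\alpha_{j_{a-c+1}},0}$ of $A_{\circ}$, and it is crossed in the direction $-\beta_{a-c+1}$. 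This already establishes the first half of the ``moreover'' assertion and that $l_{c}=0$. For the second phase I would determine the successive walls and then read off the endpoint through the level function $\alpha\mapsto\lfloor\pair{\cdot}{\alpha^{\vee}}\rfloor$: each crossing of $H_{\beta,0}$ $(\beta\in P_{-})$ in the direction $-\beta$ moves this function from $0$ to $-1$, each crossing of $H_{\zeta,1}$ $(\zeta\in P_{+})$ in the direction $+\zeta$ moves it from $0$ to $1$, and no hyperplane orthogonal to a root with $\pair{\lambda}{\alpha^{\vee}}=0$ is ever crossed; since these are exactly the integers $\pair{\lambda}{\alpha^{\vee}}$ characterizing $A_{\lambda}$, the terminal alcove is $A_{\lambda}$. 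Reducedness is then immediate: the $a+b$ crossed hyperplanes are precisely the $|P_{+}|+|P_{-}|=a+b$ hyperplanes separating $A_{\circ}$ from $A_{\lambda}$, each crossed once, so the gallery is minimal and $\Gamma\in\RAP(\lambda)$.

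The main obstacle is the second phase, specifically confirming that the hyperplanes $H_{\zeta_{1},1},\dots,H_{\zeta_{b},1}$ really occur as successive walls, in this order, of the alcoves produced after $xA_{\circ}$ --- not merely that they form the correct unordered set with the correct levels. I would handle this by transporting the phase-two subgallery through the translation by $-\lambda$, a symmetry of the affine hyperplane arrangement that sends $H_{\zeta_{d},1}$ to $H_{\zeta_{d},0}$ (as $\pair{\lambda}{\zeta_{d}^{\vee}}=1$) and thereby turns it into a first-phase-type gallery through the origin, to which the explicit analysis of the first phase applies verbatim. Should this bookkeeping prove unwieldy, the alternative is to argue entirely inside the extended affine Weyl group: write $t_{\lambda}=\omega v$ with $\omega$ of length zero and $v\in W_{\af}$, and verify that $\ha{r}_{\zeta_{b},1}\cdots\ha{r}_{\zeta_{1},1}\,s_{\beta_{1}}\cdots s_{\beta_{a}}$ is a reduced factorization of $v$; the correspondence between reduced words for $v$ and reduced alcove paths from $A_{\circ}$ to $A_{\lambda}$ then yields all three conclusions at once.
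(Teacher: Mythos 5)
There is a genuine gap, and it sits exactly at the point you yourself flag as ``the main obstacle.'' Your translation trick is sound as far as it goes: translating by $-\lambda$ sends $H_{\zeta_d,1}$ to $H_{\zeta_d,0}$, and the standard gallery attached to the reduced word $y^{-1}=s_{i_b}\cdots s_{i_1}$ (your first-phase analysis) does cross $H_{\zeta_b,0},\dots,H_{\zeta_1,0}$ in the required order and directions. But that standard gallery runs from $A_\circ$ to $y^{-1}A_\circ$; reversing and translating back, you obtain a genuine gallery from $y^{-1}A_\circ+\lambda$ to $A_\lambda$ with the claimed walls $H_{\zeta_d,1}$. For this to be the continuation of your phase one, you must know that its initial alcove is the terminal alcove of phase one, i.e.\ that $xA_\circ = y^{-1}A_\circ+\lambda$, equivalently $A_\circ-\vpi_k=(yx)^{-1}A_\circ=\mcr{\lng}^{-1}A_\circ$. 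Nothing in your proposal establishes this identity: the phrase ``the explicit analysis of the first phase applies verbatim'' presupposes knowing from which alcove of the form $uA_\circ$ the translated gallery emanates, which is the very point at issue. This identity is precisely the claim to which the paper devotes the final paragraph of its proof (counting separating hyperplanes and using that a Weyl group element is determined by its inversion set). Your fallback plan --- ``verify'' that $\ha{r}_{\zeta_b,1}\cdots\ha{r}_{\zeta_1,1}s_{\beta_1}\cdots s_{\beta_a}$ is a reduced factorization of the $W_{\af}$-part of $t_\lambda$ --- is a restatement of the same unproved fact, not an argument.

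The gap is bridgeable with tools you already set up, which is worth noting. Once you know $\{\beta_c\}=\mathrm{Inv}(x^{-1})=P_-$ and $\{\zeta_d\}=\mathrm{Inv}(y)\supseteq P_+$ (hence $=P_+$ by your counting), compare level functions: for every $\gamma\in\Delta^+$, both $xA_\circ$ and $y^{-1}A_\circ+\lambda$ lie at level $-1$ across $H_{\gamma,0}$ when $\gamma\in P_-$ and at level $0$ otherwise (for the latter alcove the shift $\pair{\lambda}{\gamma^\vee}$ exactly cancels the level $-1$ of $y^{-1}A_\circ$ across $\gamma\in P_+$); since an alcove is determined by its levels, $xA_\circ=y^{-1}A_\circ+\lambda$, and your proof closes. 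Separately, one smaller imprecision: the inclusion $\{\beta_c\}\subseteq P_-$ does not follow from dominance and minusculity alone --- if $x^{-1}\alpha\in\Delta^-$ you must still exclude $\pair{\vpi_k}{(x^{-1}\alpha)^\vee}=0$, and this requires the minimality of $x$ in $xW_{\vpi_k}$ (otherwise $xs_{|x^{-1}\alpha|}$ would be a shorter element of the coset with the same image of $\vpi_k$); the same minimality is implicitly used when you write $\ell(z)=\#\{\alpha\in\Delta^+\mid\pair{\lng\vpi_k}{\alpha^\vee}\ne 0\}$.
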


\begin{proof} 
Consider a dominant weight $\mu$, and an element $w$ in the set $W^{\mu}$ of 
minimal(-length) coset representatives modulo the stabilizer of $\mu$, 
denoted $W_{\mu}$ (as a parabolic subgroup). In the extended affine Weyl group, 
we have the following length-additive factorization of the translation element $t_{w \mu}$:
\begin{equation} \label{dec-trans}
t_{w \mu} = w (t_{\mu} w^{-1}).
\end{equation}
Indeed, this follows from the well-known equality $\ell(t_{w \mu}) = \ell(t_{\mu})$ 
(see, for example, \cite[(2.4.1)]{Mac}) and the fact that
\begin{equation*}
\ell(t_{\mu} w^{-1}) = \ell(wt_{-\mu})=\ell(t_{-\mu})-\ell(w)=\ell(t_{\mu}) - \ell(w).
\end{equation*}
The first and last equalities above are obvious, 
while the second one is the straightforward extension to 
the extended affine Weyl group of the corresponding result in \cite[Lemma~3.3]{LS}; 
indeed, the hypothesis of the mentioned lemma is satisfied, 
as $-\mu$ is anti-dominant and $w \in W^{\mu}$.

Now consider the following reduced alcove paths: 
$\Delta$ from $A_{\circ}$ to $w A_{\circ}$, 
and $\Xi$ from $A_{\circ}$ to $w^{-1} A_{\circ} + \mu$. 
The reduced alcove paths $\Delta$ and $w\Xi$ can be 
concatenated (as sequences of alcoves), and 
we obtain in this way the alcove path $\Delta*w\Xi$ in $\AP(w\mu)$. 
In fact, $\Delta*w\Xi$ is in $\RAP(w\mu)$, 
due to the length-additive factorization \eqref{dec-trans}. 

We now specialize $\mu=\varpi_k$ and $w=x$, 
so we obtain $\Delta*x\Xi$ in $\RAP(\lambda)$. 
The reduced alcove path $\Delta$, written as a sequence of roots, 
can be obtained from the reduced decomposition $x=s_{j_a}\ldots s_{j_1}$ as needed; 
see~\cite[Theorem~4.5]{Hum}. 

It remains to analyze the reduced alcove path $x\Xi$. 
Upon translation by $-\lambda=-x\varpi_k$ and reversal, 
we obtain a reduced alcove path from $A_\circ$ to $x(A_\circ-\varpi_k)$. 
We claim that $A_\circ-\varpi_k=\mcr{w_\circ}^{-1} A_\circ$, 
where $\mcr{u} \in W^{\vpi_{k}}$ denotes the minimal(-length) coset representative of 
$uW_{\varpi_k}$. Therefore, we have $x(A_\circ-\vpi_k)=x \mcr{w_\circ}^{-1} A_\circ=y^{-1}A_\circ$. 
We conclude that, as a sequence of roots, the alcove path $x\Xi$ is 
obtained from the reduced decomposition $y^{-1}=s_{i_b}\ldots s_{i_1}$ 
by reversing the sequence of roots given by \cite[Theorem~4.5]{Hum}. 

Finally, we address the claim that $A_\circ-\vpi_k=\mcr{\lng}^{-1} A_\circ$. 
Using the assumption that $\vpi_{k}$ is a minuscule fundamental weight, 
we have $A_{\circ} - \vpi_{k}=v A_{\circ}$ for some $v\in W$. 
Indeed, for any positive root $\beta$, we have precisely $\pair{\vpi_k}{\beta^\vee}$ hyperplanes 
separating $A_\circ$ and $A_{\circ} - \vpi_{k}$, that is, 
either $0$ or $1$; in the second case, the respective hyperplane is $H_{\beta,0}$. 
On another hand, we have
\begin{equation*}
\pair{\vpi_k}{\beta^\vee} \ne 0 \iff 
\beta \in \Delta^{+} \setminus \Delta_{\vpi_k}^{+} \iff 
\beta \in \mathrm{Inv}(\lng w_\circ^{\varpi_k});
\end{equation*}
here, $\Delta_{\vpi_k}^{+}$ denotes the positive roots corresponding to 
the parabolic subgroup $W_{\vpi_k}$, $w_\circ^{\vpi_k}$ denotes 
the longest element of $W_{\vpi_k}$, and $\mathrm{Inv}(u)$ denotes 
the inversion set of $u$, namely $\Delta^{+} \cap u^{-1}(-\Delta^{+})$. 
We conclude that the hyperplanes separating $A_\circ$ from $vA_\circ$ are precisely those 
$H_{\beta,0}$ for which $\beta \in \mathrm{Inv}(\mcr{\lng})$. 
But it is well-known that these hyperplanes correspond to $\mathrm{Inv}(v^{-1})$. 
As Weyl group elements are uniquely determined by their inversion sets 
(see, for example, \cite[Proposition~2.1]{HL} or \cite[(2.2.6)]{Mac}), 
we deduce $v=\mcr{\lng}^{-1}$. This concludes the proof. 
\end{proof}

We write the $\Gamma$ in Lemma~\ref{lem:min} as $\Gamma=(\gamma_{1},\dots,\gamma_{a+b})$, i.e., 
\begin{equation*}
\Gamma = (\gamma_{1},\dots,\gamma_{a+b}) = 
(-\beta_{a},\dots,-\beta_{1},\zeta_{1},\dots,\zeta_{b}). 
\end{equation*}
For $1 \le t \le a+b$, let $H_{\gamma_{t},l_{t}}$ be the affine hyperplane 
between the $(t-1)$-th alcove and the $t$-th alcove in $\Gamma$; 
recall that $l_{t}' = \pair{\lambda}{\gamma_{t}^{\vee}}-l_{t}$ 
for $1 \le t \le a+b$. Then we see that 
\begin{align*}
(l_{1},\dots,l_{a},l_{a+1},\dots,l_{a+b}) & = (0,\dots,0,1,\dots,1), \\
(l_{1}',\dots,l_{a}',l_{a+1}',\dots,l_{a+b}') & = (1,\dotsc,1,0,\dotsc,0). 
\end{align*}

Let $w \in W$. We see that the sets $\QW_{\lambda,w}=\QW_{\lambda,w}(\Gamma)$ and 
$\ti{\QW}_{\lambda,w}=\ti{\QW}_{\lambda,w}(\Gamma)$ agree with
$\QW_{\lambda,w}^{\KNOS}$ and $\ti{\QW}_{\lambda,w}^{\KNOS}$ 
(defined by using $\vec{\eta}=
(\beta_{a},\dots,\beta_{1},\zeta_{1},\dots,\zeta_{b})$), respectively. 
Let $\bw=(w_{0},w_{1},\dots,w_{a},w_{a+1},\dots,w_{a+b}) \in \QW_{\lambda,w}$. 
Then we deduce from Lemma~\ref{lem:min} and \eqref{eq:har} that 
$\wt(\bw)=w_{a}^{-1}\lambda$. Also, we see that 
the set $S(\bw)$ agrees with $S(\bw)^{\KNOS}$, and that
$\qwt(\bw,\bb)$ (and $\qwt^{\vee}(\bw,\bb)$) is identical to 
$-\lng \wt(\bw,\bb)^{\KNOS}$ for $(\bw,\bb) \in \ti{\QW}_{\lambda,w}$; 
remark that $\qwt(\bw,\bb)=\qwt^{\vee}(\bw,\bb)$ under the identification 
of roots and coroots, mentioned in Section~\ref{subsec:simplylaced}. 

We will show that the degree function $\deg(\bw,\bb)$ defined by \eqref{eq:deg} 
agrees with $\deg(\bw,\bb)^{\KNOS}$. 
Fix $(\bw,\bb) \in \ti{\QW}_{\lambda,w}$. We set
\begin{equation*}
A := \sum_{u \in T^{-}(\bw)} A_u, \qquad
B := \sum_{u \in S(\bw)} B_u,
\end{equation*}
where
\begin{align*}
A_{u} & := \sum_{t=u}^{a+b} \pair{\wt_t(\bw)}{|w_u^{-1}\gamma_u|^{\vee}} 
\qquad \text{for $u \in T^{-}(\bw)$}, \\[2mm]
B_{u} & := -\bb(u) \sum_{t=u}^{a+b} \pair{\wt_t(\bw)}{w_u^{-1}\gamma_u^{\vee}} 
\qquad \text{for $u \in S(\bw)$}, 
\end{align*}
so that
\begin{align*}
& \sum_{t=1}^{a+b} \pair{\wt_t(\bw)}{\qwt_t^{\vee}(\bw,\bb)} \\
& \qquad = 
  \sum_{t=1}^{a+b}
 \sum_{
 \begin{subarray}{c}
 1 \le u \le t \\[1mm]
 u \in T^{-}(\bw)
 \end{subarray}} \pair{\wt_t(\bw)}{|w_{u}^{-1}\gamma_{u}|^{\vee}} + 
\sum_{t=1}^{a+b}
\sum_{
 \begin{subarray}{c}
 1 \le u \le t \\[1mm]
 u \in S(\bw)
 \end{subarray}} (- \bb(t)) \pair{\wt_t(\bw)}{ w_{t}^{-1}\gamma_{t}^{\vee} } \\[3mm]
& \qquad = 
  \underbrace{ \sum_{u \in T^{-}(\bw)}
  \overbrace{ \sum_{t=u}^{a+b} \pair{\wt_t(\bw)}{|w_{u}^{-1}\gamma_{u}|^{\vee}} }^{=A_{u}} }_{=A} + 
  \underbrace{ \sum_{u \in S(\bw)} 
  \overbrace{ (- \bb(u)) \sum_{t=u}^{a+b} \pair{\wt_t(\bw)}{ w_{t}^{-1}\gamma_{t}^{\vee} } }^{=B_{u}} }_{=B}. 
\end{align*}
By \eqref{eq:wttw}, we observe that $\wt_t(\bw) = 0$ for all $a < t \le a+b$ 
since $l_{t}'=0$ for such a $t$. 
Hence we have
\begin{align*}
& A_u = 0 \quad \text{for $u \in T^{-}(\bw)$ such that $a < u \le a+b$}, \\
& B_u = 0 \quad \text{for $u \in S(\bw)$ such that $a < u \le a+b$}. 
\end{align*}
Since $l_{t}=0$ for all $1 \le t \le a$, we see by \eqref{eq:har} that 
$\ha{r}_{t}(\bw)\lambda = w_{t}^{-1}\lambda$ for $0 \le t \le a$. Hence, 
for $u \in T^{-}(\bw)$ with $1\le u \le a$, we have
\begin{align*}
A_u & = \sum_{t=u}^{a+b} \pair{\wt_t(\bw)}{ |w_u^{-1}\gamma_u|^{\vee} } 
      = \sum_{t=u}^{a} \pair{\wt_t(\bw)}{ |w_u^{-1}\gamma_u|^{\vee} } \\[2mm]
& = \pair{\ha{r}_{a}(\bw)\lambda - \ha{r}_{u-1}(\bw)\lambda}{ |w_u^{-1}\gamma_u|^{\vee} } \\
& = \pair{w_{a}^{-1}\lambda}{ |w_u^{-1}\gamma_u|^{\vee} } - 
    \pair{w_{u-1}^{-1}\lambda}{ |w_u^{-1}\gamma_u|^{\vee} }. 
\end{align*}
Here we note that $\gamma_{u} \in \Delta^{-}$ for all $1 \le u \le a$, 
which implies that 
$w_{u}^{-1}\gamma_{u} \in \Delta^{-}$ for all $u \in T^{-}(\bw)$ with $1\le u\le a$. 
Therefore, 
\begin{align*}
A_{u} & = 
    \pair{w_{a}^{-1}\lambda}{ |w_u^{-1}\gamma_u|^{\vee} } 
    + \pair{w_{u-1}^{-1}\lambda}{ w_u^{-1}\gamma_u^{\vee} } \\
& = \pair{w_{a}^{-1}\lambda}{ |w_{u}^{-1}\gamma_u|^{\vee} } 
    - \pair{\lambda}{ \gamma_u^{\vee} } 
  = \pair{w_{a}^{-1}\lambda}{ |w_{u}^{-1}\gamma_u|^{\vee} } - 1 \\
& = \pair{w_{a}^{-1}\lambda}{ |w_{u}^{-1}\gamma_u|^{\vee} } + \sgn(\gamma_u), 
\end{align*}
and hence
\begin{align*}
A & = \sum_{u \in T^{-}(\bw)} A_{u}
    = \sum_{\substack{u \in T^{-}(\bw) \\[1mm] 1 \le u \le a}} A_{u}
    = \sum_{\substack{u \in T^{-}(\bw) \\[1mm] 1 \le u \le a}}
      \pair{w_{a}^{-1}\lambda}{ |w_{u}^{-1}\gamma_u|^{\vee} } +
      \sum_{\substack{u \in T^{-}(\bw) \\[1mm] 1 \le u \le a}} \sgn(\gamma_u) \\[3mm]
& = \sum_{\substack{u \in T^{-}(\bw) \\[1mm] 1 \le u \le a}}
      \pair{w_{a}^{-1}\lambda}{ |w_{u}^{-1}\gamma_u|^{\vee} } +
      \sum_{u \in T^{-}(\bw)} \sgn(\gamma_u)l_{u}'.
\end{align*}
Similarly, for $u \in S(\bw)$ with $1\le u \le a$, we have
\begin{align*}
B_u & = - \bb(u) \sum_{t=u}^{a+b} \pair{\wt_t(\bw)}{ w_u^{-1}\gamma_u^{\vee} } 
      = - \bb(u) \sum_{t=u}^{a} \pair{\wt_t(\bw)}{ w_u^{-1}\gamma_u^{\vee} } \\[2mm]
& = - \bb(u) \pair{\ha{r}_{a}(\bw)\lambda - \ha{r}_{u-1}(\bw)\lambda}{ w_u^{-1}\gamma_u^{\vee} }
  = - \bb(u)( 
      \pair{w_{a}^{-1}\lambda}{ w_{u}^{-1}\gamma_u^{\vee} } -
      \pair{w_{u-1}^{-1}\lambda}{ w_u^{-1}\gamma_u^{\vee} }) \\
& = - \bb(u)( 
      \pair{w_{a}^{-1}\lambda}{ w_{u}^{-1}\gamma_u^{\vee} } - 
      \pair{\lambda}{ \gamma_u^{\vee} }) \quad \text{(note that $w_{u-1}=w_{u}$ since $u \in S(\bw)$)} \\
& = - \bb(u)( 
      \pair{w_{a}^{-1}\lambda}{ w_{u}^{-1}\gamma_u^{\vee} } - 1)
      \quad \text{(since $\pair{\lambda}{ \gamma_u^{\vee} } = \pair{\lambda}{-\beta_{a-u+1}^{\vee}} = 1$)}, 
\end{align*}
and hence
\begin{align*}
B & = \sum_{u \in S(\bw)} B_{u}
    = \sum_{\substack{u \in S(\bw) \\[1mm] 1 \le u \le a}} B_{u}
    = \sum_{\substack{u \in S(\bw) \\[1mm] 1 \le u \le a}}
      \pair{w_{a}^{-1}\lambda}{- \bb(u) w_{u}^{-1}\gamma_u^{\vee} } +
      \sum_{\substack{u \in S(\bw) \\[1mm] 1 \le u \le a}} \bb(u) \\[3mm]
&   = \sum_{\substack{u \in S(\bw) \\[1mm] 1 \le u \le a}}
      \pair{w_{a}^{-1}\lambda}{- \bb(u) w_{u}^{-1}\gamma_u^{\vee} } +
      \sum_{\substack{u \in S(\bw) \\[1mm] 1 \le u \le a}} \bb(u)l_{u}'. 
\end{align*}
Putting all this together, we deduce that
$\deg'(\bw,\bb) = \pair{w_{a}^{-1}\lambda}{\qwt_{a}^{\vee}(\bw,\bb)}$, 
and hence that
\begin{align*}
\deg(\bw,\bb) & = 
\frac{1}{2} \pair{\qwt(\bw,\bb)}{\qwt^{\vee}(\bw,\bb)} + 
\pair{w_{a}^{-1}\lambda}{\qwt_{a}^{\vee}(\bw,\bb)} \\
& = 
\frac{1}{2} ( \qwt(\bw,\bb), \qwt(\bw,\bb) ) + 
( w_{a}^{-1}\lambda, \qwt_{a}(\bw,\bb) ), 
\end{align*}
which agrees with $\deg(\bw,\bb)^{\KNOS}$, as desired. 
Therefore, by Theorem~\ref{thm:KNOS}, 
we conclude that Theorem~\ref{thm:main} holds for $\Gamma \in \RAP(\lambda)$ 
of the form \eqref{eq:mGam}. 
This completes the proof of Proposition~\ref{prop:minu}.

\section{Proof of Proposition~\ref{prop:sum}.}
\label{sec:sum}

Recall the notation and setting of Proposition~\ref{prop:sum}, 
equation \eqref{eq:cat}, and Remark~\ref{rem:cat}. 
We write $\Gamma \in \AP(\lambda)$ and $\Xi \in \AP(\mu)$ as: 
$\Gamma=(\gamma_{1},\dots,\gamma_{m})$ and 
$\Xi=(\xi_{1},\dots,\xi_{p})$, respectively. 
If we set $\gamma_{t}:=\xi_{t-m}$ for $m+1 \le t \le m+p$, then
\begin{equation*}
\Gamma \ast \Xi = (\gamma_{1},\dots,\gamma_{m}, \xi_{1},\dots,\xi_{p}) = 
(\gamma_{1},\dots,\gamma_{m+p}). 
\end{equation*}
For $1 \le t \le m$, 
let $H_{\gamma_{t},l_{t}}$ be the affine hyperplane 
between the $(t-1)$-th alcove and the $t$-th alcove in $\Gamma$. 
Similarly, for $1 \le q \le p$, 
let $H_{\xi_{q},k_{q}}$ be the affine hyperplane 
between the $(q-1)$-th alcove and the $q$-th alcove in $\Xi$. 
Then the affine hyperplane between 
the $(t-1)$-th alcove and the $t$-th alcove in $\Gamma \ast \Xi$ is 
$H_{\gamma_{t},l_{t}}$ (resp., $H_{\xi_{t-m},\pair{\lambda}{\xi_{t-m}^{\vee}} + k_{t-m}}$) 
for $1 \le t \le m$ (resp., $m+1 \le t \le m+p$); we set 
$l_{t}:=\pair{\lambda}{\xi_{t-m}^{\vee}} + k_{t-m}$ for $m+1 \le t \le m+p$. 
Also, we set
\begin{equation} \label{eq:lta}
\begin{cases}
l_{t}'':= \pair{\lambda+\mu}{\gamma_{t}^{\vee}} - l_{t} 
 & \text{for $1 \le t \le m+p$}, \\[1mm]
l_{t}':= \pair{\lambda}{\gamma_{t}^{\vee}} - l_{t} 
 & \text{for $1 \le t \le m$}, \\[1mm]
k_{q}':= \pair{\mu}{\xi_{q}^{\vee}} - k_{q} = 
\pair{\mu}{\gamma_{m+q}^{\vee}} - l_{m+q} 
 & \text{for $1 \le q \le p$}.
\end{cases}
\end{equation}

For $\bw = (w_{0},\dots,w_{m}) \in \QW_{\lambda,w}(\Gamma)$ and 
$\bv = (v_{0},v_{1},\dots,v_{p}) \in \QW_{\mu,\ed(\bw)}(\Xi)$ 
(note that $\ed(\bw)=w_{m}$), we set 
\begin{equation}
\bw \ast \bv:=(w_{0},\dots,w_{m},v_{1},\dots,v_{p}) \in 
\QW_{\lambda+\mu,w}(\Gamma \ast \Xi); 
\end{equation}
if we set $w_{q}:=v_{q-m}$ for $m+1 \le q \le m+p$, then 
we can write $\bw \ast \bv$ as: 
\begin{equation*}
\bw \ast \bv 
 = (w_{0},\dots,w_{m},v_{1},\dots,v_{p}) 
 = (w_{0},\dots,w_{m},w_{m+1},\dots,w_{m+p});
\end{equation*}
note that $S(\bw \ast \bv) \subset [1,m+p]$. 
We see that $S(\bw \ast \bv) = S(\bw) \sqcup \bigl\{ m+q \mid q \in S(\bv) \bigr\}$. 
For $\bb:S(\bw) \rightarrow \{0,1\}$ and $\bc:S(\bv) \rightarrow \{0,1\}$, 
we define $\bb \ast \bc: S(\bw \ast \bv) \rightarrow \{0,1\}$ by
\begin{equation}
(\bb \ast \bc)(t) := 
\begin{cases}
b(t) & \text{if $t \in S(\bw)$}, \\[1mm]
c(t-m) & \text{if $t \in \bigl\{ m+q \mid q \in S(\bv) \bigr\}$}. 
\end{cases}
\end{equation}
Then it follows that 
\begin{equation} \label{eq:lm}
\ti{\QW}_{\lambda+\mu,w}(\Gamma \ast \Xi)= 
\left\{ (\bw \ast \bv,\bb \ast \bc) \ \Biggm| \ 
\begin{array}{l}
(\bw,\bb) \in \ti{\QW}_{\lambda,w}(\Gamma), \\[3mm]
(\bv,\bc) \in \ti{\QW}_{\mu,\ed(\bw)}(\Xi)
\end{array} \right\}. 
\end{equation}

By the assumption, we have in $\BK \subset \KQGr$,
\begin{align*}
& \be^{\lambda+\mu} \cdot \OQG{w} \\[5mm]
& =
 \sum_{(\bw,\bb) \in \ti{\QW}_{\lambda,w}(\Gamma)} (-1)^{(\bw,\bb)} q^{\deg(\bw,\bb)} 
 \be^{\mu} \cdot \OQGL{ \ed(\bw) t_{ \qwt^{\vee}(\bw,\bb) } }{ -\lng \wt(\bw) - \lng \qwt(\bw,\bb) } \\[5mm]
%
%
& = \sum_{(\bw,\bb) \in \ti{\QW}_{\lambda,w}(\Gamma)} \ \sum_{(\bv,\bc) \in \ti{\QW}_{\mu,\ed(\bw)}(\Xi)}
(-1)^{(\bw,\bb)}(-1)^{(\bv,\bc)} \times \\[2mm]
& \hspace*{5mm} q^{\deg(\bw,\bb) + \deg(\bv,\bc) + \pair{ \wt(\bv)+\qwt(\bv,\bc) }{ \qwt^{\vee}(\bw,\bb) } } \times \\
& \hspace*{5mm} \OQGL{ \ed(\bv) t_{ \qwt^{\vee}(\bw,\bb) + \qwt^{\vee}(\bv,\bc) } }%
  { -\lng \wt(\bw) -\lng \qwt(\bw,\bb) -\lng \wt(\bv)-\lng \qwt(\bv,\bc) } \\
& = : (\spadesuit)
\end{align*}
We can easily verify that 
\begin{equation} \label{eq:misc*}
\begin{cases}
(-1)^{(\bw,\bb)}(-1)^{(\bv,\bc)} = (-1)^{(\bw \ast \bc,\bb \ast \bc)}, \\[1mm]
\qwt(\bw,\bb) + \qwt(\bv,\bc) =  \qwt(\bw \ast \bc,\bb \ast \bc), \\[1mm]
\qwt^{\vee}(\bw,\bb) + \qwt^{\vee}(\bv,\bc) =  \qwt^{\vee}(\bw \ast \bc,\bb \ast \bc), \\[1mm]
\ed(\bv) = \ed(\bw \ast \bv). 
\end{cases}
\end{equation}
%
First we claim that
\begin{equation} \label{eq:wt*}
\wt(\bw)+\wt(\bv) = \wt(\bw \ast \bv). 
\end{equation}
Let $\bw = (w_{0},w_{1},\dots,w_{m}) \in \QW_{\lambda,w}$ and
$\bv = (v_{0},v_{1},\dots,v_{p}) \in \QW_{\mu,\ed(\bw)}$. 
If we write 
\begin{align*}
& 
T(\bw) = 
\bigl\{ 1 \le t \le m \mid w_{t-1} \ne w_{t} \bigr\} = 
\bigl\{ t_{1} < t_{2} < \cdots < t_{c} \bigr\}, \\
& 
T(\bv) = 
\bigl\{ 1 \le q \le p \mid v_{q-1} \ne v_{q} \bigr\} = 
\bigl\{ q_{1} < q_{2} < \cdots < q_{d} \bigr\},
\end{align*}
then we see by \eqref{eq:har} that 
\begin{align*}
\wt(\bw) & = \underbrace{w^{-1}s_{\gamma_{t_1}} \cdots s_{\gamma_{t_{c-1}}}s_{\gamma_{t_c}}}_{%
    =w_{m}^{-1}=\ed(\bw)^{-1}} \lambda + 
    \sum_{a=1}^{c} l_{t_a} w^{-1}s_{\gamma_{t_1}} \cdots s_{\gamma_{t_{a-1}}} \gamma_{t_a}, 
\end{align*}
and that 
\begin{align*}
& \wt(\bv) = 
  w_{m}^{-1}s_{\xi_{q_1}} \cdots s_{\xi_{q_{d-1}}}s_{\xi_{q_d}} \mu + 
  \sum_{b=1}^{d} k_{q_b} w_{m}^{-1}s_{\xi_{q_1}} \cdots s_{\xi_{q_{b-1}}} \xi_{q_b} \\
& = 
w^{-1}s_{\gamma_{t_1}} \cdots s_{\gamma_{t_{c}}}s_{\xi_{q_1}} \cdots s_{\xi_{q_d}} \mu + 
  \sum_{b=1}^{d} k_{q_b} w^{-1}s_{\gamma_{t_1}} \cdots s_{\gamma_{t_{c}}}
  s_{\xi_{q_1}} \cdots s_{\xi_{q_{b-1}}} \xi_{q_b}. 
\end{align*}
Also, we deduce from Remark~\ref{rem:cat} and \eqref{eq:har} that 
\begin{align*}
& \wt(\bw \ast \bv) =
  w^{-1}s_{\gamma_{t_1}} \cdots s_{\gamma_{t_{c}}}s_{\xi_{q_1}} \cdots s_{\xi_{q_d}}(\lambda+\mu) \\
& \quad + 
  \sum_{a=1}^{c} l_{t_a} w^{-1}s_{\gamma_{t_1}} \cdots s_{\gamma_{t_{a-1}}} \gamma_{t_a} +
  \sum_{b=1}^{d} (\pair{\lambda}{\xi_{q_b}^{\vee}}+k_{q_b})w^{-1}s_{\gamma_{t_1}} \cdots s_{\gamma_{t_{c}}}
  s_{\xi_{q_1}} \cdots s_{\xi_{q_{b-1}}} \xi_{q_b}. 
\end{align*}
Since 
\begin{equation*}
s_{\xi_{q_1}} \cdots s_{\xi_{q_d}} \lambda =  
\lambda - \sum_{b=1}^{d} \pair{\lambda}{\xi_{q_b}^{\vee}}
  s_{\xi_{q_1}} \cdots s_{\xi_{q_{b-1}}} \xi_{q_b}, 
\end{equation*}
we obtain \eqref{eq:wt*}, as desired. 

Next we claim that
\begin{equation} \label{eq:deg*}
\deg(\bw,\bb) + \deg(\bv,\bc) + \pair{ \wt(\bv)+\qwt(\bv,\bc) }{ \qwt^{\vee}(\bw,\bb) } = 
\deg(\bw \ast \bv,\bb \ast \bc).
\end{equation}
We use $\qwt(\bw \ast \bv,\bb \ast \bc)=\qwt(\bw,\bb)+\qwt(\bv,\bc)$ and 
$\qwt^{\vee}(\bw \ast \bv,\bb \ast \bc)=\qwt^{\vee}(\bw,\bb)+\qwt^{\vee}(\bv,\bc)$ to obtain
\begin{align*}
& \frac{1}{2}\pair{\qwt(\bw \ast \bv,\bb \ast \bc)}{\qwt^{\vee}(\bw \ast \bv,\bb \ast \bc)} \\
& = 
\frac{1}{2}\pair{\qwt(\bw,\bb)}{\qwt^{\vee}(\bw,\bb)}+
\frac{1}{2}\pair{\qwt(\bv,\bc)}{\qwt^{\vee}(\bv,\bc)}, \\
& \qquad 
+\underbrace{\frac{1}{2}\pair{\qwt(\bv,\bc)}{\qwt^{\vee}(\bw,\bb)}
+\frac{1}{2}\pair{\qwt(\bw,\bb)}{\qwt^{\vee}(\bv,\bc)}}_{=\pair{\qwt(\bv,\bc)}{\qwt^{\vee}(\bw,\bb)}
 \text{ since $\Fg$ is simply-laced} },
\end{align*}
which enables us to simplify \eqref{eq:deg*} as:
\begin{equation}\label{eq:deg*2}
\deg'(\bw \ast \bv,\bb \ast \bc) = 
\deg'(\bw,\bb)+\deg'(\bv,\bc)+\pair{\wt(\bv)}{\qwt^{\vee}(\bw,\bb)}. 
\end{equation}
Thus, what we need to show is:
\begin{equation} \label{eq:deg*3}
\begin{split}
& \sum_{t=1}^{m+p} \pair{\wt_t(\bw \ast \bv)}{\qwt_{t}^{\vee}(\bw \ast \bv,\bb \ast \bc)} 
  - \sum_{ t \in T^{-}(\bw \ast \bv) } \sgn(\gamma_{t})l_{t}'' 
  - \sum_{ t \in S(\bw \ast \bv) } (\bb \ast \bc)(t) l_{t}'' \\[3mm]
& =
\sum_{t=1}^{m} \pair{\wt_t(\bw)}{\qwt_{t}^{\vee}(\bw,\bb)} 
  - \sum_{ t \in T^{-}(\bw) } \sgn(\gamma_{t})l_{t}' 
  - \sum_{ t \in S(\bw) } \bb(t) l_{t}' \\[3mm]
& + \sum_{q=1}^{p} \pair{\wt_q(\bv)}{\qwt_{q}^{\vee}(\bv,\bc)} 
  - \sum_{ q \in T^{-}(\bv) } \sgn(\xi_{q})k_{q}' 
  - \sum_{ q \in S(\bv) } \bc(q) k_{q}' \\[3mm]
& + \pair{\wt(\bv)}{\qwt^{\vee}(\bw,\bb)}. 
\end{split}
\end{equation}
Let us write a part (i.e., the first sum) of $\deg'(\bw \ast \bv,\bb \ast \bc)$ as follows: 
\begin{equation*}
\sum_{t=1}^{m+p} \pair{\wt_t(\bw \ast \bv)}{\qwt_{t}^{\vee}(\bw \ast \bv,\bb \ast \bc)}=
\sum_{u \in T^{-}(\bw \ast \bv)}A_u + \sum_{u \in S(\bw \ast \bv)} B_u, 
\end{equation*}
where
\begin{align*}
A_u & := \sum_{t=u}^{m+p} \pair{\wt_t(\bw \ast \bv)}{ |w_u^{-1}\gamma_u|^{\vee} } \qquad 
      \text{for $u \in T^{-}(\bw \ast \bv)$}, \\
B_u & := - (\bb \ast \bc)(u)
      \sum_{t=u}^{m+p} \pair{ \wt_t(\bw \ast \bv) }{ w_u^{-1}\gamma_u^{\vee} } \qquad 
      \text{for $u \in S(\bw \ast \bv)$}. 
\end{align*}
For $m < t \le m+p$, we have
\begin{equation} \label{eq:lt''}
l_{t}'' = \pair{\lambda+\mu}{\xi_{t-m}^{\vee}}-(k_{t-m}+\pair{\lambda}{\xi_{t-m}^{\vee}})
        = \pair{\mu}{\xi_{t-m}^{\vee}}-k_{t-m} = k_{t-m}', 
\end{equation}
and hence
\begin{align*}
\wt_t(\bw \ast \bv) & =
\begin{cases}
- l_{t}'' w_{t-1}^{-1}\gamma_t & \text{if $t \in T(\bw \ast \bv)$}, \\[2mm]
0 & \text{otherwise}
\end{cases} \qquad \text{by \eqref{eq:wttw}} \\[3mm]
&=
\begin{cases}
- k_{t-m}' v_{t-m-1}^{-1}\xi_{t-m} & \text{if $t-m \in T(\bv)$}, \\[2mm]
0 & \text{otherwise}
\end{cases} \\[3mm]
&= \wt_{t-m}(\bv).
\end{align*}
Therefore, we deduce that
\begin{align*}
A_{u} 
& = \sum_{t=u}^{m+p} \pair{\wt_t(\bw \ast \bv)}{|w_u^{-1}\gamma_u|^{\vee}}
  = \sum_{t=u-m}^{p}\pair{\wt_{t}(\bv)}{|v_{u-m}^{-1}\xi_{u-m}|^{\vee}}
\end{align*}
for $u \in T^{-}(\bw \ast \bv)$ with $m < u\le m+p$, and that
\begin{align*}
B_{u} 
& = - (\bb \ast \bc)(u) \sum_{t=u}^{m+p} \pair{ \wt_t(\bw \ast \bv) }{ w_u^{-1}\gamma_u^{\vee} } 
  = - \bc(u-m) \sum_{t=u-m}^{p}\pair{ \wt_{t}(\bv)}{ v_{u-m}^{-1}\xi_{u-m}^{\vee} }
\end{align*}
for $u \in S(\bw \ast \bv)$ with $m < u\le m+p$. Here we remark that 
\begin{equation*}
\sum_{q=1}^{p} \pair{\wt_q(\bv)}{\qwt_{q}^{\vee}(\bv,\bc)} = 
 \sum_{u \in T^{-}(\bv)}A_u' + \sum_{u \in S(\bv)} B_u',
\end{equation*}
where
\begin{align*}
A_u' & := \sum_{t=u}^{p} \pair{\wt_t(\bv)}{ |v_u^{-1}\xi_u|^{\vee} } \qquad 
      \text{for $u \in T^{-}(\bv)$}, \\
B_u' & := - \bc(u)
      \sum_{t=u}^{p} \pair{ \wt_t(\bv) }{ v_u^{-1}\xi_u^{\vee} } \qquad 
      \text{for $u \in S(\bv)$}. 
\end{align*}
Since $A_{u}=A_{u-m}'$ for 
$u \in T^{-}(\bw \ast \bv) \cap \{m+1,m+2,\dots,m+p\} = \{ m+u' \mid u' \in T^{-}(\bv) \}$, 
and since $B_{u}=B_{u-m}'$ for 
$u \in S(\bw \ast \bv) \cap \{m+1,m+2,\dots,m+p\} = \{ m+u' \mid u' \in S(\bv) \}$, 
it follows that 
\begin{equation*}
\sum_{ \substack{ u \in T^{-}(\bw \ast \bv) \\[1mm] m < u \le m+p} }A_u + 
\sum_{ \substack{ u \in S(\bw \ast \bv) \\[1mm] m < u \le m+p} } B_u 
= \sum_{q=1}^{p} \pair{\wt_q(\bv)}{\qwt_{q}^{\vee}(\bv,\bc)}. 
\end{equation*}
Thus, equation \eqref{eq:deg*3} (which we need to show) is equivalent to: 
\begin{equation} \label{eq:deg*4}
\begin{split}
& \sum_{ \substack{ u \in T^{-}(\bw \ast \bv) \\[1mm] 1 \le u \le m} }A_u + 
  \sum_{ \substack{ u \in S(\bw \ast \bv) \\[1mm] 1 \le u \le m} } B_u
  - \sum_{ t \in T^{-}(\bw \ast \bv) } \sgn(\gamma_{t})l_{t}'' 
  - \sum_{ t \in S(\bw \ast \bv) } (\bb \ast \bc)(t) l_{t}'' \\[3mm]
& =
\sum_{t=1}^{m} \pair{\wt_t(\bw)}{\qwt_{t}^{\vee}(\bw,\bb)} 
  - \sum_{ t \in T^{-}(\bw) } \sgn(\gamma_{t})l_{t}' 
  - \sum_{ t \in S(\bw) } \bb(t) l_{t}' \\[3mm]
& - \sum_{ q \in T^{-}(\bv) } \sgn(\xi_{q})k_{q}' 
  - \sum_{ q \in S(\bv) } \bc(q) k_{q}' + \pair{\wt(\bv)}{\qwt^{\vee}(\bw,\bb)}.
\end{split}
\end{equation}

If $1 \le t \le m$, then we have
\begin{equation} \label{eq:lt2}
l_{t}'' = \pair{\lambda+\mu}{\gamma_t^{\vee}}-l_t 
= \pair{\lambda}{\gamma_{t}^{\vee}} -l_{t} + \pair{\mu}{\gamma_{t}^{\vee}} = 
l_{t}' + \pair{\mu}{\gamma_{t}^{\vee}}, 
\end{equation}
and hence
\begin{align*}
\wt_{t}(\bw \ast \bv) &=
\begin{cases}
- l_{t}''w_{t-1}^{-1}\gamma_{t} & \text{if $t \in T(\bw \ast \bv)$}, \\
0 & \text{otherwise}
\end{cases} \quad \text{by \eqref{eq:wttw}} \\[3mm]
&=
\begin{cases}
-(l_{t}'+\pair{\mu}{\gamma_t^{\vee}})w_{t-1}^{-1}\gamma_t
  & \text{if $t \in T(\bw)$}, \\
0 & \text{otherwise}
\end{cases} \\[3mm]
& = \wt_{t}(\bw)+w_{t}^{-1}\mu-w_{t-1}^{-1}\mu. 
\end{align*}
Therefore, we have
\begin{align*}
\sum_{t=u}^{m+p}\wt_t(\bw \ast \bv)
 & = \sum_{t=u}^{m}\wt_t(\bw \ast \bv) + 
     \sum_{t=m+1}^{m+p} \underbrace{\wt_{t}(\bw \ast \bv)}_{=\wt_{t-m}(\bv)} \\
 & = w_{m}^{-1}\mu-w_{u-1}^{-1}\mu+\sum_{t=u}^{m} \wt_t(\bw) +
     \wt(\bv) - w_{m}^{-1} \mu. 
\end{align*}
Hence we deduce that
\begin{align*}
A_u 
  & = \pair{- w_{u-1}^{-1}\mu}{ |w_u^{-1}\gamma_u|^{\vee} }
     + \sum_{t=u}^{m} \pair{ \wt_t(\bw) }{ |w_u^{-1}\gamma_u|^{\vee} }
     + \pair{\wt(\bv)}{ |w_u^{-1}\gamma_u|^{\vee} } \\
  & = \sgn(\gamma_u) \pair{\mu}{\gamma_u^{\vee}} 
     +\sum_{t=u}^{m}\pair{\wt_t(\bw)}{ |w_u^{-1}\gamma_u|^{\vee} }
     +\pair{\wt(\bv)}{ |w_u^{-1}\gamma_u|^{\vee} }
\end{align*}
for $u \in T^{-}(\bw \ast \bv)$ with $1\le u\le m$, and that
\begin{align*}
B_u 
  & = \bb(u)\pair{\mu}{\gamma_u^{\vee}}
      -\bb(u)\sum_{t=u}^{m}\pair{\wt_t(\bw)}{w_u^{-1}\gamma_u^{\vee}}  
      -\bb(u)\pair{\wt(\bv)}{w_u^{-1}\gamma_u^{\vee}}
\end{align*}
for $u \in S(\bw \ast \bv)$ with $1 \le u \le m$. 
Here we remark that 
\begin{equation*}
\sum_{t=1}^{m} \pair{\wt_t(\bw)}{\qwt_{t}^{\vee}(\bw,\bb)} = 
 \sum_{u \in T^{-}(\bw)}A_u'' + \sum_{u \in S(\bw)} B_u'',
\end{equation*}
where
\begin{align*}
A_u'' & := \sum_{t=u}^{m} \pair{\wt_t(\bw)}{ |w_u^{-1}\gamma_u|^{\vee} } \qquad 
      \text{for $u \in T^{-}(\bw)$}, \\
B_u'' & := - \bb(u)
      \sum_{t=u}^{m} \pair{ \wt_t(\bw) }{ w_u^{-1}\gamma_u^{\vee} } \qquad 
      \text{for $u \in S(\bw)$}. 
\end{align*}
Since $A_{u}=A_{u}'' + \sgn(\gamma_u) \pair{\mu}{\gamma_u^{\vee}} + 
\pair{\wt(\bv)}{ |w_u^{-1}\gamma_u|^{\vee} }$ 
for $u \in T^{-}(\bw \ast \bv) \cap \{1,2,\dots,m\} = T^{-}(\bw)$, 
and since $B_{u}=B_u''+\bb(u)\pair{\mu}{\gamma_u^{\vee}}-
\bb(u)\pair{\wt(\bv)}{w_u^{-1}\gamma_u^{\vee}}$ for 
$u \in S(\bw \ast \bv) \cap \{1,2,\dots,m\} = S(\bw)$, 
it follows that 
\begin{equation*}
\begin{split}
\sum_{ \substack{ u \in T^{-}(\bw \ast \bv) \\[1mm] 1 \le u \le m} }A_u + 
\sum_{ \substack{ u \in S(\bw \ast \bv) \\[1mm] 1 \le u \le m} } B_u 
& = \sum_{t=1}^{m} \pair{\wt_t(\bw)}{\qwt_{t}^{\vee}(\bw,\bb)} \\
& + \sum_{ \substack{ u \in T^{-}(\bw \ast \bv) \\[1mm] 1 \le u \le m} }
\left( \sgn(\gamma_u) \pair{\mu}{\gamma_u^{\vee}}+\pair{\wt(\bv)}{ |w_u^{-1}\gamma_u|^{\vee}} \right) \\[3mm]
& + \sum_{ \substack{ u \in S(\bw \ast \bv) \\[1mm] 1 \le u \le m} } 
\left( \bb(u)\pair{\mu}{\gamma_u^{\vee}} - \bb(u)\pair{\wt(\bv)}{w_u^{-1}\gamma_u^{\vee}} \right) \\[3mm]
& = \sum_{t=1}^{m} \pair{\wt_t(\bw)}{\qwt_{t}^{\vee}(\bw,\bb)} + \pair{\wt(\bv)}{\qwt^{\vee}(\bw,\bb)} \\[3mm]
& + \sum_{ \substack{ u \in T^{-}(\bw \ast \bv) \\[1mm] 1 \le u \le m} }
    \sgn(\gamma_u) \pair{\mu}{\gamma_u^{\vee}}
  + \sum_{ \substack{ u \in S(\bw \ast \bv) \\[1mm] 1 \le u \le m} } 
    \bb(u) \pair{\mu}{\gamma_u^{\vee}}. 
\end{split}
\end{equation*}
Thus, equation \eqref{eq:deg*4} (which we need to show) is equivalent to: 
\begin{equation} \label{eq:deg*5}
\begin{split}
& \sum_{ \substack{ u \in T^{-}(\bw \ast \bv) \\[1mm] 1 \le u \le m} }
    \sgn(\gamma_u) \pair{\mu}{\gamma_u^{\vee}}
  + \sum_{ \substack{ u \in S(\bw \ast \bv) \\[1mm] 1 \le u \le m} } 
    \bb(u) \pair{\mu}{\gamma_u^{\vee}} \\
&  - \sum_{ t \in T^{-}(\bw \ast \bv) } \sgn(\gamma_{t})l_{t}'' 
  - \sum_{ t \in S(\bw \ast \bv) } (\bb \ast \bc)(t) l_{t}'' \\[3mm]
& =
  - \sum_{ t \in T^{-}(\bw) } \sgn(\gamma_{t})l_{t}' 
  - \sum_{ t \in S(\bw) } \bb(t) l_{t}'
  - \sum_{ q \in T^{-}(\bv) } \sgn(\xi_{q})k_{q}' 
  - \sum_{ q \in S(\bv) } \bc(q) k_{q}'. 
\end{split}
\end{equation}
We see by \eqref{eq:lta} and \eqref{eq:lt''} that 
\begin{equation*}
\sum_{ \substack{ u \in T^{-}(\bw \ast \bv) \\[1mm] 1 \le u \le m} }
    \sgn(\gamma_u) \pair{\mu}{\gamma_u^{\vee}}
- \sum_{ t \in T^{-}(\bw \ast \bv) } \sgn(\gamma_{t})l_{t}'' = 
  - \sum_{ t \in T^{-}(\bw) } \sgn(\gamma_{t})l_{t}' 
  - \sum_{ q \in T^{-}(\bv) } \sgn(\xi_{q})k_{q}', 
\end{equation*}
\begin{equation*}
  \sum_{ \substack{ u \in S(\bw \ast \bv) \\[1mm] 1 \le u \le m} } 
    \bb(u) \pair{\mu}{\gamma_u^{\vee}}
  - \sum_{ t \in S(\bw \ast \bv) } (\bb \ast \bc)(t) l_{t}'' =
  - \sum_{ t \in S(\bw) } \bb(t) l_{t}'  - \sum_{ q \in S(\bv) } \bc(q) k_{q}'. 
\end{equation*}
This proves \eqref{eq:deg*5}, and hence \eqref{eq:deg*}, as desired. 

Substituting \eqref{eq:misc*}, \eqref{eq:wt*}, and 
\eqref{eq:deg*} into ($\spadesuit$), we conclude that 
\begin{align*}
(\spadesuit) & = 
\sum_{(\bw \ast \bv,\bb \ast \bc) \in \ti{\QW}_{\lambda+\mu,w}(\Gamma \ast \Xi)}
(-1)^{(\bw \ast \bv,\bb \ast \bc)} q^{\deg(\bw \ast \bv,\bb \ast \bc)} \times \\
& \hspace*{20mm} \OQGL{ \ed(\bw \ast \bv) t_{ \qwt^{\vee}(\bw \ast \bv,\bb \ast \bc) } }%
{- \lng \wt(\bw \ast \bv) - \lng \qwt(\bw \ast \bv,\bb \ast \bc) } \\
& = \bF_{\lambda+\mu,w}(\Gamma \ast \Xi). 
\end{align*}
This completes the proof of Proposition~\ref{prop:sum}. 

\section{Proof of Theorem~\ref{thm:D}.} \label{sec:D}

Let $\lambda \in P$ be an arbitrary weight, and let 
\begin{equation} \label{eq:Gamma3}
\Gamma: 
  A_{\circ}=A_{0} 
  \xrightarrow{\gamma_{1}} A_{1} 
  \xrightarrow{\gamma_{2}} \cdots 
  \xrightarrow{\gamma_{m}} A_{m}=A_{\lambda}
\end{equation}
be an alcove path from the fundamental alcove $A_{\circ}$ 
to $A_{\lambda}=A_{\circ}+\lambda$. 
Assume that $\gamma_{s}=\alpha$, $\gamma_{s+1}=-\alpha$ 
for some $\alpha \in \Delta$ and $1 \le s \le m-1$, 
i.e., 
\begin{equation*}
\cdots \edge{\gamma_{s-1}} A_{s-1} \edge{\alpha} A_{s} \edge{-\alpha} A_{s+1} \edge{\gamma_{s+2}} A_{s+2} \cdots; 
\end{equation*}
note that $A_{s-1}=A_{s+1}$. 
We define $\Xi=(\beta_{1},\dots,\beta_{s-1},\beta_{s+2},\dots,\beta_{m})$, 
where $\beta_{k}:=\gamma_{k}$ for $1 \le k \le m$ with $k \ne s, s+1$. 
Then, $\Xi$ is an alcove path from $A_{\circ}$ to $A_{\lambda}$ of the form:
\begin{equation}
\Xi: 
  A_{\circ}=A_{0} 
  \xrightarrow{\gamma_{1}} \cdots 
  \edge{\gamma_{s-1}} A_{s-1} = A_{s+1} \edge{\gamma_{s+2}} A_{s+2} \edge{\gamma_{s+3}} \cdots 
\xrightarrow{\gamma_{m}} A_{m}=A_{\lambda}. 
\end{equation}

Now, let $(\bw,\bb) \in \ti{\QW}_{\lambda,w}(\Gamma)$, 
with $\bw=(w_{0},w_{1},\dots,w_{m}) \in \QW_{\lambda,w}(\Gamma)$. 
In the following, we will define $\D(\bw,\bb)$. 

\begin{case} \label{case1D}
Assume that $(\bw,\bb)$ satisfies $w_{s-1}=w_{s}=w_{s+1}$; 
observe that $S(\bw) \cap \{s,s+1\} = \emptyset$, $\{s\}$, or $\{s+1\}$. 

\begin{subcase} \label{subcase11D}
Assume that $S(\bw) \cap \{s,s+1\} = \emptyset$, or 
$S(\bw) \cap \{s,s+1\} \ne \emptyset$ and 
$\bb(t) = 0$ for $t \in S(\bw) \cap \{s,s+1\}$. 
In this case, we set 
\begin{equation*}
v_{k}:=w_{k} \qquad \text{for $0 \le k \le m$ with $k \ne s,s+1$}.
\end{equation*}
We see that $\bv=(v_{0},v_{1},\dots,v_{s-1},v_{s+2},\dots,v_{m}) \in 
\QW_{\lambda,w}(\Xi)$; notice that $S(\bv) = S(\bw) \setminus \{s,s+1\}$. 
We set $\bc(t) := \bb(t)$ for $t \in S(\bv)$. 
Then, $\D(\bw,\bb):=(\bv,\bc) \in \ti{\QW}_{\lambda,w}(\Xi)$ satisfies \eqref{eq:D1}.
Indeed, we can easily show the equalities in \eqref{eq:D1}, 
except for $\deg(\bv,\bc) = \deg(\bw,\bb)$. In order to show $\deg(\bv,\bc) = \deg(\bw,\bb)$, 
it suffices to show that $\deg'(\bv,\bc) = \deg'(\bw,\bb)$. For this equality, we claim that 
\begin{equation} \label{eq:X}
X := \sum_{t=s,s+1} \pair{ \wt_{t}(\bw) }{ \qwt_{t}^{\vee}(\bw,\bb) } -
 \sum_{ t \in T^{-}(\bw) \cap \{s,s+1\} } \sgn(\gamma_{t})l_{t}' -
 \sum_{ t \in S(\bw) \cap \{s,s+1\} }\bb(t)l_{t}'
\end{equation}
is equal to $0$. Indeed, since $\wt_{s}(\bw)=\wt_{s+1}(\bw)=0$, 
$T^{-}(\bw) \cap \{s,s+1\} = \emptyset$, and 
$\bb(t) = 0$ for $t \in S(\bw) \cap \{s,s+1\}$, 
we obtain $X=0$, as desired. 
\end{subcase}

\begin{subcase}[to be paired with Case~\ref{case4D} below] \label{subcase12D}
Assume that $S(\bw) \cap \{s,s+1\} \ne \emptyset$, and 
$\bb(t) = 1$ for $t \in S(\bw) \cap \{s,s+1\}$. 
We deduce that $|w_{s-1}^{-1}\alpha|$ is a simple root. 
Hence it follows that 
\begin{equation*}
w_{s-1}
\edge{|w_{s-1}^{-1}\alpha|} 
s_{\alpha}w_{s-1}
\edge{|w_{s-1}^{-1}\alpha|}
w_{s-1}; 
\end{equation*}
notice that one of these edges is a Bruhat edge, and the other is a quantum edge. 
We set 
\begin{equation*}
\begin{cases}
v_{k}:=w_{k} \qquad \text{for $0 \le k \le m$ with $k \ne s,s+1$}, \\
v_{s}:=s_{\alpha}w_{s-1}, \qquad v_{s+1}:=w_{s-1}.
\end{cases}
\end{equation*}
We see that $\bv=(v_{0},v_{1},\dots,v_{s-1},v_{s},v_{s+1},v_{s+2},\dots,v_{m}) \in 
\QW_{\lambda,w}(\Gamma)$; notice that $S(\bv) \cap \{s,s+1\} = \emptyset$, 
and $S(\bv) = S(\bw) \setminus \{s,s+1\}$. We set $\bc(t) := \bb(t)$ for $t \in S(\bv)$. 
Then, $\D(\bw,\bb)=(\bv,\bc) \in \ti{\QW}_{\lambda,w}(\Gamma)$ satisfies \eqref{eq:D2}.
Indeed, we can easily show the equalities in \eqref{eq:D2}, 
except for $\deg(\bv,\bc) = \deg(\bw,\bb)$. In order to show that $\deg(\bv,\bc) = \deg(\bw,\bb)$, 
it suffices to show that $\deg'(\bv,\bc) = \deg'(\bw,\bb)$. 
For this equality, we claim that $X$ in \eqref{eq:X} is equal to
\begin{equation} \label{eq:Y}
Y := \sum_{t=s,s+1} \pair{ \wt_{t}(\bv) }{ \qwt_{t}^{\vee}(\bv,\bc) } -
 \sum_{ t \in T^{-}(\bv) \cap \{s,s+1\} } \sgn(\gamma_{t})l_{t}' -
 \sum_{ t \in S(\bv) \cap \{s,s+1\} }\bc(t)l_{t}'. 
\end{equation}
Since $l_{s+1}'= - l_{s}'$, and $\wt_{s}(\bv) = -l_{s}'w_{s-1}^{-1}\alpha$, 
$\wt_{s+1}(\bv) = l_{s}'w_{s-1}^{-1}\alpha = -\wt_{s}(\bv)$, we deduce that
\begin{equation*}
\sum_{t=s,s+1} \pair{ \wt_{t}(\bv) }{ \qwt_{t}^{\vee}(\bv,\bc) } 
= \pair{ l_{s}'w_{s-1}^{-1}\alpha }{ \delta_{s+1 \in T^{-}(\bv)}|w_{s-1}^{-1}\alpha|^{\vee} },
\end{equation*}
where for a statement $\mathsf{P}$, we define
$\delta_{\mathsf{P}} := 1$ (resp., $:=0$) if $\mathsf{P}$ is true (resp., false). 
We see that $\delta_{s+1 \in T^{-}(\bv)}|w_{s-1}^{-1}\alpha|^{\vee} = 
\delta_{s+1 \in T^{-}(\bv)} \sgn(\alpha)w_{s-1}^{-1}\alpha^{\vee}$. 
Hence it follows that
\begin{equation*}
\sum_{t=s,s+1} \pair{ \wt_{t}(\bv) }{ \qwt_{t}^{\vee}(\bv,\bc) } 
= 2 \delta_{s+1 \in T^{-}(\bv)} l_{s}' \sgn(\alpha).
\end{equation*}
Also, we see that 
\begin{equation*}
\sum_{ t \in T^{-}(\bv) \cap \{s,s+1\} } \sgn(\gamma_{t})l_{t}' = \sgn(\alpha)l_{s}'
\quad \text{and} \quad
\sum_{ t \in S(\bv) \cap \{s,s+1\} }\bc(t)l_{t}' = 0.
\end{equation*}
Hence it follows that 
$Y = (2 \delta_{s+1 \in T^{-}(\bv)}-1) l_{s}' \sgn(\alpha)$.
As for $X$, we have 
\begin{equation*}
\sum_{t=s,s+1} \pair{ \wt_{t}(\bw) }{ \qwt_{t}^{\vee}(\bw,\bb) } = 
\sum_{ t \in T^{-}(\bw) \cap \{s,s+1\} } \sgn(\gamma_{t})l_{t}'= 0.
\end{equation*}
Also, we deduce that
\begin{align*}
& s \in S(\bw) \iff 
s+1 \in T^{-}(\bv) \text{ and } \alpha \in \Delta^{-}, \text{ or } 
s+1 \not\in T^{-}(\bv) \text{ and } \alpha \in \Delta^{+}, \\
& s+1 \in S(\bw) \iff 
s+1 \in T^{-}(\bv) \text{ and } \alpha \in \Delta^{+}, \text{ or }
s+1 \not\in T^{-}(\bv) \text{ and } \alpha \in \Delta^{-},
\end{align*}
which implies that 
\begin{equation*}
X= - \sum_{ t \in S(\bw) \cap \{s,s+1\} }\bb(t)l_{t}' = 
(2 \delta_{s+1 \in T^{-}(\bv)}-1) l_{s}' \sgn(\alpha). 
\end{equation*}
This proves $X=Y$, as desired. 
\end{subcase}

\end{case}

\begin{case}[to be paired with Case~\ref{case3D} below] \label{case2D}
Assume that $(\bw,\bb)$ satisfies 
$w_{s-1} \edge{|w_{s-1}^{-1}\alpha|} w_{s}=w_{s+1}$. We set 
\begin{equation*}
\begin{cases}
v_{k}:=w_{k} \qquad \text{for $0 \le k \le m$ with $k \ne s,s+1$}, & \\
\underbrace{w_{s-1}}_{=v_{s-1}=:v_{s}} \edge{|w_{s-1}^{-1}\alpha|} 
\underbrace{s_{\alpha}w_{s-1}=w_{s}=w_{s+1}}_{=:v_{s+1}}.
\end{cases}
\end{equation*}
We see that $\bv=(v_{0},v_{1},\dots,v_{s-1},v_{s},v_{s+1},v_{s+2},\dots,v_{m}) \in 
\QW_{\lambda,w}(\Gamma)$; notice that 
\begin{center}
$S(\bw) \cap \{s,s+1\} = \emptyset$ (resp., $=\{s+1\}$) 
if and only if 
$S(\bv) \cap \{s,s+1\} = \emptyset$ (resp., $=\{s\}$).
\end{center}
We set $\bc(t):=\bb(t)$ for $t \in S(\bv) \setminus \{s,s+1\} = S(\bw) \setminus \{s,s+1\}$, 
and $\bc(s):=\bb(s+1)$ if $S(\bv) \cap \{s,s+1\} = \{s\}$. 
Then, $\D(\bw,\bb)=(\bv,\bc) \in \ti{\QW}_{\lambda,w}(\Gamma)$ 
satisfies \eqref{eq:D2}. 
For the equality $\deg(\bv,\bc) = \deg(\bw,\bb)$, let us show that 
$X$ in \eqref{eq:X} is equal to $Y$ in \eqref{eq:Y}. We compute 
\begin{align*}
X & = \sum_{t=s,s+1} \pair{ \wt_{t}(\bw) }{ \qwt_{t}^{\vee}(\bw,\bb) } -
 \sum_{ t \in T^{-}(\bw) \cap \{s,s+1\} } \sgn(\gamma_{t})l_{t}' -
 \sum_{ t \in S(\bw) \cap \{s,s+1\} }\bb(t)l_{t}' \\[3mm]
& = \pair{ \wt_{s}(\bw) }{ \qwt_{s}^{\vee}(\bw,\bb) } - 
    \delta_{s \in T^{-}(\bw)} \sgn(\gamma_{s})l_{s}' - 
    \delta_{s+1 \in S(\bw) }\bb(s+1)l_{s+1}' \\
& = \pair{ \wt_{s}(\bw) }{ \qwt_{s}^{\vee}(\bw,\bb) } - 
    \delta_{s+1 \in T^{-}(\bv)} \sgn(\gamma_{s+1})l_{s+1}' + 
    \delta_{s \in S(\bv) }\bc(s)l_{s}', \\[3mm]
Y & = 
  \pair{ \wt_{s+1}(\bv) }{ \qwt_{s+1}^{\vee}(\bv,\bc) } - 
  \delta_{s+1 \in T^{-}(\bv)} \sgn(\gamma_{s+1})l_{s+1}' -
  \delta_{s \in S(\bv) }\bc(s)l_{s}'.
\end{align*}
Here, 
\begin{align*}
& \pair{ \wt_{s}(\bw) }{ \qwt_{s}^{\vee}(\bw,\bb) } = 
  \pair{ - l_{s}'w_{s-1}^{-1} \gamma_{s} }{ \qwt_{s-1}^{\vee}(\bw,\bb) + 
  \delta_{s \in T^{-}(\bw)}|w_{s}^{-1}\gamma_{s}|^{\vee} },  \\[3mm]
& \pair{ \wt_{s+1}(\bv) }{ \qwt_{s+1}^{\vee}(\bv,\bc) } \\
& = 
  \pair{ - \underbrace{ l_{s+1}'v_{s}^{-1} \gamma_{s+1} }_{= l_{s}'w_{s-1}^{-1}\gamma_{s} } }
  { \underbrace{ \qwt_{s-1}^{\vee}(\bv,\bc) }_{= \qwt_{s-1}^{\vee}(\bw,\bb)} + 
    \underbrace{\delta_{s+1 \in T^{-}(\bv)}|v_{s+1}^{-1}\gamma_{s+1}|^{\vee}}_{=
    \delta_{s \in T^{-}(\bw)}|w_{s}^{-1}\gamma_{s}|^{\vee} } - 
  \delta_{s \in S(\bv)} \bc(s) \underbrace{v_{s}^{-1}\gamma_{s}^{\vee}}_{=w_{s-1}^{-1}\gamma_{s}^{\vee}}} \\
& = \pair{ \wt_{s}(\bw) }{ \qwt_{s}^{\vee}(\bw,\bb) } + 
    2 \delta_{s \in S(\bv)} \bc(s)l_{s}'. 
\end{align*}
Combining these equalities, we obtain $X=Y$, as desired. 
\end{case}

\begin{case}[to be paired with Case~\ref{case2D}] \label{case3D}
Assume that $(\bw,\bb)$ satisfies 
$w_{s-1}=w_{s} \edge{|w_{s-1}^{-1}\alpha|} s_{\alpha}w_{s}=w_{s+1}$. We set 
\begin{equation*}
\begin{cases}
v_{k}:=w_{k} \qquad \text{for $0 \le k \le m$ with $k \ne s,s+1$}, & \\
\underbrace{w_{s-1}=w_{s}}_{=v_{s-1}} \edge{|w_{s-1}^{-1}\alpha|} 
\underbrace{s_{\alpha}w_{s}=w_{s+1}}_{=:v_{s}=v_{s+1}}.
\end{cases}
\end{equation*}
We see that $\bv=(v_{0},v_{1},\dots,v_{s-1},v_{s},v_{s+1},v_{s+2},\dots,v_{m}) \in 
\QW_{\lambda,w}(\Gamma)$; notice that 
\begin{center}
$S(\bw) \cap \{s,s+1\} = \emptyset$ (resp., $=\{s\}$) 
if and only if 
$S(\bv) \cap \{s,s+1\} = \emptyset$ (resp., $=\{s+1\}$).
\end{center}
We set $\bc(t):=\bb(t)$ for $t \in S(\bv) \setminus \{s,s+1\} = S(\bw) \setminus \{s,s+1\}$, 
and $\bc(s+1):=\bw(s)$ if $S(\bv) \cap \{s,s+1\} = \{s+1\}$. 
Then, $\D(\bw,\bb)=(\bv,\bc) \in \ti{\QW}_{\lambda,w}(\Gamma)$ 
satisfies \eqref{eq:D2}; we can show the equalities in \eqref{eq:D2} 
in exactly the same way as in Case~\ref{case2D}.
\end{case}

\begin{case}[to be paired with Subcase~\ref{subcase12D}] \label{case4D}
Assume that $(\bw,\bb)$ satisfies 
$w_{s-1} \edge{|w_{s-1}^{-1}\alpha|} w_{s} 
\edge{|w_{s-1}^{-1}\alpha|} w_{s+1}$; note that $w_{s+1}=w_{s-1}$. 
Also, notice that 
$S(\bw) \cap \{s,s+1\} = \emptyset$, and 
$|w_{s-1}^{-1}\alpha|$ is a simple root. We set 
\begin{equation*}
\begin{cases}
v_{k}:=w_{k} \qquad \text{for $0 \le k \le m$ with $k \ne s,s+1$}, & \\
w_{s-1} = v_{s-1} = v_{s} = v_{s+1}. 
\end{cases}
\end{equation*}
We see that $\bv=(v_{0},v_{1},\dots,v_{s-1},v_{s},v_{s+1},v_{s+2},\dots,v_{m}) \in 
\QW_{\lambda,w}(\Gamma)$; notice that 
$S(\bv) \setminus \{s,s+1\} = S(\bw) \setminus \{s,s+1\}$, and that 
$S(\bv) \cap \{s,s+1\}$ is either $\{s\}$ or $\{s+1\}$ 
since $|w_{s-1}^{-1}\alpha|$ is a simple root. We set 
\begin{equation*}
\bc(t):=
\begin{cases}
\bb(t) & \text{for $t \in S(\bv) \setminus \{s,s+1\}$}, \\[1mm]
1 & \text{for $t \in S(\bv) \cap \{s,s+1\}$}.
\end{cases}
\end{equation*}
Then, $\D(\bw,\bb)=(\bv,\bc) \in \ti{\QW}_{\lambda,w}(\Gamma)$ 
satisfies \eqref{eq:D2}; we can show the equalities in \eqref{eq:D2} 
in exactly the same way as in Subcase~\ref{subcase12D}. 
\end{case}

Now, we set 
\begin{equation*}
\ti{\QW}^{(1)}_{\lambda,w}(\Gamma):=
\bigl\{ (\bw,\bb) \in \ti{\QW}_{\lambda,w}(\Gamma) \mid 
\D(\bw,\bb) \in \ti{\QW}_{\lambda,w}(\Xi) \bigr\}. 
\end{equation*}
We can easily verify that the map $(\bw,\bb) \mapsto \D(\bw,\bb)$ 
gives a bijection from $\ti{\QW}_{\lambda,w}(\Gamma) \setminus 
\ti{\QW}^{(1)}_{\lambda,w}(\Gamma)$ onto 
$\ti{\QW}_{\lambda,w}(\Xi)$ satisfying \eqref{eq:D1}, and also 
an involution on $\ti{\QW}^{(1)}_{\lambda,w}(\Gamma)$ satisfying \eqref{eq:D2}. 
This completes the proof of Theorem~\ref{thm:D}. 

\section{Proof of Theorem~\ref{thm:YB}.} \label{sec:YB}

In what follows, we indicate that an edge 
$w \edge{\alpha} ws_{\alpha}$ in $\QBG(W)$ 
is a quantum edge (resp., Bruhat edge) 
by writing $w \Qe{\alpha} ws_{\alpha}$ 
(resp., $w \Be{\alpha} ws_{\alpha}$). 

\subsection{In type $A_{2}$.}

We give a proof of Theorem~\ref{thm:YB} 
in the case that $\pair{\alpha}{\beta^{\vee}}=
\pair{\beta}{\alpha^{\vee}}=-1$ (see Remark~\ref{rem:YB}).  
Assume that $\Gamma \in \AP(\lambda)$ is of the form
\begin{equation} \label{eq:Gamma2}
\Gamma: 
  A_{\circ}=A_{0} 
  \xrightarrow{\gamma_{1}} A_{1} 
  \xrightarrow{\gamma_{2}} \cdots 
  \xrightarrow{\gamma_{m}} A_{m}=A_{\lambda}, 
\end{equation}
with $\gamma_{s+1}=\alpha$, $\gamma_{s+2}=\alpha+\beta$, $\gamma_{s+3}=\beta$, 
i.e., 
\begin{equation*}
\cdots \edge{\gamma_{s}} A_{s} \edge{\alpha} A_{s+1} \edge{\alpha+\beta} A_{s+2} 
  \edge{\beta} A_{s+3} \edge{\gamma_{s+4}} \cdots  \qquad \text{(in $\Gamma$)}. 
\end{equation*}
Then, $\Xi=(\beta_{1},\dots,\beta_{m})$, where 
$\beta_{k}:=\gamma_{k}$ for $1 \le k \le m$ with $k \ne s+1,s+2,s+3$, and 
$\beta_{s+1}=\beta$, $\beta_{s+2}=\alpha+\beta$, $\beta_{s+3}=\alpha$; note that 
$\Xi$ is an alcove path from $A_{\circ}$ to $A_{\lambda}$ of the form:
\begin{equation}
\Xi: 
  A_{\circ}=A_{0} 
  \xrightarrow{\gamma_{1}} \cdots 
  \edge{\gamma_{s}} A_{s} \edge{\beta} B_{s+1} \edge{\alpha+\beta} B_{s+2} 
  \edge{\alpha} A_{s+3} \edge{\gamma_{s+4}} \cdots 
\xrightarrow{\gamma_{m}} A_{m}=A_{\lambda}
\end{equation}
for some alcoves $B_{s+1}$ and $B_{s+2}$; observe that (the closure of) 
$A_{s} \sqcup A_{s+1} \sqcup A_{s+2} \sqcup A_{s+3} \sqcup B_{s+1} \sqcup B_{s+2}$ 
forms a ``hexagon'' lying in $\BR \alpha \oplus \BR \beta \subset \Fh_{\BR}^{\ast}$. 
For $1 \le t \le m$, 
let $H_{\gamma_{t},l_{t}}$ (resp., $H_{\beta_{t},k_{t}}$) be 
the affine hyperplane between the $(t-1)$-th alcove and 
the $t$-th alcove in $\Gamma$ (resp., $\Xi$). Then we see that 
$l_{t}=k_{t}$ for all $1 \le t \le m$ with $t \ne s+1, s+2, s+3$, and that 
\begin{equation} \label{eq:lk1}
k_{s+3}=l_{s+1}, \qquad 
k_{s+1}=l_{s+3}, \qquad 
l_{s+2}=l_{s+1} + l_{s+3} = k_{s+1} + k_{s+3} = k_{s+2}.
\end{equation}
Recall that $l_{t}'=\pair{\lambda}{\gamma_{t}^{\vee}}-l_{t}$ for $1 \le t \le m$. 
Set $k_{t}':=\pair{\lambda}{\beta_{t}^{\vee}}-k_{t}$ for $1 \le t \le m$. 
We see that $l_{t}'=k_{t}'$ for all $1 \le t \le m$ with $t \ne s+1, s+2, s+3$, and that 
\begin{equation} \label{eq:lk2}
k_{s+3}'=l_{s+1}', \qquad 
k_{s+1}'=l_{s+3}', \qquad 
l_{s+2}'=l_{s+1}' + l_{s+3}' = k_{s+1}' + k_{s+3}' = k_{s+2}'.
\end{equation}

\setcounter{case}{0}

Now, for $\bw=(w_{0},w_{1},\dots,w_{m}) \in \QW_{\lambda,w}(\Gamma)$, 
we set 
\begin{equation} \label{eq:Theta1a}
v_{k}:=w_{k} \qquad \text{for $0 \le k \le m$ with $k \ne s+1,s+2$}.
\end{equation}
In the following, we will define $v_{s+1}$ and $v_{s+2}$ in such a way that 
$\bv:=(v_{0},v_{1},\dots,v_{m}) \in \QW_{\lambda,w}(\Gamma) \sqcup \QW_{\lambda,w}(\Xi)$. 
Note that $S(\bw) \setminus \{s+1,s+2,s+3\} = S(\bv) \setminus \{s+1,s+2,s+3\}$. 
Also, for $\bb:S(\bw) \rightarrow \{0,1\}$, we will define $\bc:S(\bv) \rightarrow \{0,1\}$ 
in such a way that
\begin{equation} \label{eq:Theta1b}
\bc|_{S(\bv) \setminus \{s+1,s+2,s+3\}} = 
\bb|_{S(\bw) \setminus \{s+1,s+2,s+3\}}, 
\end{equation}
and $\YB(\bw,\bb):=(\bv,\bc) \in 
\ti{\QW}_{\lambda,w}(\Gamma) \sqcup \ti{\QW}_{\lambda,w}(\Xi)$. 
Then we set 
\begin{equation} \label{eq:QW0}
\ti{\QW}^{(0)}_{\lambda,w}(\Gamma):=
\bigl\{ (\bw,\bb) \in \ti{\QW}_{\lambda,w}(\Gamma) \mid 
\YB(\bw,\bb) \in \ti{\QW}_{\lambda,w}(\Xi) \bigr\}.
\end{equation}
%

\begin{case} \label{case1}
Assume that $w_{s}=w_{s+1}=w_{s+2}=w_{s+3}$. 
Then we set $w_{s}=v_{s}=v_{s+1}=v_{s+2}=v_{s+3}=w_{s+3}$. 
It is obvious that $\bv=(v_{0},v_{1},\dots,v_{m}) \in \QW_{\lambda,w}(\Xi)$. 

We see that $S(\bw) \cap \{s+1,s+2,s+3\}$ is one of 
$\emptyset$, $\{s+1\}$, $\{s+2\}$, $\{s+3\}$, and $\{s+1, s+3\}$, and that
\begin{align*}
& \text{$S(\bw) \cap \{s+1,s+2,s+3\} = \emptyset$ 
  (resp., $\{s+1\}$, $\{s+2\}$, $\{s+3\}$, $\{s+1, s+3\}$)} \\
& \iff \text{$S(\bv) \cap \{s+1,s+2,s+3\} = \emptyset$ 
  (resp., $\{s+3\}$, $\{s+2\}$, $\{s+1\}$, $\{s+1, s+3\}$)}.
\end{align*}
Hence we can define $\bc(t):=\bb(2s+4-t)$ for 
$t \in S(\bv) \cap \{s+1,s+2,s+3\}$. 
In this case, $\YB(\bw,\bb)=(\bv,\bc) \in \ti{\QW}_{\lambda,w}(\Xi)$ 
satisfies \eqref{eq:YB1}. 
Indeed, we can easily show the equalities in \eqref{eq:YB1}, 
except for $\deg(\bv,\bc) = \deg(\bw,\bb)$. In order to show that $\deg(\bv,\bc) = \deg(\bw,\bb)$, 
it suffices to show that $\deg'(\bv,\bc) = \deg'(\bw,\bb)$. 
For this equality, we claim that 
\begin{equation} \label{eq:X1}
\begin{split}
X := & \overbrace{ \sum_{t=s+1,s+2,s+3} \pair{ \wt_{t}(\bw) }{ \qwt_{t}^{\vee}(\bw,\bb) } }^{=:X_{1}} \\[3mm]
& 
 - \underbrace{ \sum_{ t \in T^{-}(\bw) \cap \{s+1,s+2,s+3\} } \sgn(\gamma_{t})l_{t}' }_{=:X_{2}}
 - \underbrace{ \sum_{ t \in S(\bw) \cap \{s+1,s+2,s+3\} }\bb(t)l_{t}' }_{=:X_{3}}
\end{split}
\end{equation}
is equal to
\begin{equation} \label{eq:Y1}
\begin{split}
Y := & \overbrace{ \sum_{t=s+1,s+2,s+3} \pair{ \wt_{t}(\bv) }{ \qwt_{t}^{\vee}(\bv,\bc) } }^{=:Y_{1}} \\[3mm]
&
 - \underbrace{ \sum_{ t \in T^{-}(\bv) \cap \{s+1,s+2,s+3\} } \sgn(\beta_{t})k_{t}' }_{=:Y_{2}}
 - \underbrace{ \sum_{ t \in S(\bv) \cap \{s+1,s+2,s+3\} }\bc(t)k_{t}' }_{=:Y_{3}}.
\end{split}
\end{equation}
We see that $X_{1}=Y_{1}=0$. Also, it is easily verified by \eqref{eq:lt2} that 
$X_{2}=Y_{2}$ and $X_{3}=Y_{3}$. This proves $X=Y$, as desired. 
\end{case}

\begin{case} \label{case2}
Assume that $w_{s} \edge{|w_{s}^{-1}\alpha|} s_{\alpha}w_{s}=w_{s+1}=w_{s+2}=w_{s+3}$. 
Then, we define $v_{s+1}$ and $v_{s+2}$ by the following directed path in $\QBG(W)$: 
\begin{equation*}
(w_{s}=) \quad v_{s}=v_{s+1}=v_{s+2} \edge{|v_{s+2}^{-1}\alpha|} s_{\alpha}v_{s+2}=v_{s+3} \quad (=w_{s+3}).
\end{equation*}
It is easily checked that $\bv=(v_{0},v_{1},\dots,v_{m}) \in \QW_{\lambda,w}(\Xi)$. 

We see that $S(\bw) \cap \{s+1,s+2,s+3\}$ is one of 
$\emptyset$, $\{s+2\}$, and $\{s+3\}$, and that
\begin{align*}
& \text{$S(\bw) \cap \{s+1,s+2,s+3\} = \emptyset$ 
  (resp., $\{s+2\}$, $\{s+3\}$)} \\
& \iff \text{$S(\bv) \cap \{s+1,s+2,s+3\} = \emptyset$ 
  (resp., $\{s+1\}$, $\{s+2\}$)}.
\end{align*}
Hence we can define $\bc(t):=\bb(t+1)$ for $t \in S(\bv) \cap \{s+1,s+2,s+3\}$. 
In this case, $\YB(\bw,\bb)=(\bv,\bc) \in \ti{\QW}_{\lambda,w}(\Xi)$ satisfies \eqref{eq:YB1}.
Indeed, we can easily show the equalities in \eqref{eq:YB1}, 
except for $\deg(\bv,\bc) = \deg(\bw,\bb)$. In order to show that $\deg(\bv,\bc) = \deg(\bw,\bb)$, 
it suffices to show that $\deg'(\bv,\bc) = \deg'(\bw,\bb)$. For this equality, we claim that 
$X$ in \eqref{eq:X1} is equal to $Y$ in \eqref{eq:Y1}. We have
\begin{align*}
X_{1} & = \pair{ \wt_{s+1}(\bw) }{ \qwt_{s+1}^{\vee}(\bw,\bb) }
 = \pair{-l_{s+1}'w_{s}^{-1}\gamma_{s+1}}{%
 \qwt_{s}^{\vee}(\bw,\bb) + \delta_{s+1 \in T^{-}(\bw)}|w_{s+1}^{-1}\gamma_{s+1}|^{\vee} } \\
& = \pair{ -l_{s+1}'w_{s}^{-1}\alpha }{ \qwt_{s}^{\vee}(\bw,\bb) } + 
\pair{ -l_{s+1}'w_{s}^{-1}\alpha }{ \delta_{s+1 \in T^{-}(\bw)}|w_{s}^{-1}\alpha|^{\vee} }, \\[3mm]
Y_{1} & = \pair{ \wt_{s+3}(\bv) }{ \qwt_{s+3}^{\vee}(\bv,\bc) } \\
& = \pair{-k_{s+3}'v_{s+2}^{-1}\beta_{s+3}}{ \qwt_{s}^{\vee}(\bv,\bc) } + 
    \pair{-k_{s+3}'v_{s+2}^{-1}\beta_{s+3}}{ \delta_{s+3 \in T^{-}(\bv)}|v_{s+3}^{-1}\beta_{s+3}|^{\vee} } \\
& \qquad -
 \pair{-k_{s+3}'v_{s+2}^{-1}\beta_{s+3}}{
 \delta_{s+1 \in S(\bv)} \bc(s+1) v_{s+1}^{-1}\beta_{s+1}^{\vee} + 
 \delta_{s+2 \in S(\bv)} \bc(s+2) v_{s+2}^{-1}\beta_{s+2}^{\vee} } \\
& = \pair{ -l_{s+1}'w_{s}^{-1}\alpha }{ \qwt_{s}^{\vee}(\bw,\bb) } + 
    \pair{ -l_{s+1}'w_{s}^{-1}\alpha }{ \delta_{s+1 \in T^{-}(\bw)}|w_{s}^{-1}\alpha|^{\vee} } \\
& \qquad - 
 \pair{ -l_{s+1}'w_{s}^{-1}\alpha }{
 \delta_{s+1 \in S(\bv)} \bc(s+1) w_{s}^{-1}\beta^{\vee} + 
 \delta_{s+2 \in S(\bv)} \bc(s+2) w_{s}^{-1}(\alpha+\beta)^{\vee} } \\
& = \pair{ -l_{s+1}'w_{s}^{-1}\alpha }{ \qwt_{s}^{\vee}(\bw,\bb) } + 
    \pair{ -l_{s+1}'w_{s}^{-1}\alpha }{ \delta_{s+1 \in T^{-}(\bw)}|w_{s}^{-1}\alpha|^{\vee} } \\
& \qquad - \delta_{s+1 \in S(\bv)} \bc(s+1)l_{s+1}' + \delta_{s+2 \in S(\bv)} \bc(s+2)l_{s+1}',
\end{align*}
and hence 
\begin{equation*}
Y_{1} = X_{1} - \delta_{s+1 \in S(\bv)} \bc(s+1)l_{s+1}' + \delta_{s+2 \in S(\bv)} \bc(s+2)l_{s+1}'. 
\end{equation*}
Also, we have
\begin{align*}
X_{2} = \delta_{s+1 \in T^{-}(\bw)} \sgn(\gamma_{s+1}) l_{s+1}' 
 = \delta_{s+3 \in T^{-}(\bv)} \sgn(\beta_{s+3}) k_{s+3}' = Y_{2}, 
\end{align*}
and
\begin{align*}
X_{3} & = \delta_{s+2 \in S(\bw)} \bb(s+2) l_{s+2}' + \delta_{s+3 \in S(\bw)} \bb(s+3) l_{s+3}', \\
Y_{3} & = \delta_{s+1 \in S(\bv)} \bc(s+1) k_{s+1}' + \delta_{s+2 \in S(\bv)} \bc(s+2) k_{s+2}'. 
\end{align*}
Therefore, we deduce that 
\begin{align*}
Y & = Y_{1} - Y_{2} - Y_{3} \\
& = X_{1} - \delta_{s+1 \in S(\bv)} \bc(s+1)l_{s+1}' + \delta_{s+2 \in S(\bv)} \bc(s+2)l_{s+1}' - X_{2} \\
& \qquad - \delta_{s+1 \in S(\bv)} \bc(s+1) k_{s+1}' - \delta_{s+2 \in S(\bv)} \bc(s+2) k_{s+2}' \\
& = X_{1} - \delta_{s+2 \in S(\bw)} \bb(s+2)l_{s+1}' + \delta_{s+3 \in S(\bw)} \bb(s+3)l_{s+1}' - X_{2} \\
& \qquad - \delta_{s+2 \in S(\bw)} \bb(s+2) l_{s+3}' - \delta_{s+3 \in S(\bw)} \bb(s+3) l_{s+2}' \\
& = X_{1} - X_{2} - X_{3} = X, 
\end{align*}
as desired. 
\end{case}

\begin{case} \label{case3}
Assume that $w_{s} = w_{s+1} = w_{s+2} \edge{|w_{s}^{-1}\beta|} s_{\beta}w_{s+2}=w_{s+3}$. 
Then, we define $v_{s+1}$ and $v_{s+2}$ by the following directed path in $\QBG(W)$: 
\begin{equation*}
(w_{s}=) \quad 
v_{s} \edge{|v_{s}^{-1}\beta|} s_{\alpha}v_{s}=v_{s+1}=v_{s+2}=v_{s+3} \quad (=w_{s+3}).
\end{equation*}
It is easily checked that $\bv=(v_{0},v_{1},\dots,v_{m})=\bv \in \QW_{\lambda,w}(\Xi)$. 

We see that $S(\bw) \cap \{s+1,s+2,s+3\}$ is one of 
$\emptyset$, $\{s+1\}$, and $\{s+2\}$, and that
\begin{align*}
& \text{$S(\bw) \cap \{s+1,s+2,s+3\} = \emptyset$ 
  (resp., $\{s+1\}$, $\{s+2\}$)} \\
& \iff \text{$S(\bv) \cap \{s+1,s+2,s+3\} = \emptyset$ 
  (resp., $\{s+2\}$, $\{s+3\}$)}.
\end{align*}
Hence we can define $\bc(t):=\bb(t-1)$ for 
$t \in S(\bv) \cap \{s+1,s+2,s+3\}$. 
As in Case~\ref{case2}, we can show that 
$\YB(\bw,\bb)=(\bv,\bc) \in \ti{\QW}_{\lambda,w}(\Xi)$ satisfies \eqref{eq:YB1}. 
\end{case}

\begin{case} \label{case4}
Assume that 
$w_{s} = w_{s+1} \edge{|w_{s+1}^{-1}(\alpha+\beta)|} s_{\alpha+\beta}w_{s+1}=w_{s+2}=w_{s+3}$; 
note that $w_{s+1}^{-1}(\alpha+\beta) = w_{s}^{-1}\alpha+w_{s}^{-1}\beta$, and 
$\pair{w_{s}^{-1}\alpha}{ w_{s}^{-1}\beta^{\vee}}=\pair{w_{s}^{-1}\beta}{ w_{s}^{-1}\alpha^{\vee}}=-1$. 
Also, since $w_{s+1}^{-1} \gamma_{s+1} = w_{s}^{-1}\alpha$ and 
$w_{s+3}^{-1} \gamma_{s+3} = w_{s}^{-1}s_{\alpha+\beta} \beta = -w_{s}^{-1}\alpha$, 
we see that $S(\bw) \cap \{s+1,s+2,s+3\}$ is one of 
$\emptyset$, $\{s+1\}$, and $\{s+3\}$.  

\begin{subcase}[to be paired with Subcase~\ref{subcase61} below] \label{subcase41}
Assume that $(\bw,\bb)$ satisfies either
\begin{equation} \label{eq:c41a}
\begin{cases}
\text{$S(\bw) \cap \{s+1,s+2,s+3\} = \{s+1\}$ (i.e., $-w_{s}^{-1}\alpha$ is a simple root)}, \\[3mm]
\bb(s+1)=1, \quad w_{s}^{-1}\beta \in \Delta^{-}, \quad \text{and} \quad 
w_{s+1} \Qe{|w_{s+1}^{-1}(\alpha+\beta)|} s_{\alpha+\beta}w_{s+1}=w_{s+2},
\end{cases}
\end{equation}
or
\begin{equation} \label{eq:c41b}
\begin{cases}
\text{$S(\bw) \cap \{s+1,s+2,s+3\} = \{s+3\}$ (i.e., $w_{s}^{-1}\alpha$ is a simple root)}, \\[3mm]
\bb(s+3)=1, \quad w_{s}^{-1}\beta \in \Delta^{+}, \quad \text{and} \quad 
w_{s+1} \Qe{|w_{s+1}^{-1}(\alpha+\beta)|} s_{\alpha+\beta}w_{s+1}=w_{s+2}. 
\end{cases}
\end{equation}
It follows from Lemma~\ref{lem:qqq} (applied to $w_{s}$, $w_{s}^{-1}\alpha$, and $w_{s}^{-1}\beta$) that 
\begin{equation*}
\underbrace{w_{s}}_{=v_{s}} \Qe{|w_{s}^{-1}\alpha|} \underbrace{s_{\alpha}w_{s}}_{=:v_{s+1}} 
\Qe{|w_{s}^{-1}\beta|} \underbrace{s_{\alpha+\beta}s_{\alpha}w_{s}}_{=:v_{s+2}}
\Qe{|w_{s}^{-1}\alpha|} \underbrace{s_{\beta}s_{\alpha+\beta}s_{\alpha}w_{s}}_{=w_{s+3}=v_{s+3}} \quad 
\text{in $\QBG(W)$}, 
\end{equation*}
and that
\begin{equation*}
\sgn(w_{s}^{-1}\alpha)=\sgn(w_{s}^{-1}\beta)=\sgn(w_{s}^{-1}(\alpha+\beta)) 
= - \sgn(\alpha) = - \sgn(\beta) = - \sgn(\alpha+\beta).
\end{equation*}
We see that $\bv=(v_{0},v_{1},\dots,v_{m}) \in \QW_{\lambda,w}(\Gamma)$; notice that 
$S(\bv) \cap \{s+1,s+2,s+3\} = \emptyset$, and hence $\bc$ is defined only by \eqref{eq:Theta1b}.
Since $\sgn(w_{s}^{-1}\alpha)=\sgn(w_{s}^{-1}\beta)=\sgn(w_{s}^{-1}(\alpha+\beta))$, 
we have $(-1)^{(\bw,\bb)}=-(-1)^{(\bv,\bc)}$. In this case, 
$\YB(\bw,\bb):=(\bv,\bc) \in \ti{\QW}_{\lambda,w}(\Gamma)$ satisfies \eqref{eq:YB2}. 
Indeed, we can easily show the equalities in \eqref{eq:YB2}, 
except for $\deg(\bv,\bc) = \deg(\bw,\bb)$. In order to show that $\deg(\bv,\bc) = \deg(\bw,\bb)$, 
it suffices to show that $\deg'(\bv,\bc) = \deg'(\bw,\bb)$. For this equality, we claim that 
$X$ of \eqref{eq:X1} is equal to 
\begin{equation} \label{eq:Z1}
\begin{split}
Z := & \overbrace{ \sum_{t=s+1,s+2,s+3} \pair{ \wt_{t}(\bv) }{ \qwt_{t}^{\vee}(\bv,\bc) } }^{=:Z_{1}} \\[3mm]
&
 - \underbrace{ \sum_{ t \in T^{-}(\bv) \cap \{s+1,s+2,s+3\} } \sgn(\gamma_{t})l_{t}' }_{=:Z_{2}}
 - \underbrace{ \sum_{ t \in S(\bv) \cap \{s+1,s+2,s+3\} }\bc(t)l_{t}' }_{=:Z_{3}}.
\end{split}
\end{equation}
We give a proof only in the case that \eqref{eq:c41a} holds;
the proof in the case that \eqref{eq:c41b} holds is similar. We have 
\begin{align*}
X_{1} & = \pair{ \wt_{s+2}(\bw) }{ \qwt_{s+2}^{\vee}(\bw,\bb) } \\
& = \pair{-l_{s+2}'w_{s+1}^{-1}\gamma_{s+2}}{%
 \qwt_{s}^{\vee}(\bw,\bb) + |w_{s+2}^{-1}\gamma_{s+2}|^{\vee} - w_{s+1}^{-1}\gamma_{s+1}^{\vee} } \\
& = \pair{-l_{s+2}'w_{s}^{-1}(\alpha+\beta)}{%
 \qwt_{s}^{\vee}(\bw,\bb) + |w_{s}^{-1}(\alpha+\beta)|^{\vee} - w_{s}^{-1}\alpha^{\vee} } \\
& = \pair{-l_{s+2}'w_{s}^{-1}(\alpha+\beta)}{\qwt_{s}^{\vee}(\bw,\bb)} + 2l_{s+2}' + l_{s+2}',
\end{align*}
\begin{equation*}
X_{2} = l_{s+2}', \qquad X_{3} = l_{s+1}', 
\end{equation*}
and hence 
\begin{equation*}
X=X_{1}-X_{2}-X_{3} = 
\pair{-l_{s+2}'w_{s}^{-1}(\alpha+\beta)}{\qwt_{s}^{\vee}(\bw,\bb)} 
+ 2l_{s+2}' - l_{s+1}'.
\end{equation*}
Also, we have 
\begin{align*}
Z_{1} & = 
  \pair{ \wt_{s+1}(\bv) }{ \qwt_{s+1}^{\vee}(\bv,\bc) } + 
  \pair{ \wt_{s+2}(\bv) }{ \qwt_{s+2}^{\vee}(\bv,\bc) } + 
  \pair{ \wt_{s+3}(\bv) }{ \qwt_{s+3}^{\vee}(\bv,\bc) } \\
& = \pair{-l_{s+1}'v_{s}^{-1}\gamma_{s+1}}{%
 \qwt_{s}^{\vee}(\bv,\bc) + |v_{s+1}^{-1}\gamma_{s+1}|^{\vee} } \\
& \qquad + \pair{-l_{s+2}'v_{s+1}^{-1}\gamma_{s+2}}{%
 \qwt_{s}^{\vee}(\bv,\bc) + |v_{s+1}^{-1}\gamma_{s+1}|^{\vee} + |v_{s+2}^{-1}\gamma_{s+2}|^{\vee} } \\
& \qquad + \pair{-l_{s+3}'v_{s+2}^{-1}\gamma_{s+3}}{%
 \qwt_{s}^{\vee}(\bv,\bc) + |v_{s+1}^{-1}\gamma_{s+1}|^{\vee} + 
 |v_{s+2}^{-1}\gamma_{s+2}|^{\vee} + |v_{s+3}^{-1}\gamma_{s+3}|^{\vee} } \\
& = \pair{-l_{s+1}'w_{s}^{-1}\alpha}{%
 \qwt_{s}^{\vee}(\bw,\bb) - w_{s}^{-1}\alpha^{\vee} } \\
& \qquad + \pair{-l_{s+2}'w_{s}^{-1}\beta}{%
 \qwt_{s}^{\vee}(\bw,\bb) - w_{s}^{-1}\alpha^{\vee} - w_{s}^{-1}\beta^{\vee} } \\
& \qquad + \pair{-l_{s+3}'w_{s}^{-1}\alpha}{%
 \qwt_{s}^{\vee}(\bw,\bb) - w_{s}^{-1}\alpha^{\vee} - w_{s}^{-1}\beta^{\vee} 
 - w_{s}^{-1}\alpha^{\vee} } \\
& = \pair{-l_{s+2}'w_{s}^{-1}(\alpha+\beta)}{\qwt_{s}^{\vee}(\bw,\bb)} + 2l_{s+1}' + l_{s+2}' + 3l_{s+3}', 
\end{align*}
\begin{equation*}
Z_{2} = l_{s+1}'+l_{s+2}'+l_{s+3}', \qquad Z_{3}=0, 
\end{equation*}
and hence 
\begin{equation*}
Z=Z_{1}-Z_{2}-Z_{3} = 
\pair{-l_{s+2}'w_{s}^{-1}(\alpha+\beta)}{\qwt_{s}^{\vee}(\bw,\bb)} 
+ l_{s+1}' +2 l_{s+3}'.
\end{equation*}
Since $2l_{s+2}' - l_{s+1}' = 2(l_{s+1}' +l_{s+3}') - l_{s+1}' = 
l_{s+1}' +2 l_{s+3}'$, we obtain $X=Z$, as desired. 
\end{subcase}

\begin{subcase}[to be paired with Subcase~\ref{subcase62} below] \label{subcase42}
Assume that $(\bw,\bb)$ satisfies either
\begin{equation} \label{eq:c42a}
\begin{cases}
\text{$S(\bw) \cap \{s+1,s+2,s+3\} = \{s+1\}$ (i.e., $-w_{s}^{-1}\alpha$ is a simple root)}, \\[3mm]
\bb(s+1)=1, \quad w_{s}^{-1}\beta \in \Delta^{+}, \quad \alpha,\beta \in \Delta^{-}, \quad \text{and} \\[3mm]
w_{s+1} \Qe{|w_{s+1}^{-1}(\alpha+\beta)|} s_{\alpha+\beta}w_{s+1}=w_{s+2}, 
\end{cases}
\end{equation}
or
\begin{equation} \label{eq:c42b}
\begin{cases}
\text{$S(\bw) \cap \{s+1,s+2,s+3\} = \{s+3\}$ (i.e., $w_{s}^{-1}\alpha$ is a simple root)}, \\[3mm]
\bb(s+3)=1, \quad w_{s}^{-1}\beta \in \Delta^{-}, \quad \alpha,\beta \in \Delta^{+}, \quad \text{and} \\[3mm]
w_{s+1} \Qe{|w_{s+1}^{-1}(\alpha+\beta)|} s_{\alpha+\beta}w_{s+1}=w_{s+2}. 
\end{cases}
\end{equation}
It follows from Lemma~\ref{lem:bqbq} (applied to $w_{s}$, $w_{s}^{-1}\alpha$, and $w_{s}^{-1}\beta$) that 
\begin{equation*}
\underbrace{w_{s}}_{=v_{s}} \Be{|w_{s}^{-1}\alpha|} \underbrace{s_{\alpha}w_{s}}_{=:v_{s+1}} 
\Qe{|w_{s}^{-1}\beta|} \underbrace{s_{\alpha+\beta}s_{\alpha}w_{s}}_{=:v_{s+2}}
\Be{|w_{s}^{-1}\alpha|} \underbrace{s_{\beta}s_{\alpha+\beta}s_{\alpha}w_{s}}_{=w_{s+3}=v_{s+3}} \quad 
\text{in $\QBG(W)$}, 
\end{equation*}
and that
\begin{equation*}
\sgn(w_{s}^{-1}\alpha) = - \sgn(w_{s}^{-1}\beta)= - \sgn(w_{s}^{-1}(\alpha+\beta)) 
= \sgn(\alpha) = \sgn(\beta) = \sgn(\alpha+\beta).
\end{equation*}
We see that $\bv=(v_{0},v_{1},\dots,v_{m}) \in \QW_{\lambda,w}(\Gamma)$; notice that 
$S(\bv) \cap \{s+1,s+2,s+3\} = \emptyset$, and hence $\bc$ is defined only by \eqref{eq:Theta1b}.
In this case, $\YB(\bw,\bb):=(\bv,\bc) \in \ti{\QW}_{\lambda,w}(\Gamma)$ satisfies \eqref{eq:YB2}, 
which we can verify in exactly the same way as in Subcase~\ref{subcase41}. 
\end{subcase}

\begin{subcase}[to be paired with Subcase~\ref{subcase63} below] \label{subcase43}
Assume that $(\bw,\bb)$ satisfies either
\begin{equation} \label{eq:c43a}
\begin{cases}
\text{$S(\bw) \cap \{s+1,s+2,s+3\} = \{s+1\}$ (i.e., $-w_{s}^{-1}\alpha$ is a simple root)}, \\[3mm]
\bb(s+1)=1, \quad w_{s}^{-1}\beta \in \Delta^{+}, \quad \alpha \in \Delta^{+}, \quad \text{and} \\[3mm]
w_{s+1} \Qe{|w_{s+1}^{-1}(\alpha+\beta)|} s_{\alpha+\beta}w_{s+1}=w_{s+2}, 
\end{cases}
\end{equation}
or
\begin{equation} \label{eq:c43b}
\begin{cases}
\text{$S(\bw) \cap \{s+1,s+2,s+3\} = \{s+3\}$ (i.e., $w_{s}^{-1}\alpha$ is a simple root)}, \\[3mm]
\bb(s+3)=1, \quad w_{s}^{-1}\beta \in \Delta^{-}, \quad \alpha \in \Delta^{-}, \quad \text{and} \\[3mm]
w_{s+1} \Qe{|w_{s+1}^{-1}(\alpha+\beta)|} s_{\alpha+\beta}w_{s+1}=w_{s+2}. 
\end{cases}
\end{equation}
It follows from Lemma~\ref{lem:qbbq} that 
\begin{equation*}
\underbrace{w_{s}}_{=v_{s}} \Qe{|w_{s}^{-1}\beta|} \underbrace{s_{\beta}w_{s}}_{=:v_{s+1}} 
\Be{|w_{s}^{-1}\alpha|} \underbrace{s_{\alpha+\beta}s_{\beta}w_{s}}_{=:v_{s+2}}
\Be{|w_{s}^{-1}\beta|} \underbrace{s_{\alpha}s_{\alpha+\beta}s_{\beta}w_{s}}_{=w_{s+3}=v_{s+3}} \quad 
\text{in $\QBG(W)$},
\end{equation*}
and that 
\begin{equation*}
\sgn(w_{s}^{-1}\alpha) = - \sgn(w_{s}^{-1}\beta)= - \sgn(w_{s}^{-1}(\alpha+\beta)) 
= - \sgn(\alpha) = \sgn(\beta) = \sgn(\alpha+\beta).
\end{equation*}
We see that $\bv=(v_{0},v_{1},\dots,v_{m}) \in \QW_{\lambda,w}(\Xi)$; notice that 
$S(\bv) \cap \{s+1,s+2,s+3\} = \emptyset$, and hence $\bc$ is defined only by \eqref{eq:Theta1b}.
In this case, $\YB(\bw,\bb):=(\bv,\bc) \in \ti{\QW}_{\lambda,w}(\Xi)$ satisfies \eqref{eq:YB1}, 
which we can verify in exactly the same way as in Cases~\ref{case1} and \ref{case2}. 
\end{subcase}

\begin{subcase}[to be paired with Subcase~\ref{subcase64} below] \label{subcase44}
Assume that $(\bw,\bb)$ satisfies either
\begin{equation} \label{eq:c44a}
\begin{cases}
\text{$S(\bw) \cap \{s+1,s+2,s+3\} = \{s+1\}$ (i.e.,, $-w_{s}^{-1}\alpha$ is a simple root)}, \\[3mm]
\bb(s+1)=1, \quad w_{s}^{-1}\beta \in \Delta^{+}, \quad 
\alpha \in \Delta^{-}, \quad \beta \in \Delta^{+}, \quad \text{and} \\[3mm]
w_{s+1} \Qe{|w_{s+1}^{-1}(\alpha+\beta)|} s_{\alpha+\beta}w_{s+1}=w_{s+2}, 
\end{cases}
\end{equation}
or
\begin{equation} \label{eq:c44b}
\begin{cases}
\text{$S(\bw) \cap \{s+1,s+2,s+3\} = \{s+3\}$ (i.e., $w_{s}^{-1}\alpha$ is a simple root)}, \\[3mm]
\bb(s+3)=1, \quad w_{s}^{-1}\beta \in \Delta^{-}, \quad 
\alpha \in \Delta^{+}, \quad \beta \in \Delta^{-}, \quad \text{and} \\[3mm]
w_{s+1} \Qe{|w_{s+1}^{-1}(\alpha+\beta)|} s_{\alpha+\beta}w_{s+1}=w_{s+2}. 
\end{cases}
\end{equation}
It follows from Lemma~\ref{lem:bbqq} that 
\begin{equation*}
\underbrace{w_{s}}_{=v_{s}} \Be{|w_{s}^{-1}\beta|} \underbrace{s_{\beta}w_{s}}_{=:v_{s+1}} 
\Be{|w_{s}^{-1}\alpha|} \underbrace{s_{\alpha+\beta}s_{\beta}w_{s}}_{=:v_{s+2}}
\Qe{|w_{s}^{-1}\beta|} \underbrace{s_{\alpha}s_{\alpha+\beta}s_{\beta}w_{s}}_{=w_{s+3}=v_{s+3}} \quad 
\text{in $\QBG(W)$},
\end{equation*}
and that 
\begin{equation*}
\sgn(w_{s}^{-1}\alpha) = - \sgn(w_{s}^{-1}\beta)= - \sgn(w_{s}^{-1}(\alpha+\beta)) 
= \sgn(\alpha) = - \sgn(\beta) = \sgn(\alpha+\beta).
\end{equation*}
We see that $\bv=(v_{0},v_{1},\dots,v_{m}) \in \QW_{\lambda,w}(\Xi)$; notice that 
$S(\bv) \cap \{s+1,s+2,s+3\} = \emptyset$, and hence $\bc$ is defined only by \eqref{eq:Theta1b}.
In this case, $\YB(\bw,\bb):=(\bv,\bc) \in \ti{\QW}_{\lambda,w}(\Xi)$ satisfies \eqref{eq:YB1}, 
which we can verify in exactly the same way as in Cases~\ref{case1} and \ref{case2}. 
\end{subcase}

\begin{subcase}[to be paired with Subcase~\ref{subcase65} below] \label{subcase45}
Assume that $(\bw,\bb)$ satisfies
\begin{equation} \label{eq:c46a}
\begin{cases}
\text{$S(\bw) \cap \{s+1,s+2,s+3\} = \{s+1\}$ (i.e., $-w_{s}^{-1}\alpha$ is a simple root)}, \\[3mm]
\bb(s+1)=1, \quad \alpha \in \Delta^{-}, \quad \text{and} \quad 
w_{s+1} \Be{|w_{s+1}^{-1}(\alpha+\beta)|} s_{\alpha+\beta}w_{s+1}=w_{s+2}, 
\end{cases}
\end{equation}
or
\begin{equation} \label{eq:c46b}
\begin{cases}
\text{$S(\bw) \cap \{s+1,s+2,s+3\} = \{s+3\}$ (i.e.,, $w_{s}^{-1}\alpha$ is a simple root)}, \\[3mm]
\bb(s+3)=1, \quad \alpha \in \Delta^{+}, \quad \text{and} \quad 
w_{s+1} \Be{|w_{s+1}^{-1}(\alpha+\beta)|} s_{\alpha+\beta}w_{s+1}=w_{s+2}. 
\end{cases}
\end{equation}
It follows from Lemma~\ref{lem:bbqb} that 
\begin{equation*}
\underbrace{w_{s}}_{=v_{s}} \Be{|w_{s}^{-1}\alpha|} \underbrace{s_{\alpha}w_{s}}_{=:v_{s+1}} 
\Be{|w_{s}^{-1}\beta|} \underbrace{s_{\alpha+\beta}s_{\alpha}w_{s}}_{=:v_{s+2}}
\Qe{|w_{s}^{-1}\alpha|} \underbrace{s_{\beta}s_{\alpha+\beta}s_{\alpha}w_{s}}_{=w_{s+3}=v_{s+3}} \quad 
\text{in $\QBG(W)$}, 
\end{equation*}
and that 
\begin{equation*}
\sgn(\alpha) = \sgn(w_{s}^{-1}\alpha) = -\sgn(\beta) \quad \text{and} \quad 
\sgn(\alpha+\beta) = \sgn(w_{s}^{-1}(\alpha+\beta)) = \sgn(w_{s}^{-1}\beta). 
\end{equation*}
We see that $\bv=(v_{0},v_{1},\dots,v_{m}) \in \QW_{\lambda,w}(\Gamma)$; notice that 
$S(\bv) \cap \{s+1,s+2,s+3\} = \emptyset$, and hence $\bc$ is defined only by \eqref{eq:Theta1b}. 
In this case, $\YB(\bw,\bb):=(\bv,\bc) \in \ti{\QW}_{\lambda,w}(\Gamma)$ satisfies \eqref{eq:YB2}, 
which we can verify in exactly the same way as in Subcase~\ref{subcase41}. 
\end{subcase}

\begin{subcase}[to be paired with Subcase~\ref{subcase66} below] \label{subcase46}
Assume that $(\bw,\bb)$ satisfies
\begin{equation} \label{eq:c47a}
\begin{cases}
\text{$S(\bw) \cap \{s+1,s+2,s+3\} = \{s+1\}$ (i.e., $-w_{s}^{-1}\alpha$ is a simple root)}, \\[3mm]
\bb(s+1)=1, \quad \beta \in \Delta^{-}, \quad \text{and} \quad 
w_{s+1} \Be{|w_{s+1}^{-1}(\alpha+\beta)|} s_{\alpha+\beta}w_{s+1}=w_{s+2}, 
\end{cases}
\end{equation}
or
\begin{equation} \label{eq:c47b}
\begin{cases}
\text{$S(\bw) \cap \{s+1,s+2,s+3\} = \{s+3\}$ (i.e., $w_{s}^{-1}\alpha$ is a simple root)}, \\[3mm]
\bb(s+3)=1, \quad \beta \in \Delta^{+}, \quad \text{and} \quad 
w_{s+1} \Be{|w_{s+1}^{-1}(\alpha+\beta)|} s_{\alpha+\beta}w_{s+1}=w_{s+2}. 
\end{cases}
\end{equation}
It follows from Lemma~\ref{lem:qbbb} that 
\begin{equation*}
\underbrace{w_{s}}_{=v_{s}} \Qe{|w_{s}^{-1}\alpha|} \underbrace{s_{\alpha}w_{s}}_{=:v_{s+1}} 
\Be{|w_{s}^{-1}\beta|} \underbrace{s_{\alpha+\beta}s_{\alpha}w_{s}}_{=:v_{s+2}}
\Be{|w_{s}^{-1}\alpha|} \underbrace{s_{\beta}s_{\alpha+\beta}s_{\alpha}w_{s}}_{=w_{s+3}=v_{s+3}} \quad 
\text{in $\QBG(W)$},
\end{equation*}
and that 
\begin{equation*}
\sgn(\alpha) = - \sgn(w_{s}^{-1}\alpha) = -\sgn(\beta) \quad \text{and} \quad 
\sgn(\alpha+\beta) = \sgn(w_{s}^{-1}(\alpha+\beta)) = \sgn(w_{s}^{-1}\beta). 
\end{equation*}
We see that $\bv=(v_{0},v_{1},\dots,v_{m}) \in \QW_{\lambda,w}(\Gamma)$; notice that 
$S(\bv) \cap \{s+1,s+2,s+3\} = \emptyset$, and hence $\bc$ is defined only by \eqref{eq:Theta1b}.
In this case, $\YB(\bw,\bb):=(\bv,\bc) \in \ti{\QW}_{\lambda,w}(\Gamma)$ satisfies \eqref{eq:YB2}, 
which we can verify in exactly the same way as in Subcase~\ref{subcase41}. 
\end{subcase}

\begin{subcase}[to be paired with Subcase~\ref{subcase67} below] \label{subcase47}
Assume that $(\bw,\bb)$ satisfies
\begin{equation} \label{eq:c48a}
\begin{cases}
\text{$S(\bw) \cap \{s+1,s+2,s+3\} = \{s+1\}$ (i.e., $-w_{s}^{-1}\alpha$ is a simple root)}, \\[3mm]
\bb(s+1)=1, \quad \alpha,\beta \in \Delta^{+}, \quad \text{and} \quad 
w_{s+1} \Be{|w_{s+1}^{-1}(\alpha+\beta)|} s_{\alpha+\beta}w_{s+1}=w_{s+2}, 
\end{cases}
\end{equation}
or
\begin{equation} \label{eq:c48b}
\begin{cases}
\text{$S(\bw) \cap \{s+1,s+2,s+3\} = \{s+3\}$ (i.e., $w_{s}^{-1}\alpha$ is a simple root)}, \\[3mm]
\bb(s+3)=1, \quad \alpha,\beta \in \Delta^{-}, \quad \text{and} \quad
w_{s+1} \Be{|w_{s+1}^{-1}(\alpha+\beta)|} s_{\alpha+\beta}w_{s+1}=w_{s+2}. 
\end{cases}
\end{equation}
It follows from Lemma~\ref{lem:bqbb} that 
\begin{equation*}
\underbrace{w_{s}}_{=v_{s}} \Be{|w_{s}^{-1}\beta|} \underbrace{s_{\beta}w_{s}}_{=:v_{s+1}} 
\Qe{|w_{s}^{-1}\alpha|} \underbrace{s_{\alpha+\beta}s_{\beta}w_{s}}_{=:v_{s+2}}
\Be{|w_{s}^{-1}\beta|} \underbrace{s_{\alpha}s_{\alpha+\beta}s_{\beta}w_{s}}_{=w_{s+3}=v_{s+3}} \quad 
\text{in $\QBG(W)$},
\end{equation*}
and that 
\begin{equation*}
\sgn(w_{s}^{-1}\alpha) = - \sgn(w_{s}^{-1}\beta) = - \sgn(w_{s}^{-1}(\alpha+\beta)) =
- \sgn(\alpha) = - \sgn(\beta) = - \sgn(\alpha+\beta). 
\end{equation*}
We see that $\bv=(v_{0},v_{1},\dots,v_{m}) \in \QW_{\lambda,w}(\Xi)$; notice that 
$S(\bv) \cap \{s+1,s+2,s+3\} = \emptyset$, and hence $\bc$ is defined only by \eqref{eq:Theta1b}.
In this case, $\YB(\bw,\bb):=(\bv,\bc) \in \ti{\QW}_{\lambda,w}(\Xi)$ satisfies \eqref{eq:YB1}, 
which we can verify in exactly the same way as in Cases~\ref{case1} and \ref{case2}. 
\end{subcase}

\begin{subcase} \label{subcase48}
Assume that $(\bw,\bb)$ does not satisfy any of equations \eqref{eq:c41a}--\eqref{eq:c48b}; 
notice that $S(\bw) \cap \{s+1,s+2,s+3\} = \emptyset$ or $\bb(t) = 0$ 
for $t \in S(\bw) \cap \{s+1,s+2,s+3\}$. 
Then, we define $v_{s+1}$ and $v_{s+2}$ by the following directed path in $\QBG(W)$: 
\begin{equation*}
(w_{s}=) \quad 
v_{s} = v_{s+1} \edge{|w_{s+1}^{-1}(\alpha+\beta)|} s_{\alpha+\beta}v_{s+1}=v_{s+2}=v_{s+3} \quad (=w_{s+3}).
\end{equation*}
We see that $\bv=(v_{0},v_{1},\dots,v_{m}) \in \QW_{\lambda,w}(\Xi)$. 
If $S(\bv) \cap\{s+1,s+2,s+3\} \ne \emptyset$, then we set $\bc(t)=0$ 
for $t \in S(\bv) \cap\{s+1,s+2,s+3\}$. In this case, 
$\YB(\bw,\bb):=(\bv, \bc) \in \ti{\QW}_{\lambda,w}(\Xi)$ satisfies \eqref{eq:YB1}, 
which we can verify in exactly the same way as in Cases~\ref{case1} and \ref{case2}. 
\end{subcase}
\end{case}

\begin{case} \label{case5}
Assume that $\#\bigl\{ s+1 \le t \le s+3 \mid w_{t-1} \ne w_{t} \bigr\} = 2$; 
then the sequence 
\begin{equation} \label{eq:seq}
(w_{s},w_{s+1},w_{s+2},w_{s+3} \,;\, 
 w_{s}^{-1}\gamma_{s+1},w_{s+1}^{-1}\gamma_{s+2},w_{s+2}^{-1}\gamma_{s+3})
\end{equation}
is identical to one of the following: 
\begin{enu}
\item[(a)] 
$(w_{s},\,s_{\alpha}w_{s},\,s_{\alpha+\beta}s_{\alpha}w_{s},\,s_{\alpha+\beta}s_{\alpha}w_{s} \,;\, 
  w_{s}^{-1}\alpha,\,w_{s}^{-1}\beta,\,w_{s}^{-1}\alpha)$; 
note that $w_{s+3} = s_{\alpha}s_{\beta}w_{s}$. 

\item[(b)] 
$(w_{s},\,w_{s},\,s_{\alpha+\beta}w_{s},\,s_{\beta}s_{\alpha+\beta}w_{s} \,;\,
  w_{s}^{-1}\alpha,\,w_{s}^{-1}(\alpha+\beta),\,-w_{s}^{-1}\alpha)$;
note that $w_{s+3} = s_{\alpha}s_{\beta}w_{s}$. 

\item[(c)] 
$(w_{s},\,s_{\alpha}w_{s},\,s_{\alpha}w_{s},\,s_{\beta}s_{\alpha}w_{s} \,;\,
  w_{s}^{-1}\alpha,\,w_{s}^{-1}\beta,\,w_{s}^{-1}(\alpha+\beta))$;
note that $w_{s+3} = s_{\beta}s_{\alpha}w_{s}$.
\end{enu}
In Case 5, we make frequent uses of a fundamental fact about the existence and uniqueness of a label-increasing or label-decreasing directed path 
in the quantum Bruhat graph with respect to a fixed reflection order; see, for example, \cite[Theorem~7.3]{LNSSS1}. 

\begin{subcase} \label{subcase51}
Assume that $\sgn(w_{s}^{-1}\alpha) = \sgn(w_{s}^{-1}\beta)$. 
We fix a reflection order $\lhd$ on $\Delta^{+}$ such that 
$|w_{s}^{-1}\alpha| \lhd |w_{s}^{-1}(\alpha+\beta)| \lhd |w_{s}^{-1}\beta|$. 

\enskip

\paragraph{\bf (5.1a)}
If the sequence in \eqref{eq:seq} is of the form (a), then we have 
the label-increasing directed path 
\begin{equation} \label{eq:seq51z}
w_{s} \edge{|w_{s}^{-1}\alpha|} s_{\alpha}w_{s} 
      \edge{|w_{s}^{-1}\beta|} s_{\alpha+\beta}s_{\alpha}w_{s}=s_{\alpha}s_{\beta}w_{s}, 
\end{equation}
where $S(\bw) \cap \{s+1,s+2,s+3\}$ is either $\emptyset$ or $\{s+3\}$. 
It follows that there exists a unique label-decreasing directed path 
from $w_{s}$ to $w_{s+3}$, which is of the form:
\begin{equation} \label{eq:seq51a}
w_{s} \edge{|w_{s}^{-1}\beta|} s_{\beta}w_{s} 
      \edge{|w_{s}^{-1}(\alpha+\beta)|} s_{\alpha}s_{\beta}w_{s}=w_{s+3}
\end{equation}
or
\begin{equation} \label{eq:seq51b}
w_{s} \edge{|w_{s}^{-1}(\alpha+\beta)|} s_{\alpha+\beta}w_{s} 
      \edge{|w_{s}^{-1}\alpha|} s_{\beta}s_{\alpha+\beta}w_{s}=w_{s+3}. 
\end{equation}
If \eqref{eq:seq51a} holds, then we define
$(v_{s},v_{s+1},v_{s+2},v_{s+3}):=(w_{s},s_{\beta}w_{s},s_{\beta}w_{s},s_{\alpha}s_{\beta}w_{s})$.
We see that $\bv=(v_{0},v_{1},\dots,v_{m}) \in \QW_{\lambda,w}(\Xi)$, and that
\begin{align*}
& \text{$S(\bw) \cap \{s+1,s+2,s+3\} = \emptyset$ 
  (resp., $\{s+3\}$)} \\
& \iff \text{$S(\bv) \cap \{s+1,s+2,s+3\} = \emptyset$ 
  (resp., $\{s+2\}$)}.
\end{align*}
If $S(\bv) \cap \{s+1,s+2,s+3\} = \{s+2\}$, then we define $\bc(s+2):=\bb(s+3)$. 
In this case, $\YB(\bw,\bb):=(\bv,\bc) \in \ti{\QW}_{\lambda,w}(\Xi)$ satisfies \eqref{eq:YB1}.
Indeed, we can easily show the equalities in \eqref{eq:YB1}, 
except for $\deg(\bv,\bc) = \deg(\bw,\bb)$. In order to show that $\deg(\bv,\bc) = \deg(\bw,\bb)$, 
it suffices to show that $\deg'(\bv,\bc) = \deg'(\bw,\bb)$. 
For this equality, we claim that $X$ in \eqref{eq:X1} is equal to $Y$ in \eqref{eq:Y1}.
We set $\epsilon:=\sgn(w_{s}^{-1}\alpha) = \sgn(w_{s}^{-1}\beta)$. 
Recall that 
$\delta_{\mathsf{P}} = 1$ (resp., $=0$) if a statement $\mathsf{P}$ is true (resp., false). 
We have 
\begin{align*}
X_{1} & = 
\pair{ \wt_{s+1}(\bw) }{ \qwt_{s+1}^{\vee}(\bw,\bb) } + 
\pair{ \wt_{s+2}(\bw) }{ \qwt_{s+2}^{\vee}(\bw,\bb) } \\
& = 
 \pair{-l_{s+1}'w_{s}^{-1}\gamma_{s+1}}{%
 \qwt_{s}^{\vee}(\bw,\bb) + \delta_{s+1 \in T^{-}(\bw)} |w_{s+1}^{-1}\gamma_{s+1}|^{\vee} } \\
& \quad +
 \pair{-l_{s+2}'w_{s+1}^{-1}\gamma_{s+2}}{%
 \qwt_{s}^{\vee}(\bw,\bb) + \delta_{s+1 \in T^{-}(\bw)} |w_{s+1}^{-1}\gamma_{s+1}|^{\vee}
  + \delta_{s+2 \in T^{-}(\bw)} |w_{s+2}^{-1}\gamma_{s+2}|^{\vee}} \\
& = 
 \pair{-l_{s+1}'w_{s}^{-1}\alpha}{%
 \qwt_{s}^{\vee}(\bw,\bb) + \delta_{s+1 \in T^{-}(\bw)} |w_{s}^{-1}\alpha|^{\vee} } \\
& \quad +
 \pair{-l_{s+2}'w_{s}^{-1}\beta}{%
 \qwt_{s}^{\vee}(\bw,\bb) + \delta_{s+1 \in T^{-}(\bw)} |w_{s}^{-1}\alpha|^{\vee}
  + \delta_{s+2 \in T^{-}(\bw)} |w_{s}^{-1}\beta|^{\vee}} \\
& = 
 \pair{-l_{s+1}'w_{s}^{-1}\alpha-l_{s+2}'w_{s}^{-1}\beta}{%
 \qwt_{s}^{\vee}(\bw,\bb)} \\
& \quad - 2\delta_{s+1 \in T^{-}(\bw)}\epsilon l_{s+1}' 
 + \delta_{s+1 \in T^{-}(\bw)}\epsilon l_{s+2}' - 2\delta_{s+2 \in T^{-}(\bw)}\epsilon l_{s+2}',
\end{align*}
\begin{align*}
X_{2} & = \delta_{s+1 \in T^{-}(\bw)} \sgn(\alpha) l_{s+1}' + 
      \delta_{s+2 \in T^{-}(\bw)} \sgn(\alpha+\beta) l_{s+2}', \\
X_{3} & = \delta_{s+3 \in S(\bw)} \bb(s+3) l_{s+3}'; 
\end{align*}
recall that $X=X_{1}-X_{2}-X_{3}$. Also, we have 
\begin{align*}
Y_{1} & = 
\pair{ \wt_{s+1}(\bv) }{ \qwt_{s+1}^{\vee}(\bv,\bc) } + 
\pair{ \wt_{s+3}(\bv) }{ \qwt_{s+3}^{\vee}(\bv,\bc) } \\
& = 
 \pair{-k_{s+1}'v_{s}^{-1}\beta_{s+1}}{%
 \qwt_{s}^{\vee}(\bv,\bc) + \delta_{s+1 \in T^{-}(\bv)} |v_{s+1}^{-1}\beta_{s+1}|^{\vee} } \\
& \quad +
 \bigl\langle -k_{s+3}'v_{s+2}^{-1}\beta_{s+3},\,
 \qwt_{s}^{\vee}(\bv,\bc) + \\
& \qquad \delta_{s+1 \in T^{-}(\bv)} |v_{s+1}^{-1}\beta_{s+1}|^{\vee}
  + \delta_{s+3 \in T^{-}(\bv)} |v_{s+3}^{-1}\beta_{s+3}|^{\vee} 
  - \delta_{s+2 \in S(\bv)} \bc(s+2) v_{s+2}^{-1}\beta_{s+2}^{\vee} \bigr\rangle \\
& = 
 \pair{-k_{s+1}'w_{s}^{-1}\beta}{%
 \qwt_{s}^{\vee}(\bv,\bc) + \delta_{s+1 \in T^{-}(\bv)} |w_{s}^{-1}\beta|^{\vee} } \\
& \quad +
 \bigl\langle -k_{s+3}'w_{s}^{-1}(\alpha+\beta),\,
 \qwt_{s}^{\vee}(\bv,\bc) + \\
& \qquad \delta_{s+1 \in T^{-}(\bv)} |w_{s}^{-1}\beta|^{\vee}
  + \delta_{s+3 \in T^{-}(\bv)} |w_{s}^{-1}(\alpha+\beta)|^{\vee} 
  - \delta_{s+2 \in S(\bv)} \bc(s+2) w_{s}^{-1}\alpha^{\vee} \bigr\rangle \\
& = \pair{-k_{s+1}'w_{s}^{-1}\beta-k_{s+3}'w_{s}^{-1}(\alpha+\beta)}{\qwt_{s}^{\vee}(\bv,\bc)} \\
& \quad 
  - 2\delta_{s+1 \in T^{-}(\bv)}\epsilon k_{s+1}' 
  - \delta_{s+1 \in T^{-}(\bv)}\epsilon k_{s+3}'
  - 2\delta_{s+3 \in T^{-}(\bv)}\epsilon k_{s+3}'
  +  \delta_{s+2 \in S(\bv)} \bc(s+2) k_{s+3}',
\end{align*}
\begin{align*}
Y_{2} & = \delta_{s+1 \in T^{-}(\bv)} \sgn(\beta) k_{s+1}' + 
\delta_{s+3 \in T^{-}(\bv)} \sgn(\alpha) k_{s+3}', \\
Y_{3} & = \delta_{s+2 \in S(\bv)} \bc(s+2) k_{s+2}'; 
\end{align*}
recall that $Y=Y_{1}-Y_{2}-Y_{3}$. 
Here we note that $\qwt_{s}^{\vee}(\bw,\bb) = \qwt_{s}^{\vee}(\bv,\bc)$. 
By \eqref{eq:lk2}, we see that 
\begin{equation*}
\pair{-l_{s+1}'w_{s}^{-1}\alpha-l_{s+2}'w_{s}^{-1}\beta}{%
 \qwt_{s}^{\vee}(\bw,\bb)} =
\pair{-k_{s+1}'w_{s}^{-1}\beta-k_{s+3}'w_{s}^{-1}(\alpha+\beta)}{\qwt_{s}^{\vee}(\bv,\bc)}, 
\end{equation*}
\begin{equation*}
- \delta_{s+3 \in S(\bw)} \bb(s+3) l_{s+3}' = 
\delta_{s+2 \in S(\bv)} \bc(s+2) k_{s+3}' - \delta_{s+2 \in S(\bv)} \bc(s+2) k_{s+2}'. 
\end{equation*}
Hence, in order to show that $X=Y$, we need to show that
\begin{align}
& \delta_{s+1 \in T^{-}(\bw)} 
  (- 2\epsilon l_{s+1}'+ \epsilon l_{s+2}' - \sgn(\alpha) l_{s+1}')
  + \delta_{s+2 \in T^{-}(\bw)} 
  (- 2\epsilon l_{s+2}' - \sgn(\alpha+\beta) l_{s+2}') \nonumber \\
& = \delta_{s+1 \in T^{-}(\bv)}
  (- 2\epsilon k_{s+1}' - \epsilon k_{s+3}' - \sgn(\beta) k_{s+1}')
  + \delta_{s+3 \in T^{-}(\bv)}
  (- 2\epsilon k_{s+3}' - \sgn(\alpha) k_{s+3}'). \label{eq:51aa}
\end{align}
Since both the label-increasing directed path \eqref{eq:seq51z} and 
the label-decreasing directed path \eqref{eq:seq51a} are shortest directed paths 
from $w_{s}$ to $s_{\alpha}s_{\beta}w_{s}=s_{\beta}s_{\alpha+\beta}w_{s}$ in $\QBG(W)$, 
the sum of the labels of quantum edges in \eqref{eq:seq51z} is identical to 
that in \eqref{eq:seq51a}; see, for example, \cite[Proposition~8.1]{LNSSS1}. 
From this fact, together with the assumption that $\sgn(w_{s}^{-1}\alpha) = \sgn(w_{s}^{-1}\beta)$, 
we deduce that 
\begin{align*}
& T^{-}(\bw) \cap \{s+1,s+2,s+3\} = \emptyset, \ \{s+2\}, \text{ or } \{s+1,s+2\}, \\
& T^{-}(\bv) \cap \{s+1,s+2,s+3\} = \emptyset, \ \{s+1\}, \text{ or } \{s+3\},
\end{align*}
and that
\begin{align*}
& \text{$T^{-}(\bw) \cap \{s+1,s+2,s+3\} = \emptyset$ 
  (resp., $\{s+2\}$, $\{s+1,s+2\}$)} \\
& \iff \text{$T^{-}(\bv) \cap \{s+1,s+2,s+3\} = \emptyset$ 
  (resp., $\{s+1\}$, $\{s+3\}$)}.
\end{align*}
We show \eqref{eq:51aa} in the case that $\epsilon = 1$ and 
$T^{-}(\bw) \cap \{s+1,s+2,s+3\} = \{s+2\}$; the proofs in the other cases are similar or simpler. 
In this case, note that $\delta_{s+1 \in T^{-}(\bw)} = 0$, $\delta_{s+2 \in T^{-}(\bw)} = 1$, 
$\delta_{s+1 \in T^{-}(\bv)} = 1$, and $\delta_{s+3 \in T^{-}(\bv)} = 0$. 
Also, since $(s_{\alpha}w_{s})^{-1}(\alpha+\beta) = w_{s}^{-1}\beta \in \Delta^{+}$ and 
the edge $s_{\alpha}w_{s} \edge{|w_{s}^{-1}\beta|} s_{\alpha+\beta}s_{\alpha}w_{s}$ is a quantum edge, 
we deduce from Lemma~\ref{lem:ell} that $\alpha+\beta \in \Delta^{-}$, and hence 
$\sgn(\alpha+\beta)=-1$. Similarly, 
since $w_{s}^{-1}\beta \in \Delta^{+}$ and 
the edge $w_{s} \edge{|w_{s}^{-1}\beta|} s_{\beta}w_{s}$ is a quantum edge, 
we deduce from Lemma~\ref{lem:ell} that $\beta \in \Delta^{-}$, and hence $\sgn(\beta)=-1$. 
Thus, equation \eqref{eq:51aa} (which we need to show) follows from these equalities and \eqref{eq:lk2}. 

If \eqref{eq:seq51b} holds, then we define
$(v_{s},v_{s+1},v_{s+2},v_{s+3}):=(w_{s},w_{s},s_{\alpha+\beta}w_{s},s_{\beta}s_{\alpha+\beta}w_{s})$.
We see that $\bv=(v_{0},v_{1},\dots,v_{m}) \in \QW_{\lambda,w}(\Gamma)$, and that
\begin{align*}
& \text{$S(\bw) \cap \{s+1,s+2,s+3\} = \emptyset$ 
  (resp., $\{s+3\}$)} \\
& \iff \text{$S(\bv) \cap \{s+1,s+2,s+3\} = \emptyset$ 
  (resp., $\{s+1\}$)}.
\end{align*}
If $S(\bv) \cap \{s+1,s+2,s+3\} = \{s+1\}$, then we define $\bc(s+1):=\bb(s+3)$. 
In this case, $\YB(\bw,\bb):=(\bv,\bc) \in \ti{\QW}_{\lambda,w}(\Gamma)$ satisfies \eqref{eq:YB2}. 
Indeed, we can easily show the equalities in \eqref{eq:YB2}, 
except for $\deg(\bv,\bc) = \deg(\bw,\bb)$. In order to show that $\deg(\bv,\bc) = \deg(\bw,\bb)$, 
it suffices to show that $\deg'(\bv,\bc) = \deg'(\bw,\bb)$. 
For this equality, we claim that $X$ in \eqref{eq:X1} is equal to $Z$ in \eqref{eq:Z1}. 
As above, we have 
\begin{align*}
X_{1} & = 
 \pair{-l_{s+1}'w_{s}^{-1}\alpha-l_{s+2}'w_{s}^{-1}\beta}{%
 \qwt_{s}^{\vee}(\bw,\bb)} \\
& \quad - 2\delta_{s+1 \in T^{-}(\bw)}\epsilon l_{s+1}' 
 + \delta_{s+1 \in T^{-}(\bw)}\epsilon l_{s+2}' - 2\delta_{s+2 \in T^{-}(\bw)}\epsilon l_{s+2}', \\
X_{2} & = \delta_{s+1 \in T^{-}(\bw)} \sgn(\alpha) l_{s+1}' + 
      \delta_{s+2 \in T^{-}(\bw)} \sgn(\alpha+\beta) l_{s+2}', \\
X_{3} & = \delta_{s+3 \in S(\bw)} \bb(s+3) l_{s+3}'; 
\end{align*}
recall that $X=X_{1}-X_{2}-X_{3}$. By computations similar to the above, we have 
\begin{align*}
Z_{1}
& = \pair{ -l_{s+2}'w_{s}^{-1}(\alpha+\beta) +  l_{s+3}'w_{s}^{-1}\alpha }{ \qwt_{s}^{\vee}(\bv,\bc) } \\
& \qquad - 2\delta_{s+2 \in T^{-}(\bw)}\epsilon l_{s+2}' + \delta_{s+1 \in S(\bv)}\bc(s+1)l_{s+2}' \\
& \qquad  + \delta_{s+2 \in T^{-}(\bv)} \epsilon l_{s+3}' + 2 \delta_{s+3 \in T^{-}(\bv)} \epsilon l_{s+3}' 
  - 2 \delta_{s+1 \in S(\bv)} \bc(s+1) l_{s+3}', \\
Z_{2} & = \delta_{s+2 \in T^{-}(\bv)} \sgn(\alpha+\beta) l_{s+2}' + 
\delta_{s+3 \in T^{-}(\bv)} \sgn(\beta) l_{s+3}', \\
Z_{3} & = \delta_{s+1 \in S(\bv)} \bc(s+1) l_{s+1}'; 
\end{align*}
recall that $Z=Z_{1}-Z_{2}-Z_{3}$. Note that 
\begin{equation*}
\pair{-l_{s+1}'w_{s}^{-1}\alpha-l_{s+2}'w_{s}^{-1}\beta}{%
 \qwt_{s}^{\vee}(\bw,\bb)} =
\pair{ -l_{s+2}'w_{s}^{-1}(\alpha+\beta) +  l_{s+3}'w_{s}^{-1}\alpha }{ \qwt_{s}^{\vee}(\bv,\bc) },  
\end{equation*}
\begin{align*}
& - \delta_{s+3 \in S(\bw)} \bb(s+3) l_{s+3}' \\
& = \delta_{s+1 \in S(\bv)}\bc(s+1)l_{s+2}'- 2 \delta_{s+1 \in S(\bv)} \bc(s+1) l_{s+3}'-
    \delta_{s+1 \in S(\bv)} \bc(s+1) l_{s+1}'. 
\end{align*}
Hence, in order to show that $X=Z$, we need to show that
\begin{align}
& \delta_{s+1 \in T^{-}(\bw)} 
  (- 2\epsilon l_{s+1}'+ \epsilon l_{s+2}' - \sgn(\alpha) l_{s+1}')
  + \delta_{s+2 \in T^{-}(\bw)} 
  (- 2\epsilon l_{s+2}' - \sgn(\alpha+\beta) l_{s+2}') \nonumber \\
& = \delta_{s+2 \in T^{-}(\bw)} 
  (- 2\epsilon l_{s+2}' + \epsilon l_{s+3}' - \sgn(\alpha+\beta) l_{s+2}') + 
 \delta_{s+3 \in T^{-}(\bv)} 
  (2 \epsilon l_{s+3}' - \sgn(\beta) l_{s+3}'). \label{eq:51ab}
\end{align}
Since both the label-increasing directed path \eqref{eq:seq51z} and 
the label-decreasing directed path \eqref{eq:seq51b} are shortest directed paths 
from $w_{s}$ to $s_{\alpha}s_{\beta}w_{s}=s_{\beta}s_{\alpha+\beta}w_{s}$ in $\QBG(W)$, 
the sum of the labels of quantum edges in \eqref{eq:seq51z} is identical to 
that in \eqref{eq:seq51b}; see, for example, \cite[Proposition~8.1]{LNSSS1}. 
From this fact, together with the assumption that $\sgn(w_{s}^{-1}\alpha) = \sgn(w_{s}^{-1}\beta)$, 
we deduce that 
\begin{align*}
& T^{-}(\bw) \cap \{s+1,s+2,s+3\} = \emptyset, \ \{s+1\}, \text{ or } \{s+1,s+2\}, \\
& T^{-}(\bv) \cap \{s+1,s+2,s+3\} = \emptyset, \ \{s+3\}, \text{ or } \{s+2\},
\end{align*}
and 
\begin{align*}
& \text{$T^{-}(\bw) \cap \{s+1,s+2,s+3\} = \emptyset$ 
  (resp., $\{s+1\}$, $\{s+1,s+2\}$)} \\
& \iff \text{$T^{-}(\bv) \cap \{s+1,s+2,s+3\} = \emptyset$ 
  (resp., $\{s+3\}$, $\{s+2\}$)}.
\end{align*}
We show \eqref{eq:51ab} in the case that $\epsilon = 1$ and 
$T^{-}(\bw) \cap \{s+1,s+2,s+3\} = \{s+1\}$; the proofs in the other cases are similar or simpler. 
In this case, note that $\delta_{s+1 \in T^{-}(\bw)} = 1$, $\delta_{s+2 \in T^{-}(\bw)} = 0$, 
$\delta_{s+2 \in T^{-}(\bv)} = 0$, and $\delta_{s+3 \in T^{-}(\bv)} = 1$. 
Also, since $w_{s}^{-1}\alpha \in \Delta^{+}$ and 
the edge $w_{s} \edge{|w_{s}^{-1}\alpha|} s_{\alpha}w_{s}$ is a quantum edge, 
we deduce from Lemma~\ref{lem:ell} that $\alpha \in \Delta^{-}$, and hence 
$\sgn(\alpha)=-1$. Similarly, since $(s_{\alpha+\beta}w_{s})^{-1}(\beta) = - w_{s}^{-1}\alpha \in \Delta^{-}$ and 
the edge $s_{\alpha+\beta}w_{s} \edge{|w_{s}^{-1}\alpha|} s_{\beta}s_{\alpha+\beta}w_{s}$ is a quantum edge, 
we deduce from Lemma~\ref{lem:ell} that $\beta \in \Delta^{+}$, and hence $\sgn(\beta)=1$. 
Thus, equation \eqref{eq:51ab} (which we need to show) follows from these equalities and \eqref{eq:lk2}. 

\enskip

\paragraph{\bf (5.1b)}
If the sequence in \eqref{eq:seq} is of the form (b), then we have 
the label-decreasing directed path 
\begin{equation*}
w_{s} \edge{|w_{s}^{-1}(\alpha+\beta)|} s_{\alpha+\beta}w_{s} 
      \edge{|w_{s}^{-1}\alpha|} s_{\beta}s_{\alpha+\beta}w_{s}=w_{s+3},
\end{equation*}
where $S(\bw) \cap \{s+1,s+2,s+3\}$ is either $\emptyset$ or $\{s+1\}$. 
It follows that there exists a unique label-increasing directed path 
from $w_{s}$ to $w_{s+3}$, which is of the form:
\begin{equation} \label{eq:seq51c}
w_{s} \edge{|w_{s}^{-1}\alpha|} s_{\alpha}w_{s} 
      \edge{|w_{s}^{-1}\beta|} s_{\alpha+\beta}s_{\alpha}w_{s}=w_{s+3}. 
\end{equation}
If we define
$(v_{s},v_{s+1},v_{s+2},v_{s+3}):=
(w_{s},s_{\alpha}w_{s},s_{\alpha+\beta}s_{\alpha}w_{s},s_{\alpha+\beta}s_{\alpha}w_{s})$, 
then we see that $\bv=(v_{0},v_{1},\dots,v_{m}) \in \QW_{\lambda,w}(\Gamma)$, and that
\begin{align*}
& \text{$S(\bw) \cap \{s+1,s+2,s+3\} = \emptyset$ 
  (resp., $\{s+1\}$)} \\
& \iff \text{$S(\bv) \cap \{s+1,s+2,s+3\} = \emptyset$ 
  (resp., $\{s+3\}$)}.
\end{align*}
If $S(\bv) \cap \{s+1,s+2,s+3\} = \{s+3\}$, then we define $\bc(s+3):=\bb(s+1)$. 
In this case, $\YB(\bw,\bb):=(\bv,\bc) \in \ti{\QW}_{\lambda,w}(\Gamma)$ 
satisfies \eqref{eq:YB2}. 

\enskip

\paragraph{\bf (5.1c)}
If the sequence in \eqref{eq:seq} is of the form (c), then we have 
the label-increasing directed path 
\begin{equation*}
w_{s} \edge{|w_{s}^{-1}\alpha|} s_{\alpha}w_{s} 
      \edge{|w_{s}^{-1}(\alpha+\beta)|} s_{\beta}s_{\alpha}w_{s}=w_{s+3},
\end{equation*}
where $S(\bw) \cap \{s+1,s+2,s+3\}$ is either $\emptyset$ or $\{s+2\}$. 
It follows that there exists a unique label-decreasing directed path 
from $w_{s}$ to $w_{s+3}$, which is of the form:
\begin{equation} \label{eq:seq51d}
w_{s} \edge{|w_{s}^{-1}\beta|} s_{\beta}w_{s} 
      \edge{|w_{s}^{-1}\alpha|} s_{\alpha+\beta}s_{\beta}w_{s}=w_{s+3}. 
\end{equation}
If we define
$(v_{s},v_{s+1},v_{s+2},v_{s+3}):=
(w_{s},s_{\beta}w_{s},s_{\alpha+\beta}s_{\beta}w_{s},s_{\alpha+\beta}s_{\beta}w_{s})$, 
then we see that $\bv=(v_{0},v_{1},\dots,v_{m}) \in \QW_{\lambda,w}(\Xi)$, and that
\begin{align*}
& \text{$S(\bw) \cap \{s+1,s+2,s+3\} = \emptyset$ 
  (resp., $\{s+2\}$)} \\
& \iff \text{$S(\bv) \cap \{s+1,s+2,s+3\} = \emptyset$ 
  (resp., $\{s+3\}$)}.
\end{align*}
If $S(\bv) \cap \{s+1,s+2,s+3\} = \{s+3\}$, then we define $\bc(s+3):=\bb(s+2)$. 
In this case, $\YB(\bw,\bb):=(\bv,\bc) \in \ti{\QW}_{\lambda,w}(\Xi)$ 
satisfies \eqref{eq:YB1}. 
\end{subcase}

\begin{subcase} \label{subcase52}
Assume that $\sgn(w_{s}^{-1}\alpha) = - \sgn(w_{s}^{-1}\beta) = \sgn(w_{s}^{-1}(\alpha+\beta))$. 
We fix a reflection order $\lhd$ on $\Delta^{+}$ such that 
$|w_{s}^{-1}(\alpha+\beta)| \lhd |w_{s}^{-1}\alpha| \lhd |w_{s}^{-1}\beta|$. 

\enskip

\paragraph{\bf (5.2a)}
If the sequence in \eqref{eq:seq} is of the form (a), then we have 
the label-increasing directed path 
\begin{equation*}
w_{s} \edge{|w_{s}^{-1}\alpha|} s_{\alpha}w_{s} 
      \edge{|w_{s}^{-1}\beta|} s_{\alpha+\beta}s_{\alpha}w_{s}=s_{\alpha}s_{\beta}w_{s}, 
\end{equation*}
where $S(\bw) \cap \{s+1,s+2,s+3\}$ is either $\emptyset$ or $\{s+3\}$. 
It follows that there exists a unique label-decreasing directed path 
from $w_{s}$ to $w_{s+3}$, which is of the form:
\begin{equation} \label{eq:seq52a}
w_{s} \edge{|w_{s}^{-1}\beta|} s_{\beta}w_{s} 
      \edge{|w_{s}^{-1}(\alpha+\beta)|} s_{\alpha}s_{\beta}w_{s}=w_{s+3}. 
\end{equation}
Define
$(v_{s},v_{s+1},v_{s+2},v_{s+3}):=(w_{s},s_{\beta}w_{s},s_{\beta}w_{s},s_{\alpha}s_{\beta}w_{s})$. 
We see that $\bv=(v_{0},v_{1},\dots,v_{m}) \in \QW_{\lambda,w}(\Xi)$, and that 
\begin{align*}
& \text{$S(\bw) \cap \{s+1,s+2,s+3\} = \emptyset$ 
  (resp., $\{s+3\}$)} \\
& \iff \text{$S(\bv) \cap \{s+1,s+2,s+3\} = \emptyset$ 
  (resp., $\{s+2\}$)}.
\end{align*}
If $S(\bv) \cap \{s+1,s+2,s+3\} = \{s+2\}$, then we define $\bc(s+2):=\bb(s+3)$. 
In this case, $\YB(\bw,\bb):=(\bv,\bc) \in \ti{\QW}_{\lambda,w}(\Xi)$ 
satisfies \eqref{eq:YB1}. 

\enskip

\paragraph{\bf (5.2b)}
If the sequence in \eqref{eq:seq} is of the form (b), then we have 
the label-increasing directed path 
\begin{equation*}
w_{s} \edge{|w_{s}^{-1}(\alpha+\beta)|} s_{\alpha+\beta}w_{s} 
      \edge{|w_{s}^{-1}\alpha|} s_{\beta}s_{\alpha+\beta}w_{s}=w_{s+3},
\end{equation*}
where $S(\bw) \cap \{s+1,s+2,s+3\}$ is either $\emptyset$ or $\{s+1\}$. 
It follows that there exists a unique label-decreasing directed path 
from $w_{s}$ to $w_{s+3}$, which is of the form:
\begin{equation} \label{eq:seq52c}
w_{s} \edge{|w_{s}^{-1}\beta|} s_{\beta}w_{s} 
      \edge{|w_{s}^{-1}(\alpha+\beta)|} s_{\alpha}s_{\beta}w_{s}=w_{s+3}. 
\end{equation}
Define
$(v_{s},v_{s+1},v_{s+2},v_{s+3}):=
(w_{s},s_{\beta}w_{s},s_{\beta}w_{s},s_{\alpha}s_{\beta}w_{s})$. 
We see that $\bv=(v_{0},v_{1},\dots,v_{m}) \in \QW_{\lambda,w}(\Xi)$, and that
\begin{align*}
& \text{$S(\bw) \cap \{s+1,s+2,s+3\} = \emptyset$ 
  (resp., $\{s+1\}$)} \\
& \iff \text{$S(\bv) \cap \{s+1,s+2,s+3\} = \emptyset$ 
  (resp., $\{s+2\}$)}.
\end{align*}
If $S(\bv) \cap \{s+1,s+2,s+3\} = \{s+2\}$, then we define $\bc(s+2):=\bb(s+1)$. 
In this case, $\YB(\bw,\bb):=(\bv,\bc) \in \ti{\QW}_{\lambda,w}(\Xi)$ 
satisfies \eqref{eq:YB1}. 

\enskip

\paragraph{\bf (5.2c)}
If the sequence in \eqref{eq:seq} is of the form (c), then we have 
the label-decreasing directed path 
\begin{equation*}
w_{s} \edge{|w_{s}^{-1}\alpha|} s_{\alpha}w_{s} 
      \edge{|w_{s}^{-1}(\alpha+\beta)|} s_{\beta}s_{\alpha}w_{s}=w_{s+3},
\end{equation*}
where $S(\bw) \cap \{s+1,s+2,s+3\}$ is either $\emptyset$ or $\{s+2\}$. 
It follows that there exists a unique label-increasing directed path 
from $w_{s}$ to $w_{s+3}$, which is of the form:
\begin{equation} \label{eq:seq52d}
w_{s} \edge{|w_{s}^{-1}(\alpha+\beta)|} s_{\alpha+\beta}w_{s} 
      \edge{|w_{s}^{-1}\beta|} s_{\alpha}s_{\alpha+\beta}w_{s}=w_{s+3}. 
\end{equation}
Define
$(v_{s},v_{s+1},v_{s+2},v_{s+3}):=
(w_{s},w_{s},s_{\alpha+\beta}w_{s},s_{\alpha}s_{\alpha+\beta}w_{s})$. 
It is easily seen that 
$\bv=(v_{0},v_{1},\dots,v_{m}) \in \QW_{\lambda,w}(\Xi)$, and that 
\begin{align*}
& \text{$S(\bw) \cap \{s+1,s+2,s+3\} = \emptyset$ 
  (resp., $\{s+2\}$)} \\
& \iff \text{$S(\bv) \cap \{s+1,s+2,s+3\} = \emptyset$ 
  (resp., $\{s+1\}$)}.
\end{align*}
If $S(\bv) \cap \{s+1,s+2,s+3\} = \{s+1\}$, then we define $\bc(s+1):=\bb(s+2)$. 
In this case, $\YB(\bw,\bb):=(\bv,\bc) \in \ti{\QW}_{\lambda,w}(\Xi)$ 
satisfies \eqref{eq:YB1}. 
\end{subcase}

\begin{subcase} \label{subcase53}
Assume that $\sgn(w_{s}^{-1}\alpha) = - \sgn(w_{s}^{-1}\beta) = - \sgn(w_{s}^{-1}(\alpha+\beta))$. 
We fix a reflection order $\lhd$ on $\Delta^{+}$ such that 
$|w_{s}^{-1}(\alpha+\beta)| \lhd |w_{s}^{-1}\beta| \lhd |w_{s}^{-1}\alpha|$. 

\enskip

\paragraph{\bf (5.3a)}
If the sequence in \eqref{eq:seq} is of the form (a), then we have 
the label-decreasing directed path 
\begin{equation*}
w_{s} \edge{|w_{s}^{-1}\alpha|} s_{\alpha}w_{s} 
      \edge{|w_{s}^{-1}\beta|} s_{\alpha+\beta}s_{\alpha}w_{s}=s_{\alpha}s_{\beta}w_{s}, 
\end{equation*}
where $S(\bw) \cap \{s+1,s+2,s+3\}$ is either $\emptyset$ or $\{s+3\}$. 
It follows that there exists a unique label-increasing directed path 
from $w_{s}$ to $w_{s+3}$, which is of the form:
\begin{equation} \label{eq:seq53a}
w_{s} \edge{|w_{s}^{-1}(\alpha+\beta)|} s_{\alpha+\beta}w_{s} 
      \edge{|w_{s}^{-1}\alpha|} s_{\beta}s_{\alpha+\beta}w_{s}=w_{s+3}. 
\end{equation}
Define
$(v_{s},v_{s+1},v_{s+2},v_{s+3}):=(w_{s},w_{s},s_{\alpha+\beta}w_{s},s_{\beta}s_{\alpha+\beta}w_{s})$. 
It is easily seen that $\bv=(v_{0},v_{1},\dots,v_{m}) \in \QW_{\lambda,w}(\Gamma)$, and that
\begin{align*}
& \text{$S(\bw) \cap \{s+1,s+2,s+3\} = \emptyset$ 
  (resp., $\{s+3\}$)} \\
& \iff \text{$S(\bv) \cap \{s+1,s+2,s+3\} = \emptyset$ 
  (resp., $\{s+1\}$)}.
\end{align*}
If $S(\bv) \cap \{s+1,s+2,s+3\} = \{s+1\}$, then we define $\bc(s+1):=\bb(s+3)$. 
In this case, $\YB(\bw,\bb):=(\bv,\bc) \in \ti{\QW}_{\lambda,w}(\Gamma)$ 
satisfies \eqref{eq:YB2}. 

\enskip

\paragraph{\bf (5.3b)}
If the sequence in \eqref{eq:seq} is of the form (b), then we have 
the label-increasing directed path 
\begin{equation*}
w_{s} \edge{|w_{s}^{-1}(\alpha+\beta)|} s_{\alpha+\beta}w_{s} 
      \edge{|w_{s}^{-1}\alpha|} s_{\beta}s_{\alpha+\beta}w_{s}=w_{s+3}, 
\end{equation*}
where $S(\bw) \cap \{s+1,s+2,s+3\}$ is either $\emptyset$ or $\{s+1\}$. 
It follows that there exists a unique label-decreasing directed path 
from $w_{s}$ to $w_{s+3}$, which is of the form:
\begin{equation} \label{eq:seq53b}
w_{s} \edge{|w_{s}^{-1}\alpha|} s_{\alpha}w_{s} 
      \edge{|w_{s}^{-1}\beta|} s_{\alpha+\beta}s_{\alpha}w_{s}=s_{\alpha}s_{\beta}w_{s}
\end{equation}
or
\begin{equation} \label{eq:seq53c}
w_{s} \edge{|w_{s}^{-1}\beta|} s_{\beta}w_{s} 
      \edge{|w_{s}^{-1}(\alpha+\beta)|} s_{\alpha}s_{\beta}w_{s}=w_{s+3}. 
\end{equation}
If \eqref{eq:seq53b} holds, then we define
$(v_{s},v_{s+1},v_{s+2},v_{s+3}):=
 (w_{s},s_{\alpha}w_{s},s_{\alpha+\beta}s_{\alpha}w_{s},s_{\alpha+\beta}s_{\alpha}w_{s})$. 
We see that $\bv=(v_{0},v_{1},\dots,v_{m}) \in \QW_{\lambda,w}(\Gamma)$, and that
\begin{align*}
& \text{$S(\bw) \cap \{s+1,s+2,s+3\} = \emptyset$ 
  (resp., $\{s+1\}$)} \\
& \iff \text{$S(\bv) \cap \{s+1,s+2,s+3\} = \emptyset$ 
  (resp., $\{s+3\}$)}.
\end{align*}
If $S(\bv) \cap \{s+1,s+2,s+3\} = \{s+3\}$, then we define $\bc(s+3):=\bb(s+1)$. 
In this case, $\YB(\bw,\bb):=(\bv,\bc) \in \ti{\QW}_{\lambda,w}(\Gamma)$ 
satisfies \eqref{eq:YB2}. 

If \eqref{eq:seq53c} holds, then we define
$(v_{s},v_{s+1},v_{s+2},v_{s+3}):=(w_{s},s_{\beta}w_{s},s_{\beta}w_{s},s_{\alpha}s_{\beta}w_{s})$. 
We see that $\bv=(v_{0},v_{1},\dots,v_{m}) \in \QW_{\lambda,w}(\Xi)$, and that
\begin{align*}
& \text{$S(\bw) \cap \{s+1,s+2,s+3\} = \emptyset$ 
  (resp., $\{s+1\}$)} \\
& \iff \text{$S(\bv) \cap \{s+1,s+2,s+3\} = \emptyset$ 
  (resp., $\{s+2\}$)}.
\end{align*}
If $S(\bv) \cap \{s+1,s+2,s+3\} = \{s+2\}$, then we define $\bc(s+2):=\bb(s+1)$. 
In this case, $\YB(\bw,\bb):=(\bv,\bc) \in \ti{\QW}_{\lambda,w}(\Xi)$ 
satisfies \eqref{eq:YB1}. 

\enskip

\paragraph{\bf (5.3c)}
If the sequence in \eqref{eq:seq} is of the form (c), then we have 
the label-decreasing directed path 
\begin{equation*}
w_{s} \edge{|w_{s}^{-1}\alpha|} s_{\alpha}w_{s} 
      \edge{|w_{s}^{-1}(\alpha+\beta)|} s_{\beta}s_{\alpha}w_{s}=w_{s+3}, 
\end{equation*}
where $S(\bw) \cap \{s+1,s+2,s+3\}$ is either $\emptyset$ or $\{s+2\}$. 
It follows that there exists a unique label-increasing directed path 
from $w_{s}$ to $w_{s+3}$, which is either of the form:
\begin{equation} \label{eq:seq53d}
w_{s} \edge{|w_{s}^{-1}(\alpha+\beta)|} s_{\alpha+\beta}w_{s} 
      \edge{|w_{s}^{-1}\beta|} s_{\alpha}s_{\alpha+\beta}w_{s}=w_{s+3}
\end{equation}
or
\begin{equation} \label{eq:seq53e}
w_{s} \edge{|w_{s}^{-1}\beta|} s_{\beta}w_{s} 
      \edge{|w_{s}^{-1}\alpha|} s_{\alpha+\beta}s_{\beta}w_{s}=w_{s+3}.
\end{equation}
If \eqref{eq:seq53d} holds, then we define
$(v_{s},v_{s+1},v_{s+2},v_{s+3}):=
 (w_{s},w_{s},s_{\alpha+\beta}w_{s},s_{\alpha}s_{\alpha+\beta}w_{s})$. 
We see that $\bv=(v_{0},v_{1},\dots,v_{m}) \in \QW_{\lambda,w}(\Xi)$, and that
\begin{align*}
& \text{$S(\bw) \cap \{s+1,s+2,s+3\} = \emptyset$ 
  (resp., $\{s+2\}$)} \\
& \iff \text{$S(\bv) \cap \{s+1,s+2,s+3\} = \emptyset$ 
  (resp., $\{s+1\}$)}.
\end{align*}
If $S(\bv) \cap \{s+1,s+2,s+3\} = \{s+1\}$, then we define $\bc(s+1):=\bb(s+2)$. 
In this case, $\YB(\bw,\bb):=(\bv,\bc) \in \ti{\QW}_{\lambda,w}(\Xi)$ 
satisfies \eqref{eq:YB1}. 

If \eqref{eq:seq53e} holds, then we define
$(v_{s},v_{s+1},v_{s+2},v_{s+3}):=
 (w_{s},s_{\beta}w_{s},s_{\alpha+\beta}s_{\beta}w_{s},s_{\alpha+\beta}s_{\beta}w_{s})$. 
We see that $\bv=(v_{0},v_{1},\dots,v_{m}) \in \QW_{\lambda,w}(\Xi)$, and that 
\begin{align*}
& \text{$S(\bw) \cap \{s+1,s+2,s+3\} = \emptyset$ 
  (resp., $\{s+2\}$)} \\
& \iff \text{$S(\bv) \cap \{s+1,s+2,s+3\} = \emptyset$ 
  (resp., $\{s+3\}$)}.
\end{align*}
If $S(\bv) \cap \{s+1,s+2,s+3\} = \{s+3\}$, then we define $\bc(s+3):=\bb(s+2)$. 
In this case, $\YB(\bw,\bb):=(\bv,\bc) \in \ti{\QW}_{\lambda,w}(\Xi)$ 
satisfies \eqref{eq:YB1}. 
\end{subcase}
\end{case}

\begin{case} \label{case6}
Assume that $\#\bigl\{ s+1 \le t \le s+3 \mid w_{t-1} \ne w_{t} \bigr\} = 3$, 
i.e., $w_{s} \ne w_{s+1} \ne w_{s+2} \ne w_{s+3}$.  

\begin{subcase}[to be paired with Subcase~\ref{subcase41}] \label{subcase61}
Assume that $(\bw,\bb)$ satisfies
\begin{equation*}
w_{s} \Qe{|w_{s}^{-1}\alpha|} s_{\alpha}w_{s} \Qe{|w_{s}^{-1}\beta|} s_{\alpha+\beta}s_{\alpha}w_{s}
\Qe{|w_{s}^{-1}\alpha|} s_{\beta}s_{\alpha+\beta}s_{\alpha}w_{s}. 
\end{equation*}
It follows from Lemma~\ref{lem:qqq} that 
\begin{equation*}
\begin{cases}
w_{s} \Qe{|w_{s}^{-1}(\alpha+\beta)|} 
s_{\alpha+\beta}w_{s} = w_{s+3} \quad \text{in $\QBG(W)$}, \\[2mm]
\text{$|w_{s}^{-1}\alpha|$ is a simple root, and } \\[2mm]
\sgn(w_{s}^{-1}\alpha)=\sgn(w_{s}^{-1}\beta)=\sgn(w_{s}^{-1}(\alpha+\beta)).
\end{cases}
\end{equation*}
If we set $v_{s+1} := w_{s}$ and $v_{s+2} := s_{\alpha+\beta}w_{s}$, then 
we see that $\bv=(v_{0},v_{1},\dots,v_{m}) \in \QW_{\lambda,w}(\Gamma)$. 
Also, we have 
\begin{equation*}
S(\bv)=
 \begin{cases}
 \{s+1\} & \text{if $-w_{s}^{-1}\alpha$ is a simple root}, \\
 \{s+3\} & \text{if $w_{s}^{-1}\alpha$ is a simple root}. 
 \end{cases}
\end{equation*}
We set $\bc(t):=1$ for $t \in S(\bv) \cap \{s+1,s+2,s+3\}$. 
Then, $\YB(\bw,\bb):=(\bv,\bc) \in \ti{\QW}_{\lambda,w}(\Gamma)$ satisfies \eqref{eq:YB2}. 
\end{subcase}

\begin{subcase}[to be paired with Subcase~\ref{subcase42}] \label{subcase62}
Assume that $(\bw,\bb)$ satisfies
\begin{equation*}
w_{s} \Be{|w_{s}^{-1}\alpha|} s_{\alpha}w_{s} \Qe{|w_{s}^{-1}\beta|} s_{\alpha+\beta}s_{\alpha}w_{s}
\Be{|w_{s}^{-1}\alpha|} s_{\beta}s_{\alpha+\beta}s_{\alpha}w_{s} \quad \text{and} \quad
\ell(s_{\alpha+\beta}w_{s}) < \ell(w_{s}).
\end{equation*}
It follows from Lemma~\ref{lem:bqbq} that 
\begin{equation*}
\begin{cases}
w_{s} \Qe{|w_{s}^{-1}(\alpha+\beta)|} 
s_{\alpha+\beta}w_{s} = w_{s+3} \quad \text{in $\QBG(W)$}, \\[2mm]
\text{$|w_{s}^{-1}\alpha|$ is a simple root, and } \\[2mm]
\sgn(w_{s}^{-1}\alpha)=-\sgn(w_{s}^{-1}\beta)=-\sgn(w_{s}^{-1}(\alpha+\beta)).
\end{cases}
\end{equation*}
If we set $v_{s+1} := w_{s}$ and $v_{s+2} := s_{\alpha+\beta}w_{s}$, then 
we see that $\bv=(v_{0},v_{1},\dots,v_{m}) \in \QW_{\lambda,w}(\Gamma)$. 
Also, we have 
\begin{equation*}
S(\bv)=
 \begin{cases}
 \{s+1\} & \text{if $-w_{s}^{-1}\alpha$ is a simple root}, \\
 \{s+3\} & \text{if $w_{s}^{-1}\alpha$ is a simple root}. 
 \end{cases}
\end{equation*}
We set $\bc(t):=1$ for $t \in S(\bv) \cap \{s+1,s+2,s+3\}$. 
Then, $\YB(\bw,\bb):=(\bv,\bc) \in \ti{\QW}_{\lambda,w}(\Gamma)$ satisfies \eqref{eq:YB2}. 
\end{subcase}

\begin{subcase}[to be paired with Subcase~\ref{subcase43}] \label{subcase63}
Assume that $(\bw,\bb)$ satisfies
\begin{equation*}
w_{s} \Qe{|w_{s}^{-1}\alpha|} s_{\alpha}w_{s} \Be{|w_{s}^{-1}\beta|} s_{\alpha+\beta}s_{\alpha}w_{s}
\Be{|w_{s}^{-1}\alpha|} s_{\beta}s_{\alpha+\beta}s_{\alpha}w_{s} \quad \text{and} \quad
\ell(s_{\alpha+\beta}w_{s}) < \ell(w_{s}).
\end{equation*}
It follows from Lemma~\ref{lem:qbbq} that 
\begin{equation*}
\begin{cases}
w_{s} \Qe{|w_{s}^{-1}(\alpha+\beta)|} 
s_{\alpha+\beta}w_{s} = w_{s+3} \quad \text{in $\QBG(W)$}, \\[2mm]
\text{$|w_{s}^{-1}\beta|$ is a simple root, and } \\[2mm]
\sgn(w_{s}^{-1}\alpha)=-\sgn(w_{s}^{-1}\beta)=\sgn(w_{s}^{-1}(\alpha+\beta)).
\end{cases}
\end{equation*}
If we set $v_{s+1} := w_{s}$ and $v_{s+2} := s_{\alpha+\beta}w_{s}$, then 
we see that $\bv=(v_{0},v_{1},\dots,v_{m}) \in \QW_{\lambda,w}(\Xi)$. 
Also, we have 
\begin{equation*}
S(\bv)=
 \begin{cases}
 \{s+1\} & \text{if $-w_{s}^{-1}\beta$ is a simple root}, \\
 \{s+3\} & \text{if $w_{s}^{-1}\beta$ is a simple root}. 
 \end{cases}
\end{equation*}
We set $\bc(t):=1$ for $t \in S(\bv) \cap \{s+1,s+2,s+3\}$. 
Then, $\YB(\bw,\bb):=(\bv,\bc) \in \ti{\QW}_{\lambda,w}(\Xi)$ satisfies \eqref{eq:YB1}. 
\end{subcase}

\begin{subcase}[to be paired with Subcase~\ref{subcase44}] \label{subcase64}
Assume that $(\bw,\bb)$ satisfies
\begin{equation*}
w_{s} \Be{|w_{s}^{-1}\alpha|} s_{\alpha}w_{s} \Be{|w_{s}^{-1}\beta|} s_{\alpha+\beta}s_{\alpha}w_{s}
\Qe{|w_{s}^{-1}\alpha|} s_{\beta}s_{\alpha+\beta}s_{\alpha}w_{s} \quad \text{and} \quad
\ell(s_{\alpha+\beta}w_{s}) < \ell(w_{s}).
\end{equation*}
It follows from Lemma~\ref{lem:bbqq} that 
\begin{equation*}
\begin{cases}
w_{s} \Qe{|w_{s}^{-1}(\alpha+\beta)|} 
s_{\alpha+\beta}w_{s} = w_{s+3} \quad \text{in $\QBG(W)$}, \\[2mm]
\text{$|w_{s}^{-1}\beta|$ is a simple root, and } \\[2mm]
\sgn(w_{s}^{-1}\alpha)=-\sgn(w_{s}^{-1}\beta)=\sgn(w_{s}^{-1}(\alpha+\beta)).
\end{cases}
\end{equation*}
If we set $v_{s+1} := w_{s}$ and $v_{s+2} := s_{\alpha+\beta}w_{s}$, then 
we see that $\bv=(v_{0},v_{1},\dots,v_{m}) \in \QW_{\lambda,w}(\Xi)$. 
Also, we have 
\begin{equation*}
S(\bv)=
 \begin{cases}
 \{s+1\} & \text{if $-w_{s}^{-1}\beta$ is a simple root}, \\
 \{s+3\} & \text{if $w_{s}^{-1}\beta$ is a simple root}. 
 \end{cases}
\end{equation*}
We set $\bc(t):=1$ for $t \in S(\bv) \cap \{s+1,s+2,s+3\}$. 
Then, $\YB(\bw,\bb):=(\bv,\bc) \in \ti{\QW}_{\lambda,w}(\Xi)$ satisfies \eqref{eq:YB1}. 
\end{subcase}

\begin{subcase}[to be paired with Subcase~\ref{subcase45}] \label{subcase65}
Assume that $(\bw,\bb)$ satisfies
\begin{equation*}
w_{s} \Be{|w_{s}^{-1}\alpha|} s_{\alpha}w_{s} \Be{|w_{s}^{-1}\beta|} s_{\alpha+\beta}s_{\alpha}w_{s}
\Qe{|w_{s}^{-1}\alpha|} s_{\beta}s_{\alpha+\beta}s_{\alpha}w_{s} \quad \text{and} \quad
\ell(s_{\alpha+\beta}w_{s}) > \ell(w_{s}).
\end{equation*}
It follows from Lemma~\ref{lem:bbqb} that 
\begin{equation*}
\begin{cases}
w_{s} \Be{|w_{s}^{-1}(\alpha+\beta)|} 
s_{\alpha+\beta}w_{s} = w_{s+3} \quad \text{in $\QBG(W)$}, \\[2mm]
\text{$|w_{s}^{-1}\alpha|$ is a simple root, and } \\[2mm]
\sgn(w_{s}^{-1}\beta)=\sgn(w_{s}^{-1}(\alpha+\beta)).
\end{cases}
\end{equation*}
If we set $v_{s+1} := w_{s}$ and $v_{s+2} := s_{\alpha+\beta}w_{s}$, then 
we see that $\bv=(v_{0},v_{1},\dots,v_{m}) \in \QW_{\lambda,w}(\Gamma)$. 
Also, we have 
\begin{equation*}
S(\bv)=
 \begin{cases}
 \{s+1\} & \text{if $-w_{s}^{-1}\beta$ is a simple root}, \\
 \{s+3\} & \text{if $w_{s}^{-1}\beta$ is a simple root}. 
 \end{cases}
\end{equation*}
We set $\bc(t):=1$ for $t \in S(\bv) \cap \{s+1,s+2,s+3\}$. 
Then, $\YB(\bw,\bb):=(\bv,\bc) \in \ti{\QW}_{\lambda,w}(\Gamma)$ satisfies \eqref{eq:YB2}. 
\end{subcase}

\begin{subcase}[to be paired with Subcase~\ref{subcase46}] \label{subcase66}
Assume that $(\bw,\bb)$ satisfies
\begin{equation*}
w_{s} \Qe{|w_{s}^{-1}\alpha|} s_{\alpha}w_{s} \Be{|w_{s}^{-1}\beta|} s_{\alpha+\beta}s_{\alpha}w_{s}
\Be{|w_{s}^{-1}\alpha|} s_{\beta}s_{\alpha+\beta}s_{\alpha}w_{s} \quad \text{and} \quad
\ell(s_{\alpha+\beta}w_{s}) > \ell(w_{s}).
\end{equation*}
It follows from Lemma~\ref{lem:qbbb} that 
\begin{equation*}
\begin{cases}
w_{s} \Be{|w_{s}^{-1}(\alpha+\beta)|} 
s_{\alpha+\beta}w_{s} = w_{s+3} \quad \text{in $\QBG(W)$}, \\[2mm]
\text{$|w_{s}^{-1}\alpha|$ is a simple root, and } \\[2mm]
\sgn(w_{s}^{-1}\beta)=\sgn(w_{s}^{-1}(\alpha+\beta)).
\end{cases}
\end{equation*}
If we set $v_{s+1} := w_{s}$ and $v_{s+2} := s_{\alpha+\beta}w_{s}$, then 
we see that $\bv=(v_{0},v_{1},\dots,v_{m}) \in \QW_{\lambda,w}(\Gamma)$. 
Also, we have 
\begin{equation*}
S(\bv)=
 \begin{cases}
 \{s+1\} & \text{if $-w_{s}^{-1}\beta$ is a simple root}, \\
 \{s+3\} & \text{if $w_{s}^{-1}\beta$ is a simple root}. 
 \end{cases}
\end{equation*}
We set $\bc(t):=1$ for $t \in S(\bv) \cap \{s+1,s+2,s+3\}$. 
Then, $\YB(\bw,\bb):=(\bv,\bc) \in \ti{\QW}_{\lambda,w}(\Gamma)$ satisfies \eqref{eq:YB2}. 
\end{subcase}

\begin{subcase}[to be paired with Subcase~\ref{subcase47}] \label{subcase67}
Assume that $(\bw,\bb)$ satisfies
\begin{equation*}
w_{s} \Be{|w_{s}^{-1}\alpha|} s_{\alpha}w_{s} \Qe{|w_{s}^{-1}\beta|} s_{\alpha+\beta}s_{\alpha}w_{s}
\Be{|w_{s}^{-1}\alpha|} s_{\beta}s_{\alpha+\beta}s_{\alpha}w_{s} \quad \text{and} \quad
\ell(s_{\alpha+\beta}w_{s}) > \ell(w_{s}).
\end{equation*}
It follows from Lemma~\ref{lem:bqbb} that 
\begin{equation*}
\begin{cases}
w_{s} \Be{|w_{s}^{-1}(\alpha+\beta)|} 
s_{\alpha+\beta}w_{s} = w_{s+3} \quad \text{in $\QBG(W)$}, \\[2mm]
\text{$|w_{s}^{-1}\beta|$ is a simple root, and } \\[2mm]
\sgn(w_{s}^{-1}\beta)=\sgn(w_{s}^{-1}\beta)=\sgn(w_{s}^{-1}(\alpha+\beta)).
\end{cases}
\end{equation*}
If we set $v_{s+1} := w_{s}$ and $v_{s+2} := s_{\alpha+\beta}w_{s}$, then 
we see that $\bv=(v_{0},v_{1},\dots,v_{m}) \in \QW_{\lambda,w}(\Xi)$. 
Also, we have 
\begin{equation*}
S(\bv)=
 \begin{cases}
 \{s+1\} & \text{if $-w_{s}^{-1}\beta$ is a simple root}, \\
 \{s+3\} & \text{if $w_{s}^{-1}\beta$ is a simple root}. 
 \end{cases}
\end{equation*}
We set $\bc(t):=1$ for $t \in S(\bv) \cap \{s+1,s+2,s+3\}$. 
Then, $\YB(\bw,\bb):=(\bv,\bc) \in \ti{\QW}_{\lambda,w}(\Xi)$ satisfies \eqref{eq:YB1}. 
\end{subcase}

\begin{subcase}[to be paired with Subcase~\ref{subcase69} below] \label{subcase68}
Assume that $(\bw,\bb)$ satisfies
\begin{equation*}
w_{s} \Be{|w_{s}^{-1}\alpha|} s_{\alpha}w_{s} \Qe{|w_{s}^{-1}\beta|} s_{\alpha+\beta}s_{\alpha}w_{s}
\Qe{|w_{s}^{-1}\alpha|} s_{\beta}s_{\alpha+\beta}s_{\alpha}w_{s}.
\end{equation*}
It follows from Lemma~\ref{lem:bqq} that 
\begin{equation*}
w_{s} \Qe{|w_{s}^{-1}\beta|} s_{\beta}w_{s} \Qe{|w_{s}^{-1}\alpha|} s_{\alpha+\beta}s_{\alpha}w_{s}
\Be{|w_{s}^{-1}\beta|} s_{\alpha}s_{\alpha+\beta}s_{\beta}w_{s}, 
\end{equation*}
and that
\begin{equation*}
\sgn(w_{s}^{-1}\alpha)=\sgn(w_{s}^{-1}\beta)=\sgn(w_{s}^{-1}(\alpha+\beta))=\sgn(\alpha)=-\sgn(\beta)=-\sgn(\alpha+\beta).
\end{equation*}
If we set $v_{s+1} := s_{\beta}w_{s}$ and $v_{s+2} := s_{\alpha+\beta}s_{\alpha}w_{s}$, then 
we see that $\bv=(v_{0},v_{1},\dots,v_{m}) \in \QW_{\lambda,w}(\Xi)$; notice that 
$S(\bv) \cap \{s+1,s+2,s+3\} = \emptyset$, and hence $\bc$ is defined only by \eqref{eq:Theta1b}.
In this case, $\YB(\bw,\bb):=(\bv,\bc) \in \ti{\QW}_{\lambda,w}(\Xi)$ satisfies \eqref{eq:YB1}. 
\end{subcase}

\begin{subcase}[to be paired with Subcase~\ref{subcase68}] \label{subcase69}
Assume that $(\bw,\bb)$ satisfies
\begin{equation*}
w_{s} \Qe{|w_{s}^{-1}\alpha|} s_{\alpha}w_{s} \Qe{|w_{s}^{-1}\beta|} s_{\alpha+\beta}s_{\alpha}w_{s}
\Be{|w_{s}^{-1}\alpha|} s_{\beta}s_{\alpha+\beta}s_{\alpha}w_{s}.
\end{equation*}
It follows from Lemma~\ref{lem:bqq} that 
\begin{equation*}
w_{s} \Be{|w_{s}^{-1}\beta|} s_{\beta}w_{s} \Qe{|w_{s}^{-1}\alpha|} s_{\alpha+\beta}s_{\alpha}w_{s}
\Qe{|w_{s}^{-1}\beta|} s_{\alpha}s_{\alpha+\beta}s_{\beta}w_{s},
\end{equation*}
and that
\begin{equation*}
\sgn(w_{s}^{-1}\alpha)=\sgn(w_{s}^{-1}\beta)=\sgn(w_{s}^{-1}(\alpha+\beta))=-\sgn(\alpha)=\sgn(\beta)=-\sgn(\alpha+\beta).
\end{equation*}
If we set $v_{s+1} := s_{\beta}w_{s}$ and $v_{s+2} := s_{\alpha+\beta}s_{\alpha}w_{s}$, then 
we see that $\bv=(v_{0},v_{1},\dots,v_{m}) \in \QW_{\lambda,w}(\Xi)$; notice that 
$S(\bv) \cap \{s+1,s+2,s+3\} = \emptyset$, and hence $\bc$ is defined only by \eqref{eq:Theta1b}.
In this case, $\YB(\bw,\bb):=(\bv,\bc) \in \ti{\QW}_{\lambda,w}(\Xi)$ satisfies \eqref{eq:YB1}. 
\end{subcase}

\begin{subcase} \label{subcase610}
Assume that $(\bw,\bb)$ satisfies
\begin{equation*}
w_{s} \Be{|w_{s}^{-1}\alpha|} s_{\alpha}w_{s} \Be{|w_{s}^{-1}\beta|} s_{\alpha+\beta}s_{\alpha}w_{s}
\Be{|w_{s}^{-1}\alpha|} s_{\beta}s_{\alpha+\beta}s_{\alpha}w_{s}.
\end{equation*}
It follows from Lemma~\ref{lem:bbb} that 
\begin{equation*}
w_{s} \Be{|w_{s}^{-1}\beta|} s_{\beta}w_{s} \Be{|w_{s}^{-1}\alpha|} s_{\alpha+\beta}s_{\alpha}w_{s}
\Be{|w_{s}^{-1}\beta|} s_{\alpha}s_{\alpha+\beta}s_{\beta}w_{s},
\end{equation*}
and that 
\begin{equation*}
\sgn(w_{s}^{-1}\alpha)=\sgn(w_{s}^{-1}\beta)=\sgn(w_{s}^{-1}(\alpha+\beta))=\sgn(\alpha)=\sgn(\beta)=\sgn(\alpha+\beta).
\end{equation*}
If we set $v_{s+1} := s_{\beta}w_{s}$ and $v_{s+2} := s_{\alpha+\beta}s_{\alpha}w_{s}$, then 
we see that $\bv=(v_{0},v_{1},\dots,v_{m}) \in \QW_{\lambda,w}(\Xi)$; notice that 
$S(\bv) \cap \{s+1,s+2,s+3\} = \emptyset$, and hence $\bc$ is defined only by \eqref{eq:Theta1b}.
In this case, $\YB(\bw,\bb):=(\bv,\bc) \in \ti{\QW}_{\lambda,w}(\Xi)$ satisfies \eqref{eq:YB1}. 
\end{subcase}
\end{case}

Thus we have defined $\YB(\bw,\bb)$ for $(\bw,\bb) \in \ti{\QW}_{\lambda,w}(\Gamma)$. 
Also, we define $\YB(\bv,\bc) \in \ti{\QW}_{\lambda,w}(\Gamma) \sqcup \ti{\QW}_{\lambda,w}(\Xi)$ 
for $(\bv,\bc) \in \ti{\QW}_{\lambda,w}(\Xi)$ by interchanging $\alpha$ and $\beta$ in 
Cases~\ref{case1}--\ref{case6}, and then define $\ti{\QW}^{(0)}_{\lambda,w}(\Xi)$ as in \eqref{eq:QW0} 
(with $\Gamma$ replaced by $\Xi$). We can verify that these subsets and 
the map $(\bw,\bb) \mapsto \YB(\bw,\bb)$ satisfy conditions (1)--(3). 
This completes the proof of Theorem~\ref{thm:YB} in the case that 
$\pair{\alpha}{\beta^{\vee}}=\pair{\beta}{\alpha^{\vee}}=-1$. 

\subsection{In type $A_{1} \times A_{1}$.}

We give a proof of Theorem~\ref{thm:YB} 
in the case that $\pair{\alpha}{\beta^{\vee}}=\pair{\beta}{\alpha^{\vee}}=0$. 
Assume that $\Gamma \in \AP(\lambda)$ is of the form
\begin{equation*}
\Gamma: 
  A_{\circ}=A_{0} 
  \xrightarrow{\gamma_{1}} A_{1} 
  \xrightarrow{\gamma_{2}} \cdots 
  \xrightarrow{\gamma_{m}} A_{m}=A_{\lambda}, 
\end{equation*}
with $\gamma_{s+1}=\alpha$, $\gamma_{s+2}=\beta$, i.e., 
\begin{equation*}
\cdots \edge{\gamma_{s}} A_{s} \edge{\alpha} A_{s+1} \edge{\beta} A_{s+2} 
  \edge{\gamma_{s+3}} A_{s+3}  \cdots  \qquad \text{(in $\Gamma$)}. 
\end{equation*}
Then, $\Xi=(\beta_{1},\dots,\beta_{m})$, where 
$\beta_{k}:=\gamma_{k}$ for $1 \le k \le m$ with $k \ne s+1,s+2$, and 
$\beta_{s+1}=\beta$, $\beta_{s+2}=\alpha$; note that 
$\Xi$ is an alcove path from $A_{\circ}$ to $A_{\lambda}$ of the form:
\begin{equation*}
\Xi: 
  A_{\circ}=A_{0} 
  \xrightarrow{\gamma_{1}} \cdots 
  \edge{\gamma_{s}} A_{s} \edge{\beta} B_{s+1} \edge{\alpha} A_{s+2} 
  \edge{\gamma_{s+3}} \cdots 
\xrightarrow{\gamma_{m}} A_{m}=A_{\lambda}
\end{equation*}
for some alcove $B_{s+1}$. 

Now, for $\bw=(w_{0},w_{1},\dots,w_{m}) \in \QW_{\lambda,w}(\Gamma)$, 
we set 
\begin{equation*}
v_{k}:=w_{k} \qquad \text{for $0 \le k \le m$ with $k \ne s+1$}, 
\end{equation*}
and define $v_{s+1}$ as follows:
\begin{enumerate}
\item[(i)] if $w_{s}=w_{s+1}=w_{s+2}$, then 
we define $v_{s+1}$ by $v_{s}=v_{s+1} = v_{s+2}$; 

\item[(ii)] if $w_{s} \edge{|w_{s}^{-1}\alpha|} s_{\alpha}w_{s}=w_{s+1}=w_{s+2}$, then 
we define $v_{s+1}$ by $v_{s}=v_{s+1} \edge{|v_{s+1}^{-1}\alpha|} s_{\alpha}v_{s+1} = v_{s+2}$; 

\item[(iii)] if $w_{s} = w_{s+1} \edge{|w_{s+1}^{-1}\beta|} s_{\beta}w_{s+1}=w_{s+2}$, then 
we define $v_{s+1}$ by $v_{s} \edge{|v_{s}^{-1}\beta|} s_{\beta}v_{s} = v_{s+1} = v_{s+2}$; 

\item[(iv)] if $w_{s} \edge{|w_{s}^{-1}\alpha|} s_{\alpha}w_{s}=w_{s+1} 
\edge{|w_{s+1}^{-1}\beta|} s_{\beta}w_{s+1}=w_{s+2}$, then 
we define $v_{s+1}$ by $v_{s} \edge{|v_{s}^{-1}\beta|} s_{\beta}v_{s} = v_{s+1} 
\edge{|v_{s+1}^{-1}\alpha|} s_{\alpha}v_{s+1}=v_{s+2}$. 
\end{enumerate}
Then it follows that 
$\bv:=(v_{0},v_{1},\dots,v_{m}) \in \QW_{\lambda,w}(\Xi)$. 
Also, for $\bb:S(\bw) \rightarrow \{0,1\}$, we define $\bc:S(\bv) \rightarrow \{0,1\}$ as follows. 
Noting that $S(\bw) \setminus \{s+1,s+2\} = S(\bv) \setminus \{s+1,s+2\}$, we set 
\begin{equation}
\bc|_{S(\bv) \setminus \{s+1,s+2\}} := 
\bb|_{S(\bw) \setminus \{s+1,s+2\}}. 
\end{equation}
Notice that $s+1 \in S(\bw) \cap \{s+1,s+2\}$ if and only if 
$s+2 \in S(\bv) \cap \{s+1,s+2\}$; in this case, 
we set $\bc(s+2):=\bb(s+1)$. Similarly, 
notice that $s+2 \in S(\bw) \cap \{s+1,s+2\}$ if and only if 
$s+1 \in S(\bv) \cap \{s+1,s+2\}$; in this case, 
we set $\bc(s+1):=\bb(s+2)$. Then we see that 
$\YB(\bw,\bb):=(\bv,\bc) \in \ti{\QW}_{\lambda,w}(\Xi)$.

As in the case that $\pair{\alpha}{\beta^{\vee}}=\pair{\beta}{\alpha^{\vee}}=-1$, 
we can verify that the map $\YB$ is a bijection 
from $\ti{\QW}_{\lambda,w}^{(0)}(\Gamma):=\ti{\QW}_{\lambda,w}(\Gamma)$ to 
$\ti{\QW}_{\lambda,w}^{(0)}(\Xi):=\ti{\QW}_{\lambda,w}(\Xi)$ satisfying \eqref{eq:YB1}. 
This completes the proof of Theorem~\ref{thm:YB} in the case that 
$\pair{\alpha}{\beta^{\vee}}=\pair{\beta}{\alpha^{\vee}}=0$. 

Thus we have established Theorem~\ref{thm:YB}. 

\appendix

\section{Technical lemmas.}
\label{sec:lem}

In this section, we assume that $\Fg$ is simply-laced, 
and that $\alpha,\beta \in \Delta$ are such that 
$\pair{\alpha}{\beta^{\vee}}=\pair{\beta}{\alpha^{\vee}}=-1$.
%
%
%
\begin{lem} \label{lem:qqq}
Let $w \in W$. Then, 
\begin{equation*}
w \Qe{|\alpha|} ws_{\alpha} \Qe{|\beta|} 
ws_{\alpha}s_{\beta} \Qe{|\alpha|} ws_{\alpha}s_{\beta}s_{\alpha}
\end{equation*}
if and only if the following conditions {\rm (1)--(3)} hold\,{\rm :}
\begin{enu}
\item $|\alpha|$ is a simple root\,{\rm;} 
\item $\sgn(\alpha)=\sgn(\beta)$\,{\rm;} 
\item we have the quantum edge $w \Qe{|\alpha+\beta|} ws_{\alpha+\beta}$ in $\QBG(W)$. 
\end{enu}
In this case, 
\begin{equation*}
\sgn(w\alpha)=\sgn(w\beta)=\sgn(w(\alpha+\beta))=-\sgn(\alpha)=-\sgn(\beta)=-\sgn(\alpha+\beta).
\end{equation*}
\end{lem}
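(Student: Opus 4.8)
The plan is to prove both implications by splitting each occurrence of ``quantum edge'' into the two pieces of data it carries: a \emph{sign condition}, read off through Lemma~\ref{lem:ell}, and a \emph{length condition}, namely that the step realizes the maximal possible drop in length. Throughout I would use the standard facts that $\ell(s_{\gamma}) = 2\pair{\rho}{\gamma^{\vee}} - 1$ for $\gamma \in \Delta^{+}$, that $|\ell(xs_{\gamma}) - \ell(x)| \le \ell(s_{\gamma})$, and that a step $x \edge{|\gamma|} xs_{\gamma}$ is a quantum edge precisely when $\ell(xs_{\gamma}) = \ell(x) - \ell(s_{|\gamma|})$, the extreme length-additive case (this is the definition of (Q) rewritten). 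I would also record the $A_{2}$ identities $s_{\alpha}\beta = \alpha+\beta$, $s_{\alpha}s_{\beta}\alpha = \beta$, and $s_{\alpha}s_{\beta}s_{\alpha} = s_{\alpha+\beta}$, so that the three-step walk terminates at $ws_{\alpha}s_{\beta}s_{\alpha} = ws_{\alpha+\beta}$, matching the endpoint in (3).

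For the forward implication I would first extract signs. Since each of the three edges is length-decreasing, Lemma~\ref{lem:ell} applied to $s_{\alpha}$, then to $s_{\beta}$ at $ws_{\alpha}$, then to $s_{\alpha}$ at $ws_{\alpha}s_{\beta}$ (using $(ws_{\alpha})\beta = w(\alpha+\beta)$ and $(ws_{\alpha}s_{\beta})\alpha = w\beta$) yields $\sgn(w\alpha) = \sgn(w\beta) = -\sgn(\alpha)$ and $\sgn(w(\alpha+\beta)) = -\sgn(\beta)$. Because $\pair{w\alpha}{(w\beta)^{\vee}} = \pair{\alpha}{\beta^{\vee}} = -1$, the sum $w\alpha + w\beta = w(\alpha+\beta)$ is a root, and the sign of a sum of two roots of equal sign is that common sign; hence $\sgn(w(\alpha+\beta)) = -\sgn(\alpha)$, which compared with $\sgn(w(\alpha+\beta)) = -\sgn(\beta)$ forces $\sgn(\alpha) = \sgn(\beta)$, i.e.\ (2), and the same principle applied to $\alpha,\beta$ gives $\sgn(\alpha+\beta) = \sgn(\alpha)$, establishing the full ``in this case'' sign chain. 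Next, summing the three length conditions gives $\ell(w) - \ell(ws_{\alpha+\beta}) = 2\ell(s_{|\alpha|}) + \ell(s_{|\beta|})$; since $\sgn(\alpha)=\sgn(\beta)=\sgn(\alpha+\beta)$ implies $|\alpha+\beta|^{\vee} = |\alpha|^{\vee} + |\beta|^{\vee}$ and hence $\ell(s_{|\alpha+\beta|}) = \ell(s_{|\alpha|}) + \ell(s_{|\beta|}) + 1$, the general bound forces $2\ell(s_{|\alpha|}) + \ell(s_{|\beta|}) \le \ell(s_{|\alpha|}) + \ell(s_{|\beta|}) + 1$, so $\ell(s_{|\alpha|}) = 1$, which is (1). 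The displayed drop then equals $\ell(s_{|\alpha+\beta|})$, so $w \edge{|\alpha+\beta|} ws_{\alpha+\beta}$ is a quantum edge, giving (3).

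For the converse I would run this in reverse, the key device being that length-additivity of a reduced factorization descends to every prefix: if $\ell(uv) = \ell(u) - \ell(v)$ with $v$ given by a reduced word, then $\ell(uv') = \ell(u) - \ell(v')$ for each prefix $v'$. By (1), $|\alpha|$ is simple, so $\ell(s_{|\alpha|}) = 1$, and by (2) together with the coroot identity above $s_{\alpha}s_{\beta}s_{\alpha}$ is a reduced factorization of $s_{\alpha+\beta}$ (its length $\ell(s_{|\beta|})+2$ equals $\ell(s_{|\alpha+\beta|})$). Condition (3) reads $\ell(ws_{\alpha+\beta}) = \ell(w) - \ell(s_{|\alpha+\beta|})$, so the prefix principle yields $\ell(ws_{\alpha}) = \ell(w) - 1$, then $\ell(ws_{\alpha}s_{\beta}) = \ell(ws_{\alpha}) - \ell(s_{|\beta|})$, then $\ell(ws_{\alpha}s_{\beta}s_{\alpha}) = \ell(ws_{\alpha}s_{\beta}) - 1$. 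The first and third are maximal drops by simple reflections, hence quantum edges, and the middle is a maximal drop by $s_{\beta}$, hence a quantum edge; reading signs off these length decreases via Lemma~\ref{lem:ell} also recovers the final sign chain.

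The main obstacle, and the step I would verify most carefully, is the reflection-length bookkeeping linking $s_{\alpha+\beta}$ to the product $s_{\alpha}s_{\beta}s_{\alpha}$: one must be certain that $\ell(s_{|\alpha+\beta|}) = \ell(s_{|\alpha|}) + \ell(s_{|\beta|}) + 1$ holds exactly (this uses the simply-laced $A_{2}$ hypothesis through $|\alpha+\beta|^{\vee} = |\alpha|^{\vee} + |\beta|^{\vee}$) and that $s_{\alpha}s_{\beta}s_{\alpha}$ is genuinely reduced, so that the prefix principle applies and the three maximal-drop conditions are forced. Once these are secured, the quantum-edge criteria follow mechanically, and the remaining sign computations are routine applications of Lemma~\ref{lem:ell} and the elementary fact about signs of sums of roots.
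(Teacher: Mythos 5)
Your proposal is correct, and its skeleton matches the paper's: both arguments identify a quantum edge with the maximal possible length drop and then force equality in the subadditivity bound for $\ell(ws_{\alpha+\beta})$, which is exactly how (1) and (3) emerge in each proof. (Your rewriting of the drop $2\pair{\rho}{\gamma^{\vee}}-1$ as $\ell(s_{\gamma})$ is harmless here, but note that the identity $\ell(s_{\gamma})=2\pair{\rho}{\gamma^{\vee}}-1$ itself requires the simply-laced hypothesis --- it fails for short roots in general type --- so it should be flagged alongside the coroot identity $|\alpha+\beta|^{\vee}=|\alpha|^{\vee}+|\beta|^{\vee}$, not treated as universally valid.) There are two genuine local differences. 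First, you obtain condition (2) by reading signs off all three length-decreasing edges via Lemma~\ref{lem:ell} and invoking the fact that a sum of two roots of equal sign, if a root, has that common sign; the paper instead reduces WLOG to $\alpha\in\Delta^{+}$ (via the symmetry $\alpha,\beta\mapsto -\alpha,-\beta$) and proves $\beta\in\Delta^{+}$ by contradiction, splitting on $\sgn(\alpha+\beta)$ and deriving incompatible $\rho$-pairing inequalities. Your route avoids both the reduction and the case split, and it delivers the concluding sign chain at the outset rather than as an afterthought. Second, your converse is packaged as a prefix principle applied to the length-additive factorization $s_{\alpha+\beta}=s_{\alpha}s_{\beta}s_{\alpha}$ (of length $1+\ell(s_{|\beta|})+1$), whereas the paper writes out the chain of inequalities $\ell(ws_{\alpha}s_{\beta}s_{\alpha})\ge\cdots\ge\ell(w)-2\pair{\rho}{2\alpha^{\vee}+\beta^{\vee}}+3$ and forces every inequality to be an equality using (1); these are the same argument in different clothing, since the prefix principle is proved by precisely that triangle-inequality manipulation, but your formulation makes it clearer why each individual step must realize its maximal drop.
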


\begin{proof}
We may assume that $\alpha \in \Delta^{+}$, since 
%
%
\begin{equation} \label{eq:iff}
\begin{cases}
\pair{\alpha}{\beta^{\vee}}=\pair{\beta}{\alpha^{\vee}}=-1 \iff 
\pair{-\alpha}{-\beta^{\vee}}=\pair{-\beta}{-\alpha^{\vee}}=-1; \\[3mm]
w \Qe{|\alpha|} ws_{\alpha} \Qe{|\beta|} ws_{\alpha}s_{\beta} \Qe{|\alpha|} ws_{\alpha}s_{\beta}s_{\alpha}  \\[2mm]
\hspace*{30mm} \iff
w \Qe{|-\alpha|} ws_{-\alpha} \Qe{|-\beta|} ws_{-\alpha}s_{-\beta} \Qe{|-\alpha|} ws_{-\alpha}s_{-\beta}s_{-\alpha}; \\[3mm]
\text{condition (1) (resp., (2), (3)) holds for $\alpha$ and $\beta$} \\[2mm]
\hspace*{30mm} \iff
\text{condition (1) (resp., (2), (3)) holds for $-\alpha$ and $-\beta$}. 
\end{cases}
\end{equation}
We set $v:=ws_{\alpha}s_{\beta}s_{\alpha}=ws_{\alpha+\beta}$. 

We first prove the ``only if'' part. We have 
\begin{align}
\ell(v) & = 
\ell(w) - 2\pair{\rho}{\alpha^{\vee}} + 1 - 2\pair{\rho}{|\beta|^{\vee}} + 1 - 2\pair{\rho}{\alpha^{\vee}} + 1 \nonumber \\
& = \ell(w) - 2 \pair{\rho}{2\alpha^{\vee}+|\beta|^{\vee}} +3. \label{eq:qqq1}
\end{align}
Also, we have 
\begin{equation} \label{eq:qqq2}
\ell(v) = \ell(ws_{\alpha+\beta}) \ge \ell(w)-\ell(s_{\alpha+\beta})=\ell(w) - 2 \pair{\rho}{|\alpha+\beta|^{\vee}} + 1. 
\end{equation}
Combining \eqref{eq:qqq1} and \eqref{eq:qqq2}, we obtain
\begin{equation} \label{eq:qqq3}
 \pair{\rho}{|\alpha+\beta|^{\vee}} - \pair{\rho}{2\alpha^{\vee}+|\beta|^{\vee}} +1 \ge 0. 
\end{equation}
Suppose, for a contradiction, that 
$\beta$ is negative. If $\alpha+\beta$ is positive, then we see by \eqref{eq:qqq3} that 
$\pair{\rho}{- \alpha^{\vee}+2\beta^{\vee}} +1 \ge 0$, 
which contradicts the inequalities $\pair{\rho}{\alpha^{\vee}} \ge 1$ and 
$\pair{\rho}{\beta^{\vee}} \le -1$. If $\alpha+\beta$ is negative, then 
we see by \eqref{eq:qqq3} that $\pair{\rho}{-3\alpha^{\vee}} +1 \ge 0$, 
which also contradicts the inequality $\pair{\rho}{\alpha^{\vee}} \ge 1$. 
Hence $\beta$ is positive, which shows (2). We see by \eqref{eq:qqq3} that 
$- \pair{\rho}{\alpha^{\vee}} + 1 \ge 0$, which shows (1). 
In addition, since equality holds in the inequality $- \pair{\rho}{\alpha^{\vee}} + 1 \ge 0$, 
we deduce that the inequality in \eqref{eq:qqq2} is, in fact, equality, 
which implies (3). This proves the ``only if'' part. 

Next we prove the ``if'' part. 
Recall that $\alpha \in \Delta^{+}$, and hence $\beta \in \Delta^{+}$ by (2). 
It follows from (3) that
\begin{equation} \label{eq:qqq1a}
\ell(v) = \ell(ws_{\alpha+\beta}) = \ell(w) - 2 \pair{\rho}{\alpha^{\vee}+\beta^{\vee}} + 1. 
\end{equation}
Also, we see that 
\begin{align}
\ell(v) & = \ell(ws_{\alpha}s_{\beta}s_{\alpha}) \ge \ell(ws_{\alpha}s_{\beta}) - 2\pair{\rho}{\alpha^{\vee}} + 1 \nonumber \\
& \ge \ell(ws_{\alpha}) - 2\pair{\rho}{\beta^{\vee}} + 1 - 2\pair{\rho}{\alpha^{\vee}} + 1 \nonumber \\
& \ge \ell(w) - 2\pair{\rho}{\alpha^{\vee}} + 1 - 2\pair{\rho}{\beta^{\vee}} + 1 - 2\pair{\rho}{\alpha^{\vee}} + 1 \nonumber \\
& = \ell(w) - 2 \pair{\rho}{2\alpha^{\vee}+\beta^{\vee}} +3. \label{eq:qqq2a}
\end{align}
Combining \eqref{eq:qqq1a} and \eqref{eq:qqq2a}, we obtain
\begin{equation} \label{eq:qqq3a}
 \pair{\rho}{\alpha^{\vee}+\beta^{\vee}} - \pair{\rho}{2\alpha^{\vee}+\beta^{\vee}} +1 \le 0;
\end{equation}
the left-hand side of \eqref{eq:qqq3a} 
is equal to $- \pair{\rho}{\alpha^{\vee}} +1$, which is equal to $0$ by (1). 
Hence the inequality in \eqref{eq:qqq3a} is equality. Therefore, we see that 
all the inequalities in \eqref{eq:qqq2a} are, in fact, equalities.
This proves the ``if'' part. 

Since $w \Qe{|\alpha|} ws_{\alpha}$ and $\alpha \in \Delta^{+}$, 
it follows from Lemma~\ref{lem:ell} that $w\alpha \in \Delta^{-}$. Similarly, since
$ws_{\alpha}s_{\beta} \Qe{|\alpha|} ws_{\alpha}s_{\beta}s_{\alpha}$ 
and $\alpha \in \Delta^{+}$, it follows from Lemma~\ref{lem:ell} 
that $w\beta = ws_{\alpha}s_{\beta}(\alpha) \in \Delta^{-}$. 
Hence we obtain $\sgn(w\alpha)=\sgn(w\beta)=-\sgn(\alpha)=-\sgn(\beta)$. 
This completes the proof of the lemma. 
\end{proof}

By arguments similar to those for Lemma~\ref{lem:qqq}, 
we can prove the following lemmas. 
%
%
\begin{lem} \label{lem:bqbq}
Let $w \in W$. Then, 
\begin{equation*}
w \Be{|\alpha|} ws_{\alpha} \Qe{|\beta|} 
ws_{\alpha}s_{\beta} \Be{|\alpha|} ws_{\alpha}s_{\beta}s_{\alpha}
 \quad \text{\rm and} \quad \ell(ws_{\alpha+\beta}) < \ell(w)
\end{equation*}
if and only if the following conditions {\rm (1)--(3)} hold\,{\rm:}
\begin{enu}
\item $|\alpha|$ is a simple root\,{\rm;} 
\item $\sgn(\alpha)=\sgn(w\alpha)=-\sgn(\beta)=\sgn(w\beta)$\,{\rm;}
\item we have the quantum edge $w \Qe{|\alpha+\beta|} ws_{\alpha+\beta}$ in $\QBG(W)$. 
\end{enu}
In this case, 
\begin{equation*}
\sgn(\alpha)=-\sgn(\beta)=-\sgn(\alpha+\beta)=\sgn(w\alpha)=\sgn(w\beta)=\sgn(w(\alpha+\beta)).
\end{equation*}
\end{lem}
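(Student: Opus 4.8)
The plan is to follow the template of the proof of Lemma~\ref{lem:qqq}, adapting it to the mixed Bruhat--quantum--Bruhat pattern. First I would reduce to the case $\alpha \in \Delta^{+}$ by the same sign-flip $(\alpha,\beta) \mapsto (-\alpha,-\beta)$ used in \eqref{eq:iff}; this replacement fixes the labels $|\alpha|$, $|\beta|$, $|\alpha+\beta|$, the element $ws_{\alpha+\beta}$, the auxiliary inequality $\ell(ws_{\alpha+\beta}) < \ell(w)$, and each of conditions (1)--(3), so the normalization is harmless. Throughout I set $v := ws_{\alpha}s_{\beta}s_{\alpha} = ws_{\alpha+\beta}$ and use the identities $s_{\alpha}\beta = \alpha+\beta$ and $s_{\alpha}s_{\beta}\alpha = \beta$, which hold because $\pair{\alpha}{\beta^{\vee}} = \pair{\beta}{\alpha^{\vee}} = -1$.

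For the ``only if'' part I would first read off signs from the three edges via Lemma~\ref{lem:ell}: the Bruhat edge $w \Be{|\alpha|} ws_{\alpha}$ gives $\sgn(w\alpha) = \sgn(\alpha) = 1$; the Bruhat edge $ws_{\alpha}s_{\beta} \Be{|\alpha|} v$ together with $ws_{\alpha}s_{\beta}\alpha = w\beta$ gives $\sgn(w\beta) = 1$; and the quantum edge $ws_{\alpha} \Qe{|\beta|} ws_{\alpha}s_{\beta}$ together with $ws_{\alpha}\beta = w(\alpha+\beta)$ gives $\sgn(w(\alpha+\beta)) = -\sgn(\beta)$. The key observation is that $w(\alpha+\beta) = w\alpha + w\beta$ is a sum of two positive roots, hence positive; combined with the previous line this forces $\sgn(\beta) = -1$, which is condition (2). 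Next, the hypothesis $\ell(v) < \ell(w)$ and Lemma~\ref{lem:ell} give $\sgn(\alpha+\beta) = -\sgn(w(\alpha+\beta)) = -1$. Finally, summing lengths along the path yields $\ell(v) = \ell(w) - 2\pair{\rho}{|\beta|^{\vee}} + 3$, while the general bound $\ell(v) = \ell(ws_{\alpha+\beta}) \ge \ell(w) - \ell(s_{\alpha+\beta}) = \ell(w) - 2\pair{\rho}{|\alpha+\beta|^{\vee}} + 1$ gives, after substituting $|\beta| = -\beta$ and $|\alpha+\beta| = -\alpha-\beta$, the inequality $-\pair{\rho}{\alpha^{\vee}} + 1 \ge 0$. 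Since $\alpha \in \Delta^{+}$ this forces $\pair{\rho}{\alpha^{\vee}} = 1$, i.e.\ condition (1); and equality makes the displayed bound tight, which is exactly the quantum-edge condition (3).

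For the ``if'' part I would assume (1)--(3) with $\alpha \in \Delta^{+}$, so that (2) reads $\sgn(w\alpha) = \sgn(w\beta) = 1$ and $\sgn(\beta) = -1$. Again $w(\alpha+\beta) = w\alpha + w\beta \in \Delta^{+}$, and since (3) forces $\ell(v) < \ell(w)$, Lemma~\ref{lem:ell} gives $\sgn(\alpha+\beta) = -1$. Now $\alpha$ simple together with $\sgn(w\alpha) = \sgn(\alpha)$ makes $w \Be{|\alpha|} ws_{\alpha}$ a Bruhat edge; likewise $\sgn(ws_{\alpha}s_{\beta}\alpha) = \sgn(w\beta) = \sgn(\alpha)$ and $\alpha$ simple make $ws_{\alpha}s_{\beta} \Be{|\alpha|} v$ a Bruhat edge, whence $\ell(ws_{\alpha}s_{\beta}) = \ell(v) - 1$. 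The length change across the middle step is then $\ell(ws_{\alpha}s_{\beta}) - \ell(ws_{\alpha}) = \ell(v) - \ell(w) - 2$; substituting the value of $\ell(v)$ from (3) and $\pair{\rho}{|\alpha+\beta|^{\vee}} = \pair{\rho}{|\beta|^{\vee}} - 1$ (which follows from $\alpha$ simple and $\beta, \alpha+\beta \in \Delta^{-}$) shows this equals $-2\pair{\rho}{|\beta|^{\vee}} + 1$, so $ws_{\alpha} \Qe{|\beta|} ws_{\alpha}s_{\beta}$ is a quantum edge. The concluding sign display collects the equalities $\sgn(\alpha) = -\sgn(\beta) = -\sgn(\alpha+\beta) = \sgn(w\alpha) = \sgn(w\beta) = \sgn(w(\alpha+\beta))$ already obtained.

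I expect the delicate step to be the determination of $\sgn(\alpha+\beta)$, which is precisely where the extra hypothesis $\ell(ws_{\alpha+\beta}) < \ell(w)$ is indispensable: unlike the purely quantum situation of Lemma~\ref{lem:qqq}, here the sign of $\alpha+\beta$ is not forced by the edge pattern alone, yet it is exactly what is needed to evaluate $\pair{\rho}{|\alpha+\beta|^{\vee}}$ and thereby match the middle length change to the quantum-edge condition. The supporting fact that $w(\alpha+\beta) = w\alpha + w\beta$ is positive, being a sum of positive roots, is the other small but essential input.
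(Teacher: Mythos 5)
Your proof is correct, and it is essentially the paper's intended argument: the paper proves Lemma~\ref{lem:bqbq} only by remarking that it follows ``by arguments similar to those for Lemma~\ref{lem:qqq}'', and your proposal is precisely that template — the sign-flip reduction to $\alpha \in \Delta^{+}$, length bookkeeping along the three edges, the bound $\ell(ws_{\alpha+\beta}) \ge \ell(w) - \ell(s_{\alpha+\beta}) = \ell(w) - 2\pair{\rho}{|\alpha+\beta|^{\vee}} + 1$ whose tightness yields simplicity of $|\alpha|$ and the quantum edge, and Lemma~\ref{lem:ell} for the sign relations. Your adaptation of the sign analysis (reading $\sgn(w\alpha)$, $\sgn(w\beta)$, $\sgn(w(\alpha+\beta))$ off the edge types, using positivity of $w\alpha + w\beta$, and invoking the extra hypothesis $\ell(ws_{\alpha+\beta}) < \ell(w)$ to fix $\sgn(\alpha+\beta)$) is exactly the adjustment this mixed Bruhat--quantum pattern requires, since the pure inequality argument of Lemma~\ref{lem:qqq} would not by itself determine the signs here.
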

%
%
\begin{lem} \label{lem:bqbb}
Let $w \in W$. Then, 
\begin{equation*}
w \Be{|\beta|} ws_{\beta} \Qe{|\alpha|} 
ws_{\beta}s_{\alpha} \Be{|\beta|} ws_{\beta}s_{\alpha}s_{\beta}
\quad \text{\rm and} \quad \ell(ws_{\alpha+\beta}) > \ell(w)
\end{equation*}
if and only if the following conditions {\rm (1)--(3)} hold\,{\rm:}
\begin{enu}
\item $|\alpha|$ is a simple root\,{\rm;}
\item $\sgn(\alpha)=-\sgn(w\alpha)=-\sgn(w\beta)$\,{\rm;}
\item we have the Bruhat edge $w \Be{|\alpha+\beta|} ws_{\alpha+\beta}$ in $\QBG(W)$. 
\end{enu}
In this case, 
\begin{equation*}
\sgn(\alpha) = - \sgn(\beta) = - \sgn(\alpha+\beta) 
= - \sgn(w\alpha) = - \sgn(w\beta) = -\sgn(w(\alpha+\beta)).
\end{equation*}
\end{lem}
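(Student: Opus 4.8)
The plan is to follow the template of the proof of Lemma~\ref{lem:qqq} closely, since the authors note these results are proved by similar arguments. As there, I would first reduce to the case $\alpha \in \Delta^{+}$ by the simultaneous sign change $\alpha,\beta \mapsto -\alpha,-\beta$: this leaves the three edges, their Bruhat/quantum types, the length condition $\ell(ws_{\alpha+\beta}) > \ell(w)$, the three conditions (1)--(3), and the displayed sign identity all unchanged, exactly as recorded in \eqref{eq:iff}. Throughout I set $v := ws_{\beta}s_{\alpha}s_{\beta}$ and observe that $s_{\beta}s_{\alpha}s_{\beta} = s_{s_{\beta}\alpha} = s_{\alpha+\beta}$ (since $s_{\beta}\alpha = \alpha+\beta$), so that $v = ws_{\alpha+\beta}$ is the same target element as in Lemma~\ref{lem:qqq}.

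For the ``only if'' part, I would read off $\ell(v)$ from the prescribed edge types: the two Bruhat edges contribute $+1$ each and the quantum edge labelled $|\alpha|$ contributes $-2\pair{\rho}{|\alpha|^{\vee}}+1$, so $\ell(v) = \ell(w) + 3 - 2\pair{\rho}{|\alpha|^{\vee}}$. The hypothesis $\ell(ws_{\alpha+\beta}) = \ell(v) > \ell(w)$ then forces $\pair{\rho}{|\alpha|^{\vee}} = 1$, which is condition (1), and simultaneously $\ell(v) = \ell(w)+1$; since $v = ws_{\alpha+\beta}$, the latter says precisely that $w \to ws_{\alpha+\beta}$ is a Bruhat edge, i.e. condition (3). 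For condition (2) I would apply Lemma~\ref{lem:ell} to the three edges: the first (Bruhat) edge gives $\sgn(w\beta) = \sgn(\beta)$, the middle (quantum) edge gives $w(\alpha+\beta) = ws_{\beta}\alpha \in \Delta^{-}$, and the third (Bruhat) edge gives $\sgn(w\alpha) = \sgn(\beta)$ (using $s_{\beta}s_{\alpha}\beta = \alpha$). Since $w(\alpha+\beta) = w\alpha + w\beta$ is a root with $\pair{w\alpha}{(w\beta)^{\vee}} = -1$, the assumption $\sgn(\beta) = +1$ would exhibit it as a sum of two positive roots, hence positive, contradicting $w(\alpha+\beta) \in \Delta^{-}$. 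Thus $\sgn(\beta) = -1$, giving $\sgn(w\alpha) = \sgn(w\beta) = -1 = -\sgn(\alpha)$, which is condition (2).

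For the ``if'' part, condition (3) gives $\ell(v) = \ell(w)+1$, while condition (2) gives $w\alpha, w\beta \in \Delta^{-}$, whence $w(\alpha+\beta) \in \Delta^{-}$; combining with (3) yields $\alpha+\beta \in \Delta^{-}$ and then $\beta \in \Delta^{-}$ (if $\beta$ were positive, $\alpha+\beta$ would be a positive root). With these signs, Lemma~\ref{lem:ell} shows the first and third steps (reflection by $s_{\beta}$) are length-increasing and the middle step (reflection by $s_{\alpha}$) is length-decreasing. Here condition (1) is used crucially: since $|\alpha|$ is simple, the decreasing $s_{\alpha}$-step changes length by exactly $-1$, matching the quantum-edge value $-2\pair{\rho}{|\alpha|^{\vee}}+1$, so the middle edge is automatically a quantum edge. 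Finally the chain $\ell(w)+1 = \ell(v) \ge \ell(ws_{\beta}s_{\alpha}) + 1 = \ell(ws_{\beta}) \ge \ell(w)+1$ (first inequality from the increasing third step, the interior equality from the simple decreasing step, the last inequality from the increasing first step) must consist of equalities, forcing both $s_{\beta}$-steps to raise length by exactly $1$, i.e. to be Bruhat edges.

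The closing displayed sign identity then follows from the facts already collected: $\sgn(\alpha+\beta) = -1$ comes from the Bruhat edge (3) together with $w(\alpha+\beta) \in \Delta^{-}$, and all six signs thereby agree with $-\sgn(\alpha)$; being invariant under the simultaneous sign flip, the identity holds without the reduction $\alpha \in \Delta^{+}$. I expect the main subtlety to lie in the ``if'' direction, where one must combine the exact length count afforded by condition (1), the sign bookkeeping through Lemma~\ref{lem:ell}, and the length-squeezing inequality in order to pin down the \emph{types} of the two $s_{\beta}$-edges (Bruhat), not merely their increasing character; the simply-laced root-sum argument fixing $\sgn(\beta)$ is the other step requiring care.
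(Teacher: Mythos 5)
Your proof is correct and is essentially the argument the paper intends: the paper gives no separate proof of Lemma~\ref{lem:bqbb}, stating only that it follows ``by arguments similar to those for Lemma~\ref{lem:qqq}'', and your write-up is precisely such an adaptation --- reduction to $\alpha \in \Delta^{+}$ via the symmetry \eqref{eq:iff}, length bookkeeping along the path to force $\pair{\rho}{|\alpha|^{\vee}}=1$ and hence conditions (1) and (3), sign analysis via Lemma~\ref{lem:ell} together with the root-sum positivity argument for condition (2), and the squeeze of length inequalities (using that a simple reflection changes length by exactly one) to pin down the edge types in the ``if'' direction. The only deviation is a natural simplification: since this lemma's hypothesis already contains $\ell(ws_{\alpha+\beta}) > \ell(w)$, you can use it directly in place of the triangle-inequality bound $\ell(ws_{\alpha+\beta}) \ge \ell(w) - \ell(s_{\alpha+\beta})$ that the proof of Lemma~\ref{lem:qqq} needs.
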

%
%
\begin{lem} \label{lem:qbbq}
Let $w \in W$. Then, 
\begin{equation*}
w \Qe{|\beta|} ws_{\beta} \Be{|\alpha|} 
ws_{\beta}s_{\alpha} \Be{|\beta|} ws_{\beta}s_{\alpha}s_{\beta}
 \quad \text{\rm and} \quad \ell(ws_{\alpha+\beta}) < \ell(w)
\end{equation*}
if and only if the following conditions {\rm (1)--(3)} hold\,{\rm:}
\begin{enu}
\item $|\alpha|$ is a simple root\,{\rm;}
\item $\sgn(\alpha)=-\sgn(\beta)=-\sgn(w\alpha)$\,{\rm;} 
\item we have the quantum edge $w \Qe{|\alpha+\beta|} ws_{\alpha+\beta}$ in $\QBG(W)$. 
\end{enu}
In this case, 
\begin{equation*}
\sgn(\alpha)=-\sgn(\beta)=-\sgn(\alpha+\beta)=-\sgn(w\alpha)=\sgn(w\beta)=\sgn(w(\alpha+\beta)).
\end{equation*}
\end{lem}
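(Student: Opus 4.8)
The plan is to mirror the proof of Lemma~\ref{lem:qqq} as closely as possible, the essential new features being that the three edges now have the mixed types (Q), (B), (B) and that the auxiliary hypothesis $\ell(ws_{\alpha+\beta})<\ell(w)$ must be fed into the sign analysis. First I would reduce to the case $\alpha\in\Delta^{+}$ by the symmetry $(\alpha,\beta)\mapsto(-\alpha,-\beta)$, exactly as in \eqref{eq:iff}: the labels $|\beta|,|\alpha|,|\beta|$, the edge types, the element $ws_{\beta}s_{\alpha}s_{\beta}$, the condition $\ell(ws_{\alpha+\beta})<\ell(w)$, and each of (1)--(3) are all invariant under this flip (condition (2) being invariant after multiplying through by $-1$). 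Throughout I set $v:=ws_{\beta}s_{\alpha}s_{\beta}=ws_{\alpha+\beta}$, using $s_{\beta}s_{\alpha}s_{\beta}=s_{s_{\beta}\alpha}=s_{\alpha+\beta}$ since $\pair{\alpha}{\beta^{\vee}}=-1$.

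For the ``only if'' direction, I would first read off the length of $v$ from the edge types: the quantum step drops the length by $2\pair{\rho}{|\beta|^{\vee}}-1$ and each Bruhat step raises it by $1$, so $\ell(v)=\ell(w)-2\pair{\rho}{|\beta|^{\vee}}+3$. Comparing with the general reflection bound $\ell(v)=\ell(ws_{\alpha+\beta})\ge\ell(w)-2\pair{\rho}{|\alpha+\beta|^{\vee}}+1$ yields $\pair{\rho}{|\alpha+\beta|^{\vee}}-\pair{\rho}{|\beta|^{\vee}}+1\ge 0$, the analogue of \eqref{eq:qqq3}. The signs are then pinned down by applying Lemma~\ref{lem:ell} at each edge: the Bruhat edge $ws_{\beta}\Be{|\alpha|}ws_{\beta}s_{\alpha}$ gives $\sgn(w(\alpha+\beta))=\sgn(\alpha)$ (as $(ws_{\beta})\alpha=w(\alpha+\beta)$), while the hypothesis $\ell(ws_{\alpha+\beta})<\ell(w)$ gives $\sgn(w(\alpha+\beta))=-\sgn(\alpha+\beta)$; combining these forces $\alpha+\beta\in\Delta^{-}$, hence (with $\alpha\in\Delta^{+}$) $\beta\in\Delta^{-}$. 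The third edge then gives $\sgn(w\alpha)=\sgn(\beta)$ (using $(ws_{\beta}s_{\alpha})\beta=w\alpha$), completing condition (2). Feeding $\beta,\alpha+\beta\in\Delta^{-}$ into the inequality collapses it to $-\pair{\rho}{\alpha^{\vee}}+1\ge 0$, so $\pair{\rho}{\alpha^{\vee}}=1$ and $\alpha$ is simple (condition (1)); equality then makes the reflection bound sharp, so $w\Qe{|\alpha+\beta|}ws_{\alpha+\beta}$ is a quantum edge (condition (3)).

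For the ``if'' direction I would assume (1)--(3) with $\alpha\in\Delta^{+}$, so $\alpha$ simple, $\beta\in\Delta^{-}$, $w\alpha\in\Delta^{-}$. A quick structural remark gives $\alpha+\beta\in\Delta^{-}$: if $\alpha+\beta$ were positive, then $\alpha=(\alpha+\beta)+|\beta|$ would exhibit the simple root $\alpha$ as a sum of two positive roots, which is impossible. The quantum edge (3) then yields both $\ell(ws_{\alpha+\beta})=\ell(w)-2\pair{\rho}{|\alpha+\beta|^{\vee}}+1<\ell(w)$ (the auxiliary conclusion) and, via $\pair{\rho}{|\alpha+\beta|^{\vee}}=\pair{\rho}{|\beta|^{\vee}}-1$, the exact value $\ell(v)=\ell(w)-2\pair{\rho}{|\beta|^{\vee}}+3$. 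I would then verify the edges directly: $\sgn(w(\alpha+\beta))=-\sgn(\alpha+\beta)=+1$ makes the second ($\alpha$-simple) edge Bruhat, and $v\beta=ws_{\alpha+\beta}\beta=-w\alpha\in\Delta^{+}$ forces the third step to raise length. Writing $\ell(ws_{\beta})=\ell(w)-d_{1}$ with $1\le d_{1}\le 2\pair{\rho}{|\beta|^{\vee}}-1$, the second edge gives $\ell(ws_{\beta}s_{\alpha})=\ell(w)-d_{1}+1$, and the third step being length-raising together with the exact value of $\ell(v)$ forces $d_{1}>2\pair{\rho}{|\beta|^{\vee}}-2$; integrality then gives $d_{1}=2\pair{\rho}{|\beta|^{\vee}}-1$, so the first edge is quantum and the third is Bruhat of size $1$. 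Finally, the displayed chain of six sign equalities is read off by applying Lemma~\ref{lem:ell} edge-by-edge (e.g.\ the quantum edge $w\Qe{|\beta|}ws_{\beta}$ gives $\sgn(w\beta)=-\sgn(\beta)$, with $w\beta=w(\alpha+\beta)-w\alpha\in\Delta^{+}$).

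I expect the main obstacle to be precisely the sign and case bookkeeping in the ``only if'' step: unlike Lemma~\ref{lem:qqq}, where all edges are quantum and the $\rho$-pairing inequality alone determines everything, here the mixed (Q),(B),(B) pattern means the extremal-length chain \eqref{eq:qqq2a} of Lemma~\ref{lem:qqq} does not transfer (running it would spuriously force all edges quantum). One must instead extract the signs of $\beta$ and $\alpha+\beta$ from Lemma~\ref{lem:ell} applied at the individual edges in combination with the hypothesis $\ell(ws_{\alpha+\beta})<\ell(w)$, and in the converse direction replace the chain by the integrality argument above; getting these interlocking sign constraints to close is the delicate part.
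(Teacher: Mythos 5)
Your proof is correct and takes essentially the approach the paper itself prescribes: the paper offers no separate argument for Lemma~\ref{lem:qbbq}, saying only that it follows ``by arguments similar to those for Lemma~\ref{lem:qqq}'', and your write-up is exactly that template --- reduction to $\alpha\in\Delta^{+}$ via the symmetry \eqref{eq:iff}, the length count $\ell(v)=\ell(w)-2\pair{\rho}{|\beta|^{\vee}}+3$ played against the bound $\ell(ws_{\alpha+\beta})\ge\ell(w)-2\pair{\rho}{|\alpha+\beta|^{\vee}}+1$, sign extraction via Lemma~\ref{lem:ell}, simplicity of $|\alpha|$ from the resulting inequality, and sharpness of that bound for condition (3). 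The two places where you adapt rather than transcribe the Lemma~\ref{lem:qqq} proof --- feeding the hypothesis $\ell(ws_{\alpha+\beta})<\ell(w)$ into the edge-wise sign analysis, and replacing the collapsing chain of inequalities by the integrality argument pinning $d_{1}=2\pair{\rho}{|\beta|^{\vee}}-1$ --- are precisely the adjustments the mixed (Q),(B),(B) edge pattern forces, and both steps are carried out correctly.
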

%
%
\begin{lem} \label{lem:bbqq}
Let $w \in W$. Then, 
\begin{equation*}
w \Be{|\beta|} ws_{\beta} \Be{|\alpha|} 
ws_{\beta}s_{\alpha} \Qe{|\beta|} ws_{\beta}s_{\alpha}s_{\beta}
 \quad \text{\rm and} \quad \ell(ws_{\alpha+\beta}) < \ell(w)
\end{equation*}
if and only if the following conditions {\rm (1)--(3)} hold\,{\rm:}
\begin{enu}
\item $|\alpha|$ is a simple root\,{\rm;}
\item $\sgn(\alpha)=-\sgn(\beta)=\sgn(w\alpha)$\,{\rm;}
\item we have the quantum edge $w \Qe{|\alpha+\beta|} ws_{\alpha+\beta}$ in $\QBG(W)$. 
\end{enu}
In this case, 
\begin{equation*}
\sgn(\alpha)=-\sgn(\beta)=-\sgn(\alpha+\beta)=\sgn(w\alpha)=-\sgn(w\beta)=\sgn(w(\alpha+\beta)).
\end{equation*}
\end{lem}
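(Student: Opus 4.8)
The plan is to follow the template of the proof of Lemma~\ref{lem:qqq}, adapting the two-sided length estimate and the sign bookkeeping to the mixed Bruhat/quantum edge pattern $\mathsf{B},\mathsf{B},\mathsf{Q}$. First I would observe, exactly as in \eqref{eq:iff}, that replacing $(\alpha,\beta)$ by $(-\alpha,-\beta)$ leaves invariant the displayed path, the inequality $\ell(ws_{\alpha+\beta})<\ell(w)$, and each of conditions (1)--(3); this reduces everything to the case $\alpha\in\Delta^+$. Setting $v:=ws_{\beta}s_{\alpha}s_{\beta}=ws_{\alpha+\beta}$ (using $s_{\beta}\alpha=\alpha+\beta$), I would record the general lower bound $\ell(v)\ge \ell(w)-2\pair{\rho}{|\alpha+\beta|^{\vee}}+1$ coming from the single reflection $s_{\alpha+\beta}$.

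For the ``only if'' direction I would first read off the signs from the edge types via Lemma~\ref{lem:ell}: the two Bruhat edges give $\sgn(w\beta)=\sgn(\beta)$ and $\sgn(w(\alpha+\beta))=\sgn(\alpha)$, the quantum edge gives $\sgn(w\alpha)=-\sgn(\beta)$, and the hypothesis $\ell(ws_{\alpha+\beta})<\ell(w)$ gives $\sgn(w(\alpha+\beta))=-\sgn(\alpha+\beta)$. Combining the last two with $\alpha\in\Delta^{+}$ forces $\alpha+\beta\in\Delta^{-}$, and since a sum of two positive roots is again a positive root this forces $\beta\in\Delta^{-}$; this already yields condition (2) and the entire sign chain of the ``In this case'' clause. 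Substituting $|\beta|^{\vee}=-\beta^{\vee}$ and $|\alpha+\beta|^{\vee}=-\alpha^{\vee}-\beta^{\vee}$, I would then compute $\ell(v)$ exactly from the $\mathsf{B},\mathsf{B},\mathsf{Q}$ pattern, namely $\ell(v)=\ell(w)+3-2\pair{\rho}{|\beta|^{\vee}}$, and compare it with the reflection lower bound above; the comparison collapses to $\pair{\rho}{\alpha^{\vee}}\le 1$, whence $\pair{\rho}{\alpha^{\vee}}=1$, which is condition (1), and equality throughout promotes the reflection bound to the quantum-edge identity, giving condition (3).

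For the converse I would assume (1)--(3) and run these implications backwards, the key new input being condition (1). Since $|\alpha|$ is simple and $\beta\ne\pm\alpha$ (as $\pair{\alpha}{\beta^{\vee}}=-1$), the simple reflection $s_{\alpha}$ preserves the sign of $\beta$, so $\sgn(\alpha+\beta)=\sgn(s_{\alpha}\beta)=\sgn(\beta)$; together with condition (2) and condition (3), which makes $w\to ws_{\alpha+\beta}$ a down-edge realizing the extremal length drop, Lemma~\ref{lem:ell} then certifies the type of each intermediate step. The exact length drop furnished by the quantum edge (3) pins the three intermediate lengths so that the first two steps are genuine covers (Bruhat edges) and the third is the maximal drop (a quantum edge), after which the remaining sign equalities follow as in the direct direction.

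I expect the converse to be the main obstacle. Because the three edges are of mixed type, the chain of length estimates does not tighten on its own the way it does in the all-quantum Lemma~\ref{lem:qqq}, where a single one-sided bound forces all equalities simultaneously; instead the argument must interleave the exact length identity from (3) with the sign data produced by Lemma~\ref{lem:ell}, certifying the two up-steps as covers and the single down-step as extremal. Getting this bookkeeping to close cleanly, rather than collapsing from one estimate, is the delicate point.
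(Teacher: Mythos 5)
Your ``only if'' half is correct, and it is exactly the adaptation of the proof of Lemma~\ref{lem:qqq} that the paper has in mind (the paper gives no details for this lemma, asserting only that the argument is similar): Lemma~\ref{lem:ell} applied at the three edges and at $\ell(ws_{\alpha+\beta})<\ell(w)$ yields condition (2), the concluding sign chain, and (for $\alpha\in\Delta^{+}$) $\beta,\alpha+\beta\in\Delta^{-}$; and comparing the exact length $\ell(ws_{\beta}s_{\alpha}s_{\beta})=\ell(w)+3-2\pair{\rho}{|\beta|^{\vee}}$ with the reflection bound $\ell(ws_{\alpha+\beta})\geq\ell(w)-2\pair{\rho}{|\alpha+\beta|^{\vee}}+1$ collapses to $\pair{\rho}{\alpha^{\vee}}\leq 1$, giving (1), and equality then gives (3).

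The converse is where your proposal has a genuine gap, and the gap cannot be repaired from hypotheses (1)--(3) alone. The mechanism you invoke --- that the length identity from (3) ``pins the three intermediate lengths'' and that Lemma~\ref{lem:ell} then ``certifies the type of each intermediate step'' --- does not exist: the two endpoint lengths do not determine $\ell(ws_{\beta})$, and Lemma~\ref{lem:ell} cannot classify the first step without knowing $\sgn(w\beta)$, which appears nowhere in (1)--(3). In fact the ``if'' direction, read literally, is false. In type $A_{2}$ take $w=s_{1}s_{2}$, $\alpha=\alpha_{1}$, $\beta=-\theta$, so $\alpha+\beta=-\alpha_{2}$. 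Then (1)--(3) all hold: $\alpha_{1}$ is simple; $w\alpha_{1}=\alpha_{2}$, so $\sgn(\alpha)=-\sgn(\beta)=\sgn(w\alpha)=+1$; and $ws_{\alpha_{2}}=s_{1}$ satisfies $\ell(s_{1})=\ell(w)-2\pair{\rho}{\alpha_{2}^{\vee}}+1$, so $w \Qe{\alpha_{2}} s_{1}$ is a quantum edge. Yet $ws_{\beta}=s_{1}s_{2}s_{\theta}=s_{2}$ has $\ell(ws_{\beta})=\ell(w)-1$, so $w\edge{|\beta|}ws_{\beta}$ is not even an edge of $\QBG(W)$ (the drop is $1$, not $2\pair{\rho}{\theta^{\vee}}-1=3$), let alone a Bruhat edge; correspondingly $w\beta=\alpha_{1}\in\Delta^{+}$ violates the concluding sign chain. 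What is missing from the hypotheses is precisely the fourth sign equality $-\sgn(w\beta)=\sgn(\alpha)$ occurring in that chain; compare Lemma~\ref{lem:bqbq}, whose condition (2) records four signs, and note that the applications of the present lemma in Subcase~\ref{subcase44} do supply this datum, via $\beta\in\Delta^{+}$ in \eqref{eq:c44a} and $\beta\in\Delta^{-}$ in \eqref{eq:c44b}.

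Once $\sgn(w\beta)=\sgn(\beta)$ is added to (2), the converse does hold, but still not by your argument: the issue is to show the first step is a Bruhat \emph{cover}, not merely that it goes up. One workable route, after normalizing $\alpha$ simple positive and $\beta,\alpha+\beta\in\Delta^{-}$: condition (3) is equivalent to $\ell(s_{|\alpha+\beta|})=2\pair{\rho}{|\alpha+\beta|^{\vee}}-1$ together with $w\nu\in\Delta^{-}$ for every $\nu\in\mathrm{Inv}(s_{|\alpha+\beta|})$; since $\alpha$ is simple one has $\mathrm{Inv}(s_{|\beta|})=\{\alpha,|\alpha+\beta|\}\sqcup s_{\alpha}\,\mathrm{Inv}(s_{|\alpha+\beta|})$, and an inversion count (pairing $\nu$ with $|\alpha+\beta|-\nu$ and using $w\alpha\in\Delta^{+}$, $w|\beta|\in\Delta^{+}$) gives $\ell(ws_{\beta})=\ell(w)+1$; then $(ws_{\beta})\alpha=w(\alpha+\beta)\in\Delta^{+}$ makes the second step a Bruhat edge, and (1) together with (3) forces the last step to drop by exactly $2\pair{\rho}{|\beta|^{\vee}}-1$, i.e.\ to be quantum. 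So, in summary: your first half matches the paper's template, but your converse rests on a nonexistent ``length-pinning'' principle, and closing it requires both a corrected statement and an argument genuinely beyond the one-sided estimates of Lemma~\ref{lem:qqq}.
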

%
%
\begin{lem} \label{lem:bbqb}
Let $w \in W$. Then, 
\begin{equation*}
w \Be{|\alpha|} ws_{\alpha} \Be{|\beta|} 
ws_{\alpha}s_{\beta} \Qe{|\alpha|} ws_{\alpha}s_{\beta}s_{\alpha}
\quad \text{\rm and} \quad \ell(ws_{\alpha+\beta}) > \ell(w)
\end{equation*}
if and only if the following conditions {\rm (1)--(3)} hold\,{\rm:}
\begin{enu}
\item $|\alpha|$ is a simple root\,{\rm;}
\item $\sgn(\alpha)=\sgn(w\alpha)$\,{\rm;}
\item we have the Bruhat edge $w \Be{|\alpha+\beta|} ws_{\alpha+\beta}$ in $\QBG(W)$. 
\end{enu}
In this case,
\begin{equation*}
\sgn(w\alpha) = \sgn(\alpha)= - \sgn(w\beta) \quad \text{\rm and} \quad 
\sgn(w(\alpha+\beta))=\sgn(\alpha+\beta)=\sgn(\beta).
\end{equation*}
\end{lem}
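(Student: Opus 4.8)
The plan is to follow the template of the proof of Lemma~\ref{lem:qqq}, adapting it to the mixed Bruhat/quantum pattern $\mathsf{BBQ}$. Exactly as in \eqref{eq:iff}, replacing $(\alpha,\beta)$ by $(-\alpha,-\beta)$ leaves the three edges, the auxiliary inequality $\ell(ws_{\alpha+\beta}) > \ell(w)$, and each of the conditions (1)--(3) unchanged; hence I may assume $\alpha \in \Delta^{+}$, so that $|\alpha| = \alpha$ and $\pair{\rho}{\alpha^{\vee}} \ge 1$. Throughout I set $v := ws_{\alpha}s_{\beta}s_{\alpha} = ws_{\alpha+\beta}$, and I record the identities $s_{\alpha}\beta = \alpha+\beta$, $s_{\alpha}s_{\beta}\alpha = \beta$, and $s_{\alpha+\beta}\alpha = -\beta$ (the last because $\pair{\alpha}{(\alpha+\beta)^{\vee}} = (\alpha,\alpha+\beta) = 1$), which transport root signs along the path.

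For the ``only if'' part I read off the lengths from the edge types. The two Bruhat edges give $\ell(ws_{\alpha}s_{\beta}) = \ell(w) + 2$, and the final quantum edge gives $\ell(v) = \ell(ws_{\alpha}s_{\beta}) - 2\pair{\rho}{\alpha^{\vee}} + 1 = \ell(w) + 3 - 2\pair{\rho}{\alpha^{\vee}}$. Combining this with the hypothesis $\ell(v) = \ell(ws_{\alpha+\beta}) > \ell(w)$ forces $\pair{\rho}{\alpha^{\vee}} \le 1$, whence $\pair{\rho}{\alpha^{\vee}} = 1$ and $\alpha$ is simple, proving (1). Substituting back yields $\ell(v) = \ell(w) + 1$, so $s_{\alpha+\beta}$ raises the length of $w$ by exactly one, i.e.\ $w \Be{|\alpha+\beta|} ws_{\alpha+\beta}$ is a Bruhat edge, proving (3). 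Finally, the first edge is Bruhat with $\alpha \in \Delta^{+}$, so $ws_{\alpha} > w$, and Lemma~\ref{lem:ell} gives $\sgn(w\alpha) = \sgn(\alpha)$, proving (2).

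For the ``if'' part, (3) immediately gives $\ell(v) = \ell(ws_{\alpha+\beta}) = \ell(w) + 1 > \ell(w)$, so the auxiliary inequality holds, and by (1) and (2) the first edge $w \Be{|\alpha|} ws_{\alpha}$ is a length-increasing Bruhat edge. The crux is to locate the intermediate vertex $ws_{\alpha}s_{\beta} = vs_{\alpha}$, for which it suffices to determine $\sgn(w\beta)$: since $v\alpha = ws_{\alpha+\beta}\alpha = -w\beta$, Lemma~\ref{lem:ell} shows $vs_{\alpha} > v \iff w\beta \in \Delta^{-}$. Here the lower-bound length chain of Lemma~\ref{lem:qqq} is no longer tight, because the present path first ascends and then descends; instead I determine the signs $\sgn(w\alpha),\sgn(w\beta),\sgn(w(\alpha+\beta))$ by restricting to the rank-two subsystem $\Psi := \{\pm\alpha,\pm\beta,\pm(\alpha+\beta)\}$ of type $A_{2}$ with base $\{\alpha,\beta\}$. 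Writing $\langle s_{\alpha},s_{\beta}\rangle \cong S_{3}$ and letting $w'$ be the $\Psi$-minimal representative of $w\langle s_{\alpha},s_{\beta}\rangle$ with $w = w'\pi$, one has $\ell(w\sigma) = \ell(w') + \ell_{\Psi}(\pi\sigma)$ and $\sgn(w\mu) = \sgn_{\Psi}(\pi\mu)$ for $\mu \in \Psi$. Condition (3) reads $\ell_{\Psi}(\pi s_{\alpha+\beta}) = \ell_{\Psi}(\pi) + 1$, which in $S_{3}$ forces $\pi \in \{s_{\alpha},s_{\beta}\}$; condition (2), i.e.\ $\pi\alpha \in \Psi^{+}$, excludes $\pi = s_{\alpha}$ (as $s_{\alpha}\alpha = -\alpha$) and leaves $\pi = s_{\beta}$. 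Hence $\sgn(w\beta) = \sgn_{\Psi}(s_{\beta}\beta) = \sgn_{\Psi}(-\beta) = -1$ and $\sgn(w(\alpha+\beta)) = \sgn_{\Psi}(s_{\beta}(\alpha+\beta)) = \sgn_{\Psi}(\alpha) = +1$; in particular $w\beta \in \Delta^{-}$, so $\ell(ws_{\alpha}s_{\beta}) = \ell(w) + 2$. Thus the middle edge is Bruhat, and, since $\alpha$ is simple, the final edge $ws_{\alpha}s_{\beta} \Qe{|\alpha|} v$ drops the length by $2\pair{\rho}{\alpha^{\vee}} - 1 = 1$ and is quantum, as required.

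It remains to verify the displayed sign relations, which I obtain by applying Lemma~\ref{lem:ell} to the three edges with the ambient roots $\alpha$, $\beta$, $\alpha$ attached to them: the first (Bruhat) edge gives $\sgn(w\alpha) = \sgn(\alpha)$; the second (Bruhat) edge, via $s_{\alpha}\beta = \alpha+\beta$, gives $\sgn(w(\alpha+\beta)) = \sgn(\beta)$; the third (quantum) edge, via $s_{\alpha}s_{\beta}\alpha = \beta$, gives $\sgn(w\beta) = -\sgn(\alpha)$; and (3) gives $\sgn(w(\alpha+\beta)) = \sgn(\alpha+\beta)$. Together with $\sgn(w(\alpha+\beta)) = +1$ found above, these assemble into $\sgn(w\alpha) = \sgn(\alpha) = -\sgn(w\beta)$ and $\sgn(w(\alpha+\beta)) = \sgn(\alpha+\beta) = \sgn(\beta)$. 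The main obstacle is precisely this rank-two sign determination in the ``if'' direction: unlike in Lemma~\ref{lem:qqq}, condition (2) does not by itself fix $\sgn(\beta)$, so the naive inequality chain must be replaced by the finite $S_{3}$ coset computation (equivalently, by the uniqueness of label-increasing and label-decreasing shortest paths in $\QBG(W)$) in order to exclude the competing configurations corresponding to the sibling lemmas.
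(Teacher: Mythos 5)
Your ``only if'' half is correct and matches the template the paper intends (the paper proves only Lemma~\ref{lem:qqq} in detail and obtains Lemma~\ref{lem:bbqb} ``by similar arguments''): the two Bruhat edges and the quantum edge give $\ell(ws_{\alpha}s_{\beta}s_{\alpha})=\ell(w)+3-2\pair{\rho}{\alpha^{\vee}}$, the hypothesis $\ell(ws_{\alpha+\beta})>\ell(w)$ then forces $\pair{\rho}{\alpha^{\vee}}=1$, and (1), (3), (2) follow. The closing sign relations are also fine, since they follow from Lemma~\ref{lem:ell} applied to the three edges together with (3) alone; you do not actually need the intermediate claim ``$\sgn(w(\alpha+\beta))=+1$'' there, which is fortunate, because that claim is false in general.

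The genuine gap is in the ``if'' direction, at the step you call the crux. The identities $\ell(w\sigma)=\ell(w')+\ell_{\Psi}(\pi\sigma)$ and $\sgn(w\mu)=\sgn_{\Psi}(\pi\mu)$ for the minimal coset representative $w'$ hold only when $\ell_{\Psi}$ and $\sgn_{\Psi}$ are taken with respect to the positive system of $\Psi$ \emph{induced} from $\Delta^{+}$, namely $\Psi\cap\Delta^{+}$ (Dyer's canonical generators of a reflection subgroup). You take them with respect to the system $\{\alpha,\beta,\alpha+\beta\}$ with base $\{\alpha,\beta\}$; the two coincide only when $\beta,\alpha+\beta\in\Delta^{+}$, which is not given --- the lemma's own conclusion allows $\sgn(\beta)=\sgn(\alpha+\beta)=-1$. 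Concretely, in type $A_{2}$ take $w=e$, $\alpha=\alpha_{1}$, $\beta=-\theta$, so $\alpha+\beta=-\alpha_{2}$: the path $e \Be{\alpha_{1}} s_{1} \Be{\theta} s_{2}s_{1} \Qe{\alpha_{1}} s_{2}$ and conditions (1)--(3) all hold (with $ws_{\alpha+\beta}=s_{2}$), yet $\pi=e$, contradicting your deduction $\pi\in\{s_{\alpha},s_{\beta}\}$; indeed your length additivity fails outright here, since $\ell(ws_{\alpha+\beta})=1$ while $\ell(w')+\ell_{\Psi}(\pi s_{\alpha+\beta})=0+3=3$, and your conclusion $\sgn(w(\alpha+\beta))=+1$ fails since $w(\alpha+\beta)=-\alpha_{2}\in\Delta^{-}$. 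The repair is to run the same finite $S_{3}$ check with respect to $\Psi\cap\Delta^{+}$, whose simple system call $\{\sigma_{1},\sigma_{2}\}$: condition (1) --- which your coset computation never uses --- forces $\alpha\in\{\sigma_{1},\sigma_{2}\}$, because a $\Delta$-simple root cannot equal $\sigma_{1}+\sigma_{2}$, a sum of two $\Delta$-positive roots. Taking $\alpha=\sigma_{1}$, only two configurations remain, $\beta=\sigma_{2}$ or $\beta=-(\sigma_{1}+\sigma_{2})$; in the first, the translated conditions (2), (3) force $\pi=s_{\sigma_{2}}$, in the second they force $\pi=e$, and in both cases $\pi\beta\in-(\Psi\cap\Delta^{+})$, i.e.\ $w\beta\in\Delta^{-}$. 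After that, your own reduction (via $v\alpha=-w\beta$, Lemma~\ref{lem:ell}, and simplicity of $\alpha$) correctly finishes the proof.
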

%
%
\begin{lem} \label{lem:qbbb}
Let $w \in W$. Then, 
\begin{equation*}
w \Qe{|\alpha|} ws_{\alpha} \Be{|\beta|} 
ws_{\alpha}s_{\beta} \Be{|\alpha|} ws_{\alpha}s_{\beta}s_{\alpha}
\quad \text{\rm and} \quad \ell(ws_{\alpha+\beta}) > \ell(w)
\end{equation*}
if and only if the following conditions {\rm (1)--(3)} hold\,{\rm:} 
\begin{enu}
\item $|\alpha|$ is a simple root\,{\rm;}

\item $\sgn(\alpha)=\sgn(w\beta)$\,{\rm;}

\item we have the Bruhat edge $w \Be{|\alpha+\beta|} ws_{\alpha+\beta}$ in $\QBG(W)$. 
\end{enu}
In this case, 
\begin{equation*}
\sgn(w\alpha) = - \sgn(\alpha)= - \sgn(w\beta) \quad \text{\rm and} \quad 
\sgn(w(\alpha+\beta))=\sgn(\alpha+\beta)=\sgn(\beta).
\end{equation*}
\end{lem}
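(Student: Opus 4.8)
The plan is to follow the template established in the proof of Lemma~\ref{lem:qqq}, adapting the length bookkeeping to the mixed (Q,B,B) edge pattern. As a preliminary reduction, I would first record that $s_{\alpha}s_{\beta}s_{\alpha} = s_{\alpha+\beta}$ (since $s_{\alpha}\beta = \alpha+\beta$), so that $ws_{\alpha}s_{\beta}s_{\alpha} = ws_{\alpha+\beta}$, and that every ingredient of the statement---the relation $\pair{\alpha}{\beta^{\vee}}=-1$, the edge pattern, the inequality $\ell(ws_{\alpha+\beta}) > \ell(w)$, and each of conditions (1)--(3)---is invariant under $(\alpha,\beta)\mapsto(-\alpha,-\beta)$, exactly as in \eqref{eq:iff}. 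Hence I may assume $\alpha \in \Delta^{+}$, so that $|\alpha| = \alpha$ and $\pair{\rho}{\alpha^{\vee}} \ge 1$.

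For the ``only if'' direction, I would add up the length changes along the path: the quantum step contributes $-2\pair{\rho}{\alpha^{\vee}}+1$ and the two Bruhat steps contribute $+1$ each, giving $\ell(ws_{\alpha+\beta}) = \ell(w) - 2\pair{\rho}{\alpha^{\vee}} + 3$. The hypothesis $\ell(ws_{\alpha+\beta}) > \ell(w)$ then forces $\pair{\rho}{\alpha^{\vee}} \le 1$, hence $\pair{\rho}{\alpha^{\vee}}=1$, i.e.\ $\alpha$ is simple, which is (1); moreover $\ell(ws_{\alpha+\beta}) = \ell(w)+1$, so the edge $w \to ws_{\alpha+\beta}$ is a Bruhat edge, which is (3). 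Condition (2) then comes from applying Lemma~\ref{lem:ell} to the third (Bruhat) edge: since $(ws_{\alpha}s_{\beta})\alpha = ws_{\alpha}(\alpha+\beta) = w\beta$, the relation $ws_{\alpha}s_{\beta}s_{\alpha} > ws_{\alpha}s_{\beta}$ gives $\sgn(w\beta) = \sgn(\alpha)$.

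For the ``if'' direction I would argue in the reverse order. First, $(ws_{\alpha}s_{\beta})\alpha = w\beta \in \Delta^{+}$ by (2) with $\alpha$ simple by (1), so the third step is length-increasing and $\ell(ws_{\alpha}s_{\beta}) = \ell(ws_{\alpha+\beta}) - 1 = \ell(w)$ using the Bruhat edge of (3); this already identifies the third edge as Bruhat. Next I would show $w\alpha \in \Delta^{-}$ by contradiction: if $w\alpha \in \Delta^{+}$, then together with $w\beta \in \Delta^{+}$ one gets $w(\alpha+\beta) \in \Delta^{+}$ and $\ell(ws_{\alpha}) = \ell(w)+1$, whence $ws_{\alpha}s_{\beta} > ws_{\alpha}$ and, using $(ws_{\alpha}s_{\beta})\alpha = w\beta \in \Delta^{+}$ again, $\ell(ws_{\alpha+\beta}) \ge \ell(w)+3$, contradicting (3). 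Thus $w\alpha \in \Delta^{-}$, so $\ell(ws_{\alpha}) = \ell(w)-1$ and the first edge is the quantum edge $w \Qe{|\alpha|} ws_{\alpha}$; comparing $\ell(ws_{\alpha}) = \ell(w)-1$ with $\ell(ws_{\alpha}s_{\beta}) = \ell(w)$ identifies the middle edge as Bruhat. This recovers the full (Q,B,B) pattern and the inequality $\ell(ws_{\alpha+\beta}) = \ell(w)+1 > \ell(w)$. Finally, the ``in this case'' sign chain follows by reading Lemma~\ref{lem:ell} off the three edges: the quantum first edge gives $\sgn(w\alpha) = -\sgn(\alpha)$, condition (2) supplies $-\sgn(\alpha) = -\sgn(w\beta)$, the Bruhat middle edge gives $\sgn(w(\alpha+\beta)) = \sgn((ws_{\alpha})\beta) = \sgn(\beta)$, and the Bruhat edge of (3) gives $\sgn(w(\alpha+\beta)) = \sgn(\alpha+\beta)$.

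The main obstacle I anticipate is the ``if'' direction, specifically ruling out $w\alpha \in \Delta^{+}$: unlike the single length inequality in Lemma~\ref{lem:qqq}, here the sign data of (1)--(3) do not by themselves fix $\sgn(w\alpha)$, and one must combine two computations of $\ell(ws_{\alpha}s_{\beta})$---one forced by the third step, one produced by the assumed first step---to expose the contradiction. Keeping careful track of which intermediate lengths are \emph{exactly} determined (the Bruhat steps, each $+1$) versus merely bounded below is the delicate point; once $\ell(ws_{\alpha}s_{\beta})$ is pinned to $\ell(w)$, the identification of all three edge types is forced.
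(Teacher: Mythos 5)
Your reduction to $\alpha \in \Delta^{+}$, your ``only if'' argument, and the closing sign chain are all correct, and the overall template (length bookkeeping as in Lemma~\ref{lem:qqq}, plus Lemma~\ref{lem:ell} applied edge by edge) is exactly what the paper intends for this lemma. The problem is in the ``if'' direction, at the step where you rule out $w\alpha \in \Delta^{+}$: from ``$w(\alpha+\beta) \in \Delta^{+}$ and $\ell(ws_{\alpha}) = \ell(w)+1$'' you conclude ``whence $ws_{\alpha}s_{\beta} > ws_{\alpha}$''. This inference is invalid as written. By Lemma~\ref{lem:ell}, $ws_{\alpha}s_{\beta} > ws_{\alpha}$ holds if and only if $\sgn\bigl((ws_{\alpha})\beta\bigr) = \sgn(\beta)$, i.e.\ $\sgn(w(\alpha+\beta)) = \sgn(\beta)$; so $w(\alpha+\beta) \in \Delta^{+}$ gives the desired inequality only when $\beta \in \Delta^{+}$, and gives the \emph{opposite} inequality when $\beta \in \Delta^{-}$. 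Nothing in your proof determines $\sgn(\beta)$, and it cannot simply be assumed positive: conditions (1)--(3) admit genuine instances with $\beta \in \Delta^{-}$ (e.g.\ in type $A_{2}$, $w = s_{2}s_{1}$, $\alpha = \alpha_{1}$, $\beta = -\theta$), which is precisely why the lemma's conclusion only asserts $\sgn(\beta) = \sgn(w(\alpha+\beta))$. In effect you have used the false rule ``$v\gamma \in \Delta^{+}$ implies $vs_{\gamma} > v$'' for a root $\gamma$ of unknown sign.

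The gap is fixable, but it requires invoking condition (3) a second time, through signs rather than lengths. Under the contradiction hypothesis $w\alpha \in \Delta^{+}$ you correctly get $w(\alpha+\beta) = w\alpha + w\beta \in \Delta^{+}$; then Lemma~\ref{lem:ell} applied to the Bruhat edge of (3) forces $\sgn(\alpha+\beta) = \sgn(w(\alpha+\beta)) = +$, and since $\alpha$ is simple, $\beta \in \Delta^{-}$ is impossible (it would make $\alpha = (\alpha+\beta) + (-\beta)$ a sum of two positive roots); hence $\beta \in \Delta^{+}$. Only now does Lemma~\ref{lem:ell} yield $ws_{\alpha}s_{\beta} > ws_{\alpha}$, so $\ell(ws_{\alpha}s_{\beta}) \ge \ell(w)+2$, contradicting the value $\ell(ws_{\alpha}s_{\beta}) = \ell(w)$ that you had already pinned down from the third edge (equivalently, continuing as you do, $\ell(ws_{\alpha+\beta}) \ge \ell(w)+3$ contradicts (3)). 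With this insertion your proof is complete.
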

%
%
\begin{lem} \label{lem:qbq}
For any $w \in W$, we do not have the following directed path in $\QBG(W)${\rm :} 
\begin{equation} \label{eq:qbq}
w \Qe{|\alpha|} ws_{\alpha} \Be{|\beta|} ws_{\alpha}s_{\beta} \Qe{|\alpha|} ws_{\alpha}s_{\beta}s_{\alpha}. 
\end{equation}
\end{lem}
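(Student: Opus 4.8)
The plan is to argue by contradiction: assume the path \eqref{eq:qbq} exists and extract incompatible constraints on the length of its endpoint. The key structural observation is that, since $\pair{\alpha}{\beta^{\vee}}=-1$, we have $s_{\alpha}\beta=\alpha+\beta$ and $s_{\alpha}s_{\beta}s_{\alpha}=s_{s_{\alpha}\beta}=s_{\alpha+\beta}$, so the final vertex is $ws_{\alpha}s_{\beta}s_{\alpha}=ws_{\alpha+\beta}$. Throughout I abbreviate $h_{\gamma}:=\pair{\rho}{|\gamma|^{\vee}}\ge 1$ for $\gamma\in\Delta$.

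First I would read off the sign data forced by the three edges, using Lemma~\ref{lem:ell} together with the identities $s_{\alpha}\beta=\alpha+\beta$ and $s_{\alpha}s_{\beta}\alpha=\beta$. The quantum edge $w\Qe{|\alpha|}ws_{\alpha}$ forces $ws_{\alpha}<w$, hence $\sgn(w\alpha)=-\sgn(\alpha)$; the quantum edge $ws_{\alpha}s_{\beta}\Qe{|\alpha|}ws_{\alpha}s_{\beta}s_{\alpha}$, read at the vertex $x:=ws_{\alpha}s_{\beta}$ (for which $x\alpha=w\beta$), forces $\sgn(w\beta)=-\sgn(\alpha)$; and the Bruhat edge $ws_{\alpha}\Be{|\beta|}ws_{\alpha}s_{\beta}$, read at the vertex $ws_{\alpha}$ (for which $(ws_{\alpha})\beta=w(\alpha+\beta)$), forces $\sgn(w(\alpha+\beta))=\sgn(\beta)$. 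Since $w\alpha$ and $w\beta$ thus share the common sign $-\sgn(\alpha)$ and $\alpha+\beta$ is a root, their sum $w(\alpha+\beta)$ inherits that sign, so I obtain $\sgn(\beta)=\sgn(w(\alpha+\beta))=-\sgn(\alpha)$; in particular $\alpha$ and $\beta$ have opposite signs, whence $\pair{\rho}{(\alpha+\beta)^{\vee}}=\pair{\rho}{\alpha^{\vee}}+\pair{\rho}{\beta^{\vee}}=\sgn(\alpha)(h_{\alpha}-h_{\beta})$.

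Next I would compute the total length change: the two quantum steps each contribute $-2h_{\alpha}+1$ and the Bruhat step contributes $+1$, so $\ell(ws_{\alpha+\beta})=\ell(w)-4h_{\alpha}+3$, which is $<\ell(w)$ as $h_{\alpha}\ge 1$. I then split on $\sgn(\alpha+\beta)$, equivalently on whether $h_{\alpha}<h_{\beta}$ or $h_{\alpha}>h_{\beta}$ (equality is impossible, since $\alpha+\beta$ is a nonzero root and $\pair{\rho}{(\alpha+\beta)^{\vee}}\ne 0$). When $h_{\alpha}<h_{\beta}$ one has $\sgn(\alpha+\beta)=-\sgn(\alpha)=\sgn(w(\alpha+\beta))$, so Lemma~\ref{lem:ell} applied to $w$ and the root $\alpha+\beta$ gives $ws_{\alpha+\beta}>w$, directly contradicting $\ell(ws_{\alpha+\beta})<\ell(w)$. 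In the remaining case $h_{\alpha}>h_{\beta}$ one has $h_{\alpha+\beta}=h_{\alpha}-h_{\beta}$, and the elementary bound $\ell(ws_{\alpha+\beta})\ge \ell(w)-\ell(s_{\alpha+\beta})=\ell(w)-2h_{\alpha+\beta}+1$ combined with the length computation yields $h_{\beta}\le 1-h_{\alpha}$, which is absurd because $h_{\alpha}\ge 2$ and $h_{\beta}\ge 1$.

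The hard part will be recognizing that the crude length inequality $\ell(ws_{\alpha+\beta})\ge\ell(w)-\ell(s_{\alpha+\beta})$ is by itself insufficient: it fails to give a contradiction precisely when $\alpha+\beta$ is negative (the $A_{2}$ instance $\alpha=\alpha_{1}$, $\beta=-(\alpha_{1}+\alpha_{2})$ makes that inequality an equality, yet the path still cannot exist). The resolution is to feed the sign bookkeeping of Step~1 back in: when $\alpha+\beta$ is negative, $s_{\alpha+\beta}$ must \emph{raise} the length of $w$ by Lemma~\ref{lem:ell}, and it is this that eliminates the configuration, while the length bound handles the case where $\alpha+\beta$ is positive. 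Thus the only genuinely delicate point is the careful tracking of signs in the $|\cdot|$ (signed-root) conventions so that the two cases are correctly separated by $\sgn(\alpha+\beta)$; the remaining verifications are routine and parallel those in Lemma~\ref{lem:qqq}.
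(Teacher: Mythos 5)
Your proof is correct, and it is essentially the argument the paper intends: the paper disposes of Lemma~\ref{lem:qbq} with the remark that it follows ``by arguments similar to those for Lemma~\ref{lem:qqq}'', and your proposal is exactly such an adaptation — edge-by-edge length bookkeeping for the two quantum steps and one Bruhat step, the subadditivity bound $\ell(ws_{\alpha+\beta})\ge\ell(w)-\ell(s_{\alpha+\beta})$, and sign tracking via Lemma~\ref{lem:ell}. Your observation that the crude length inequality alone is insufficient, and that the sign constraints forced by the three edges (yielding $\sgn(\beta)=-\sgn(\alpha)$ and the case split on $\sgn(\alpha+\beta)$) must be fed back in, is precisely the point that makes the adaptation of the Lemma~\ref{lem:qqq} argument go through.
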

%
%
\begin{lem} \label{lem:bqq}
Let $w \in W$. Then, 
\begin{equation*}
w \Be{|\alpha|} ws_{\alpha} \Qe{|\beta|} 
ws_{\alpha}s_{\beta} \Qe{|\alpha|} ws_{\alpha}s_{\beta}s_{\alpha}
\end{equation*}
if and only if 
\begin{equation*}
w \Qe{|\beta|} ws_{\beta} \Qe{|\alpha|} 
ws_{\beta}s_{\alpha} \Be{|\beta|} ws_{\beta}s_{\alpha}s_{\beta}. 
\end{equation*}
In this case, 
\begin{equation*}
\sgn(\alpha)=\sgn(\beta)=\sgn(\alpha+\beta)=\sgn(w\alpha)=-\sgn(w\beta)=-\sgn(w(\alpha+\beta)).
\end{equation*}
\end{lem}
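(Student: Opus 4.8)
The plan is to establish the equivalence by direct length bookkeeping, exactly in the spirit of the proof of Lemma~\ref{lem:qqq}, and then to read off the sign assertion from Lemma~\ref{lem:ell}.

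First I would reduce to the case $\alpha \in \Delta^{+}$. Since $s_{-\alpha} = s_{\alpha}$, $s_{-\beta} = s_{\beta}$, $|-\alpha| = |\alpha|$, $|-\beta| = |\beta|$, and $\pair{-\alpha}{-\beta^{\vee}} = \pair{\alpha}{\beta^{\vee}}$, both displayed directed paths are literally unchanged under $(\alpha,\beta) \mapsto (-\alpha,-\beta)$, and the sign conclusion is unchanged up to a global flip of all signs; this is the analogue of \eqref{eq:iff}. So assume $\alpha \in \Delta^{+}$, hence $|\alpha| = \alpha$. I write $a := \pair{\rho}{\alpha^{\vee}}$ and $b := \pair{\rho}{|\beta|^{\vee}}$, both $\ge 1$, and recall $s_{\alpha}s_{\beta}s_{\alpha} = s_{\alpha+\beta} = s_{\beta}s_{\alpha}s_{\beta}$, so that both paths terminate at $ws_{\alpha+\beta}$, together with the simply-laced identity $(\alpha+\beta)^{\vee} = \alpha^{\vee}+\beta^{\vee}$.

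Next, assuming the path $w \Be{|\alpha|} ws_{\alpha} \Qe{|\beta|} ws_{\alpha}s_{\beta} \Qe{|\alpha|} ws_{\alpha+\beta}$, I would extract two kinds of data. From the length identities (B) and (Q) I obtain $\ell(ws_{\alpha}) = \ell(w)+1$, $\ell(ws_{\alpha}s_{\beta}) = \ell(w)+2-2b$, and $\ell(ws_{\alpha+\beta}) = \ell(w)+3-2a-2b$. From Lemma~\ref{lem:ell} applied to each edge, together with the computations $s_{\alpha}\beta = \alpha+\beta$ and $s_{\alpha}s_{\beta}\alpha = \beta$, I read off $w\alpha \in \Delta^{+}$, $w\beta \in \Delta^{-}$, and $\sgn(w(\alpha+\beta)) = -\sgn(\beta)$. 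To pin down $\sgn(\beta)$ I combine the endpoint length with the universal bound $\ell(ws_{\alpha+\beta}) \ge \ell(w) - \ell(s_{\alpha+\beta}) = \ell(w) - 2\pair{\rho}{|\alpha+\beta|^{\vee}}+1$; writing this out and using $a,b \ge 1$, the hypothesis $\beta \in \Delta^{-}$ leads, in either subcase $\alpha+\beta \in \Delta^{\pm}$, to $b \le 1/2$ or $a \le 1/2$, a contradiction. Hence $\beta \in \Delta^{+}$, and then $\alpha+\beta \in \Delta^{+}$ automatically, being a root that is a sum of two positive roots. This already yields the asserted chain $\sgn(\alpha) = \sgn(\beta) = \sgn(\alpha+\beta) = \sgn(w\alpha) = -\sgn(w\beta) = -\sgn(w(\alpha+\beta))$.

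The crux is then to produce the path $w \Qe{|\beta|} ws_{\beta} \Qe{|\alpha|} ws_{\beta}s_{\alpha} \Be{|\beta|} ws_{\alpha+\beta}$, i.e.\ to determine the lengths of the remaining hexagon vertices $ws_{\beta}$ and $ws_{\beta}s_{\alpha}$. Here I exploit the rewriting $ws_{\alpha}s_{\beta} = ws_{\beta}s_{\alpha+\beta}$. Since $(ws_{\beta})(\alpha+\beta) = w\alpha \in \Delta^{+}$ and $\alpha+\beta \in \Delta^{+}$, Lemma~\ref{lem:ell} gives $ws_{\alpha}s_{\beta} > ws_{\beta}$; combined with the lower bound $\ell(ws_{\beta}) \ge \ell(w)-2b+1$ and a parity check, the known value $\ell(ws_{\alpha}s_{\beta}) = \ell(w)+2-2b$ sandwiches $\ell(ws_{\beta})$ to the single value $\ell(w)-2b+1$, so $w \Qe{|\beta|} ws_{\beta}$ is a quantum edge. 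A two-sided sandwich, using $(ws_{\beta})\alpha = w(\alpha+\beta) \in \Delta^{-}$ and $(ws_{\beta}s_{\alpha})\beta = w\alpha \in \Delta^{+}$, then forces $\ell(ws_{\beta}s_{\alpha}) = \ell(w)-2a-2b+2$, whence $ws_{\beta} \Qe{|\alpha|} ws_{\beta}s_{\alpha}$ is a quantum edge and $ws_{\beta}s_{\alpha} \Be{|\beta|} ws_{\alpha+\beta}$ is a Bruhat edge, as required. The converse implication is entirely symmetric: starting from the QQB path I run the same argument, using the mirror identity $ws_{\beta}s_{\alpha} = ws_{\alpha}s_{\alpha+\beta}$ to sandwich $\ell(ws_{\alpha}s_{\beta})$ and recover the BQQ edges. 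I expect the only delicate points to be the correct determination of each hyperplane-crossing direction (up versus down) via Lemma~\ref{lem:ell}, and the parity-and-sandwich bookkeeping that collapses each intermediate length to a unique value; the reflection algebra on the $A_{2}$-subsystem is routine but must be tracked carefully.
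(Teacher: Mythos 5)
Your proposal is correct and takes essentially the same route as the paper, whose proof of this lemma is precisely ``arguments similar to those for Lemma~\ref{lem:qqq}'': reduction to $\alpha \in \Delta^{+}$ via the analogue of \eqref{eq:iff}, length bookkeeping from the (B)/(Q) edge conditions together with the bound $\ell(ws_{\gamma}) \ge \ell(w) - \ell(s_{\gamma}) = \ell(w) - 2\pair{\rho}{|\gamma|^{\vee}} + 1$, and sign determinations via Lemma~\ref{lem:ell}. Your sandwich arguments all close correctly (I checked both directions, including the converse, where the identity $ws_{\beta}s_{\alpha} = ws_{\alpha}s_{\alpha+\beta}$ pins down $\ell(ws_{\alpha})$ and $ws_{\alpha}s_{\beta} = ws_{\beta}s_{\alpha+\beta}$ pins down $\ell(ws_{\alpha}s_{\beta})$), so the only quibbles are cosmetic: the parity check is superfluous, and the roles of the two ``mirror'' identities are swapped in your sketch.
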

%
%
\begin{lem} \label{lem:bbb}
Let $w \in W$. Then, 
\begin{equation*}
w \Be{|\alpha|} ws_{\alpha} \Be{|\beta|} 
ws_{\alpha}s_{\beta} \Be{|\alpha|} ws_{\alpha}s_{\beta}s_{\alpha}
\end{equation*}
if and only if 
\begin{equation*}
w \Be{|\beta|} ws_{\beta} \Be{|\alpha|} 
ws_{\beta}s_{\alpha} \Be{|\beta|} ws_{\beta}s_{\alpha}s_{\beta}. 
\end{equation*}
In this case, 
\begin{equation*}
\sgn(\alpha)=\sgn(\beta)=\sgn(\alpha+\beta)=\sgn(w\alpha)=\sgn(w\beta)=\sgn(w(\alpha+\beta)).
\end{equation*}
\end{lem}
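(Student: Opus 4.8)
The plan is to prove the single implication that the left-hand chain of Bruhat edges forces the right-hand one, and then to obtain the reverse implication for free from the symmetry of the standing hypothesis $\pair{\alpha}{\beta^{\vee}}=\pair{\beta}{\alpha^{\vee}}=-1$ under interchanging $\alpha$ and $\beta$: the right-hand chain is exactly the left-hand chain with $\alpha$ and $\beta$ swapped, so an implication proved for $(\alpha,\beta)$ immediately yields the opposite implication when applied to $(\beta,\alpha)$. As in Lemma~\ref{lem:qqq} one could first reduce to $\alpha\in\Delta^{+}$ by replacing $(\alpha,\beta)$ with $(-\alpha,-\beta)$, which leaves all three reflections, hence both chains, unchanged; but this reduction will turn out to be unnecessary.

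Assume $w \Be{|\alpha|} ws_{\alpha} \Be{|\beta|} ws_{\alpha}s_{\beta} \Be{|\alpha|} ws_{\alpha}s_{\beta}s_{\alpha}$. First I would read off the signs. Since each edge is a Bruhat edge, the length strictly increases along it; applying Lemma~\ref{lem:ell} at each step and simplifying the relevant roots via $ws_{\alpha}\beta=w(\alpha+\beta)$ and $ws_{\alpha}s_{\beta}\alpha=w\beta$ (both using $\pair{\alpha}{\beta^{\vee}}=\pair{\beta}{\alpha^{\vee}}=-1$), I obtain
\begin{equation*}
\sgn(w\alpha)=\sgn(\alpha),\qquad \sgn(w(\alpha+\beta))=\sgn(\beta),\qquad \sgn(w\beta)=\sgn(\alpha).
\end{equation*}
Then $w\alpha$ and $w\beta$ share the common sign $\sgn(\alpha)$ and span an $A_{2}$ subsystem, so their sum $w(\alpha+\beta)=w\alpha+w\beta$ has that same sign; comparing with the middle equality forces $\sgn(\alpha)=\sgn(\beta)$, whence all six quantities $\sgn(\alpha),\sgn(\beta),\sgn(\alpha+\beta),\sgn(w\alpha),\sgn(w\beta),\sgn(w(\alpha+\beta))$ coincide. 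This already establishes the final (``In this case'') assertion.

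Next I would control the lengths. Summing the three unit increments along the left-hand chain gives $\ell(ws_{\alpha+\beta})=\ell(w)+3$, using $s_{\alpha}s_{\beta}s_{\alpha}=s_{\alpha+\beta}$. Now I claim the right-hand chain $w,\,ws_{\beta},\,ws_{\beta}s_{\alpha},\,ws_{\beta}s_{\alpha}s_{\beta}=ws_{\alpha+\beta}$ is strictly length-increasing at every step: by Lemma~\ref{lem:ell} this amounts to $\sgn(w\beta)=\sgn(\beta)$, then $\sgn(ws_{\beta}\alpha)=\sgn(w(\alpha+\beta))=\sgn(\alpha)$, and finally $\sgn(ws_{\beta}s_{\alpha}\beta)=\sgn(w\alpha)=\sgn(\beta)$, each of which holds because all six signs above are equal. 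Hence the three length increments along the right-hand chain are positive integers summing to $\ell(ws_{\alpha+\beta})-\ell(w)=3$, so each equals $1$; equivalently each step is a length-one cover effected by a reflection, i.e.\ a Bruhat edge, carrying the prescribed labels $|\beta|,|\alpha|,|\beta|$. This is exactly the right-hand chain, completing the implication.

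The reverse implication then follows by applying the above with $\alpha$ and $\beta$ interchanged, and the displayed sign equalities are symmetric in $\alpha,\beta$, so they hold under either chain hypothesis. The one point I would flag as the crux—lighter here than in Lemma~\ref{lem:qqq}, since no quantum edges and hence no $\pair{\rho}{\,\cdot\,}$ length formula enter—is that the sign computation alone guarantees only that each step of the right-hand chain \emph{increases} length, not that it increases by exactly $1$; the exact-cover (Bruhat-edge) property is extracted only by combining this monotonicity with the global constraint $\ell(ws_{\alpha+\beta})=\ell(w)+3$ supplied by the left-hand chain.
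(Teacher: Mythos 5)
Your proposal is correct. Every step checks out: the three Bruhat edges on the left give, via Lemma~\ref{lem:ell} and the computations $s_{\alpha}\beta=\alpha+\beta$, $s_{\alpha}s_{\beta}\alpha=\beta$, the sign relations $\sgn(w\alpha)=\sgn(\alpha)$, $\sgn(w(\alpha+\beta))=\sgn(\beta)$, $\sgn(w\beta)=\sgn(\alpha)$; the observation that $w\alpha+w\beta$ is a root with the common sign of $w\alpha,w\beta$ then forces $\sgn(\alpha)=\sgn(\beta)$ and hence all six signs to agree; and the cover-counting step (three strictly positive length increments summing to $\ell(ws_{\alpha+\beta})-\ell(w)=3$, hence each equal to $1$) correctly upgrades monotonicity to Bruhat edges with the prescribed labels. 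The converse via the $\alpha\leftrightarrow\beta$ symmetry of the hypothesis is also legitimate, since the right-hand chain is exactly the left-hand chain for the pair $(\beta,\alpha)$.

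Your route differs somewhat from what the paper intends. The paper gives no explicit proof of this lemma; it says only that it follows ``by arguments similar to those for Lemma~\ref{lem:qqq}'', whose proof runs through the quantum-edge length formula, inequalities involving $\pair{\rho}{\,\cdot\,}$, and a preliminary reduction to $\alpha\in\Delta^{+}$. You correctly observe that none of that machinery is needed in the all-Bruhat case: since every edge contributes length exactly $+1$, the $\pair{\rho}{\,\cdot\,}$ bookkeeping collapses to the constraint $\ell(ws_{\alpha+\beta})=\ell(w)+3$, and the rest is pure sign analysis via Lemma~\ref{lem:ell} together with the positivity of a sum of two like-signed roots in the $A_{2}$ subsystem. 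The paper's template buys uniformity across all nine lemmas in the appendix (in the mixed quantum/Bruhat cases the $\pair{\rho}{\,\cdot\,}$ terms are genuinely needed); your argument buys a shorter, more elementary proof for this particular lemma, and you are right to flag that the only non-trivial point is that sign monotonicity alone does not give covers --- the global length count does.
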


\section{An example.}
\label{sec:example}

In this appendix, we assume that $\Fg$ is of type $A_{2}$, 
i.e., $\Fg=\Fsl_{3}(\BC)$. Applying Theorem~\ref{thm:main} to the case 
that $\lambda=\vpi_{1}+\vpi_{2}$, $\Gamma=(\theta,\alpha_{2},\theta,\alpha_{1}) \in \RAP(\vpi_{1}+\vpi_{2})$, 
and $w = \lng$, we obtain in $\BK \subset \KQGr$,
\begin{align}
& \be^{\vpi_{1}+\vpi_{2}} \cdot \OQG{w_{\circ}} 
  = \OQGL{w_{\circ}}{-\vpi_{1}-\vpi_{2}} \nonumber \\
& + q^{2}\Bigl(
  \OQGL{ t_{\theta^{\vee}} }{ \vpi_{1}+\vpi_{2} }  - 
  \OQGL{ s_{2}t_{\theta^{\vee}} }{ \vpi_{1}+\vpi_{2} } \nonumber \\
& \qquad - \OQGL{ s_{1}t_{\theta^{\vee}} }{ \vpi_{1}+\vpi_{2} } 
  + \OQGL{ s_{2}s_{1}t_{\theta^{\vee}} }{ \vpi_{1}+\vpi_{2} } \nonumber \\
& \qquad + \OQGL{ s_{1}s_{2}t_{\theta^{\vee}} }{ \vpi_{1}+\vpi_{2} } 
  - \OQGL{ w_{\circ}t_{\theta^{\vee}} }{ \vpi_{1}+\vpi_{2} } \Bigr) \nonumber \\
& + q \OQGL{ s_{2}s_{1}t_{\alpha_{2}^{\vee}} }{ \vpi_{1}-2\vpi_{2} }
  - q \OQGL{ w_{\circ}t_{\alpha_{2}^{\vee}} }{ \vpi_{1}-2\vpi_{2} } \nonumber \\
& + q \OQGL{ s_{1}s_{2}t_{\alpha_{1}^{\vee}} }{ -2\vpi_{1}+\vpi_{2} }
  - q \OQGL{ w_{\circ}t_{\alpha_{1}^{\vee}} }{ -2\vpi_{1}+\vpi_{2} } \nonumber \\
& + q \OQG{ t_{\theta^{\vee}} }
  - q \OQG{ s_{2}s_{1}t_{\theta^{\vee}} }
  - q \OQG{ s_{1}s_{2}t_{\theta^{\vee}} }
  + q \OQG{ w_{\circ}t_{\theta^{\vee}} }. \label{eq22}
\end{align}
Indeed, observe that 
\begin{equation*}
l_{1}=l_{2}=1,\ l_{3}=2,\ l_{4}=1, \qquad 
l_{1}'=1,\ l_{2}'=l_{3}'=l_{4}'=0;
\end{equation*}
for the definitions of $l_{t}$ and $l_{t}'$, see Section~\ref{subsec:alcove}. 
Also, we see that 
\begin{equation*}
\begin{split}
& \QW_{\vpi_{1}+\vpi_{2},\lng} = \\
& \bigl\{ \bw_{1}=(\lng,\lng,\lng,\lng,\lng),\ \bw_{2}=(\lng,e,e,e,e),\ 
\bw_{3}=(\lng,e,s_{2},s_{2},s_{2}), \\
& 
\bw_{4}=(\lng,e,s_{2},s_{2}s_{1},s_{2}s_{1}),\ 
\bw_{5}=(\lng,e,s_{2},s_{2}s_{1},\lng),\ 
\bw_{6}=(\lng,e,s_{2},s_{2},s_{1}s_{2}), \\
& \bw_{7}=(\lng,e,e,e,s_{1}),\ 
  \bw_{8}=(\lng,\lng,s_{1}s_{2},s_{1}s_{2},s_{1}s_{2}),\ 
  \bw_{9}=(\lng,\lng,s_{1}s_{2},s_{1},s_{1}), \\ 
& \bw_{10}=(\lng,\lng,s_{1}s_{2},s_{1},e),\ 
  \bw_{11}=(\lng,\lng,\lng,e,e),\ 
  \bw_{12}=(\lng,\lng,\lng,e,s_{1}), \\
& \bw_{13}=(\lng,\lng,\lng,\lng,s_{2}s_{1}) \bigr\}, 
\end{split}
\end{equation*}
and that
\begin{equation*}
S(\bw_{j})=
 \begin{cases}
 \{2,4\} & \text{if $j=1$}, \\
 \{3\} & \text{if $j=8$}, \\
 \{4\} & \text{if $j=9$}, \\
 \{2\} & \text{if $j=11,12,13$}, \\
 \emptyset & \text{otherwise}; 
 \end{cases}
\end{equation*}
observe that $\# \ti{\QW}_{\vpi_{1}+\vpi_{2},\lng}=21$, which is greater than $15$, 
the number of terms on the right-hand side of \eqref{eq22}. 
Here, recall the definition of $\bG(\bw,\bb)$ 
for $(\bw,\bb) \in \ti{\QW}_{\lambda,w}$ from Theorem~\ref{thm:main}. 
Let us first compute $\bG(\bw,\bb)$ for $\bw=\bw_{10},\,\bw_{11}$.  
Note that $S(\bw_{10})=\emptyset$ and $S(\bw_{11})=\{2\}$. We have 
\begin{equation*}
(-1)^{(\bw_{10},\emptyset)} = 1, \qquad 
(-1)^{(\bw_{11}, 2 \mapsto 0)}= 1, \qquad
(-1)^{(\bw_{11}, 2 \mapsto 1)}=-1,
\end{equation*}
\begin{equation*}
\qwt(\bw_{10},\emptyset) = 2\alpha_{1}+\alpha_{2}, \quad 
\qwt(\bw_{11}, 2 \mapsto 0)= \theta, \quad
\qwt(\bw_{11}, 2 \mapsto 1)= 2\alpha_{1}+\alpha_{2},
\end{equation*}
\begin{equation*}
\wt(\bw) = \lng^{-1}(\vpi_{1}+\vpi_{2}) \quad 
  \text{for $\bw = \bw_{10},\,\bw_{11}$}, 
\end{equation*}
\begin{equation*}
\deg(\bw_{10},\emptyset) = 3, \qquad 
\deg(\bw_{11}, 2 \mapsto 0)= 1, \qquad
\deg(\bw_{11}, 2 \mapsto 1)= 3. 
\end{equation*}
Therefore, 
\begin{align*}
& \bG(\bw_{10},\emptyset) = q^{3} \OQGL{ t_{2\alpha_1^{\vee}+\alpha_2^{\vee}} }{-\vpi_{1}+2\vpi_{2}}, \\
& \bG(\bw_{11}, 2 \mapsto 0) = q \OQG{ t_{\theta^{\vee}} }, \qquad 
  \bG(\bw_{11}, 2 \mapsto 1) = - q^{3} \OQGL{ t_{2\alpha_1^{\vee}+\alpha_2^{\vee}} }{-\vpi_{1}+2\vpi_{2}}; 
\end{align*}
notice that $\bG(\bw_{10},\emptyset) + \bG(\bw_{11}, 2 \mapsto 1) = 0$. 

Next, let us compute $\bG(\bw,\bb)$ for $\bw = \bw_{9},\,\bw_{12}$. 
Note that $S(\bw_{9})=\{4\}$ and $S(\bw_{12})=\{2\}$. We have 
\begin{equation*}
(-1)^{(\bw_{9}, 4 \mapsto 0)}= 1, \qquad
(-1)^{(\bw_{9}, 4 \mapsto 1)}=-1,
\end{equation*}
\begin{equation*}
(-1)^{(\bw_{12}, 2 \mapsto 0)}= -1, \qquad
(-1)^{(\bw_{12}, 2 \mapsto 1)}= 1,
\end{equation*}
\begin{equation*}
\qwt(\bw_{9}, 4 \mapsto 0)= \theta, \qquad
\qwt(\bw_{9}, 4 \mapsto 1)= 2\alpha_{1}+\alpha_{2},
\end{equation*}
\begin{equation*}
\qwt(\bw_{12}, 2 \mapsto 0)= \theta, \qquad
\qwt(\bw_{12}, 2 \mapsto 1)= 2\alpha_{1}+\alpha_{2},
\end{equation*}
\begin{equation*}
\wt(\bw) = \lng^{-1}(\vpi_{1}+\vpi_{2}) \quad 
  \text{for $\bw=\bw_{9},\,\bw_{12}$}, 
\end{equation*}
\begin{equation*}
\deg(\bw_{9}, 4 \mapsto 0)= 1, \qquad
\deg(\bw_{9}, 4 \mapsto 1)= 3, 
\end{equation*}
\begin{equation*}
\deg(\bw_{12}, 2 \mapsto 0)= 1, \qquad
\deg(\bw_{12}, 2 \mapsto 1)= 3. 
\end{equation*}
Therefore, 
\begin{align*}
& \bG(\bw_{9}, 4 \mapsto 0) = q \OQG{ t_{\theta^{\vee}} }, \qquad 
  \bG(\bw_{9}, 4 \mapsto 1) = - q^{3} \OQGL{ t_{2\alpha_1^{\vee}+\alpha_2^{\vee}} }{-\vpi_{1}+2\vpi_{2}}, \\
& \bG(\bw_{12}, 2 \mapsto 0) = - q \OQG{ t_{\theta^{\vee}} }, \qquad 
  \bG(\bw_{12}, 2 \mapsto 1) = q^{3} \OQGL{ t_{2\alpha_1^{\vee}+\alpha_2^{\vee}} }{-\vpi_{1}+2\vpi_{2}}; 
\end{align*}
notice that 
$\bG(\bw_{9}, 4 \mapsto 0) + \bG(\bw_{12}, 2 \mapsto 0) = 0$ and 
$\bG(\bw_{9}, 4 \mapsto 1) + \bG(\bw_{12}, 2 \mapsto 1) = 0$.

By similar computations, we deduce that 
\begin{align*}
& \bG(\bw_{1}, 2 \mapsto 0, 4 \mapsto 0) = \OQGL{ \lng }{-\vpi_{1}-\vpi_{2}}, \\
& \bG(\bw_{1}, 2 \mapsto 1, 4 \mapsto 0) = -q \OQGL{ \lng t_{\alpha_{1}^{\vee}} }{-2\vpi_{1}+\vpi_{2}}, \\
& \bG(\bw_{1}, 2 \mapsto 0, 4 \mapsto 1) = -q \OQGL{ \lng t_{\alpha_{2}^{\vee}} }{\vpi_{1}-2\vpi_{2}}, \\
& \bG(\bw_{1}, 2 \mapsto 1, 4 \mapsto 1) = q \OQG{ \lng t_{\theta^{\vee}} },
\end{align*}
\begin{align*}
& \bG(\bw_{2},\emptyset) = q^{2} \OQGL{ t_{\theta^{\vee}} }{\vpi_{1}+\vpi_{2}}, &
& \bG(\bw_{3},\emptyset) = - q^{2} \OQGL{ s_{2} t_{\theta^{\vee}} }{\vpi_{1}+\vpi_{2}}, \\
& \bG(\bw_{4},\emptyset) = q^{2} \OQGL{ s_{2}s_{1} t_{\theta^{\vee}} }{\vpi_{1}+\vpi_{2}}, &
& \bG(\bw_{5},\emptyset) = - q^{2} \OQGL{ \lng t_{\theta^{\vee}} }{\vpi_{1}+\vpi_{2}}, \\
& \bG(\bw_{6},\emptyset) = q^{2} \OQGL{ s_{1}s_{2} t_{\theta^{\vee}} }{\vpi_{1}+\vpi_{2}}, &
& \bG(\bw_{7},\emptyset) = q^{2} \OQGL{ s_{1} t_{\theta^{\vee}} }{\vpi_{1}+\vpi_{2}},
\end{align*}
\begin{align*}
& \bG(\bw_{8}, 3 \mapsto 0) = q \OQGL{ s_{1}s_{2}t_{\alpha_{1}^{\vee}} }{-2\vpi_{1}+\vpi_{2}}, & 
& \bG(\bw_{8}, 3 \mapsto 1) = - q \OQG{ s_{1}s_{2}t_{\theta^{\vee}} }, & \\
& \bG(\bw_{13}, 2 \mapsto 0) = q \OQGL{ s_{2}s_{1}t_{\alpha_{2}^{\vee}} }{\vpi_{1}-2\vpi_{2}}, & 
& \bG(\bw_{13}, 2 \mapsto 1) = - q \OQG{ s_{2}s_{1}t_{\theta^{\vee}} }.
\end{align*}
Thus we obtain \eqref{eq22}, as desired. 

%

\end{document}